\renewcommand{\eqref}[1]{\hyperref[#1]{(\ref{#1})}}
\newlist{enumlist}{enumerate}{2}
\setlist[enumlist,1]{labelindent=0cm,label=(\roman*),ref=(\roman*),labelwidth=4.5ex,labelsep=1ex,leftmargin=5.5ex,align=right,topsep=0.5ex,itemsep=1ex,parsep=1ex}
\setlist[enumlist,2]{labelindent=0cm,label=\alph*),ref=\arabic*,labelwidth=5ex,labelsep=0.5ex,leftmargin=5.5ex,align=left,topsep=0.5ex,itemsep=1ex,parsep=1ex}
\newlist{enumlistprime}{enumerate}{1}
\setlist[enumlistprime]{labelindent=0cm,label=(\roman*)',ref=(\roman*)',labelwidth=4.5ex,labelsep=1ex,leftmargin=5.5ex,align=right,topsep=0.5ex,itemsep=1ex,parsep=1ex}
\newlist{enumlistprimeprime}{enumerate}{1}
\setlist[enumlistprimeprime]{labelindent=0cm,label=(\roman*)'',ref=(\roman*)'',labelwidth=4.5ex,labelsep=1ex,leftmargin=5.5ex,align=right,topsep=0.5ex,itemsep=1ex,parsep=1ex}
\newlist{itemlist}{itemize}{2}
\setlist[itemlist,1]{labelindent=0cm,label=$\bullet$,labelwidth=2.5ex,labelsep=0.5ex,leftmargin=3ex,align=left,topsep=0.5ex,itemsep=1ex,parsep=1ex}
\setlist[itemlist,2]{labelindent=0cm,label=$\circ$,labelwidth=2.5ex,labelsep=0.5ex,leftmargin=3ex,align=left,topsep=0.5ex,itemsep=1ex,parsep=1ex}
\numberwithin{equation}{section}
\theoremstyle{definition}\newtheorem{definitiona}{Definition}[section]
\newtheorem{remarka}[definitiona]{Remark}
\newtheorem{examplea}[definitiona]{Example}}
\newtheorem{propositiona}[definitiona]{Proposition}
\newtheorem{lemmaa}[definitiona]{Lemma}
\newtheorem{theorema}[definitiona]{Theorem}
\newtheorem{corollarya}[definitiona]{Corollary}
\newtheorem{letterthma}{Theorem}
\renewcommand{\theletterthma}{\Alph{letterthma}}
\theoremstyle{definition}
\newtheorem{letterremarka}[letterthma]{Remark}
\newenvironment{definition}[1][]{\begin{definitiona}[#1]\setlist*[enumlist,1]{label=(\roman*),ref=\thedefinitiona(\roman*)}}{\end{definitiona}}
\newenvironment{remark}[1][]{\begin{remarka}[#1]\setlist*[enumlist,1]{label=(\roman*),ref=\theremarka(\roman*)}}{\end{remarka}}
\newenvironment{example}[1][]{\begin{examplea}[#1]\setlist*[enumlist,1]{label=(\roman*),ref=\theexamplea(\roman*)}}{\end{examplea}}
\newenvironment{proposition}[1][]{\begin{propositiona}[#1]\setlist*[enumlist,1]{label=(\roman*),ref=\thepropositiona(\roman*)}}{\end{propositiona}}
\newenvironment{lemma}[1][]{\begin{lemmaa}[#1]\setlist*[enumlist,1]{label=(\roman*),ref=\thelemmaa(\roman*)}}{\end{lemmaa}}
\newenvironment{theorem}[1][]{\begin{theorema}[#1]\setlist*[enumlist,1]{label=(\roman*),ref=\thetheorema(\roman*)}}{\end{theorema}}
\newenvironment{corollary}[1][]{\begin{corollarya}[#1]\setlist*[enumlist,1]{label=(\roman*),ref=\thecorollarya(\roman*)}}{\end{corollarya}}
\newenvironment{letterthm}[1][]{\begin{letterthma}[#1]\setlist*[enumlist,1]{label=(\roman*),ref=\theletterthma(\roman*)}}{\end{letterthma}}
\newcommand{\C}{\mathbb{C}}
\newcommand{\F}{\mathbb{F}}
\newcommand{\bG}{\mathbb{G}}
\newcommand{\N}{\mathbb{N}}
\newcommand{\R}{\mathbb{R}}
\newcommand{\T}{\mathbb{T}}
\newcommand{\Q}{\mathbb{Q}}
\newcommand{\Z}{\mathbb{Z}}
\newcommand{\cA}{\mathcal{A}}
\newcommand{\cB}{\mathcal{B}}
\newcommand{\cF}{\mathcal{F}}
\newcommand{\cG}{\mathcal{G}}
\newcommand{\cH}{\mathcal{H}}
\newcommand{\cK}{\mathcal{K}}
\newcommand{\cM}{\mathcal{M}}
\newcommand{\cN}{\mathcal{N}}
\newcommand{\cO}{\mathcal{O}}
\newcommand{\cP}{\mathcal{P}}
\newcommand{\cU}{\mathcal{U}}
\newcommand{\cZ}{\mathcal{Z}}
\newcommand{\al}{\alpha}
\newcommand{\be}{\beta}
\newcommand{\eps}{\varepsilon}
\newcommand{\vphi}{\varphi}
\newcommand{\om}{\omega}
\newcommand{\Om}{\Omega}
\newcommand{\si}{\sigma}
\newcommand{\Ad}{\operatorname{Ad}}
\newcommand{\Aut}{\operatorname{Aut}}
\newcommand{\inv}{\operatorname{inv}}
\newcommand{\lspan}{\operatorname{span}}
\newcommand{\Tr}{\operatorname{Tr}}
\newcommand{\betil}{\widetilde{\beta}}
\newcommand{\Mtil}{\widetilde{M}}
\newcommand{\Ntil}{\widetilde{N}}
\newcommand{\Stil}{\widetilde{S}}
\newcommand{\vbar}{\overline{v}}
\newcommand{\ot}{\otimes}
\newcommand{\id}{\mathord{\text{\rm id}}}
\newcommand{\ovt}{\mathbin{\overline{\otimes}}}
\newcommand{\actson}{\curvearrowright}
\newcommand{\dpr}{^{\prime\prime}}
\newcommand{\op}{^\text{\rm op}}
\newcommand{\Wred}{W_{\text{\rm red}}}
\newcommand{\otalg}{\otimes_{\text{\rm alg}}}
\newcommand{\otmin}{\otimes_{\text{\rm min}}}
\newcommand{\free}{\operatornamewithlimits{{\text{\LARGE $\ast$}}}}
\newcommand{\link}{\operatorname{link}}
\newcommand{\ster}{\operatorname{star}}
\newcommand{\alg}{_{\text{\rm alg}}}
\newcommand{\cb}{_{\text{\rm cb}}}
\newcommand{\bim}[3]{\mathord{\raisebox{-0.4ex}[0ex][0ex]{\scriptsize $#1$}\hspace{0.1ex}{#2}\hspace{-0.1ex}\raisebox{-0.4ex}[0ex][0ex]{\scriptsize $#3$}}}
\newcommand{\rightmod}[2]{{#1}\hspace{-0.1ex}\raisebox{-0.4ex}[0ex][0ex]{\scriptsize $#2$}}
\renewcommand{\leq}{\leqslant}
\renewcommand{\geq}{\geqslant}
\begin{document}

\begin{center}
{\boldmath\LARGE\bf W$^*$-correlations of II$_1$ factors and rigidity of\vspace{0.5ex}\\ tensor products and graph products}

\vspace{1ex}

{\sc by Daniel Drimbe\footnote{University of Iowa, Department of Mathematics, Iowa City (United States), daniel-drimbe@uiowa.edu\\ Supported by NSF Grant DMS-2452525.} and Stefaan Vaes\footnote{KU~Leuven, Department of Mathematics, Leuven (Belgium), stefaan.vaes@kuleuven.be\\ Supported by FWO research project G016325N of the Research Foundation Flanders and by Methusalem grant METH/21/03 –- long term structural funding of the Flemish Government.}}
\end{center}

\begin{abstract}\noindent
A variant of Gromov's notion of measure equivalence for groups has been introduced for II$_1$ factors under different names. We propose the terminology of W$^*$-correlated II$_1$ factors. We prove rigidity results up to W$^*$-correlations for tensor products and graph products of II$_1$ factors. As a consequence, we construct the first uncountable family of discrete groups $\Gamma$ that are not von Neumann equivalent, which means that their group von Neumann algebras $L(\Gamma)$ are not W$^*$-correlated, and which implies that these groups are neither measure equivalent, nor have isomorphic or virtually isomorphic group von Neumann algebras.
\end{abstract}

\section{Introduction and main results}

Both in group theory and in operator algebras, one considers several weak forms of isomorphism or equivalence between two discrete groups $\Gamma$ and $\Lambda$, or between two II$_1$ factors $A$ and $B$. These notions are often closely related when $A = L(\Gamma)$ and $B = L(\Lambda)$ are group von Neumann algebras. Most notably, two discrete groups $\Gamma$ and $\Lambda$ are called \emph{measure equivalent} (see \cite{Gro91}) if they admit commuting, free, measure preserving actions that each admit a finite measure fundamental domain. The prototype examples are lattices in the same ambient locally compact group. Next, two discrete groups $\Gamma$ and $\Lambda$ are called \emph{W$^*$-equivalent} if their group von Neumann algebras $L(\Gamma)$ and $L(\Lambda)$ are isomorphic. This then leads to natural rigidity questions: for which classes of groups, does measure equivalence, resp.\ W$^*$-equivalence imply (virtual) isomorphism of the groups?

In \cite{IPR19}, a new and even coarser equivalence between discrete groups $\Gamma$ and $\Lambda$ was found and coined \emph{von Neumann equivalence}, which we discuss below. They prove that both measure equivalence and W$^*$-equivalence imply von Neumann equivalence. It then becomes a difficult and challenging problem to distinguish between discrete groups up to this very coarse notion of von Neumann equivalence. So far, only qualitative notions, such as amenability, the Haagerup approximation property, Kazhdan's property~(T), weak amenability, etc., were shown to be stable under von Neumann equivalence, see \cite{IPR19,Ish21}.

Our first main result is a more quantitative rigidity statement for von Neumann equivalence, resulting in the first uncountable family of groups that are mutually not von Neumann equivalent. Note that these groups are at the same time not measure equivalent and not W$^*$-equivalent.

\begin{letterthm}[{See Theorem \ref{thm.uncountable-family-of-non-correlated}}]\label{thm.main-A}
For every subset $\cF \subset \{2,3,\ldots\}$, define the group
$$G_\cF = \free_{n \in \cF} \bigl(\underbrace{\F_2 \times \cdots \times \F_2}_{n \; \text{times}}\bigr) \; .$$
If $\cF \neq \cF'$, then $G_\cF$ and $G_{\cF'}$ are not von Neumann equivalent.
\end{letterthm}

We prove Theorem \ref{thm.main-A} as a consequence of more general von Neumann equivalence rigidity results for tensor products and graph products. Before presenting these, we introduce the following shift of terminology.

Already in \cite[Theorem 1.5]{IPR19}, it was shown that von Neumann equivalence of two discrete groups $\Gamma$ and $\Lambda$ actually is a property of their group von Neumann algebras $L(\Gamma)$ and $L(\Lambda)$. Thus, in \cite[Definition 1.4]{IPR19} (see Definition \ref{def.vNequiv} below), the notion of \emph{von Neumann equivalence} of two finite von Neumann algebras is defined. A variant of this notion was considered in \cite[Definition 5.1]{BV22} and coined \emph{measure equivalence}. In hindsight, we believe that both terminologies may lead to confusion. Such a measure equivalence (in the sense of \cite{BV22}) between finite von Neumann algebras $A$ and $B$ is, by definition, given by a special type of Hilbert $A$-$B$-bimodule. In Definition \ref{def.Wstar-corr}, we call such special $A$-$B$-bimodules \emph{W$^*$-correlations} between $A$ and $B$. We then say that $A$ and $B$ are \emph{W$^*$-correlated} if there exists a faithful W$^*$-correlation.

In Section \ref{sec.Wstar-corr-vs-vN-equiv}, we discuss the relation between being von Neumann equivalent and being W$^*$-correlated. In Proposition \ref{prop.Wstar-corr-vs-vN-equiv}, we prove that both notions are the same for II$_1$ factors, and that in general, two von Neumann equivalent finite von Neumann algebras are always W$^*$-correlated. We believe that the stability results in Proposition \ref{prop.direct-sums-etc} show that W$^*$-correlations provide a more natural and flexible setting when considering nonfactorial von Neumann algebras. This is confirmed by Proposition \ref{prop.Wstar-corr-amenable}, in which we prove that there are exactly three W$^*$-correlation equivalence classes of amenable finite von Neumann algebras with separable predual: the diffuse ones, the discrete ones and their direct sums. Section \ref{sec.corr} contains several other general results on W$^*$-correlations, and the one-sided notion of $P$-embeddings. In particular in Theorem \ref{thm.no-embed-inner-amenable}, we prove that a group von Neumann algebra $L(\Lambda)$ of a nonamenable, inner amenable group has no $P$-embedding into a free group factor.

As mentioned above, we deduce Theorem \ref{thm.main-A} from two results: a rigidity theorem for W$^*$-correlated tensor product factors and a rigidity theorem for W$^*$-correlated graph product II$_1$ factors.

By \cite{Oza03}, the group von Neumann algebra $L(\Gamma)$ of a hyperbolic group with infinite conjugacy classes (icc) is solid and, in particular, prime: it cannot be written as the tensor product of two II$_1$ factors. Moreover, in \cite[Theorem 1]{OP03}, the following unique prime factorization property was obtained: if $\Gamma_1,\ldots,\Gamma_n$ are such groups and if $L(\Gamma_1) \ovt \cdots \ovt L(\Gamma_n)$ is isomorphic to any tensor product $B_1 \ovt \cdots \ovt B_r$ of II$_1$ factors, we can partition $\{1,\ldots,n\}$ into nonempty subsets $S_1 \sqcup \cdots \sqcup S_r$ such that for every $k \in \{1,\ldots,r\}$, the tensor product $\ovt_{i \in S_k} L(\Gamma_i)$ is stably isomorphic to $B_k$. Recall here that two II$_1$ factors $A$ and $B$ are called \emph{stably isomorphic} if there exist nonzero projections $p \in A$ and $q \in B$ such that $pAp \cong qBq$.

Our next result is a W$^*$-correlation version of \cite[Theorem 1]{OP03}. We prove this result for tensor products of stably solid II$_1$ factors. We introduce this concept of stable solidity in detail Section \ref{sec.rel-solid}. Note that $L(\Gamma)$ is stably solid for every hyperbolic group $\Gamma$. In Section \ref{sec.rel-solid}, we also adapt some of the existing solidity theorems to the stable setting, and prove that the $q$-Gaussian II$_1$ factors, as well as the II$_1$ factors given by certain free quantum groups are stably solid.

\begin{letterthm}[{See Theorem \ref{thm.W-star-corr-tensor-product-any-tensor-product}}]\label{thm.main-B}
Let $A = A_1 \ovt \cdots \ovt A_n$ be a tensor product of nonamenable, stably solid II$_1$ factors. Let $B = B_1 \ovt \cdots \ovt B_r$ be an arbitrary tensor product of II$_1$ factors. Then $A$ is W$^*$-correlated to $B$ if and only if we can partition $\{1,\ldots,n\}$ into nonempty subsets $S_1 \sqcup \cdots \sqcup S_r$ such that for every $k \in \{1,\ldots,r\}$, the tensor product $A_{S_k} := \ovt_{i \in S_k} A_i$ is W$^*$-correlated to $B_k$.
\end{letterthm}

Note here that a special case of Theorem \ref{thm.main-B} says that if two tensor products $A_1 \ovt \cdots \ovt A_n$ and $B_1 \ovt \cdots \ovt B_m$ of nonamenable, stably solid II$_1$ factors are W$^*$-correlated, then $n=m$ and after permuting the indices, $A_i$ is W$^*$-correlated to $B_i$ for every $i$. We state this special case as Theorem \ref{thm.corr-tensor-product}, since it serves as an ingredient in the proof of Theorem \ref{thm.main-B}.

Since two measure equivalent groups have W$^*$-correlated group von Neumann algebras, we also note that Theorem \ref{thm.corr-tensor-product} can be seen as a W$^*$-correlation version of the following result from \cite{MS02}: if two direct products $\Gamma_1 \times \cdots \times \Gamma_n$ and $\Lambda_1 \times \cdots \times \Lambda_r$ of nonelementary torsion free hyperbolic groups are measure equivalent, then $n=m$, and after permuting the indices, $\Gamma_i$ is measure equivalent to $\Lambda_i$.

%

We then turn to rigidity results for \emph{graph product} II$_1$ factors up to W$^*$-correlations. Given a simple graph $\Gamma$, which is nothing but a symmetric relation $\sim$ on a vertex set $S$ such that for all $s \in S$, we have $s \not\sim s$, and given a labeling $(A_s)_{s \in S}$ of the vertices by tracial von Neumann algebras, the graph product $A_\Gamma$ was defined in \cite{CF14} and is a tracial von Neumann algebra generated by $(A_s)_{s \in S}$, subject to the relations that $A_s$ commutes with $A_t$ whenever $s \sim t$. The precise definition is slightly more subtle, and recalled in Section \ref{sec.graph-products}.

In \cite{BCC24}, the following isomorphism rigidity theorem was proven for graph products defined by \emph{rigid graphs}. In \cite{BCC24}, a simple graph $\Gamma$ with vertex set $S$ is called rigid if $\link(\link s) = \{s\}$ for all $s \in S$. Here, $\link s = \{t \in S \mid t \sim s\}$ and $\link(\cU) = \bigcap_{u \in \cU} \link u$. So a graph is rigid if for every vertex $s$, we have that $s$ is the only vertex that is connected to all vertices in $\link s$.

Let $\Gamma$ and $\Lambda$ be rigid, simple graphs with vertex sets $S$ and $T$. Label the vertices by ``allowed'' II$_1$ factors $(A_s)_{s \in S}$ and $(B_t)_{t \in T}$. By \cite[Theorem~A]{BCC24}, if the graph products $A_\Gamma$ and $B_\Lambda$ are isomorphic, the graphs $\Gamma$ and $\Lambda$ are isomorphic, and there exists a graph isomorphism $\si : S \to T$ such that $A_s$ is stably isomorphic to $B_{\si(s)}$ for all $s \in S$.

In Section \ref{sec.Wstar-corr-graph-products}, we analyze what can be said if two such graph products $A_\Gamma$ and $B_\Lambda$ are merely W$^*$-correlated. If $\Gamma$ and $\Lambda$ are rigid graphs as above, and if the vertices are labeled by nonamenable, stably solid II$_1$ factors $(A_s)_{s \in S}$ and $(B_t)_{t \in T}$, we prove in Theorem \ref{thm.corr-graph-product} the following result: if $A_\Gamma$ is W$^*$-correlated to $B_\Lambda$, then for every $s \in S$, there exists a $t \in T$ such that $A_{\link s}$ is W$^*$-correlated to $B_{\link t}$.

While this result in combination with Theorem \ref{thm.main-B}, is enough to deduce Theorem \ref{thm.main-A}, the result by no means implies that the graphs $\Gamma$ and $\Lambda$ are isomorphic. In Proposition \ref{prop.counterex}, we prove that one should not expect this: even when $\Gamma$ and $\Lambda$ are finite rigid graphs and the vertices are labeled by free group factors, it may happen that $A_\Gamma$ is W$^*$-correlated to $B_\Lambda$ without $\Gamma$ and $\Lambda$ being isomorphic.

When however $A_\Gamma$ is stably isomorphic to $B_\Lambda$, more can be said. We thus also prove the following variant of \cite[Theorem~A]{BCC24}.

\begin{letterthm}[{See Theorem \ref{thm.iso-graph-product}}]\label{thm.main-C}
If $A_\Gamma$ and $B_\Lambda$ are graph products given by rigid simple graphs $\Gamma$, $\Lambda$, labeled by nonamenable, stably solid II$_1$ factors $(A_s)_{s \in S}$ and $(B_t)_{t \in T}$, and if $A_\Gamma$ is stably isomorphic to $B_\Lambda$, then $\Gamma$ is isomorphic to $\Lambda$ and there exists a graph isomorphism $\si : S \to T$ such that $A_s$ is stably isomorphic to $B_{\si(s)}$ for all $s \in S$.
\end{letterthm}

To compare Theorem \ref{thm.main-C} to \cite[Theorem~A]{BCC24}, note that the first version of \cite{BCC24} only covered finite graphs. Then our paper generalized the result to infinite graphs, after which \cite{BCC24} was updated so as to cover infinite graphs as well. Next, one has to be careful about which II$_1$ factors are allowed as labeling of the vertices. In \cite{BCC24}, the authors consider nonamenable II$_1$ factors satisfying the \emph{strong (AO) condition} (see \cite[Definition 2.6]{HI15}). As we discuss in Section \ref{sec.rel-solid}, conjecturally, every tracial von Neumann algebra $A$ satisfying the strong (AO) condition is stably solid, because it is biexact by \cite[Theorem 7.17]{DP23}. The converse is not true because of the examples in \cite[Section 8]{DP23}, as we explain just before Theorem \ref{thm.stably-solid-and-q-Gaussian}. Most importantly, both stable solidity and the strong (AO) condition hold for the usual natural families of examples, such as group von Neumann algebras of hyperbolic groups.

\section{Preliminaries}

\subsection{Popa's intertwining-by-bimodules}

Throughout this paper, we make use of \emph{intertwining-by-bimodules} as introduced by Popa in \cite[Section 2]{Pop03}. Recall that for a tracial von Neumann algebra $(M,\tau)$ and von Neumann subalgebras $A \subset p(M_n(\C) \ot M)p$ and $B \subset M$, one writes $A \prec_M B$ if the Hilbert $A$-$B$-bimodule $p(\C^n \ot L^2(M))$ admits a nonzero closed $A$-$B$-subbimodule that is finitely generated as a right Hilbert $B$-module. Also recall that one defines $A \prec^f_M B$ if $A q \prec_M B$ for every nonzero projection $q \in A' \cap p(M_n(\C) \ot M)p$. Finally recall that there exists a unique projection $q$, potentially zero, in the center of the normalizer of $A$ inside $p (M_n(\C) \ot M)p$, such that $A q \prec^f_M B$ and $A(p-q) \not\prec_M B$.

The following lemma is a straightforward generalization of \cite[Corollary 2.3]{Pop03}. For completeness, we give a detailed proof.

\begin{lemma}\label{lem.Popa-intertwining-for-bimodules}
Let $(A,\tau)$ and $(B,\tau)$ be tracial von Neumann algebras and $\bim{A}{H}{B}$ a Hilbert $A$-$B$-bimodule. Then the following are equivalent.
\begin{enumlist}
\item $\{0\}$ is the only closed $A$-$B$-subbimodule of $H$ that is finitely generated as a right Hilbert $B$-module.
\item There exists a net of unitaries $(u_i)_{i \in I}$ in $A$ such that $\lim_{i \in I} \|P_K(u_i \xi)\| = 0$ for every $\xi \in H$ and every finitely generated right Hilbert $B$-submodule $K \subset H$.
\end{enumlist}
\end{lemma}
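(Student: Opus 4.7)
The implication (ii)$\Rightarrow$(i) is direct: if $K\subset H$ were a nonzero closed $A$-$B$-subbimodule that is finitely generated as a right Hilbert $B$-module, any nonzero $\xi\in K$ would satisfy $u_i\xi\in K$ by $A$-invariance, so $P_K(u_i\xi)=u_i\xi$ would have constant norm $\|\xi\|>0$, contradicting (ii). The content of the lemma is thus (i)$\Rightarrow$(ii), which I would prove by contraposition, adapting Popa's center-of-mass argument from \cite[Corollary 2.3]{Pop03} to the bimodule setting.

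Assume (ii) fails. The standard directed-set unpacking, indexed by finite subsets of $H$, finitely generated right $B$-submodules of $H$ and $\eps>0$, yields vectors $\xi_1,\dots,\xi_n\in H$, a finitely generated right Hilbert $B$-submodule $K_0\subset H$, and $\eps>0$ such that
\[
\sum_{i=1}^{n}\|P_{K_0}(u\xi_i)\|^2 \geq \eps \qquad\text{for every } u\in\cU(A).
\]
Passing to the diagonal $A$-$B$-bimodule $\cH:=H^{\oplus n}$, the vector $\xi:=(\xi_1,\dots,\xi_n)\in\cH$ and the orthogonal projection $Q:=P_{K_0}^{\oplus n}$ onto $K_0^{\oplus n}$, this becomes $\langle Qu\xi,u\xi\rangle\geq\eps$ for every $u\in\cU(A)$.

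The key tool is the canonical faithful normal semifinite trace $\Tr_B$ on the commutant $(R_B(\cH))'$ of the right $B$-action: under this trace, the orthogonal projection onto a closed right $B$-submodule of $\cH$ has finite trace precisely when the submodule is finitely generated as a right Hilbert $B$-module, and for each right-$B$-bounded vector $\eta\in\cH$ the rank-one operator $\eta\otimes_B\eta^*:\zeta\mapsto\eta\langle\eta,\zeta\rangle_B$ is a positive element of $(R_B(\cH))'$ with $\Tr_B(\eta\otimes_B\eta^*)=\|\eta\|^2$. In particular $\Tr_B(Q)<\infty$. After approximating each $\xi_i$ in norm by right-$B$-bounded vectors (which are dense in $H$), I form the weakly closed convex hull
\[
C := \overline{\operatorname{conv}}^{\,w}\bigl\{(u\xi)\otimes_B(u\xi)^* : u\in\cU(A)\bigr\}
\]
inside the Hilbert space of $\Tr_B$-Hilbert-Schmidt operators on $\cH$. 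The set $C$ is bounded, convex and invariant under the isometric action $T\mapsto L_uTL_u^*$, so by the unique-element-of-minimal-norm argument its minimizer $T_0$ satisfies $L_uT_0L_u^*=T_0$ for every $u\in\cU(A)$, i.e.\ $T_0\in L_A'\cap R_B'$. The linear functional $T\mapsto\Tr_B(QT)$ is weakly continuous on $C$ and equals $\|Qu\xi\|^2\geq\eps$ on the generators, so $\Tr_B(QT_0)\geq\eps$ and in particular $T_0\neq 0$.

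Since $T_0\geq 0$ lies in $(R_B(\cH))'$ with $\Tr_B(T_0)<\infty$, any nonzero spectral projection $p:=1_{[\lambda,\infty)}(T_0)$ for $\lambda>0$ small enough is a nonzero projection in $L_A'\cap R_B'$ of finite $\Tr_B$-trace; its range is therefore a nonzero closed $A$-$B$-subbimodule of $\cH$ that is finitely generated as a right Hilbert $B$-module. Since each coordinate projection $\pi_i:\cH\to H$ belongs to $L_A'\cap R_B'$, at least one of them restricts nontrivially to this subbimodule and produces a nonzero closed $A$-$B$-subbimodule of $H$ of the same type, contradicting (i). The main technical obstacle is to set up correctly the right-$B$-commutant $(R_B(\cH))'$ with its canonical semifinite trace $\Tr_B$ and to identify finite-$\Tr_B$-trace projections with finitely generated right Hilbert $B$-submodules; once this standard Hilbert-module framework is in place, the rest is a verbatim transcription of Popa's argument, with orthogonal projections onto finitely generated right $B$-submodules of $H$ playing the role of the conditional expectation $E_B$.
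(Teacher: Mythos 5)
Your overall strategy coincides with the paper's: both adapt Popa's center-of-mass argument to the bimodule setting, using the canonical semifinite trace on the commutant $\cM$ of the right $B$-action, taking the element of minimal $\|\cdot\|_2$ in the weakly closed convex hull of unitary conjugates, and extracting a spectral projection. However, your argument contains a genuine error in the ``key tool'' you state, and this is precisely the point the paper handles with an extra step that you omit.

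You assert that a projection $p$ in the right-$B$-commutant has finite $\Tr_B$-trace \emph{precisely when} its range is finitely generated as a right Hilbert $B$-module. The forward direction (finitely generated $\Rightarrow$ finite trace) is true and is what you need to conclude $\Tr_B(Q)<\infty$. But the converse direction, which you use to conclude that the range of the spectral projection $p=1_{[\lambda,\infty)}(T_0)$ is finitely generated, is false when $B$ is not a factor. Concretely, for $B=L^\infty([0,1])$ with Lebesgue trace, the projection $p=\sum_{k\geq 1} e_{kk}\ot \chi_{[0,2^{-k}]}\in B(\ell^2(\N))\ovt B$ has $(\Tr\ot\tau)(p)=1<\infty$, yet its center-valued trace is unbounded, so the right $B$-module $p(\ell^2(\N)\ot L^2(B))$ is not isomorphic to any $q(\C^n\ot L^2(B))$, i.e.\ it is not finitely generated. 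So finite $\Tr_B$-trace of $p$ only gives $pH\cong p'(\ell^2(\N)\ot L^2(B))$ with $(\Tr\ot\tau)(p')<\infty$. To produce an actual finitely generated subbimodule, one must cut by a suitable nonzero central projection $z_0\in\cZ(B)$ on which the center-valued trace of $p'$ is bounded; since the right action of $\cZ(B)$ commutes with $A$ and with $\cM$, the corner $\rho(z_0)(pH)$ is still a closed $A$-$B$-subbimodule and is now finitely generated. Without this cutting step the contradiction with (i) is not obtained.

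A secondary, more cosmetic issue: you work on $\cH=H^{\oplus n}$ and at the end try to descend via a coordinate projection $\pi_i$, saying it ``produces a nonzero closed $A$-$B$-subbimodule of $H$ of the same type.'' The image $\pi_i(p\cH)$ need not be closed. One should instead take the polar decomposition of the $A$-$B$-bimodular map $\pi_i|_{p\cH}$ and use the range of the resulting partial isometry, which is a closed $A$-$B$-subbimodule of $H$ and still finitely generated. The paper sidesteps this entirely by forming the positive operator $T=\sum_t L_{\xi_t}L_{\xi_t}^*$ directly in $B(H)$, so the spectral projection already lives on $H$ and no descent is needed.
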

\begin{proof}
(ii) $\Rightarrow$ (i). Let $K \subset H$ be a closed $A$-$B$-subbimodule that is finitely generated as a right Hilbert $B$-module. Choose an arbitrary $\xi \in H$. Since $K$ is a left $A$-module, we have that $P_K(u_i \xi) = u_i P_K(\xi)$. Since every $u_i$ is a unitary, it follows that
$$\|P_K(\xi)\| = \lim_{i \in I} \|u_i P_K(\xi)\| = \lim_{i \in I} \|P_K(u_i \xi)\| = 0 \; .$$
Since this holds for every $\xi \in H$, we conclude that $K = \{0\}$.

(i) $\Rightarrow$ (ii). Recall that a vector $\xi \in H$ is called right bounded if there exists a $C > 0$ such that $\|\xi b\| \leq C \|b\|_2$ for all $b \in B$. For every right bounded $\xi \in H$, we denote by $L_\xi$ the bounded, right $B$-linear operator $L_\xi : L^2(B) \to H : L_\xi(b) = \xi b$, for all $b \in B$. We denote by $\cM \subset B(H)$ the commutant of the right $B$-action on $H$. Note that for all right bounded $\xi,\eta \in H$, the operator $L_\eta L_\xi^*$ belongs to $\cM$, while the operator $L_\xi^* L_\eta$ belongs to $B$, viewed as left multiplication operators on $L^2(B)$. Recall that $\cM$ has a unique normal faithful semifinite trace $\Tr$ satisfying $\Tr(L_\eta L_\xi^*) = \tau(L_\xi^* L_\eta)$ for all right bounded $\xi,\eta \in H$.

We claim that there exists a net of unitaries $(u_i)_{i \in I}$ in $A$ such that $\lim_{i \in I} \|L_\xi^* u_i L_\eta\|_2 = 0$ for all right bounded vectors $\xi,\eta \in H$. Assume the contrary. We then find $\eps > 0$ and right bounded vectors $\xi_1,\ldots,\xi_k \in H$ such that
\begin{equation}\label{eq.big-eps}
\sum_{s,t=1}^k \|L_{\xi_s}^* u L_{\xi_t}\|_2^2 \geq \eps \quad\text{for all $u \in \cU(A)$.}
\end{equation}
Define $T \in \cM^+$ by $T = \sum_{t=1}^k L_{\xi_t} L_{\xi_t}^*$. Then \eqref{eq.big-eps} says that
\begin{equation}\label{eq.T-eps}
\Tr(T u T u^*) \geq \eps \quad\text{for all $u \in \cU(A)$.}
\end{equation}
Define $S \in \cM^+$ as the unique element of minimal $\|\cdot\|_{2,\Tr}$ in the weakly closed convex hull of $\{u T u^* \mid u \in \cU(A)\}$ in $\cM$. By \eqref{eq.T-eps}, we get that $\Tr(T S) \geq \eps$, so that $S \neq 0$. By uniqueness, we find that $S = u S u^*$ for all $u \in \cU(A)$. Choose $\delta > 0$ such that the spectral projection $P = 1_{[\delta,+\infty)}(S)$ of $S$ is nonzero. Then $P$ is a nonzero projection in $\cM$ that commutes with $A$. Since $S \in \cM^+$ and $\Tr(S) < +\infty$, also $\Tr(P) < +\infty$.

Denote by $K \subset H$ the image of $P$. Then $K$ is a right Hilbert $B$-submodule of $H$. Since $\Tr(P) < +\infty$, this right Hilbert $B$-module $K$ has finite $B$-dimension w.r.t.\ $\tau$. This means that $K \cong p(\ell^2(\N) \ot L^2(B))$ as right Hilbert $B$-module for some projection $p \in B(\ell^2(\N)) \ovt B$ with $(\Tr \ot \tau)(p) < +\infty$. Take a central projection $z_0 \in \cZ(B)$ such that $(1 \ot z_0) p \neq 0$ and such that $(1 \ot z_0) p$ has bounded center-valued trace. Denoting by $\rho : \cZ(B) \to B(H)$ the $*$-homomorphism given by the right action of $\cZ(B)$, it follows that $\rho(z_0)K$ is a nonzero closed $A$-$B$-subbimodule of $H$ that is finitely generated as a right Hilbert $B$-module. This contradicts (i), so that the claim is proven.

We conclude by proving that the net $(u_i)_{i \in I}$ given by the claim satisfies (ii). Fix an arbitrary finitely generated right Hilbert $B$-submodule $K \subset H$. Take a projection $p \in M_n(\C) \ot B$ and a right $B$-linear unitary $V : p(\C^n \ot L^2(B)) \to K$. Define $\eta_s \in K$ by $\eta_s = V(p(e_s \ot 1))$. Then every $\eta_s$ is a right bounded vector with $\|L_{\eta_s}\| \leq 1$, and $P_K = \sum_{s=1}^n L_{\eta_s} L_{\eta_s}^*$. So when $\xi_0 \in H$ is any right bounded vector, we find that
$$\|P_K(u_i \xi_0)\| \leq \sum_{s =1}^n \|L_{\eta_s} L_{\eta_s}^* (u_i \xi_0)\| \leq \sum_{s =1}^n \|L_{\eta_s}^* (u_i \xi_0)\| = \sum_{s=1}^n \|L_{\eta_s}^* u_i L_{\xi_0}\|_2$$
for all $i \in I$. The right hand side tends to zero for $i \in I$, so that $\lim_{i \in I} \|P_K(u_i \xi_0)\| = 0$. Since the right bounded vectors are dense in $H$, it follows that $\lim_{i \in I} \|P_K(u_i \xi)\| = 0$ for all $\xi \in H$, so that (ii) is proven.
\end{proof}

The following is another variant of \cite[Corollary 2.3]{Pop03} and of the preceding Lemma \ref{lem.Popa-intertwining-for-bimodules}. Again we give a detailed proof for the convenience of the reader.

\begin{lemma}\label{lem.intertwining-weak-normalizer}
Let $(M,\tau)$ be a tracial von Neumann algebra with von Neumann subalgebras $A,B \subset M$. Denote by $E_M(A,B)$ the set of all $x \in M$ for which there exist $n \in \N$ and $x_1,\ldots,x_n \in M$ with $A x \subset x_1 B + \cdots + x_n B$. Note that $E_M(A,B)$ is an $A$-$B$-subbimodule of $M$.
\begin{enumlist}
\item\label{lem.intertwining-weak-normalizer.1} If $K \subset L^2(M)$ is a closed $A$-$B$-subbimodule that is finitely generated as a right Hilbert $B$-module, then $K$ is contained in the closure of $E_M(A,B)$ in $L^2(M)$.
\item\label{lem.intertwining-weak-normalizer.2} There exists a net of unitaries $(u_i)_{i \in I}$ in $\cU(A)$ such that $\lim_{i \in I} \|P_K(u_i \xi)\|_2=0$ for every $\xi \in L^2(M) \cap E_M(A,B)^\perp$ and every finitely generated right Hilbert $B$-submodule $K \subset L^2(M)$.
\end{enumlist}
\end{lemma}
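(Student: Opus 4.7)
The plan is first to observe that $E_M(A,B)$ is indeed an $A$-$B$-subbimodule of $M$ (closure under addition, left $A$-multiplication and right $B$-multiplication is immediate from the definition), then prove (i) directly from the structure of finitely generated right Hilbert $B$-modules, and finally deduce (ii) by applying Lemma~\ref{lem.Popa-intertwining-for-bimodules} to the $A$-$B$-bimodule $H := L^2(M) \cap E_M(A,B)^\perp$.

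For \ref{lem.intertwining-weak-normalizer.1}, I would realize a closed $A$-$B$-subbimodule $K \subset L^2(M)$ that is finitely generated as a right Hilbert $B$-module as $K \cong p(\C^n \ot L^2(B))$ for some projection $p \in M_n(\C) \ot B$ via a right $B$-linear unitary $V$, and set $\eta_s := V(p(e_s \ot 1))$. These $\eta_s$ are right bounded vectors in $L^2(M)$, hence elements of $M$, and every right bounded vector in $K$ has the form $\sum_s \eta_s b_s$ with $b_s \in B$. Applied to the right bounded vector $a\eta_s \in K$ for $a \in A$, this gives $a\eta_s = \sum_t \eta_t b_{ts}(a)$ for some $b_{ts}(a) \in B$, so $A\eta_s \subset \eta_1 B + \cdots + \eta_n B$ and hence $\eta_s \in E_M(A,B)$. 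Density of right bounded vectors in $K$, together with the fact that $E_M(A,B)$ is a right $B$-module, yields $K \subset \overline{E_M(A,B)}^{\,L^2(M)}$.

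For \ref{lem.intertwining-weak-normalizer.2}, I note that $H$ is automatically $A$-$B$-invariant. By \ref{lem.intertwining-weak-normalizer.1}, any closed $A$-$B$-subbimodule $K' \subset H$ finitely generated as a right Hilbert $B$-module lies in $\overline{E_M(A,B)} \cap E_M(A,B)^\perp = \{0\}$, which verifies hypothesis (i) of Lemma~\ref{lem.Popa-intertwining-for-bimodules}. That lemma therefore produces a net $(u_i)_{i \in I}$ in $\cU(A)$ with $\|P_{K'}(u_i \xi)\|_2 \to 0$ for every $\xi \in H$ and every finitely generated right $B$-submodule $K' \subset H$.

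The main obstacle is then to upgrade this from $K' \subset H$ to arbitrary finitely generated right $B$-submodules $K \subset L^2(M)$, which is what the statement actually requires. Writing $K$ as above with right bounded generators $\eta_s$ and $P_K = \sum_s L_{\eta_s} L_{\eta_s}^*$, a direct computation gives the Parseval-type identity $\|P_K(u_i \xi)\|^2 = \sum_s \|L_{\eta_s}^* u_i \xi\|_2^2$. Since $u_i \xi \in H$ and $H$ is right $B$-invariant, the identity $\langle u_i \xi, \eta_s b\rangle = \langle u_i \xi, P_H(\eta_s) b\rangle$ shows that $L_{\eta_s}^* u_i \xi = L_{P_H(\eta_s)}^* u_i \xi$. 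Setting $K' := \overline{\sum_s P_H(\eta_s) B} \subset H$, which is a finitely generated right $B$-submodule of $H$, and using the operator inequality $L_{P_H(\eta_s)} L_{P_H(\eta_s)}^* \leq \|L_{P_H(\eta_s)}\|^2 P_{K'}$, I obtain $\|P_K(u_i \xi)\|^2 \leq \bigl(\sum_s \|L_{P_H(\eta_s)}\|^2\bigr) \|P_{K'}(u_i \xi)\|^2$, and the right-hand side tends to $0$ by the previous paragraph.
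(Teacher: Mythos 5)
Your argument for part (ii) is correct and follows the paper's route: apply Lemma~\ref{lem.Popa-intertwining-for-bimodules} to $H := L^2(M) \cap E_M(A,B)^\perp$, then pass from a general finitely generated $K$ to $K' := \overline{\sum_s P_H(\eta_s)B}$, which coincides with $\overline{P_H(K)}$, the ``cokernel'' the paper uses; the paper exploits $P_K(u_i\xi) = P_K(P_{K'}(u_i\xi))$ for $\xi \in H$ to get the cleaner bound $\|P_K(u_i\xi)\|_2 \leq \|P_{K'}(u_i\xi)\|_2$, but your estimate with $\sum_s\|L_{P_H(\eta_s)}\|^2$ also works.

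There is a genuine gap in part (i), in the sentence asserting that the generators $\eta_s = V(p(e_s\ot 1))$ are ``right bounded vectors in $L^2(M)$, hence elements of $M$.'' Right-boundedness here means $\|\eta_s b\|_2 \leq C\|b\|_2$ for all $b \in B$, which does \emph{not} force $\eta_s \in M$; only right-boundedness over all of $M$ would. For instance, in $M = D \ovt B$ with $\xi \in L^2(D) \setminus D$, the vector $\xi \ot 1 \in L^2(M)$ is right $B$-bounded but not in $M$. Since $E_M(A,B) \subset M$ by definition, the relation $A\eta_s \subset \eta_1 B + \cdots + \eta_n B$ holding in $L^2(M)$ does not yet give $\eta_s \in E_M(A,B)$. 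The missing idea, which is the crux of the paper's proof of (i), is a truncation step: form $\eta = \sum_s e_s^* \ot \eta_s$, note that $\eta\eta^* \in L^1(M)$ commutes with $A$ (since $a\eta = \eta\vphi(a)$), and take spectral projections $q_k \in A' \cap M$ of $\eta\eta^*$ with $q_k \to 1$ strongly and $q_k\eta\eta^*$ bounded; then $q_k\eta \in (\C^n)^* \ot M$ because $(q_k\eta)(q_k\eta)^*$ is bounded, the intertwining relation $a(q_k\eta) = (q_k\eta)\vphi(a)$ persists, and so $q_k\eta \in (\C^n)^* \ot E_M(A,B)$. Letting $k \to \infty$ gives $\eta \in (\C^n)^* \ot \overline{E_M(A,B)}$ and hence $K \subset \overline{E_M(A,B)}$. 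Without this truncation, the step ``hence $\eta_s \in E_M(A,B)$'' in your proof of (i) is unjustified.
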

\begin{proof}
(i) Take a bimodule $K$ as in (i). Choose $n \in \N$, a projection $p \in M_n(\C) \ot B$ and a unital normal $*$-homomorphism $\vphi : A \to p(M_n(\C) \ot B)p$ such that $\bim{A}{K}{B} \cong \bim{\vphi(A)}{p(\C^n \ot L^2(B))}{B}$. Choose an $A$-$B$-bimodular unitary $V : p(\C^n \ot L^2(B)) \to K$. Define $\eta \in (\C^n)^* \ot K$ by $\eta = \sum_{s=1}^n e_s^* \ot V(p(e_s \ot 1))$. Note that for all $a \in A$, because $V$ is $A$-$B$-bimodular,
\begin{align*}
a \eta &= \sum_{s=1}^n e_s^* \ot V(\vphi(a)(e_s \ot 1)) = \sum_{s,t=1}^n e_s^* \ot V(p(e_t \ot (\vphi(a))_{ts})) \\
&= \sum_{s,t=1}^n e_s^* \ot \bigl(V(p(e_t \ot 1)) \cdot (\vphi(a))_{ts}\bigr) = \eta \vphi(a) \; .
\end{align*}
So, $a \eta = \eta \vphi(a)$ for all $a \in A$. In particular, $\eta = \eta p$.

Since $K \subset L^2(M)$, we can view $\eta \eta^*$ as an element of $L^1(M)$. Since $a \eta = \eta \vphi(a)$ for all $a \in A$, we get that $\eta \eta^*$ commutes with $A$. We can then choose a sequence of spectral projections $q_k$ of $\eta \eta^*$ such that $q_k \to 1$ strongly and such that $q_k \eta \eta^*$ is bounded for every $k$. Note that $q_k \in A' \cap M$ for all $k$. Write $\eta_k = q_k \eta$ and fix $k$. We have that $\eta_k \in (\C^n)^* \ot M$ and $a \eta_k = \eta_k \vphi(a)$ for all $a \in A$. So, $\eta_k \in (\C^n)^* \ot E_M(A,B)$.

Denote by $H \subset L^2(M)$ the closure of $E_M(A,B)$ inside $L^2(M)$. Since $q_k \to 1$ strongly, we conclude that $\eta \in (\C^n)^* \ot H$. By definition of $\eta$, it follows that the image of $V$ sits inside $H$. So, $K \subset H$ and (i) is proven.

(ii) By (i) and Lemma \ref{lem.Popa-intertwining-for-bimodules}, take a net of unitaries $(u_i)_{i \in I}$ in $\cU(A)$ such that $\lim_{i \in I} \|P_{K'}(u_i \xi)\|_2 = 0$ for every $\xi \in L^2(M) \cap E_M(A,B)^\perp$ and every closed right $B$-submodule $K' \subset L^2(M) \cap E_M(A,B)^\perp$ that is finitely generated as a right Hilbert $B$-module. Let $K \subset L^2(M)$ be a finitely generated right Hilbert $B$-module. Define $K' \subset L^2(M) \cap E_M(A,B)^\perp$ as the cokernel of the restriction of $P_K$ to $L^2(M) \cap E_M(A,B)^\perp$. As a right Hilbert $B$-module, $K'$ is isomorphic with a Hilbert $B$-submodule of $K$, and thus finitely generated. Since $P_K(u_i \xi) = P_K(P_{K'}(u_i \xi))$, we get that $\limsup_{i \in I} \|P_{K}(u_i \xi)\|_2 \leq \limsup_{i \in I} \|P_{K'}(u_i \xi)\|_2 = 0$.
\end{proof}

\subsection{Graph products of tracial von Neumann algebras}\label{sec.graph-products}

In \cite{CF14}, the concept of a \emph{graph product} of von Neumann algebras was defined. The initial data consists of a simple graph $\Gamma = (S,E)$ and a labeling of the vertices by tracial von Neumann algebras $(B_s,\tau_s)_{s \in S}$. Recall here that we call $(S,E)$ a simple graph when $S$ is a set and $E \subset S \times S$ a subset such that the following holds: for all $(s,t) \in E$, also $(t,s) \in E$; and for all $s \in S$, we have $(s,s) \not\in E$.

In analogy with the classical construction of a graph product of groups, in \cite{CF14}, a canonical tracial von Neumann algebra $(B_\Gamma,\tau)$ is defined such that $M$ is generated by the subalgebras $B_s \subset B_\Gamma$, $s \in S$, and such that for all $(s,t) \in E$, we have that $B_s$ commutes with $B_t$. To rigorously define $(B_\Gamma,\tau)$, we however need to be more careful, proceeding as follows.

Fix a simple graph $\Gamma = (S,E)$ and fix tracial von Neumann algebras $(B_s,\tau_s)$, $s \in S$. For $s,t \in S$, we write $s \sim t$ iff $(s,t) \in E$. Recall that the right-angled Coxeter group $C_\Gamma$ associated to such a simple graph $\Gamma$ is the group with generating set $S$ and relations $s^2 = e$ for all $s \in S$ and $s t = t s$ whenever $s \sim t$. As a group, $C_\Gamma$ is the graph product given by $\Gamma$ and labeling all the vertices with the group $\Z/2\Z$.

Recall from \cite[Theorem 3.4.2]{Dav08} that the word problem for $C_\Gamma$ has the following easy solution. One says that a finite word $w$ in the alphabet $S$ is \emph{reduced} if $w$ does not contain a subword of the form $s v s$ with $s \in S$ and $s \sim t$ for every letter $t \in v$. By convention, also the empty word $e$ is called reduced. One checks that a word of the form $v_1 s t v_2$, with $v_1$, $v_2$ words and $s,t \in S$ with $s \sim t$, is reduced if and only if $v_1 t s v_2$ is reduced. One denotes by $\Wred$ the set of reduced words and denotes by $\approx$ the smallest equivalence relation on $\Wred$ generated by $v_1 s t v_2 \approx v_1 t s v_2$. One then proves that two reduced words $w_1$, $w_2$ represent the same element of $C_\Gamma$ if and only if $w_1 \approx w_2$.

From this solution of the word problem, it also follows that for every subset $T \subset S$, the subgroup of $C_\Gamma$ generated by $T$ precisely is the right-angled Coxeter group associated with the subgraph $(T,E \cap (T \times T))$.

For every $s \in S$, define the Hilbert space $H_s = L^2(B_s,\tau_s)$ and the closed subspace $H_s^\circ = L^2(B_s) \ominus \C 1$. For every reduced word $w = s_1 \cdots s_k$, define
$$H_w^\circ = H_{s_1}^\circ \ot \cdots \ot H_{s_k}^\circ \; .$$
When two reduced words $w = s_1 \cdots s_k$ and $w'=s'_1 \cdots s'_{k'}$ represent the same element of $C_\Gamma$, we have that $w \approx w'$, so that in particular $k = k'$, the letters appearing in $w$ are the same as the letters appearing in $w'$ and these letters all appear the same number of times. There is thus a unique bijection $\si : \{1,\ldots,k\} \to \{1,\ldots,k'\}$ such that $s'_{\si(i)} = s_i$ for all $i \in \{1,\ldots,k\}$ and such that $\si(i) < \si(j)$ whenever $i \leq j$ and $s_i = s_j$. This bijection induces a unitary $H_w^\circ \to H_{w'}^\circ$ that permutes the tensor factors.

Using these permutation unitaries to identify $H_w^\circ \cong H_{w'}^\circ$ when $w$, $w'$ are reduced words that represent the same element of $C_\Gamma$, one defines $H_g^\circ = H_w^\circ$ for every $g \in C_\Gamma$ represented by a reduced word $w$.

One denotes by $|g|$ the word length of an element $g \in C_\Gamma$ w.r.t.\ the generating set $S$. By the discussion above, $|g|$ equals the number of letters in any representation $g = w$ as a reduced word $w$. One then gets a canonical identification $H_{gh}^\circ = H_g^\circ \ot H_h^\circ$ whenever $g,h \in C_\Gamma$ and $|gh| = |g| + |h|$. One defines
$$H = \C \Omega \oplus \bigoplus_{g \in C_\Gamma \setminus \{e\}} H_g^\circ \; .$$
For every $s \in S$, define $L_s = \{g \in C_\Gamma \mid |sg| = 1+|g|\}$. Then $C_\Gamma = s L_s \sqcup L_s$. Writing $K_s = \C \oplus \bigoplus_{g \in L_s \setminus \{e\}} H_g^\circ$, we get the canonical identification
$$H = \C \Omega \oplus (H_s^\circ \ot K_s) \oplus (K_s \ominus \C) = (H_s^\circ \ot K_s) \oplus K_s = H_s \ot K_s \; .$$
Using the representation of $B_s$ on $H_s = L^2(B_s)$ by left multiplication, one represents each $B_s$ on the Hilbert space $H$. One defines $B_\Gamma \subset B(H)$ as the von Neumann algebra generated by these representations of $B_s$, $s \in S$. Then, $\Omega$ is a cyclic and separating vector for $B_\Gamma$ that defines a faithful normal tracial state $\tau$ on $B_\Gamma$.

For every $s \in S$, denote $B_s^\circ = \{b \in B_s \mid \tau_s(b)=0\}$. For every $g \in C_\Gamma \setminus \{e\}$, represented by a reduced word $g = s_1 \cdots s_n$ in the alphabet $S$, we define $B_g^\circ \subset B_\Gamma$ as the linear subspace spanned by $B_{s_1}^\circ \cdots B_{s_n}^\circ$. One checks that this definition of $B_g^\circ$ does not depend on the choice of the reduced expression for $g$. By construction, the closure of $B_g^\circ \Omega$ equals $H_g^\circ$. So if $g,h \in C_\Gamma \setminus \{e\}$ are distinct elements, $B_g^\circ \perp B_h^\circ$ in $L^2(B_\Gamma)$.

If $S_0 \subset S$, we denote by $B_{S_0} \subset B_\Gamma$ the von Neumann subalgebra generated by $B_s$, $s \in S_0$. We similarly define the subgroup $C_{S_0} \subset C_\Gamma$ generated by $S_0$. Then $L^2(B_{S_0})$ is densely spanned by $1$ and $B_g^\circ$, $g \in C_{S_0} \setminus \{e\}$. It follows that for $S_1,S_2 \subset S$ and $S_0 = S_1 \cap S_2$, we have
$$E_{B_{S_1}} \circ E_{B_{S_2}} = E_{B_{S_0}} = E_{B_{S_2}} \circ E_{B_{S_1}} \; ,$$
which means that $B_{S_1}$ and $B_{S_2}$ are in a commuting square position inside $B_\Gamma$.

\subsection{\boldmath The $q$-Gaussian II$_1$ factors}\label{sec.q-Gaussian}

Recall from \cite{BS90} the construction of the tracial von Neumann algebras $(M_q(H_\R),\tau)$ associated with a real Hilbert space $H_\R$ and a parameter $-1 \leq q \leq 1$. When $q=-1$, this corresponds to the CAR-algebra of $H_\R$, and thus the hyperfinite II$_1$ factor if $H_\R$ is separable and infinite dimensional. When $q=0$, one recovers Voiculescu's free Gaussian functor, so that $M_0(H_\R) \cong L(\F_{\dim H_\R})$. When $q=1$, the algebra $M_q(H_\R)$ is commutative and corresponds to the CCR-algebra of $H_\R$. It is canonically isomorphic with $L^\infty(X,\mu)$, where $(X,\mu)$ is the Gaussian probability space associated with $H_\R$.

Fix $-1 \leq q \leq 1$. Denote by $H = H_\R + i H_\R$ the canonical complexification of $H_\R$, with complex conjugation operation $\xi \mapsto \overline{\xi}$. Fix $n \in \N$ and write $[n] = \{1,\ldots,n\}$. Denote by $S_n$ the group of permutations of $[n]$ and for every $\si \in S_n$, denote by $\inv \si = \#\{(i,j) \mid 1 \leq i < j \leq n \; , \; \si(i) > \si(j)\}$ the number of inversions of $\si$. We consider the $n$-fold tensor product $H^{\ot^n}$ and denote for $\si \in S_n$ by $\pi_\si$ the unitary that permutes the tensor factors by $\si$. By \cite[Proposition 1]{BS90}, the map $S_n \to \R : \si \mapsto q^{\inv \si}$ is positive definite, so that the operator $T_n \in B(H^{\ot^n})$ defined by $T_n = \sum_{\si \in S_n} q^{\inv \si} \pi_\si$ is positive. When $-1 < q < 1$, the operator $T_n$ is also invertible.

We then define the new Hilbert space $H^{\ot_q^n}$, formally by separation and completion of the scalar product $\langle \xi,\eta \rangle_q = \langle T_n \xi,\eta \rangle$ on $H^{\ot^n}$. When $-1 < q < 1$, the vector spaces $H^{\ot_q^n}$ and $H^{\ot^n}$ are equal, but have a different scalar product. When $q=-1$, we can rather identify $H^{\ot_{-1}^n}$ with the closed subspace of $H^{\ot^n}$ of antisymmetric vectors, while $H^{\ot_1^n}$ is identified with the closed subspace of $H^{\ot^n}$ of symmetric vectors, both with a rescaled scalar product.

The $q$-Fock space is then defined by $\cF_q(H) = \C \Om \oplus \bigoplus_{n=1}^\infty H^{\ot_q^n}$. By convention, $\C \Om = H^{\ot_q^0}$. For every $\xi \in H$ and $n \geq 0$, we consider the left creation operator $\ell_q(\xi) : H^{\ot_q^n} \to H^{\ot_q^{(n+1)}} : \ell_q(\xi)(\mu) = \xi \ot \mu$. W.r.t.\ the $q$-deformed scalar product, we have that
$$\ell_q(\xi)^*(\mu_1 \ot \cdots \ot \mu_n) = \sum_{k=1}^n q^{k-1} \langle \mu_k,\xi\rangle \, (\mu_1 \ot \cdots \widehat{\mu_k} \cdots \ot \mu_n) \; .$$
When $-1 \leq q < 1$, the operators $\ell_q(\xi)$ are bounded operators on $\cF_q(H)$. One writes $s_q(\xi) = \ell_q(\xi) + \ell_q(\overline{\xi})^*$ and defines $M_q(H_\R) := \{s_q(\xi) \mid \xi \in H_\R\}\dpr$. Then, the vector $\Om$ is cyclic and separating for $M_q(H_\R)$ and the corresponding vector state $\tau$ is a faithful normal tracial state on $M_q(H_\R)$.

When $q=1$, the operators $s_q(\xi)$, $\xi \in H_\R$, are unbounded and closable, with self-adjoint closure, defined on a common domain given by the algebraic direct sum of the subspaces $H^{\ot^n_{-1}}$. We can still define the abelian von Neumann algebra $M_1(H_\R)$ generated by the spectral projections of the closures of $s_q(\xi)$. Of course, there are more convenient ways of defining the Gaussian probability space $(X,\mu)$ of a real Hilbert space $H_\R$ such that $M_1(H_\R) = L^\infty(X,\mu)$.

Note the every orthogonal transformation $v \in \cO(H_\R)$ gives rise to a trace preserving automorphism $\al \in \Aut M_q(H_\R)$ satisfying $\al(s_q(\xi)) = s_q(v \xi)$ for all $\xi \in H_\R$.

For every closed real subspace $K_\R \subset H_\R$, we can naturally identify $M_q(K_\R)$ with a von Neumann subalgebra of $M_q(H_\R)$. In particular, we embed $M = M_q(H_\R) \subset \Mtil = M_q(H_\R \oplus H_\R)$ by identifying $H_\R$ with $H_\R \oplus \{0\}$. As in \cite{Avs11}, we then consider the malleable deformation in the sense of Popa of $M$, given by the automorphisms $\al_t \in \Aut \Mtil$ that are associated with the rotations $R_t \in \cO(H_\R \oplus H_\R)$ defined by $R_t(\xi \oplus \mu) = (\cos t \, \xi - \sin t \, \mu) \oplus (\sin t \, \xi + \cos t \, \mu)$.

For the rest of this section, assume that $-1 < q < 1$ and $\dim H_\R \geq 2$. By \cite[Theorem 2]{Nou03} and \cite[Corollary 1]{Ric04}, $M_q(H_\R)$ is a nonamenable II$_1$ factor. For every $n \in \N$ and $\xi$ in the algebraic tensor product $H^{\ot^n\alg}$, there is a unique element $W_q(\xi) \in M_q(H_\R)$ such that $W_q(\xi) \Om = \xi$. We denote by $\|\cdot\|_2$ the $2$-norm on $M_q(H_\R)$ given by the trace. By definition, $\|W_q(\xi)\|_2 = \|\xi\|_q$, where $\|\cdot\|_q$ denotes $q$-deformed norm of $H^{\ot^n_q}$, for every $\xi \in H^{\ot^n\alg}$.

The following result is mentioned as \cite[Lemma 4.4]{Avs11}, but the proof in \cite{Avs11} is incomplete. Using \cite[Proposition 4.9]{CIW19}, one can give a complete proof. This has been done in detail in \cite[Proposition 6.6]{Wil20}.

\begin{proposition}[{Lemma 4.4 in \cite{Avs11}, Proposition 4.9 in \cite{CIW19} and Proposition 6.6 in \cite{Wil20}}]\label{prop.good-estimate-q-Gaussian}
Let $H_\R$ and $K_\R$ be real Hilbert spaces and put $L_\R = H_\R \oplus K_\R$. Consider the von Neumann algebra $M_q(L_\R)$ with von Neumann subalgebra $M_q(H_\R)$. Assume that $-1 < q < 1$.

For every $1 \leq k \leq n$, define $F^n_k \subset L^{\ot^n}\alg$ as the linear span of the vectors of the form $\xi_1 \ot \cdots \ot \xi_n$ where all $\xi_i$ belong to either $H_\R \oplus \{0\}$, or $\{0\} \oplus K_\R$, with at least $k$ of the vectors $\xi_i$ belonging to $\{0\} \oplus K_\R$.

Let $1 \leq k_1 \leq n_1$ and $1 \leq k_2 \leq n_2$. Let $x \in W_q(F^{n_1}_{k_1})$ and $y \in W_q(F^{n_2}_{k_2})$. Define the normal completely bounded map
$$\Phi_{x,y} : M_q(H_\R) \to M_q(H_\R) : \Phi_{x,y}(a) = E_{M_q(H_\R)}(x a y) \; .$$
\begin{enumlist}
\item We have $\Phi_{x,y}\bigl(W_q(H^{\ot^n}\alg)\bigr) \subset \lspan \bigl\{ W_q(H^{\ot^r\alg}) \bigm| |r-n| \leq n_1+n_2\bigr\}$.
\item Put $k = \min\{k_1,k_2\}$. There exists a $C_{x,y} > 0$ such that $\|\Phi_{x,y}(a)\|_2 \leq C_{x,y} \, |q|^{kn} \, \|a\|_2$ for all $n \in \N$ and $a \in W_q(H^{\ot^n\alg})$.
\end{enumlist}
\end{proposition}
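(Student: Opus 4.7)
The plan is to prove both parts by an explicit computation on the $q$-Fock space, using the Wick product formula and tracking precisely where the $q$-decay originates.

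First, I would recall the multiplication formula for Wick operators. For tensors $\mu$ and $\nu$ in the algebraic tensor powers of $L := L_\R + i L_\R$, the product $W_q(\mu) W_q(\nu)$ expands as a sum of Wick operators $W_q(\mu \smile_p \nu)$, $p \geq 0$, where each $\mu \smile_p \nu$ is a $q$-weighted sum of partial contractions pairing $p$ of the last letters of $\mu$ with $p$ of the first letters of $\nu$, each weight being $q$ raised to the number of inversions of the pairing and each contraction using the inner product on $L$. Applying this formula twice to $xay = W_q(\xi) W_q(\eta) W_q(\zeta)$, with $\xi \in F^{n_1}_{k_1}$, $\zeta \in F^{n_2}_{k_2}$, and $a = W_q(\eta)$ for $\eta \in H^{\ot^n\alg}$, produces a linear combination of Wick operators $W_q(\mu)$ whose tensor degrees $r$ satisfy $|r - n| \leq n_1 + n_2$. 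Since $E_{M_q(H_\R)} \circ W_q$ annihilates any tensor that contains a letter from $\{0\} \oplus K_\R$, only the terms whose remaining tensor lives entirely in $H^{\ot^r\alg}$ survive, and this proves part (i).

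For part (ii), I would extract the factor $|q|^{kn}$ by exploiting the orthogonality of $H_\R \oplus \{0\}$ and $\{0\} \oplus K_\R$ inside $L_\R$. Because $\eta$ is built only from vectors in $H_\R \oplus \{0\}$, any contraction of a letter in $\{0\} \oplus K_\R$ against a letter of $\eta$ is zero. Consequently, for a term in the expansion of $E_{M_q(H_\R)}(xay)$ to be nonzero, each of the at least $k_1$ letters of $\xi$ that lie in $\{0\} \oplus K_\R$ has to be paired with a matching $K_\R$-letter of $\zeta$, and symmetrically for $\zeta$. In the iterated product formula, realising such a cross-pairing forces the relevant contraction to traverse all the intervening letters of $\eta$, generating an inversion count of order $n$ and thereby producing a $q$-weight of order $|q|^n$ per forced pairing. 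With at least $k = \min\{k_1,k_2\}$ such forced long-jump pairings, the total $q$-weight of every surviving term is bounded by a multiple of $|q|^{kn}$.

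It then remains to bound the $2$-norm of the residual Wick expression uniformly in $n$. Here I would invoke the operator-norm estimates on Wick products from \cite{Ric04}, which bound the operator norms of $x$ and $y$ by constants depending only on the Hilbert-space norms of $\xi$ and $\zeta$; combined with $\|W_q(\eta)\|_2 = \|\eta\|_q = \|a\|_2$ and the factor $|q|^{kn}$ extracted above, this produces the desired estimate with $C_{x,y}$ depending only on $\xi$ and $\zeta$. The main obstacle is the combinatorial bookkeeping of all partial contractions: one must verify that every nonzero term in the fully expanded expression for $E_{M_q(H_\R)}(xay)$ indeed carries at least $k$ long jumps across $\eta$, and that the accumulated $q$-weights from the various inversions do not conspire to cancel. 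This verification is the heart of \cite[Proposition 4.9]{CIW19} and is spelled out in full detail in \cite[Proposition 6.6]{Wil20}, which I would invoke to conclude.
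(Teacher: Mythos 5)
Your outline is correct and matches the argument in the literature: part (i) follows from the Wick product expansion plus the fact that $E_{M_q(H_\R)}$ acts on $\cF_q(L)$ as $\bigoplus_n \cP_H^{\ot^n}$, and part (ii) from the observation that every surviving contraction must pair each $K_\R$-letter of $x$ with a $K_\R$-letter of $y$, each such pair crossing the $\geq n-n_1-n_2$ uncontracted letters of $a$ and contributing a factor $|q|^{n-n_1-n_2}$, with the excess absorbed into $C_{x,y}$. Note that the paper itself gives no proof of this proposition: it explicitly records (just before the statement) that the proof in \cite{Avs11} is incomplete and that a complete proof is given in \cite[Proposition 6.6]{Wil20} using \cite[Proposition 4.9]{CIW19} --- so your deferral of the uniform-in-$n$ combinatorial bookkeeping to those same references puts you exactly where the paper stands, and is acceptable here.
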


In the context of Proposition \ref{prop.good-estimate-q-Gaussian}, write $M = M_q(H_\R)$ and $\Mtil = M_q(L_\R)$. Consider the $M$-$M$-bimodule $L^2(\Mtil \ominus M)$. Since the linear span of $F^n_1$, $n \geq 1$, is a dense subspace of $L^2(\Mtil \ominus M)$, Proposition \ref{prop.good-estimate-q-Gaussian} thus provides a dense subspace $V \subset L^2(\Mtil \ominus M)$ of left and right bounded vectors and, for all $\xi,\mu \in V$, a constant $C_{\xi,\mu} > 0$ such that the associated completely bounded map $\Phi_{\xi,\eta} : M \to M$ has the properties stated in the proposition.

This does not imply that the $M$-$M$-bimodule $L^2(\Mtil \ominus M)$ is weakly contained in the coarse $M$-$M$-bimodule. We nevertheless have the following result, as in \cite{Avs11}. Let $H_{0,\R} \subset H_\R$ be any finite dimensional real subspace. Take an integer $\kappa \geq 1$ such that $|q|^{2 \kappa} \dim H_0 < 1$. Define $M_0 = M_q(H_{0,\R})$. We claim that the $M_0$-$M_0$-bimodule $L^2(\Mtil \ominus M)^{\ot^\kappa_M}$ is coarse. Indeed, take coefficient maps $\Phi_{\xi,\eta} : M_0 \to M_0$ corresponding to $\xi = \xi_1 \ot_M \cdots \ot_M \xi_\kappa$ and $\eta = \eta_1 \ot_M \cdots \ot_M \eta_\kappa$ with $\xi_i,\eta_j \in V$. We find $C_{\xi,\eta}$ such that $\|\Phi_{\xi,\eta}(a)\|_2 \leq C_{\xi,\eta} \, |q|^{\kappa n} \, \|a\|_2$ for all $a \in W_q(H_0^{\ot^n})$. Since $H_0^{\ot^n}$ has dimension $(\dim H_0)^n$, it follows that each $\Phi_{\xi,\eta}$ defines a Hilbert-Schmidt operator on $L^2(M_0) = \cF_q(H_0)$. This implies the coarseness of the bimodule.

\subsection{Relative amenability}

We also make use of the notion of \emph{relative amenability} introduced in \cite[Section 2.2]{OP07}. For a tracial von Neumann algebra $(M,\tau)$ and von Neumann subalgebras $A \subset p(M_n(\C) \ot M)p$ and $B \subset M$, one says that $A$ is amenable relative to $B$ inside $M$ if the von Neumann algebra $p (M_n(\C) \ot \langle M,e_B \rangle) p$ given by Jones's basic construction admits a positive functional that is $A$-central and that is equal to $\Tr \ot \tau$ when restricted to $p(M_n(\C) \ot M)p$.

The following result provides an abstract variant of the essence of \cite[Proposition 2.7]{PV11} and \cite[Theorem 5.6]{BCC24}. The result gives a sufficient condition to deduce from amenability relative to $B_1$ and amenability relative to $B_2$, amenability relative to $B_1 \cap B_2$.

\begin{proposition}\label{prop.rel-amen-intersection}
Let $(M,\tau)$ be a tracial von Neumann algebra with von Neumann subalgebras $B_0,B_1,B_2 \subset M$. Let $\bim{B_0}{K}{M}$ be a Hilbert bimodule such that
$$\bim{M}{\bigl(L^2(M) \ot_{B_1} L^2(M) \ot_{B_2} L^2(M)\bigr)}{M} \quad\text{is weakly contained in}\quad \bim{M}{\bigl(L^2(M) \ot_{B_0} K\bigr)}{M} \; .$$
If $p \in M$ is a projection and $A \subset pMp$ a von Neumann subalgebra that is amenable relative to $B_1$ and amenable relative to $B_2$ inside $M$, then $A$ is amenable relative to $B_0$ inside $M$.
\end{proposition}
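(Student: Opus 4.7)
The plan is to reformulate everything in terms of Hilbert bimodules and Connes's weak containment, using the bimodule characterization of relative amenability due to Ozawa--Popa: for $A \subset pMp$, amenability of $A$ relative to a von Neumann subalgebra $B \subset M$ is equivalent to the statement that the $A$-$M$ bimodule $L^2(pM)$ is weakly contained in $p \cdot (L^2(M) \ot_B L^2(M))$. I would take this as the main tool and reduce the proposition to a chain of weak containments of $A$-$M$ bimodules.

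Starting from $L^2(pM) \prec p \cdot (L^2(M) \ot_{B_1} L^2(M))$, given by amenability relative to $B_1$, I tensor on the right over $M$ with the $M$-$M$ bimodule $L^2(M) \ot_{B_2} L^2(M)$. Since Connes fusion preserves weak containment of $A$-$M$ bimodules and since $L^2(pM) \ot_M (L^2(M) \ot_{B_2} L^2(M))$ canonically equals $p \cdot (L^2(M) \ot_{B_2} L^2(M))$, this yields
$$p \cdot (L^2(M) \ot_{B_2} L^2(M)) \prec p \cdot (L^2(M) \ot_{B_1} L^2(M) \ot_{B_2} L^2(M))$$
as $A$-$M$ bimodules. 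Chaining with amenability of $A$ relative to $B_2$, namely $L^2(pM) \prec p \cdot (L^2(M) \ot_{B_2} L^2(M))$, and then with the hypothesis of the proposition, I obtain
$$L^2(pM) \prec p \cdot (L^2(M) \ot_{B_1} L^2(M) \ot_{B_2} L^2(M)) \prec p \cdot (L^2(M) \ot_{B_0} K)$$
as $A$-$M$ bimodules.

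It then remains to deduce amenability of $A$ relative to $B_0$ from this last weak containment, which is the main technical point. The key observation is that for any $B_0$-$M$ bimodule $K$, the left $M$-action on $L^2(M) \ot_{B_0} K$ extends canonically to a normal representation of the Jones basic construction $\langle M, e_{B_0}\rangle$: its elements sit in $B(L^2(M))$ as operators commuting with the right $B_0$-action, and hence act on the first tensor leg of $L^2(M) \ot_{B_0} K$. A net of unit vectors $\xi_n \in p \cdot (L^2(M) \ot_{B_0} K)$ that is asymptotically $A$-central and satisfies $\langle x \xi_n, \xi_n\rangle \to \tau(x)$ for $x \in pMp$ then pulls back through this representation to an $A$-central positive form on $p \langle M, e_{B_0}\rangle p$ restricting to $\Tr \ot \tau$ on $pMp$, which is exactly amenability of $A$ relative to $B_0$ by the definition recalled just before the proposition.

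The main obstacle is precisely this last step: justifying that weak containment in the more flexible bimodule $L^2(M) \ot_{B_0} K$ (rather than the canonical $L^2(M) \ot_{B_0} L^2(M)$) still certifies amenability relative to $B_0$. Once the extension of the left action to $\langle M, e_{B_0}\rangle$ is in hand, together with the standard permanence of weak containment under Connes fusion used earlier, the rest of the proof is a clean bookkeeping of Hilbert bimodules.
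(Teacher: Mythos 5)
Your proof is correct and follows essentially the same strategy as the paper: a chain of weak containments of Hilbert $M$-$A$ (equivalently $A$-$M$) bimodules obtained by Connes fusion, using the two given relative amenabilities to pass through $L^2(M) \ot_{B_1} L^2(M) \ot_{B_2} L^2(Mp)$ and the hypothesis to land in $L^2(M) \ot_{B_0} Kp$, from which relative amenability to $B_0$ follows. The paper concludes by citing \cite[Proposition 2.4(5)]{PV11}, whose content you correctly sketch via the extension of the left $M$-action on $L^2(M) \ot_{B_0} K$ to a normal representation of $\langle M, e_{B_0}\rangle$; the minor difference in how you assemble the middle containment (tensoring on the right and then chaining by transitivity, versus inserting $L^2(M) \ot_{B_2} \,\cdot\,$ into the right leg directly) is cosmetic.
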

\begin{proof}
We have the following chain of weak containments of bimodules, resp.\ using that $A$ is amenable relative to $B_1$, relative to $B_2$ and the given weak containment.
\begin{align*}
\bim{M}{L^2(Mp)}{A} &\prec \bim{M}{\bigl( L^2(M) \ot_{B_1} L^2(Mp) \bigr)}{A} \prec \bim{M}{\bigl( L^2(M) \ot_{B_1} L^2(M) \ot_{B_2} L^2(Mp) \bigr)}{A}\\
&\prec \bim{M}{\bigl( L^2(M) \ot_{B_0} K p \bigr)}{A} \; .
\end{align*}
It then follows from \cite[Proposition 2.4(5)]{PV11} that $A$ is amenable relative to $B_0$.
\end{proof}

We also prove the following approximation result. It roughly says that relative amenability can already be detected by an increasing and generating net of von Neumann subalgebras. We will apply this technical result in two different settings: right away in Corollary \ref{cor.rel-amen-in-the-limit} and later on, in the proof of Theorem \ref{thm.stably-solid-and-q-Gaussian}.

\begin{proposition}\label{prop.rel-amen-approximation}
Let $(M,\tau)$ be a tracial von Neumann algebra and $p \in M$ a projection. Let $A \subset pMp$ and $B \subset M$ be von Neumann subalgebras. Let $(M_i)_{i \in I}$ be an increasing net of von Neumann subalgebras of $M$ such that $\bigcup_{i \in I} M_i$ is weakly dense in $M$ and $B \subset M_i$ for all $i \in I$.

Assume that for every $i \in I$, we have a Hilbert $M$-$M$-bimodule $\bim{M}{(\cH_i)}{M}$ with the following properties.
\begin{enumlist}
\item For every $i \in I$, there exists a net $(\xi_j)_{j \in J}$ of vectors in $\cH_i$ such that
$$\lim_{j \in J} \langle x \xi_j,\xi_j\rangle = \tau(pxp) = \lim_{j \in J} \langle \xi_j x , \xi_j \rangle \quad\text{and}\quad \lim_{j \in J} \|a \xi_j - \xi_j a \| = 0 \quad\text{for all $x \in M$, $a \in A$.}$$
\item For every $i \in I$, the bimodule $\bim{M_i}{(\cH_i)}{M_i}$ is weakly contained in $\bim{M_i}{L^2(M_i) \ot_B L^2(M_i)}{M_i}$.
\end{enumlist}
Then $A$ is amenable relative to $B$ inside $M$.
\end{proposition}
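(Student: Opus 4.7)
My plan is to verify the standard vector characterization of relative amenability: it suffices to show that for every finite $F \subset M$, every finite $F_A \subset \cU(A)$, and every $\eps > 0$, there exists a vector $\zeta \in L^2(M) \ot_B L^2(M)$ satisfying $|\langle x\zeta,\zeta\rangle - \tau(pxp)|, |\langle \zeta x,\zeta\rangle - \tau(pxp)| < \eps$ for $x \in F$ and $\|a\zeta - \zeta a\| < \eps$ for $a \in F_A$. Fix such data.

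I would proceed in three steps with a calibration parameter $\delta > 0$ to be fixed later. First, using SOT-density of $\bigcup_i M_i$ in $M$ together with Kaplansky's theorem, I would choose $i \in I$ and approximants $z' \in M_i$ for each $z \in F \cup F_A$ with $\|z'\|_\infty \leq \|z\|_\infty$ and $\|z - z'\|_2 < \delta$. Second, I would apply hypothesis (i) at this $i$ to select $\xi \in \cH_i$ from the given net such that $|\langle z\xi,\xi\rangle - \tau(pzp)|, |\langle \xi z,\xi\rangle - \tau(pzp)| < \delta$ for $z$ in an enlarged finite set $G \subset M$ (containing $F$, $F'$, $F_A$, $F_A'$ together with key products such as $(a-a')^*(a-a')$), and $\|a\xi - \xi a\| < \delta$ for $a \in F_A$. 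Third, I would invoke hypothesis (ii) to find a vector of the form $\zeta = \sum_{k=1}^N u_k \ot_B v_k \in L^2(M_i) \ot_B L^2(M_i) \subset L^2(M) \ot_B L^2(M)$ with $u_k, v_k \in M_i$ of operator norm at most $C$, such that $|\langle y_1 \zeta y_2,\zeta\rangle - \langle y_1 \xi y_2,\xi\rangle| < \delta$ for $y_1, y_2 \in (G \cap M_i) \cup \{1\}$.

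The crucial technical ingredient is the identity
\[
\langle x\zeta, \zeta\rangle = \tau(x\rho), \qquad \rho := \sum_{k,l} u_k E_B(v_k v_l^*) u_l^* \in M_i^+,
\]
obtained from $\tau(E_B(z)y) = \tau(z E_B(y))$, together with the bound $\|\rho\|_\infty \leq N^2 C^4$. This yields the continuity estimates $|\langle (x-x')\zeta, \zeta\rangle| \leq N^2 C^4\|x-x'\|_2$ and $\|(a-a')\zeta\|^2 \leq N^2 C^4\|a-a'\|_2^2$, which bridge the gap between $F', F_A' \subset M_i$ and $F, F_A \subset M$. Splitting $x = x' + (x-x')$, $a = a' + (a-a')$ and chaining these continuity estimates with the approximations from the three steps above via the triangle inequality, one arrives at $|\langle x\zeta, \zeta\rangle - \tau(pxp)| \leq (3 + N^2 C^4)\delta$ for $x \in F$, with analogous bounds for the right matrix coefficient and for $\|a\zeta - \zeta a\|$.

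The hard part will be the coupling between $\delta$, which controls both the hypothesis (i)/(ii) approximations and the $2$-norm approximation of $F \cup F_A$ by elements of $M_i$, and the constants $N, C$, which arise from the particular weak-containment vector $\zeta$ and thus depend on $\delta$ and on $G \cap M_i$. I would resolve this by a two-stage calibration: first fix a provisional $\delta_0 = \eps/10$ to apply Steps 2--3 and obtain definite $\xi, \zeta, N, C$; then, exploiting the canonical inclusion $L^2(M_i) \ot_B L^2(M_i) \subseteq L^2(M_{i'}) \ot_B L^2(M_{i'})$ for $i' \geq i$, enlarge $i$ to some $i' \geq i$ with $\mathrm{dist}_{\|\cdot\|_2}(F \cup F_A, M_{i'}) < \eps / (10(1 + N^2 C^4))$ (possible by density of $\bigcup_j M_j$) and refine the approximants $z'$ to lie in $M_{i'}$ within this finer tolerance. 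The unchanged $\zeta$ then satisfies the desired bound, provided the tracial property of $\xi$ also holds at the new approximants — which can be arranged by passing to a larger $j$ in hypothesis~(i), and re-adjusting $\zeta$ at a negligible cost via (ii). A standard diagonal argument along the directed set $I$ then assembles the required net.
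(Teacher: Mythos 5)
Your high-level strategy points in the right direction and shares the main skeleton with the paper's proof: pick $i$, invoke (i) at level $i$, invoke (ii) to move to $L^2(M_i)\ot_B L^2(M_i)$, and then bridge the remaining gap between $M_i$ and $M$. The identity
\[
\langle x\zeta,\zeta\rangle=\tau(x\rho),\qquad \rho=\sum_{k,l}u_kE_B(v_kv_l^*)u_l^*\in M_i^+
\]
is correct, and it is in fact the right observation. The problem is how you then use it.

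The genuine gap is in the ``two-stage calibration''. Once you fix $\delta_0$ and obtain $\zeta$, $N$, $C$, the weak containment (ii) at level $i$ only controls $\bigl|\langle y\zeta,\zeta\rangle-\langle y\xi,\xi\rangle\bigr|$ for $y$ in your finite set $G\cap M_i$. When you enlarge $i$ to $i'$ and refine the approximants to $z''\in M_{i'}$, the estimate $\langle z''\zeta,\zeta\rangle\approx\langle z''\xi,\xi\rangle$ is no longer available, because $z''\notin M_i$ and (ii) was only applied at level $i$. To restore it you would have to invoke (ii) at level $i'$, producing a new $\zeta'\in L^2(M_{i'})\ot_B L^2(M_{i'})$ with new constants $N',C'$ --- exactly the regress you were trying to escape. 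The phrase ``re-adjusting $\zeta$ at a negligible cost via (ii)'' conceals this circularity; there is no obvious way to make it negligible.

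What is missing is that your own formula already implies something much stronger that you never use. Since $\rho\in M_i$ and $E_{M_i}$ is $\tau$-preserving, $\tau(x\rho)=\tau(E_{M_i}(x)\rho)$, i.e.
\[
\langle x\zeta,\zeta\rangle=\langle E_{M_i}(x)\zeta,\zeta\rangle\quad\text{for every }x\in M.
\]
This removes the operator-norm bound $\|\rho\|_\infty\leq N^2C^4$ from the picture entirely: the gap between $M$ and $M_i$ is controlled solely by $\|x-E_{M_i}(x)\|_2$, which depends on $i$ but not on $N,C$. This is precisely the mechanism the paper uses. The paper's proof fixes $i$, produces (via (i) and (ii), and allowing a countable amplification $(L^2(M_i)\ot_B L^2(M_i))^{\oplus\infty}$) a positive functional $\Omega_i$ on $\langle M,e_B\rangle$ with $\Omega_i(x)=\tau(pE_{M_i}(x)p)$ for all $x\in M$ and $\|a\Omega_i-\Omega_i a\|\leq 4\|a-E_{M_i}(a)\|_2$ for $a\in A$, using the conditional-expectation identity above together with the soft estimate $\limsup_j\|E_{M_i}(a)\xi_j-\xi_j E_{M_i}(a)\|\leq 2\|a-E_{M_i}(a)\|_2$. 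It then takes a weak$^*$ limit point of $(\Omega_i)_{i\in I}$ at the very end: as $i$ increases, $E_{M_i}(x)\to x$ in $\|\cdot\|_2$, so the limit $\Omega$ restricts to $\tau(p\,\cdot\,p)$ on $M$ and is exactly $A$-central. This final weak$^*$ limit over $I$, absent from your proposal, is what cleanly decouples the choice of $i$ from the weak-containment approximation and dispenses with any quantitative coupling of $\delta$ with $N,C$.

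Two smaller remarks. First, weak containment in (ii) produces approximating vectors in a countable amplification $(L^2(M_i)\ot_B L^2(M_i))^{\oplus\infty}$ in general, not in $L^2(M_i)\ot_B L^2(M_i)$ itself, and not of the algebraic form $\sum_{k=1}^N u_k\ot_B v_k$ with norm-bounded $u_k,v_k$; this requires an extra density step if you insist on that form. Second, and more to the point, once you use $\langle x\zeta,\zeta\rangle=\langle E_{M_i}(x)\zeta,\zeta\rangle$ you no longer need that form at all, since the amplified vectors satisfy the same identity.
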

\begin{proof}
Fix $i \in I$. Choose a net $(\xi_j)_{j \in J}$ in $\cH_i$ as in (i). Then we get in particular that
\begin{align*}
& \lim_{j \in J} \langle x \xi_j,\xi_j\rangle = \tau(pxp) = \lim_{j \in J} \langle \xi_j x , \xi_j \rangle \quad\text{for all $x \in M_i$,} \\
& \limsup_{j \in J} \|E_{M_i}(a) \xi_j - \xi_j E_{M_i}(a) \| \leq 2 \|a - E_{M_i}(a)\|_2 \quad\text{for all $a \in A$.}
\end{align*}
Since by (ii), the bimodule $\bim{M_i}{(\cH_i)}{M_i}$ is weakly contained in $\bim{M_i}{L^2(M_i) \ot_B L^2(M_i)}{M_i}$, we can choose a net of vectors $(\eta_j)_{j \in J}$ in $(L^2(M_i) \ot_B L^2(M_i))^{\oplus \infty}$ such that also
\begin{align*}
& \lim_{j \in J} \langle x \eta_j,\eta_j\rangle = \tau(pxp) = \lim_{j \in J} \langle \eta_j x , \eta_j \rangle \quad\text{for all $x \in M_i$,} \\
& \limsup_{j \in J} \|E_{M_i}(a) \eta_j - \eta_j E_{M_i}(a) \| \leq 2 \|a - E_{M_i}(a)\|_2 \quad\text{for all $a \in A$.}
\end{align*}
We now view each $\eta_j$ as a vector in $(L^2(M) \ot_B L^2(M))^{\oplus \infty}$. Then $\langle x \eta_j , \eta_j \rangle = \langle E_{M_i}(x) \eta_j , \eta_j\rangle$ for all $x \in M$ and $j \in J$, and similarly for $\langle \eta_j x , \eta_j \rangle$. Define the normal positive functional $\om_i$ on $M$ by $\om_i(x) = \tau(p E_{M_i}(x) p)$. Note that $\om_i \leq \tau$. We get that
\begin{equation}\label{eq.prop.eta-j}
\begin{split}
& \lim_{j \in J} \langle x \eta_j,\eta_j\rangle = \om_i(x) = \lim_{j \in J} \langle \eta_j x , \eta_j \rangle \quad\text{for all $x \in M$,} \\
& \limsup_{j \in J} \|a \eta_j - \eta_j a \| \leq 4 \|a - E_{M_i}(a)\|_2 \quad\text{for all $a \in A$.}
\end{split}
\end{equation}
Also note that $T \mapsto (T \ot_B 1)^{\oplus \infty}$ provides a normal representation of $\langle M,e_B \rangle$ on $(L^2(M) \ot_B L^2(M))^{\oplus \infty}$. Choose a positive functional $\Om_i \in \langle M,e_B \rangle^*_+$ as a weak$^*$ limit point of the net of functionals $T \mapsto \langle (T \ot_B 1)^{\oplus \infty} \eta_j,\eta_j \rangle$. By \eqref{eq.prop.eta-j}, we get that $\Om_i(x) = \om_i(x)$ for all $x \in M$ and $\|a \Om_i - \Om_i a \| \leq 4 \|a - E_{M_i}(a)\|_2$ for all $a \in A$. Choosing a weak$^*$ limit point of the net $(\Om_i)_{i \in I}$, we find a positive functional $\Om \in \langle M,e_B \rangle^*_+$ such that $\Om(x) = \tau(pxp)$ for all $x \in M$ and $a \Om = \Om a$ for all $a \in A$. This means that $A$ is amenable relative to $B$ inside $M$.
\end{proof}

The following result is an immediate corollary of Proposition \ref{prop.rel-amen-approximation}. This corollary is essentially contained in the proof of \cite[Lemma 3.8]{Iso17}, but we use the occasion to correct a mistake occurring in that proof.

\begin{corollary}\label{cor.rel-amen-in-the-limit}
Let $(P,\tau)$ and $(B,\tau)$ be tracial von Neumann algebras, $p \in P \ovt B$ a projection and $Q \subset p (P \ovt B) p$ a von Neumann subalgebra.

Assume that $Q$ is amenable relative to $P \ovt B_i$, where $B_i \subset B$, $i \in I$, is a net of von Neumann subalgebras with the following property: there exists an increasing net $D_i \subset B$, $i \in I$, of von Neumann subalgebras such that $\bigcup_i D_i$ is weakly dense in $B$ and such that $\bim{B_i}{L^2(B)}{D_i}$ is weakly contained in a coarse $B_i$-$D_i$-bimodule.

Then, $Q$ is amenable relative to $P \ot 1$.
\end{corollary}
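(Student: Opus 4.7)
The plan is to apply Proposition \ref{prop.rel-amen-approximation} with ambient algebra $M := P \ovt B$, target subalgebra $P \ot 1 \subset M$ (playing the role of ``$B$'' in that proposition), $A := Q \subset pMp$, approximating net $M_i := P \ovt D_i$, and bimodules
$$\cH_i := L^2(P \ovt B) \ot_{P \ovt B_i} L^2(P \ovt B).$$
The hypothesis that $(D_i)$ is increasing with weakly dense union in $B$ gives the structural assumption of Proposition \ref{prop.rel-amen-approximation}, and $P \ot 1 \subset P \ovt D_i$ trivially. Condition (i) of that proposition is provided directly by the standard bimodular characterization of relative amenability: since $Q$ is amenable relative to $P \ovt B_i$ inside $M$, there is a net of vectors in $\cH_i$ (after amplification) that asymptotically implements $\tau(p\,\cdot\,p)$ on both sides and is asymptotically $Q$-central.

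The heart of the proof is verifying condition (ii), namely that as $M_i$-$M_i$-bimodules,
$$\bim{M_i}{\cH_i}{M_i} \;\prec\; \bim{M_i}{\bigl(L^2(M_i) \ot_{P} L^2(M_i)\bigr)}{M_i}.$$
Using $L^2(P \ovt B) \cong L^2(P) \ot L^2(B)$ and $L^2(P \ovt B_i) \cong L^2(P) \ot L^2(B_i)$, the bimodule $\cH_i$ decomposes as an external tensor product
$$\bim{M_i}{\cH_i}{M_i} \;\cong\; \bim{P}{L^2(P)}{P} \;\ot\; \bim{D_i}{\bigl(L^2(B) \ot_{B_i} L^2(B)\bigr)}{D_i},$$
and the target bimodule similarly factors as $\bim{P}{L^2(P)}{P} \ot \bim{D_i}{(L^2(D_i) \ot L^2(D_i))}{D_i}$. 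Since the $L^2(P)$-factors match, it suffices to establish
$$\bim{D_i}{\bigl(L^2(B) \ot_{B_i} L^2(B)\bigr)}{D_i} \;\prec\; \bim{D_i}{\bigl(L^2(D_i) \ot L^2(D_i)\bigr)}{D_i}.$$

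This last weak containment is where the hypothesis on $\bim{B_i}{L^2(B)}{D_i}$ enters. The hypothesis gives $\bim{B_i}{L^2(B)}{D_i} \prec L^2(B_i) \ot L^2(D_i)$, and by passing to contragredients also $\bim{D_i}{L^2(B)}{B_i} \prec L^2(D_i) \ot L^2(B_i)$. Feeding these into the left and right legs of the Connes tensor product over $B_i$ and collapsing the middle via $L^2(B_i) \ot_{B_i} L^2(B_i) \cong L^2(B_i)$, we arrive at
$$\bim{D_i}{\bigl(L^2(B) \ot_{B_i} L^2(B)\bigr)}{D_i} \;\prec\; \bim{D_i}{\bigl(L^2(D_i) \ot L^2(B_i) \ot L^2(D_i)\bigr)}{D_i},$$
which is manifestly a multiple of the coarse $D_i$-$D_i$-bimodule.

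The only real obstacle is the bookkeeping of which $B_i$-action is ``left'' and which is ``right'' when feeding the hypothesis into each of the two copies of $L^2(B)$; once this is tracked correctly, the successive weak containments compose as usual, condition (ii) of Proposition \ref{prop.rel-amen-approximation} holds, and that proposition yields the desired amenability of $Q$ relative to $P \ot 1$ inside $P \ovt B$.
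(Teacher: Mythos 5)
Your proof is correct and follows essentially the same approach as the paper: both apply Proposition \ref{prop.rel-amen-approximation} with the approximating net $M_i = P \ovt D_i$ and with $\cH_i$ being (a form of) $L^2(P) \ot (L^2(B) \ot_{B_i} L^2(B))$, and both verify condition~(ii) by reducing to the claim that $\bim{D_i}{(L^2(B) \ot_{B_i} L^2(B))}{D_i}$ is weakly contained in the coarse $D_i$-$D_i$-bimodule. The only difference is that you spell out the fusion argument behind that weak containment (contragredient of the hypothesis plus stability of weak containment under Connes tensor products), where the paper asserts the consequence directly; both versions are valid.
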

\begin{proof}
Fix $i \in I$. Define $\cH_i = L^2(P) \ot (L^2(B) \ot_{B_i} L^2(B))$. Since $Q$ is amenable relative to $P \ovt B_i$, we can choose a net $(\xi_j)_{j \in J}$ of vectors in $\cH_i$ such that
$$\lim_{j \in J} \langle x \xi_j,\xi_j\rangle = \tau(pxp) = \lim_{j \in J} \langle \xi_j x , \xi_j \rangle \quad\text{and}\quad \lim_{j \in J} \|a \xi_j - \xi_j a \| = 0 \quad\text{for all $x \in P \ovt B$, $a \in Q$.}$$
Since $\bim{B_i}{L^2(B)}{D_i}$ is weakly contained in a coarse $B_i$-$D_i$-bimodule, also $\bim{D_i}{(L^2(B) \ot_{B_i} L^2(B))}{D_i}$ is weakly contained in a coarse $D_i$-$D_i$-bimodule. It follows that $\cH_i$ is weakly contained in $\bim{P \ovt D_i}{L^2(P \ovt D_i) \ot_P L^2(P \ovt D_i)}{P \ovt D_i}$.

Applying Proposition \ref{prop.rel-amen-approximation} to the net $(P \ovt D_i)_{i \in I}$, it follows that $Q$ is amenable relative to $P \ot 1$.
\end{proof}

We finally cite the following basic properties on relative amenability and intertwining-by-bimodules and gather them in a container result for later reference in the paper.

\begin{proposition}\label{prop.basic-prop}
All capital letters denote tracial von Neumann (sub)algebras and each time, $p$ and $q$ are nonzero projections in the appropriate von Neumann algebras.
\begin{enumlist}
\item\label{prop.basic-prop.1} (\cite[Lemma 2.8]{DHI16}) If $A \subset p(B_0 \ovt B_1 \ovt B_2)p$ satisfies $A \prec^f B_0 \ovt B_1 \ovt 1$ and $A \prec^f B_0 \ovt 1 \ovt B_2$, then $A \prec^f B_0 \ot 1 \ot 1$.
\item\label{prop.basic-prop.2} (\cite[Proposition 2.7]{PV11}) If $A \subset p(B_0 \ovt B_1 \ovt B_2)p$ is amenable relative to $B_0 \ovt B_1 \ovt 1$ and is amenable relative to $B_0 \ovt 1 \ovt B_2$, then $A$ is amenable relative to $B_0 \ot 1 \ot 1$.
\item\label{prop.basic-prop.3} (\cite[Lemma 2.3]{BV12}) Let $\cG_1 \subset \cU(p(B_1 \ovt B_2)p)$ be a subgroup that normalizes $A_2 \subset p(B_1 \ovt B_2)p$. If $\cG_1\dpr \prec^f B_1 \ot 1$ and $A_2 \prec^f B_1 \ot 1$, then $(\cG_1 \cup A_2)\dpr \prec^f B_1 \ot 1$.
\end{enumlist}
Let $\al : A \to p(P \ovt B)p$ be a faithful unital normal $*$-homomorphism. Assume that the bimodule $\bim{\al(A)}{p L^2(P \ovt B)}{P \ovt 1}$ is coarse.
\begin{enumlist}[resume]
\item\label{prop.basic-prop.4} (\cite[Lemma 5.4(i)]{DV24}) If $A$ is diffuse, then $\al(A) \not\prec P \ot 1$.

More generally, if $A_0 \subset q(A_1 \ovt A)q$ and $A_0 \not\prec A_1 \ot 1$, then $(\id \ot \al)(A_0) \not\prec A_1 \ovt P \ovt 1$.

\item\label{prop.basic-prop.5} (\cite[Lemma 5.4(ii)]{DV24}) If $\al(A)$ is amenable relative to $P \ot 1$, then $A$ is amenable.

More generally, if $A_0 \subset q(A_1 \ovt A)q$ and $(\id \ot \al)(A_0)$ is amenable relative to $A_1 \ovt P \ovt 1$, then $A_0$ is amenable relative to $A_1 \ot 1$.
\end{enumlist}
\end{proposition}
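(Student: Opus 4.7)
Since this proposition is advertised as a container collecting basic facts from the literature---as signalled by the phrase ``we finally cite the following basic properties''---my plan is essentially to verify the applicability of the quoted references. Parts \ref{prop.basic-prop.1}, \ref{prop.basic-prop.2} and \ref{prop.basic-prop.3} are direct quotations of \cite[Lemma 2.8]{DHI16}, \cite[Proposition 2.7]{PV11} and \cite[Lemma 2.3]{BV12}, respectively. Conceptually, \ref{prop.basic-prop.1} and \ref{prop.basic-prop.2} both exploit that $B_0 \ovt B_1 \ovt 1$ and $B_0 \ovt 1 \ovt B_2$ form a commuting square inside $B_0 \ovt B_1 \ovt B_2$ with intersection $B_0 \ot 1 \ot 1$, so that $\prec^f$-intertwining (respectively, relative amenability) with both factors of the commuting square descends to their intersection. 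Part \ref{prop.basic-prop.3} uses that $\cG_1$ normalizes $A_2$ to glue together the intertwining data for $\cG_1\dpr$ and for $A_2$ along a common finitely generated right-$(B_1 \ot 1)$-submodule.

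For parts \ref{prop.basic-prop.4} and \ref{prop.basic-prop.5}, the basic case (i.e.\ $A_1 = \C$) is exactly \cite[Lemma 5.4]{DV24}. The mechanism behind \ref{prop.basic-prop.4} is that coarseness of $\bim{\al(A)}{p L^2(P \ovt B)}{P \ot 1}$ forces every closed $\al(A)$-$(P \ot 1)$-subbimodule that is finitely generated as a right $(P \ot 1)$-module to be of the form $H \ot L^2(P)$ for some finite-dimensional, left-$A$-invariant Hilbert subspace $H$, which is impossible when $A$ is diffuse. For \ref{prop.basic-prop.5}, an $\al(A)$-central state on the basic construction $\langle P \ovt B, e_{P \ot 1}\rangle$ transports, via the coarse identification, to an $A$-invariant state on $B(L^2(A))$, witnessing amenability of $A$.

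The ``more general'' versions follow from the basic cases by an amplification argument. The exterior tensor product of the coarse bimodule $\bim{\al(A)}{p L^2(P \ovt B)}{P \ot 1}$ with the trivial bimodule $\bim{A_1}{L^2(A_1)}{A_1}$ is canonically identified with
$$\bim{A_1 \ovt \al(A)}{(1 \ot p) L^2(A_1 \ovt P \ovt B)}{A_1 \ovt P \ot 1} \; ,$$
and inherits the coarse structure along the second tensor factor. Thus any intertwining $(\id \ot \al)(A_0) \prec A_1 \ovt P \ovt 1$ would, by the same subbimodule analysis as in the basic case applied along the $\al(A)$-$(P \ot 1)$-directions, compress to an intertwining $A_0 \prec A_1 \ot 1$, contradicting the hypothesis; the analogous reduction on states handles the relative amenability statement. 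I expect the main technical obstacle to be the book-keeping of tracking coarseness through the amplification by $A_1$ and through the projections $p$ and $q$, together with the identifications needed to view $A_0$-central states on $\langle A_1 \ovt P \ovt B, e_{A_1 \ovt P \ot 1}\rangle$ as $A_0$-central states on $\langle A_1 \ovt A, e_{A_1 \ot 1}\rangle$. Once these identifications are pinned down, the conclusions reduce mechanically to the basic case of \cite[Lemma 5.4]{DV24}.
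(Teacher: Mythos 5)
The paper gives no proof of this proposition at all --- it is stated purely as a container of results cited from the literature --- and your identification of parts \ref{prop.basic-prop.1}--\ref{prop.basic-prop.3} and of (the basic cases of) \ref{prop.basic-prop.4}--\ref{prop.basic-prop.5} as direct quotations matches this exactly. Your sketches of the underlying mechanisms and of the amplification by $A_1$ are consistent with the cited sources (in fact \cite[Lemma 5.4]{DV24} is already stated in the relative form with the auxiliary algebra, so even the ``more general'' versions are literal citations), and nothing further is needed.
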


\section{\boldmath Stably solid II$_1$ factors}\label{sec.rel-solid}

In \cite{Oza03}, a diffuse tracial von Neumann algebra $(A,\tau)$ is called \emph{solid} if the relative commutant $B' \cap A$ of any diffuse von Neumann subalgebra $B \subset A$ is amenable. Nonamenable solid II$_1$ factors are in particular \emph{prime}: they cannot be decomposed as the tensor product of two II$_1$ factors.

The main theorem of \cite{Oza03} says that the group von Neumann algebra $L(\Gamma)$ of a hyperbolic discrete group $\Gamma$ is solid. In \cite[Proposition 4.1]{Oza04} (see also \cite[Definition 15.1.2]{BO08}), the concept of \emph{biexactness} of a discrete group $\Gamma$ was introduced and it was proven in \cite[Theorem 15.1.5]{BO08} that the group von Neumann algebra $L(\Gamma)$ of a biexact group $\Gamma$ is solid. In \cite{Oza03,Oza04}, numerous families of groups are shown to be biexact: hyperbolic groups, discrete subgroups of connected simple Lie groups of rank one, and wreath product groups $\Lambda \wr \Gamma = \Lambda^{(\Gamma)} \rtimes \Gamma$ with $\Lambda$ amenable and $\Gamma$ biexact.

The group von Neumann algebra $L(\Gamma)$ of a biexact nonamenable icc group $\Gamma$ is not only prime. By \cite{OP03}, these group von Neumann algebras also satisfy a unique prime factorization property, in the sense that their tensor products $L(\Gamma_1) \ovt \cdots \ovt L(\Gamma_k)$ only have one decomposition as a tensor product of $k$ II$_1$ factors, up to natural identifications. To obtain such results, it is quite natural to consider relative commutants $B' \cap P \ovt L(\Gamma)$ when $\Gamma$ is biexact and $B \subset P \ovt L(\Gamma)$ is diffuse relative to $P$. This leads us to the following definition.

\begin{definition}\label{def.rel-solid}
We say that a tracial von Neumann algebra $(A,\tau)$ is \emph{stably solid} if for every tracial von Neumann algebra $(P,\tau)$, projection $p \in P \ovt A$ and von Neumann subalgebra $B \subset p (P \ovt A) p$ with $B \not\prec P \ot 1$, we have that $B' \cap p(P \ovt A)p$ is amenable relative to $P \ot 1$.
\end{definition}

For the rest of this section, we prove that large classes of tracial von Neumann algebras are stably solid. In each of these cases, solidity, or even strong solidity, was already proven in the literature. It is rather straightforward to adapt the arguments and obtain stable solidity. For completeness, we include the necessary details here.

By \cite{PV12} (see also \cite[Theorem 3.1(iii)]{DV24}), the group von Neumann algebra $L(\Gamma)$ of any biexact, weakly amenable group $\Gamma$ is stably solid. This includes hyperbolic groups and discrete subgroups of connected simple Lie groups of rank one. We strongly believe that weak amenability is a superfluous assumption. We actually believe that all biexact tracial von Neumann algebras $(A,\tau)$ in the sense of \cite[Definition 6.1]{DP23}, are stably solid.

The converse however does not hold. In Theorem \ref{thm.stably-solid-and-q-Gaussian} below, we prove that the $q$-Gaussian II$_1$ factors are stably solid, as well as several crossed products of the CAR-algebra and CCR-algebra by actions of groups with stably solid group von Neumann algebra. In combination with \cite[Theorem 8.9]{DP23} and the discussion preceding \cite[Lemma 8.5]{DP23}, it then follows that there are numerous stably solid II$_1$ factors that are not biexact.

In Section \ref{sec.q-Gaussian}, we recalled the definition of the $q$-Gaussian von Neumann algebras $M_q(H_\R)$. Whenever $\Gamma$ is a discrete group and $\pi : \Gamma \to \cO(H_\R)$ is an orthogonal representation, we have the associated action $\Gamma \actson^\be M_q(H_\R)$ given by $\be_g(s_q(\xi)) = s_q(\pi(g)\xi)$ for all $g \in \Gamma$ and $\xi \in H_\R$. In \cite[Corollary 8.8 and Theorem 8.11]{DP23}, strong solidity of $M_q(H_\R)$, as well as solidity of $M_1(H_\R) \rtimes_\be \Gamma$ under the appropriate assumptions, were proven. We largely follow their ideas to prove stable solidity.

\begin{theorem}\label{thm.stably-solid-and-q-Gaussian}
Let $-1 \leq q \leq 1$ and let $H_\R$ be a real Hilbert space. Consider the $q$-Gaussian von Neumann algebra $M_q(H_\R)$.
\begin{enumlist}
\item\label{thm.stably-solid-and-q-Gaussian.1} $M_q(H_\R)$ is stably solid.
\item\label{thm.stably-solid-and-q-Gaussian.2} Let $q = \pm 1$ and let $\pi : \Gamma \to \cO(H_\R)$ be an orthogonal representation of a discrete group $\Gamma$, with associated action $\Gamma \actson^\be M_q(H_\R)$. Assume that some tensor power $\pi^{\ot^\kappa}$ is weakly contained in the regular representation, and that $L(\Gamma)$ is stably solid. Then also the crossed product $M_q(H_\R) \rtimes_\be \Gamma$ is stably solid.
\end{enumlist}
\end{theorem}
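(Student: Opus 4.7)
The plan is to first peel off the easy cases of (i) and then use Popa's deformation-rigidity machinery for the main case, and finally to adapt that scheme to the crossed product in (ii).

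For (i), if $q = \pm 1$ then $M_q(H_\R)$ is amenable (hyperfinite or finite-dimensional when $q = -1$, abelian when $q = 1$), and if $\dim H_\R \leq 1$ then $M_q(H_\R)$ is again abelian. In all these cases $P \ovt M_q(H_\R)$ is itself amenable relative to $P \ot 1$, so every subalgebra of every corner is automatically amenable relative to $P \ot 1$; stable solidity thus holds for trivial reasons, without ever invoking the hypothesis $B \not\prec P \ot 1$. This reduces the real content of (i) to $-1 < q < 1$ with $\dim H_\R \geq 2$, where $M := M_q(H_\R)$ is a nonamenable II$_1$ factor.

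For this main case I would work inside $P \ovt \Mtil$ with $\Mtil = M_q(H_\R \oplus H_\R)$, carrying the malleable deformation $(\id \ot \al_t)$ and the s-malleable flip $\be$ associated with $(\xi,\mu) \mapsto (\xi,-\mu)$, which satisfies $\be|_M = \id$ and $\be \al_t = \al_{-t} \be$. Arguing by contradiction, assume $B \subset p(P \ovt M)p$ satisfies $B \not\prec_{P \ovt M} P \ot 1$ while $Q := B' \cap p(P \ovt M)p$ is not amenable relative to $P \ot 1$. The bimodule identification $L^2(P \ovt \Mtil \ominus P \ovt M) \cong L^2(P) \ot L^2(\Mtil \ominus M)$, together with the discussion following Proposition \ref{prop.good-estimate-q-Gaussian}, shows that on every finite-dimensional $H_{0,\R} \subset H_\R$ some $\kappa$-fold relative tensor power of this bimodule, restricted to $P \ovt M_q(H_{0,\R})$, is weakly contained in the coarse bimodule over $P \ot 1$. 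Invoking a Popa spectral gap argument in the style of Corollary \ref{cor.rel-amen-in-the-limit} and applied after taking this $\kappa$-fold relative tensor power, non-amenability of $Q$ relative to $P \ot 1$ forces
\[
\lim_{t \to 0} \sup_{u \in \cU(B)} \|(\id \ot \al_t)(u) - u\|_2 = 0,
\]
since $B$ lies in $Q' \cap p(P \ovt M)p$. Popa's transversality inequality combined with the flip $\be$ then promotes this small-$t$ convergence into a nonzero partial intertwiner of $B$ into $(\id \ot \al_{\pi/2})(P \ovt M) = P \ovt M_q(\{0\} \oplus H_\R)$ inside $P \ovt \Mtil$. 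The intersection identity $M_q(K_{1,\R}) \cap M_q(K_{2,\R}) = M_q(K_{1,\R} \cap K_{2,\R})$ for closed real subspaces forces this intertwiner to land in $P \ot 1$, producing $B \prec_{P \ovt M} P \ot 1$ and the required contradiction. The most delicate step is the first one: the single bimodule $L^2(\Mtil \ominus M)$ is \emph{not} weakly contained in the coarse $M$-bimodule, so the uniform convergence has to be extracted via the $\kappa$-fold relative tensor power, using an approximation argument of the same flavor as Proposition \ref{prop.rel-amen-approximation} and Corollary \ref{cor.rel-amen-in-the-limit}.

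For (ii), write $M = M_q(H_\R) \rtimes_\be \Gamma$ with $q = \pm 1$, so that $M_q(H_\R)$ is amenable and $P \ovt M = (P \ovt M_q(H_\R)) \rtimes_{\id \ot \be} \Gamma$. The orthogonal representation $\pi$ extends diagonally to $\betil = \pi \oplus \pi$ on $\Mtil = M_q(H_\R \oplus H_\R)$, and since each $R_t$ commutes with $\pi \oplus \pi$, the deformation $\al_t$ lifts to an automorphism of $\Mtil \rtimes_{\betil} \Gamma$ that fixes $M$ pointwise. The hypothesis that $\pi^{\ot^\kappa}$ is weakly contained in the regular representation of $\Gamma$ takes the place of Proposition \ref{prop.good-estimate-q-Gaussian}: a sufficiently high relative tensor power of the deformation bimodule $L^2(P \ovt (\Mtil \rtimes_{\betil} \Gamma) \ominus P \ovt M)$ is weakly contained in a coarse bimodule over $P \ovt L(\Gamma)$. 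Running the same spectral gap/transversality scheme as in (i) then gives the dichotomy that either $B \prec_{P \ovt M} P \ovt M_q(H_\R)$, in which case amenability of $M_q(H_\R)$ directly upgrades the intertwining into amenability of $Q$ relative to $P \ot 1$, or $Q$ is amenable relative to $P \ovt L(\Gamma)$. In the second subcase, stable solidity of $L(\Gamma)$ applied to an appropriate corner of $P \ovt L(\Gamma)$, together with Proposition \ref{prop.basic-prop} and an iteration of Proposition \ref{prop.rel-amen-intersection}, yields amenability of $Q$ relative to $P \ot 1$. The main additional obstacle here is exactly this assembly step: one has to invoke stable solidity of $L(\Gamma)$ inside the crossed product rather than an ambient tensor product, which is the reason Propositions \ref{prop.rel-amen-intersection} and \ref{prop.basic-prop} are needed to bookkeep the relative amenability data through the various intertwining dichotomies.
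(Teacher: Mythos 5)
Your plan for part (i) takes the ``spectral gap'' direction of Popa's s-malleable deformation machinery: from non-amenability of $Q := B' \cap p(P \ovt M)p$ relative to $P \ot 1$ you want to deduce $\lim_{t\to 0}\sup_{u\in\cU(B)}\|(\id\ot\al_t)(u)-u\|_2 = 0$, and then use transversality and the flip to manufacture an intertwiner $B\prec P\ot 1$. The paper runs the argument in the \emph{opposite} direction: Lemma \ref{lem.deformation-vs-intertwining} shows that $B\not\prec P\ot 1$ produces a corner $Br$ on which the deformation manifestly does not converge, i.e.\ there are $u_n\in\cU(Br)$ with $\|(\id\ot E_M\circ\al_{t_n})(u_n)\|_2\to 0$; the vectors $(\id\ot\al_{t_n})(u_n)$, projected off $M$ and fed through the relative tensor powers, are then used to \emph{build} relative amenability of $rQr$ over $P\ot 1$ via Proposition \ref{prop.rel-amen-approximation}. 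These are contrapositives of one another in spirit, but they are not interchangeable in execution, and that is where your proposal breaks.

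The obstruction is exactly the one you flag and then wave away. For a spectral gap argument you would need, for a single integer $\kappa$, that $L^2(P)\ot L^2(\Mtil\ominus M)^{\ot^\kappa_M}$ is weakly contained in the coarse $(P\ovt M)$-$(P\ovt M)$-bimodule over $P\ot 1$. But the estimate in Proposition \ref{prop.good-estimate-q-Gaussian} only yields $\|\Phi_{\xi,\eta}(a)\|_2\leq C|q|^{\kappa n}\|a\|_2$ for $a\in W_q(H^{\ot^n}\alg)$, and when $\dim H_\R=\infty$ the spaces $H^{\ot^n}$ are infinite-dimensional, so the coefficient maps are never Hilbert--Schmidt on all of $L^2(M)$ for any $\kappa$. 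Coarseness only holds when you cut down to $M_0 = M_q(H_{0,\R})$ with $\dim H_0 <\infty$, and the required $\kappa$ grows with $\dim H_0$: one needs $|q|^{2\kappa}\dim H_0 < 1$. There is thus no fixed $\kappa$ for which the spectral gap hypothesis is satisfied over $P\ovt M$. The paper's Proposition \ref{prop.rel-amen-approximation} is tailored to the amenability-building direction: relative amenability is an existence statement (a central state), which can be detected by an increasing generating net of subalgebras $P\ovt M_i$, letting $\kappa_i\to\infty$ as $\dim H_i\to\infty$. Spectral gap rigidity is a non-existence statement about almost-central vectors in the full bimodule, and it does not localize to the $M_i$ in the same way; writing ``an approximation argument of the same flavor as Proposition \ref{prop.rel-amen-approximation}'' does not fill this hole, and I do not see how to make the spectral gap direction run for infinite-dimensional $H_\R$.

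For part (ii) the coarseness issue disappears because $\pi^{\ot\kappa}$ weakly contained in the regular representation gives global coarseness of $L^2(\Ntil\ominus N)^{\ot^\kappa_N}$ for one fixed $\kappa$ (as in \cite[Lemma 3.3]{Bou12}), so a spectral gap scheme is at least not ruled out by the same obstruction. But note that your outline for (ii) also diverges from the paper in structure: the paper first invokes stable solidity of $L(\Gamma)$ via the crossed-product Lemma \ref{lem.stably-solid-crossed-product} to get $B\prec P\ovt M_q(H_\R)$, and only then runs the $q$-Gaussian deformation argument on the compressed $\theta(B)$; your proposal attempts a Popa--Vaes dichotomy directly and defers the use of stable solidity of $L(\Gamma)$ to a later ``assembly step'' via Propositions \ref{prop.basic-prop} and \ref{prop.rel-amen-intersection}, which is left quite vague. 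That part may be repairable, but as written it is not a proof; the genuine gap remains the spectral gap step in part (i).
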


Before proving Theorem \ref{thm.stably-solid-and-q-Gaussian}, we prove a few lemmas.

\begin{lemma}\label{lem.stably-solid-crossed-product}
Let $(P,\tau)$ be a tracial von Neumann algebra and $\Gamma \actson^\be P$ a trace preserving action of a discrete group $\Gamma$. Assume that $L(\Gamma)$ is stably solid. Let $p \in P \rtimes_\be \Gamma$ be a projection and $B \subset p(P \rtimes_\be \Gamma)p$ a von Neumann subalgebra. If $B \not\prec P$, then the relative commutant $B' \cap p(P \rtimes_\be \Gamma)p$ is amenable relative to $P$.
\end{lemma}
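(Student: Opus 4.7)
The plan is to reduce the statement to the stable solidity hypothesis on $L(\Gamma)$ by means of Ioana's comultiplication trick. Write $M = P \rtimes_\be \Gamma$ with canonical unitaries $(u_g)_{g \in \Gamma}$ and consider the normal, trace-preserving $*$-homomorphism
\[
\Delta : M \to M \ovt L(\Gamma), \qquad \Delta(a u_g) = a u_g \ot u_g \quad (a \in P,\ g \in \Gamma) \; .
\]
Two standard properties of this comultiplication, which I will treat as black boxes from the Popa--Ioana--Vaes rigidity toolbox, are needed: (F1) for any von Neumann subalgebra $Q \subset qMq$, if $\Delta(Q) \prec_{M \ovt L(\Gamma)} M \ot 1$ then $Q \prec_M P$; and (F2) for any such $Q$, if $\Delta(Q)$ is amenable relative to $M \ot 1$ inside $M \ovt L(\Gamma)$, then $Q$ is amenable relative to $P$ inside $M$.

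Granting (F1) and (F2), the argument is short. By hypothesis $B \not\prec_M P$, so the contrapositive of (F1) gives $\Delta(B) \not\prec_{M \ovt L(\Gamma)} M \ot 1$. Applying the stable solidity of $L(\Gamma)$ (Definition \ref{def.rel-solid}) to the ambient algebra $M \ovt L(\Gamma)$, with $M$ playing the role of $P$ and $L(\Gamma)$ playing the role of $A$ in that definition, yields that
\[
\Delta(B)' \cap \Delta(p)\,(M \ovt L(\Gamma))\,\Delta(p)
\]
is amenable relative to $M \ot 1$. Since $\Delta$ is a $*$-homomorphism, $\Delta(B' \cap pMp)$ sits inside this relative commutant, so it too is amenable relative to $M \ot 1$. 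Applying (F2) with $Q := B' \cap pMp$ then gives the claim that $B' \cap pMp$ is amenable relative to $P$ inside $M$.

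The real work lies in verifying (F1) and (F2), and (F2) is the easier piece: a direct check shows the compatibility $E_{M \ot 1} \circ \Delta = \Delta \circ E_P$ on elements $a u_g$, which produces a unital $*$-homomorphism $\Psi : \langle M, e_P \rangle \to \langle M \ovt L(\Gamma), e_{M \ot 1} \rangle$ extending $\Delta$ and sending $e_P$ to $e_{M \ot 1}$; pulling a $\Delta(Q)$-central witness of relative amenability back along $\Psi$ yields a $Q$-central witness of amenability of $Q$ relative to $P$. Property (F1) is the more delicate statement and is where I expect the main obstacle to lie: I would handle it via Lemma \ref{lem.Popa-intertwining-for-bimodules} together with the explicit bimodule picture $\bim{\Delta(M)}{L^2(M \ovt L(\Gamma))}{M \ot 1} \cong \bim{M}{(L^2(M) \ot \ell^2(\Gamma))}{M}$ (right action standard on the first factor, left action $au_g \mapsto \pi_\ell(au_g) \ot \lambda_g$), extracting from any $\Delta(Q)$-invariant, finitely generated right $(M \ot 1)$-submodule a corresponding $Q$-invariant, finitely generated right $P$-submodule of $L^2(M)$; alternatively one can invoke the standard Ioana comultiplication intertwining lemma directly.
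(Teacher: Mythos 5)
Your argument is correct and is essentially identical to the paper's proof: the paper also uses the comultiplication $\Delta(au_g) = au_g \ot u_g$, applies stable solidity of $L(\Gamma)$ in $M \ovt L(\Gamma)$, and transfers the non-intertwining and the relative amenability back and forth between $M$ and $M\ovt L(\Gamma)$ via exactly your (F1) and (F2), both of which the paper cites as \cite[Lemma 2.2]{DV24}.
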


Note that Lemma \ref{lem.stably-solid-crossed-product} for the trivial action $\be$ of $\Gamma$ on $P$ is just the definition of stable solidity of $L(\Gamma)$.

\begin{proof}
Write $N = P \rtimes_\be \Gamma$. Define the normal $*$-homomorphism $\Delta : N \to N \ovt L(\Gamma)$ by $\Delta(a u_g) = a u_g \ot u_g$ for all $a \in P$ and $g \in \Gamma$. Write $q = \Delta(p)$. Since $B \not\prec P$, it follows from \cite[Lemma 2.2]{DV24} that $\Delta(B) \not\prec N \ot 1$ inside $N \ovt L(\Gamma)$. Since $L(\Gamma)$ is stably solid, we get that $\Delta(B)' \cap q(N\ovt L(\Gamma))q$ is amenable relative to $N \ot 1$. In particular, $\Delta(B' \cap p N p)$ is amenable relative to $N \ot 1$. A second application of \cite[Lemma 2.2]{DV24} implies that $B' \cap pNp$ is amenable relative to $P$.
\end{proof}

The next lemma is a relative version of \cite[Lemma 8.5]{DP23}. Given a $q$-Gaussian von Neumann algebra $M_q(H_\R)$, recall from Section \ref{sec.q-Gaussian} the malleable deformation $\al_t \in \Aut M_q(H_\R \oplus H_\R)$ given by the rotations of $H_\R \oplus H_\R$.

\begin{lemma}\label{lem.deformation-vs-intertwining}
Let $-1 \leq q \leq 1$ and let $H_\R$ be a real Hilbert space. Consider the $q$-Gaussian von Neumann algebra $M_q(H_\R)$, viewed as a von Neumann subalgebra of $M_q(H_\R \oplus H_\R)$ by identifying $H_\R$ with $H_\R \oplus \{0\}$. Let $P$ be a tracial von Neumann algebra, $p \in P \ovt M_q(H_\R)$ a projection and $B \subset p(P \ovt M_q(H_\R))p$ a von Neumann subalgebra.

If $B \not\prec P \ot 1$, there exists a nonzero projection $r \in B' \cap p(P \ovt M_q(H_\R))p$ such that for every $t \in (0,\pi/2)$ and every $\eps > 0$, there exists a $u \in \cU(Br)$ such that $\|(\id \ot E_M \circ \al_t)(u)\|_2 < \eps$.
\end{lemma}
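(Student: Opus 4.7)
I would argue by contradiction, following the standard Popa template for malleable deformations. Suppose the conclusion fails. Then for every nonzero projection $r \in B' \cap p(P \ovt M)p$ there exist rational $t(r) \in \Q \cap (0,\pi/2)$ and $\eps(r) \in \Q \cap (0,+\infty)$ such that $\|(\id \ot E_M \circ \al_{t(r)})(u)\|_2 \geq \eps(r)$ for every $u \in \cU(Br)$. For each such pair $(t,\eps)$, the family of projections in $B' \cap p(P \ovt M)p$ for which the inequality holds with these specific constants is stable under suprema of orthogonal subfamilies, so admits a maximum $z_{t,\eps}$. By assumption $\bigvee_{(t,\eps)} z_{t,\eps} = p$, and since only countably many pairs occur, at least one $z_{t_0,\eps_0}$ is nonzero. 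Compressing to this projection reduces the problem to the uniform situation in which $\|(\id \ot E_M \circ \al_{t_0})(u)\|_2 \geq \eps_0$ holds for every $u \in \cU(B)$ with fixed $t_0,\eps_0 > 0$, while the hypothesis $B \not\prec P \ot 1$ is preserved. It then suffices to derive $B \prec P \ot 1$ from this uniform lower bound.

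To do so, consider the normal, completely bounded, $(P \ot 1)$-$(P \ot 1)$-bimodular map $\Phi := \id \ot (E_M \circ \al_{t_0}) : P \ovt M \to P \ovt M$ and the associated positive operator $T := \Phi^* \Phi$ on $L^2(P \ovt M)$, which lies in the commutant $\cM$ of the right $(P \ot 1)$-action. A direct computation on the $q$-Fock decomposition $L^2(M) = \C \Om \oplus \bigoplus_n H^{\ot_q^n}$ shows that $\Phi$ acts on $L^2(P) \ot H^{\ot_q^n}$ as multiplication by $(\cos t_0)^n$, so $T$ is a weighted sum of orthogonal projections with weights $(\cos t_0)^{2n}$ in degree~$n$. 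Since $\langle Tu, u\rangle = \|\Phi(u)\|_2^2 \geq \eps_0^2$ for all $u \in \cU(B)$, forming the element $S$ of minimal $\|\cdot\|_{2,\Tr}$ in the weakly closed convex hull of $\{vTv^* : v \in \cU(B)\}$ in $\cM$ (mirroring the argument used in the proof of Lemma \ref{lem.Popa-intertwining-for-bimodules}) yields a nonzero positive element of $\cM$ commuting with $B$. Combining the geometric decay $(\cos t_0)^{2n}$ with the near-coarseness of the $M$-$M$-bimodule $L^2(\Mtil \ominus M)$ discussed after Proposition \ref{prop.good-estimate-q-Gaussian}, one extracts a nonzero spectral projection of $S$ with finite $\Tr$. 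This produces a nonzero $B$-$(P \ot 1)$-subbimodule of $L^2(P \ovt M)$ that is finitely generated as a right $(P \ot 1)$-module, so that $B \prec P \ot 1$ by Lemma \ref{lem.Popa-intertwining-for-bimodules}, giving the desired contradiction.

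The main technical obstacle is ensuring finiteness of $\Tr$ on the extracted spectral projection when $\dim H_\R = \infty$: in that case each subspace $H^{\ot_q^n}$ is itself infinite-dimensional, and the geometric decay in $n$ alone is insufficient to yield a trace-class operator. The remedy is to compress to the corner associated with a finite-dimensional subspace $H_{0,\R} \subset H_\R$ and to exploit the coarse $M_0$-$M_0$-bimodule structure of a sufficiently high tensor power of $L^2(\Mtil \ominus M)$, as recalled at the end of Section \ref{sec.q-Gaussian}; passing to a limit over the finite-dimensional subspaces $H_{0,\R}$ then yields the required finite-$\Tr$ extraction, in parallel with the strong-solidity arguments for $q$-Gaussian factors.
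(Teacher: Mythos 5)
There is a genuine gap in the central minimization step, and the remedy you sketch for it does not close it.

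You form $T = \Phi^*\Phi$ with $\Phi = \id \ot (E_M \circ \al_{t_0})$ and then mirror the argument of Lemma \ref{lem.Popa-intertwining-for-bimodules}, taking the element $S$ of minimal $\|\cdot\|_{2,\Tr}$ in the weakly closed convex hull of $\{vTv^* : v \in \cU(B)\}$ inside the commutant $\cM = (P \ot 1)' \cap B(L^2(P \ovt M)) \cong P \ovt B(L^2(M))$. That step requires $T$ to lie in $L^2(\cM,\Tr)$ (or at least some $vTv^*$ to have finite $\|\cdot\|_{2,\Tr}$, which is equivalent by unitarity of $v$). But on $L^2(M) = \cF_q(H)$ the operator $E_M \circ \al_{t_0}$ is $\bigoplus_{n\geq 0}(\cos t_0)^n \id_{H^{\ot_q^n}}$, so $T = 1 \ot \bigoplus_n (\cos t_0)^{2n}\id_{H^{\ot_q^n}}$ and $\Tr_\cM(T) = \sum_n (\cos t_0)^{2n}\dim H^{\ot_q^n}$. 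This diverges whenever $\dim H_\R = \infty$, and even for finite $\dim H_\R$ it diverges unless $(\cos t_0)^2 \dim H_\R < 1$, which you cannot arrange since $t_0$ is handed to you by the contrapositive. So the minimizer $S$ does not exist. Your last paragraph acknowledges the problem but the proposed fix is not viable as stated: there is no a priori corner of $P \ovt M_q(H_{0,\R})$ in which $B$ sits, so "compressing to $H_{0,\R}$" and "passing to a limit" is not a routine step --- producing an intertwining of $B$ into $P \ovt M_q(H_{0,\R})$ is precisely the nontrivial intermediate goal, and the coarseness of a high tensor power of $L^2(\Mtil \ominus M)$ (used in the paper for relative amenability) does not by itself yield any trace-class truncation of $T$ that still carries the uniform lower bound over $\cU(B)$.

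The paper's proof sidesteps the trace issue by running Popa's minimization in the \emph{finite} von Neumann algebra $P \ovt \Mtil$, not in the semifinite $\cM$. From the Fock space identity $\|(\id \ot E_M\circ\al_t)(u)\|_2^2 = \sum_n(\cos t)^{2n}\|(u)_n\|_q^2 \leq \tau\bigl(u^*(\id\ot\al_t)(u)\bigr)$ one gets $\tau(u^*(\id\ot\al_t)(u)) \geq \eps_0^2$ for all $u\in\cU(B)$, and the $\|\cdot\|_2$-minimizer $v$ in the convex hull of $\{u^*(\id\ot\al_t)(u)\}$ inside $P\ovt\Mtil$ is a legitimate nonzero element with $bv = v(\id\ot\al_t)(b)$. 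A transversality estimate on $v$, truncated to a finite-dimensional $\Mtil_0 = M_q(H_{0,\R}\oplus H_{0,\R})$, then yields $B \prec P \ovt M_q(H_{0,\R})$; and only at that point does one invoke compactness --- of $E_M\circ\al_{t_1}\circ E_{M_1}$, which is compact because $H_1$ is finite-dimensional --- to conclude $Br \prec P\ot 1$ for a suitable nonzero $r \in Q$. Note that the second step applies the contrapositive hypothesis again, to $r = w^*w$ with a possibly different pair $(t_1,\eps_1)$; your opening reduction to a single uniform $(t_0,\eps_0)$ would discard exactly this freedom. (Your "stable under orthogonal suprema" claim is also not obvious: it would need $\Phi(ur_1)\perp\Phi(ur_2)$ for $u\in\cU(B(r_1+r_2))$, which does not hold in general; but this is secondary to the trace-class gap.)
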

\begin{proof}
We write $M = M_q(H_\R)$ and $Q = B' \cap p(P \ovt M)p$. By contraposition, we assume that for every nonzero projection $r \in Q$, there exists a $t \in (0,\pi/2)$ and an $\eps > 0$ such that $\|(\id \ot E_M \circ \al_t)(u)\|_2 \geq \eps$ for all $u \in \cU(Br)$. We have to prove that $B \prec P \ot 1$.

We also write $\Mtil = M_q(H_\R \oplus H_\R)$. Using the notation recalled in Section \ref{sec.q-Gaussian}, we identify $L^2(M)$ with the $q$-Fock space $\cF_q(H)$, and we identify $L^2(\Mtil)$ with $\cF_q(H \oplus H)$.

Applying the assumption to $r=p$, we find $t \in (0,\pi/2)$ and $\eps_0 > 0$ such that
$$\|(\id \ot E_M \circ \al_{t})(u)\|_2 \geq \eps_0 \quad\text{for all $u \in \cU(B)$.}$$

Denote by $\cP_1 : H \oplus H \to H$ the orthogonal projection onto the first summand. Since $\cP_1^{\ot^n}$ commutes with the permutation unitaries $\pi_\si$, it also defines the orthogonal projection of $(H \oplus H)^{\ot_q^n}$ onto its subspace $(H \oplus \{0\}){\ot_q^n}$. So, on the level of $\cF_q(H \oplus H) = L^2(\Mtil)$, the conditional expectation $E_M$ is given by $\bigoplus_{n=0}^\infty \cP_1^{\ot^n}$.

Fix $u \in \cU(B)$. View $u$ as a vector in $L^2(M) = \cF_q(H)$ and write $u$ as the sum of $(u)_n \in H^{\ot_q^n}$. It follows that
$$\eps_0^2 \leq \|(\id \ot E_M \circ \al_{t})(u)\|_2^2 = \sum_{n=0}^\infty (\cos t)^{2 n} \|(u)_n\|_q^2 \leq \sum_{n=0}^\infty (\cos t)^n \|(u)_n\|_q^2 = \tau(u^* (\id \ot \al_{t})(u)) \; .$$
Since this holds for all $u \in \cU(B)$, we get that the unique element of minimal $\|\cdot\|_2$ in the weak closure of the convex hull of $\{u^* (\id \ot \al_{t})(u) \mid u \in \cU(B)\}$ is a nonzero element $v \in P \ovt \Mtil$ satisfying $\|v\| \leq 1$, $v = p v$ and $b v = v (\id \ot \al_{t})(b)$ for all $b \in B$.

Write $\eps = (1/5)(1-\cos t)\|v\|_2$. For every finite dimensional real subspace $H_{0,\R} \subset H_\R$, we have the von Neumann subalgebra $M_q(H_{0,\R} \oplus H_{0,\R})$ of $\Mtil$. They form a directed system of von Neumann subalgebras whose union is dense in $\Mtil$. We can thus choose such a finite dimensional real subspace $H_{0,\R} \subset H_\R$, so that the von Neumann subalgebra $\Mtil_0 := M_q(H_{0,\R} \oplus H_{0,\R})$ of $\Mtil$ satisfies $\|v - (\id \ot E_{\Mtil_0})(v)\|_2 < \eps$. We write $v_0 := (\id \ot E_{\Mtil_0})(v)$. It follows that
\begin{equation}\label{eq.we-will-expect}
\|b v_0 - v_0 (\id \ot \al_{t})(b)\|_2 \leq 2 \eps \quad\text{for all $b \in \cU(B)$.}
\end{equation}
Define $v_1 \in P \ovt \Mtil_0$ by $v_1 := (\id \ot \al_{-t})(v_0)$. Define the von Neumann subalgebra $\Stil \subset \Mtil$ by $\Stil := M_q(H_\R \oplus H_{0,\R})$. By definition, $M \subset \Stil$ and $\Mtil_0 \subset \Stil$. It then follows from \eqref{eq.we-will-expect} that
\begin{equation}\label{eq.already-good}
\|(\id \ot E_{\Stil} \circ \al_{t})(v_1 b)\|_2 \geq \|b v_0\|_2 - 2 \eps \geq \|b v\|_2 - 3\eps = \|v\|_2 - 3 \eps \quad\text{for all $b \in \cU(B)$.}
\end{equation}
Denoting by $\cP_{(H\ominus H_0) \oplus \{0\}}$ and $\cP_{H_0 \oplus H_0}$ the orthogonal projection of $H \oplus H$ onto the indicated closed subspaces, and denoting by $R_t$ the rotation matrix defining $\al_t$, the operator $E_{\Stil} \circ \al_{t} \circ E_{\Stil}$ on $\cF_q(H \oplus H)$ is given by
$$\bigoplus_{n=0}^\infty (R_{t} \cP_{H_0 \oplus H_0} + \cos t \, \cP_{(H\ominus H_0) \oplus \{0\}})^{\ot^n} \; ,$$
so that the operator $a \mapsto (E_{\Stil} \circ \al_{t} \circ E_{\Stil})(a - E_{\Mtil_0}(a))$ is given by
\begin{equation}\label{eq.my-operator}
\bigoplus_{n=0}^\infty (R_{t} \cP_{H_0 \oplus H_0} + \cos t \, \cP_{(H\ominus H_0) \oplus \{0\}})^{\ot^n}(1-\cP_{H_0 \oplus H_0}^{\ot^n}) \; .
\end{equation}
Since the summands of \eqref{eq.my-operator} are operators that commute with the permutation unitaries $\pi_\si$, $\si \in S_n$, the norm of the operator in \eqref{eq.my-operator} in $B(\cF_q(H \oplus H))$ is bounded above by the norm of the same operator on $\cF_0(H \oplus H)$. Since the operators $R_{t} \cP_{H_0 \oplus H_0}$ and $\cP_{(H\ominus H_0) \oplus \{0\}}$ have orthogonal ranges and domains, the norm of the operator in \eqref{eq.my-operator} is bounded by $\cos t$.

Writing $v_1 b$ in \eqref{eq.already-good} as the sum of $v_1 b - E_{\Mtil_0}(v_1 b)$ and $E_{\Mtil_0}(v_1 b)$, it follows from \eqref{eq.already-good} and the $\cos t$ bound of the operator in \eqref{eq.my-operator} that
\begin{align*}
\|(\id \ot E_{\Stil} \circ \al_{t} \circ E_{\Mtil_0})(v_1 b)\|_2 & \geq \|v\|_2 - 3 \eps - (\cos t) \|v_1 b\|_2 \\
& \geq \|v\|_2 - 4 \eps - (\cos t) \|(\id \ot \al_{-t})(v) b\|_2 \\ &= (1-\cos t)\|v\|_2 - 4 \eps = \eps \quad\text{for all $b \in \cU(B)$.}
\end{align*}
Denote $M_0 = M_q(H_{0,\R})$. Since $(\id \ot E_{\Mtil_0})(v_1 b) = v_1 (\id \ot E_{\Mtil_0})(b) = v_1 (\id \ot E_{M_0})(b)$ for all $b \in B$, we conclude that there is no net of unitaries $(u_k)$ in $\cU(B)$ such that $\|(\id \ot E_{M_0})(b_k)\|_2 \to 0$. It follows that $B \prec P \ovt M_0$ inside $P \ovt M$.

Replacing $P$ by $M_m(\C) \ot P$ and $p$ by $e_{11} \ot p$, we find a projection $f \in P \ovt M_0$, a normal unital $*$-homomorphism $\theta : B \to f (P \ovt M_0)f$ and a nonzero partial isometry $w \in f(P \ovt M)p$ such that $w b = \theta(b) w$ for all $b \in B$. Write $r = w^* w$ and note that $r$ is a nonzero projection in $Q$. Also note that $b r = w^* \theta(b) w$ for all $b \in B$.

Applying our assumption to the projection $r$, we find $t_1 \in (0,\pi/2)$ and $\eps_1 > 0$ such that
\begin{equation}\label{eq.tussenstap}
\|(\id \ot E_M \circ \al_{t_1})(u)\|_2 \geq 3 \eps_1 \quad\text{for all $u \in \cU(Br)$.}
\end{equation}
Choose a larger finite dimensional real subspace $H_{0,\R} \subset H_{1,\R} \subset H_\R$, so that the von Neumann algebra $M_1 = M_q(H_{1,\R})$ satisfies $\|w - (\id \ot E_{M_1})(w)\|_2 < \eps_1$. Since $b r = w^* \theta(b) w$ and $\theta(B) \subset P \ovt M_0$, it follows that $\|u - (\id \ot E_{M_1})(u)\|_2 < 2\eps_1$ for all $u \in \cU(Br)$. In combination with \eqref{eq.tussenstap}, it follows that
\begin{equation}\label{eq.almost-final-step}
\|(\id \ot E_M \circ \al_{t_1} \circ E_{M_1})(u)\|_2 \geq \eps_1 \quad\text{for all $u \in \cU(Br)$.}
\end{equation}

Denote by $\cP_1 : H \to H_1$ the orthogonal projection of $H$ onto $H_1$. Similarly as above, as an operator on $\cF_q(H) = L^2(M)$, the map $E_M \circ \al_{t_1} \circ E_{M_1}$ is given by $\bigoplus_{n=0}^\infty (\cos t_1)^n \cP_1^{\ot^n}$, which is compact because $(\cos t_1)^n \to 0$ and $\cP_1$ has finite rank. Then \eqref{eq.almost-final-step} implies that $Br \prec P \ot 1$. A fortiori, $B \prec P \ot 1$ and the lemma is proven.
\end{proof}

For completeness, we also record the following elementary lemma.

\begin{lemma}\label{lem.reduction-of-stable-solidity}
Let $(M,\tau)$ be a tracial von Neumann algebra. Assume that for every tracial von Neumann algebra $(P,\tau)$, nonzero projection $p \in P \ovt M$ and von Neumann subalgebra $B \subset p (P \ovt M) p$ with $B \not\prec P \ot 1$, there exists a nonzero projection $r \in Q := B' \cap p(P \ovt M)p$ such that $rQr$ is amenable relative to $P \ot 1$. Then $M$ is stably solid.
\end{lemma}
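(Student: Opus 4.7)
The plan is a standard maximality argument. The hypothesis only gives, for each admissible $(P,p,B)$, a single corner $rQr$ inside $Q := B' \cap p(P \ovt M)p$ that is amenable relative to $P \ot 1$. To upgrade this to amenability of all of $Q$, I would produce a maximal central projection $z \in \mathcal{Z}(Q)$ such that $Qz$ is amenable relative to $P \ot 1$ and rule out, by applying the hypothesis to a complement, that $z$ be strictly smaller than $p$.

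Fix $(P,\tau)$, a nonzero projection $p \in P \ovt M$, and a von Neumann subalgebra $B \subset p(P \ovt M)p$ with $B \not\prec P \ot 1$; write $Q := B' \cap p(P \ovt M)p$. Since relative amenability is stable under direct sums (see e.g.\ \cite[Proposition 2.4]{PV11}), Zorn's lemma produces a maximal projection $z \in \mathcal{Z}(Q)$ with $Qz$ amenable relative to $P \ot 1$. Suppose for contradiction that $p' := p - z$ is nonzero. Since $p' \in \mathcal{Z}(Q) \subset B'$, the compression $Bp'$ is a von Neumann subalgebra of $p'(P \ovt M)p'$, and $(Bp')' \cap p'(P \ovt M)p' = p'Qp' = Qp'$. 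I would first check that $Bp' \not\prec P \ot 1$: by Lemma \ref{lem.Popa-intertwining-for-bimodules}, choose a net $(u_i)$ in $\cU(B)$ that kills all finitely generated right $(P \ot 1)$-submodules of $pL^2(P \ovt M)$; the net $(u_i p')$ in $\cU(Bp')$ then witnesses the same property on $p'L^2(P \ovt M)$, because any finitely generated right $(P \ot 1)$-submodule of $p'L^2(P \ovt M)$ is one of $pL^2(P \ovt M)$.

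Now apply the hypothesis of the lemma to $(P, p', Bp')$: there exists a nonzero projection $r \in Qp'$ such that $rQr$ is amenable relative to $P \ot 1$. Let $z' \in \mathcal{Z}(Q)$ be the central support of $r$; then $0 \neq z' \leq p'$. The remaining (and only slightly delicate) step is to promote amenability of $rQr$ to amenability of $Qz'$. This is a standard Morita-equivalence fact: $Qz'$ and $rQr$ are Morita equivalent via the bimodule $Qr$, and relative amenability is a Morita-invariant property. Concretely, given an $(rQr)$-central positive functional $\Omega$ on $r \langle P \ovt M, e_{P \ot 1} \rangle r$ restricting to $\Tr \ot \tau$ on $r(P \ovt M)r$, one chooses partial isometries $v_n \in Q$ with $v_n v_n^* \leq r$ and $\sum v_n^* v_n = z'$ (which exist since $r$ has central support $z'$ in the finite von Neumann algebra $Q$), and verifies that $\Psi(x) := \sum_n \Omega(v_n x v_n^*)$ defines a $Qz'$-central positive functional on $z'\langle P \ovt M, e_{P \ot 1}\rangle z'$ with the correct restriction to $z'(P \ovt M)z'$, whence $Qz'$ is amenable relative to $P \ot 1$. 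Combining with the maximal $Qz$, we obtain that $Q(z+z')$ is amenable relative to $P \ot 1$, contradicting the maximality of $z$. Therefore $z = p$, and $Q$ itself is amenable relative to $P \ot 1$, which is precisely stable solidity of $M$.

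The only real obstacle is the Morita-type passage from $rQr$ to $Qz'$; everything else is either a direct check or follows from the quoted stability and intertwining lemmas.
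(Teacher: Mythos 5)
Your proof is correct and follows essentially the same route as the paper: both arguments take a maximal projection $z$ in $\cZ(Q)$ with $Qz$ amenable relative to $P\ot1$, apply the hypothesis to the compression by $p-z$, and then promote $rQr$-amenability to $Qz'$-amenability (with $z'$ the central support of $r$) to contradict maximality. The paper is simply terser, invoking the known existence of a largest such projection in the center of the normalizer of $Q$ and treating the $rQr\to Qz'$ Morita step and the verification $B(p-z)\not\prec P\ot1$ as standard facts, whereas you spell these two points out.
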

\begin{proof}
Take $B \subset p (P \ovt M) p$ with $B \not\prec P \ot 1$, and write $Q = B' \cap p(P \ovt M)p$. We have to prove that $Q$ is amenable relative to $P \ot 1$. Denote by $f \in Q' \cap p(P \ovt M)p$ the largest projection such that $Q f$ is amenable relative to $P \ot 1$. If $f = p$, we are done. We assume that $p - f \neq 0$ and derive a contradiction.

Since $f$ belongs to the center of the normalizer of $Q$, we have in particular that $f$ commutes with $B$. So $f \in \cZ(Q)$. We apply the assumption of the lemma to $B(p-f) \subset (p-f)(P \ovt M)(p-f)$. We thus find a nonzero projection $r \in Q$ such that $r \leq p-f$ and $rQr$ is amenable relative to $P \ot 1$. Denote by $z \in \cZ(Q)$ the central support of the projection $r \in Q$. Since $f \in \cZ(Q)$, we have that $z \leq p-f$. Since $rQr$ is amenable relative to $P \ot 1$, also $Q z$ is amenable relative to $P \ot 1$. Then $Q(z+f)$ is amenable relative to $P \ot 1$, which contradicts the maximality of $f$.
\end{proof}

\begin{proof}[Proof of Theorem \ref{thm.stably-solid-and-q-Gaussian}]
1. Write $M = M_q(H_\R)$. If $q=-1$ or $q=1$, the von Neumann algebra $M$ is amenable and there is nothing to prove. So for the rest of the proof of 1, we assume that $-1 < q < 1$. Fix a tracial von Neumann algebra $P$, a nonzero projection $p \in P \ovt M$ and a von Neumann subalgebra $B \subset p (P \ovt M)p$ such that $B \not\prec P \ot 1$. Denote by $Q = B' \cap p(P \ovt M)p$ its relative commutant. By Lemma \ref{lem.reduction-of-stable-solidity}, it suffices to prove that there exists a nonzero projection $r \in Q$ such that $rQr$ is amenable relative to $P \ot 1$.

By Lemma \ref{lem.deformation-vs-intertwining}, we find a nonzero projection $r \in Q$ and sequences $t_n \in (0,\pi/2)$, $u_n \in \cU(Br)$ such that $t_n \to 0$ and $\|(\id \ot E_M \circ \al_{t_n})(u_n)\|_2 \to 0$.
Define $\eta_n = (\id \ot \al_{t_n})(u_n)$. Since $t_n \to 0$, we get that
\begin{equation}\label{eq.what-we-need}
\lim_{n} \langle a \eta_n , \eta_n \rangle = \tau(rar) = \lim_{n} \langle \eta_n a , \eta_n \rangle \quad\text{and}\quad \lim_{n} \|b \eta_n - \eta_n b\|_2 = 0
\end{equation}
for all $a \in P \ovt M$ and all $b \in rQr$. Define $\zeta_n \in L^2(P) \ot L^2(\Mtil \ominus M)$ by $\zeta_n = \eta_n - (\id \ot E_M)(\eta_n)$. Since $\|(\id \ot E_M)(\eta_n)\|_2 \to 0$, the sequence $(\zeta_n)_n$ satisfies the same properties as in \eqref{eq.what-we-need}.

We choose an increasing net $(H_{i,\R})_{i \in I}$ of finite dimensional real subspaces of $H_\R$ whose union is dense in $H_\R$. We denote by $M_i := M_q(H_{i,\R})$ the corresponding von Neumann subalgebras of $M$. By construction, $\bigcup_{i \in I} M_i$ is dense in $M$.

Fix $i \in I$. Since $H_{i,\R}$ is finite dimensional, it follows from Proposition \ref{prop.good-estimate-q-Gaussian} and the remark following it that we can choose an integer $\kappa_i \geq 1$ such that the $M_i$-$M_i$-bimodule $L^2(\Mtil \ominus M)^{\ot^{\kappa_i}_M}$ is coarse. Write
$$\cH_i := L^2(P) \ot L^2(\Mtil \ominus M)^{\ot^{\kappa_i}_M} = (L^2(P) \ot L^2(\Mtil \ominus M))^{\ot^{\kappa_i}_{P \ovt M}}$$
and define $\xi_n \in \cH_i$ by $\xi_n := \zeta_n \ot_{P \ovt M} \cdots \ot_{P \ovt M} \zeta_n$. For every fixed $i \in I$, the sequence $(\xi_n)_n$ still satisfies the properties in \eqref{eq.what-we-need}. As a $(P \ovt M_i)$-$(P \ovt M_i)$-bimodule, $\cH_i$ is contained in a multiple of $L^2(P \ovt M) \ot_{P \ot 1} L^2(P \ovt M)$. By Proposition \ref{prop.rel-amen-approximation}, we get that $rQr$ is amenable relative to $P \ot 1$.

2. Take $q = \pm 1$. We still write $M = M_q(H_\R)$ and keep the notation from the proof of 1. Write $N = M \rtimes_\be \Gamma$, let $P$ be a tracial von Neumann algebra, $p \in P \ovt N$ a nonzero projection and $B \subset p(P \ovt N)p$ a von Neumann subalgebra. Denote by $Q = B' \cap p(P \ovt N)p$ its relative commutant. By Lemma \ref{lem.reduction-of-stable-solidity}, we may assume that for every nonzero projection $r \in Q$, $rQr$ is not amenable relative to $P \ot 1$, and we have to prove that $B \prec P \ot 1$.

In particular, $Q$ is not amenable relative to $P \ot 1$. Since $M$ is amenable, we also get that $Q$ is not amenable relative to $P \ovt M$. Since $L(\Gamma)$ is stably solid, we can apply Lemma \ref{lem.stably-solid-crossed-product} to the crossed product $P \ovt N = (P \ovt M) \rtimes_{\id \ot \be} \Gamma$. We conclude that $B \prec P \ovt M$. After replacing $P$ by $M_m(\C) \ot P$, and $p$ by $e_{11} \ot p$, we can take a projection $f \in P \ovt M$, a unital normal $*$-homomorphism $\theta : B \to f(P \ovt M)f$ and a nonzero partial isometry $v \in f(P \ovt N)p$ such that $\theta(b) v = v b$ for all $b \in B$. By making $f$ smaller if needed, we may assume that the support of $E_{P \ovt M}(vv^*)$ equals $f$. So if $\theta(B) \prec P \ot 1$ inside $P \ovt M$, it follows that $B \prec P \ot 1$ inside $P \ovt N$, and the theorem is proven.

We thus assume that $\theta(B) \not\prec P \ot 1$ inside $P \ovt M$, and we derive a contradiction. By Lemma \ref{lem.deformation-vs-intertwining}, we find a nonzero projection $r \in \theta(B)' \cap f(P \ovt M)f$ and sequences $t_n \in (0,\pi/2)$, $u_n \in \cU(B)$ such that $t_n \to 0$ and $\|(\id \ot E_M \circ \al_{t_n})(\theta(u_n)r)\|_2 \to 0$.

Using the direct sum representation $\pi \oplus \pi : \Gamma \to \cO(H_\R \oplus H_\R)$, we define the associated action $\Gamma \actson^{\betil} \Mtil$. We denote $\Ntil = \Mtil \rtimes_{\betil} \Gamma$. Since the automorphisms $\al_t$ and $\betil_g$ of $\Mtil$ commute for all $t \in \R$, $g \in \Gamma$, we extend $\al_t$ to automorphisms of $\Ntil$ by acting trivially on $L(\Gamma)$. Define $\xi_n \in L^2(P) \ot L^2(\Mtil \ominus M)$ by
$$\xi_n = (\id \ot \al_{t_n})(\theta(u_n)r) - (\id \ot E_M \circ \al_{t_n})(\theta(u_n)r) \; .$$
We view $\xi_n$ as vectors in $L^2(P) \ot L^2(\Ntil \ominus N)$. Write $Q_1 = \theta(B)' \cap f(P \ovt N)f$. Note that $r \in Q_1$. Take $\kappa \geq 1$ such that $\pi^{\ot^\kappa}$ is weakly contained in the regular representation of $\Gamma$. By \cite[Lemma 3.3]{Bou12} (which is equally applicable in the case $q=-1$, even though only stated for $q=1$), the $N$-$N$-bimodule $L^2(\Ntil \ominus N)^{\ot^\kappa_N}$ is weakly contained in the coarse $N$-$N$-bimodule. Using $\xi_n \ot_{P \ovt N} \cdots \ot_{P \ovt N} \xi_n$ and reasoning as in the proof of 1, we conclude that $r Q_1 r$ is amenable relative to $P \ot 1$.

Since $r$ commutes with $\theta(B)$ and since $r \in f(P \ovt M)f$, we get that $r v \neq 0$ and $\theta(b) rv = rv b$ for all $b \in B$. Denote by $w \in r(P \ovt N)p$ the polar part of $rv$. We still have that $\theta(b) w = w b$ for all $b \in B$. Denote $p_0 = w^* w$, so that $p_0 \in Q$. Since $rQ_1 r$ is amenable relative to $P \ot 1$, we can choose a functional $\Om_1 \in (P \ovt B(L^2(N)))^*_+$ such that $\Om_1(a) = \tau(r a r)$ for all $a \in P \ovt N$, and such that $\Om_1$ is $rQ_1r$-central. Since $w = rw$ and $w Q w^* \subset rQ_1r$, the functional $\Om \in (P \ovt B(L^2(N)))^*_+$ defined by $\Om(T) = \Om_1(w T w^*)$ satisfies $\Om(a) = \tau(p_0 a p_0)$ for all $a \in P \ovt N$, and $\Om$ is $p_0 Q p_0$-central. We have thus found a nonzero projection $p_0 \in Q$ such that $p_0 Q p_0$ is amenable relative to $P \ot 1$, which contradicts our initial assumptions.
\end{proof}

By a minor adaptation of the proof of Theorem \ref{thm.stably-solid-and-q-Gaussian.2}, one also gets the following stable version of \cite[Corollary 4.5]{Oza04}. This time, we leave the proof to the reader.

\begin{proposition}
Let $\Gamma$ be a discrete group for which $L(\Gamma)$ is stably solid. Let $(A_0,\tau_0)$ be an amenable tracial von Neumann algebra. Then the Bernoulli crossed product $(A_0,\tau_0)^\Gamma \rtimes \Gamma$ is stably solid.
\end{proposition}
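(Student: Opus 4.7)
I will follow the proof of Theorem \ref{thm.stably-solid-and-q-Gaussian.2} mutatis mutandis, replacing the amenable $q$-Gaussian base $M_q(H_\R)$ (for $q=\pm 1$) by the amenable Bernoulli base $A := (A_0,\tau_0)^\Gamma$ and writing $N := A \rtimes \Gamma$. Given a tracial von Neumann algebra $P$, a nonzero projection $p \in P \ovt N$ and a von Neumann subalgebra $B \subset p(P \ovt N)p$ with relative commutant $Q := B' \cap p(P\ovt N)p$, Lemma \ref{lem.reduction-of-stable-solidity} reduces matters to: assuming no corner $rQr$ is amenable relative to $P \ot 1$, deduce $B \prec P \ot 1$. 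Amenability of $A$ combined with Lemma \ref{lem.stably-solid-crossed-product}, applied inside the crossed product $P \ovt N = (P \ovt A) \rtimes \Gamma$ (using stable solidity of $L(\Gamma)$), gives $B \prec P \ovt A$, hence the standard intertwining data: a projection $f$, a unital normal $*$-homomorphism $\theta : B \to f(P \ovt A)f$, and a nonzero partial isometry $v$ with $\theta(b) v = v b$. As in the original proof, it suffices to prove $\theta(B) \prec P \ot 1$ inside $P \ovt A$, so I argue by contradiction and assume $\theta(B) \not\prec P \ot 1$.

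I then install Popa's malleable deformation for Bernoulli actions: take $\Atil_0 = A_0 * L(\Z)$ and a one-parameter group $\al_t \in \Aut \Atil_0$ (chosen to be s-malleable in Popa's sense) with $\al_0 = \id$ and $\al_t|_{A_0}$ moving off $A_0$ for $t \neq 0$; extend diagonally to $\Atil := \Atil_0^\Gamma \supset A$, and then to $\Aut \Ntil$, $\Ntil := \Atil \rtimes \Gamma$, by acting trivially on $L(\Gamma)$. The Bernoulli analog of Lemma \ref{lem.deformation-vs-intertwining} now reads: under $\theta(B) \not\prec P \ot 1$ inside $P \ovt A$, there exist a nonzero projection $r \in \theta(B)' \cap f(P \ovt A)f$ and sequences $t_n \downarrow 0$, $u_n \in \cU(B)$ satisfying $\|(\id \ot E_A \circ \al_{t_n})(\theta(u_n) r)\|_2 \to 0$. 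Its proof is a word-for-word translation of that of Lemma \ref{lem.deformation-vs-intertwining}, with the finite-dimensional subalgebras $M_q(H_{0,\R}) \subset M_q(H_\R)$ replaced by the finite-support Bernoulli subfactors $A_F := (A_0,\tau_0)^F$ over finite $F \subset \Gamma$, and the explicit $\cos t$ spectral estimate of the orthogonal rotation replaced by Popa's transversality inequality, which is automatic for any s-malleable choice of $\al_t$.

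To conclude, I set
$$\xi_n := (\id \ot \al_{t_n})(\theta(u_n) r) - (\id \ot E_A \circ \al_{t_n})(\theta(u_n) r) \in L^2(P) \ot L^2(\Atil \ominus A) \subset L^2(P) \ot L^2(\Ntil \ominus N),$$
which asymptotically implement almost-bimodularity and the correct trace for $rQ_1 r$, with $Q_1 := \theta(B)' \cap f(P \ovt N)f$. The structural input is that $\bim{A}{L^2(\Atil \ominus A)}{A}$ is weakly contained in the coarse $A$-$A$-bimodule, which follows from the orthogonal decomposition
$$L^2(\Atil \ominus A) \;=\; \bigoplus_{\emptyset \neq F \subset \Gamma,\, |F| < \infty}\; \Bigl(\bigotimes_{g \in F} L^2(\Atil_0 \ominus A_0)\Bigr) \ot L^2(A_{\Gamma \setminus F}),$$
the standard coarseness of $\bim{A_0}{L^2(\Atil_0 \ominus A_0)}{A_0}$ (a free-product fact for $A_0 * L(\Z)$), and amenability of the tail factors $A_{\Gamma \setminus F}$. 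The Bernoulli analog of \cite[Lemma 3.3]{Bou12} then promotes this to $\bim{N}{L^2(\Ntil \ominus N)}{N}$ being weakly contained in the coarse $N$-$N$-bimodule, with no tensor power needed since the Bernoulli Koopman representation of $\Gamma$ is already weakly contained in the regular representation (provided $A_0 \neq \C$; the case $A_0 = \C$ is trivial as $N = L(\Gamma)$). Applying Proposition \ref{prop.rel-amen-approximation} with the increasing net of subalgebras $\bigl(P \ovt (A_F \rtimes \Gamma)\bigr)_{F}$ indexed by finite $F \subset \Gamma$ yields that $r Q_1 r$ is amenable relative to $P \ot 1$; transferring through the polar part $w$ of $rv$ exactly as in the final paragraph of Theorem \ref{thm.stably-solid-and-q-Gaussian.2} produces a nonzero projection $p_0 \in Q$ such that $p_0 Q p_0$ is amenable relative to $P \ot 1$, contradicting the initial reduction. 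The main obstacle in this plan is the Bernoulli analog of Lemma \ref{lem.deformation-vs-intertwining}: the orthogonal rotation of $H_\R \oplus H_\R$ is replaced by a free-product type deformation, for which one must use Popa's transversality inequality in place of the explicit spectral estimate; once this is in hand, the remainder of the argument is routine bookkeeping.
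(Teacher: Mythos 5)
Your architecture is the intended one (the paper leaves this proposition as an adaptation of the proof of Theorem \ref{thm.stably-solid-and-q-Gaussian.2}), and most of the surrounding steps are sound: the reduction via Lemmas \ref{lem.reduction-of-stable-solidity} and \ref{lem.stably-solid-crossed-product}, the weak coarseness of $\bim{A}{L^2(\Atil \ominus A)}{A}$ (which correctly uses amenability of the tail algebras $A_{\Gamma\setminus F}$), and the transfer back through the polar part. The gap is exactly in the step you flag as the main obstacle: the Bernoulli analogue of Lemma \ref{lem.deformation-vs-intertwining} that you state is false, and its proof is not a word-for-word translation. For the free malleable deformation, $E_A \circ \al_t$ is still diagonal with respect to the grading of $L^2(A)$ by finite supports $F \subset \Gamma$, acting on the $F$-component as the scalar $c(t)^{|F|}$ with $c(t) = |\tau(u_t)|^2$; so the averaging step and the $\cos t$-type operator norm estimate do carry over (transversality is not what is needed here, and by itself it gives an inequality in the wrong direction for the averaging step). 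What does not carry over is the last step of the proof of Lemma \ref{lem.deformation-vs-intertwining}: there, $E_M \circ \al_{t_1} \circ E_{M_1}$ is compact because $H_1^{\ot^n}$ is finite dimensional, whereas the Bernoulli counterpart $E_A \circ \al_t \circ E_{A_F}$ acts on the component indexed by $F' \subset F$ as $c(t)^{|F'|} \geq c(t)^{|F|} > 0$ on the infinite-dimensional space $\bigotimes_{g \in F'} L^2(A_0)^\circ$; it is bounded below on $L^2(A_F)$ and is certainly not compact. The deformation argument therefore only yields $\theta(B) \prec P \ovt A_F$ for some finite $F$, not $\theta(B) \prec P \ot 1$. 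Indeed your lemma is false as stated: take $P = \C$, $A_0$ a diffuse amenable factor and $B = A_F$ for a finite $F$; then $B \not\prec \C 1$, yet for every nonzero projection $r \in B' \cap A = A_{\Gamma \setminus F}$ and every $u \in \cU(Br)$ one has $\|E_A(\al_t(u))\|_2 \geq c(t)^{|F|} |\tau(r)| > 0$, uniformly in $u$.

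Consequently your case analysis is missing a branch: $\theta(B) \prec P \ovt A_F$ for some finite $F$ while $\theta(B) \not\prec P \ot 1$. This branch is not vacuous, and handling it is where the real Bernoulli-specific work lies: one must show that for a von Neumann subalgebra $D \subset q(P \ovt A_F)q$ with $D \not\prec P \ot 1$, the relative commutant $D' \cap q(P \ovt N)q$ is amenable relative to $P \ot 1$. The malleable deformation is useless here (it barely moves $A_F$); what must be exploited instead is the clustering structure of the Bernoulli action --- $u_g A_F u_g^* = A_{gF}$ is in tensor position with $A_F$ for all $g$ outside the finite set $F F^{-1}$ --- together with amenability of $A_0$, for instance via the weak coarseness of the $A_F$-$A_F$-bimodule $L^2(N)$. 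A minor symptom of the same issue: the net ``$P \ovt (A_F \rtimes \Gamma)$'' you feed into Proposition \ref{prop.rel-amen-approximation} does not exist, since $\Gamma$ does not normalize $A_F$. Until the missing branch is supplied, the proof is incomplete.
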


In the paper \cite{Oza03}, Ozawa introduced the concept of solidity and proved it by combining the Akemann-Ostrand (AO) property with local reflexivity. In \cite[Definition 3.1.1]{Iso12}, a slight strengthening of (AO), coined (AO)$^+$, was introduced. In line with \cite{PV12}, it was then proven in \cite{Iso12} that all factors satisfying (AO)$^+$ and the weak$^*$ completely bounded approximation property (W$^*$CBAP) are strongly solid. The same method can be used as follows to prove stable solidity.

It then follows from \cite[Section 2.4]{Iso12} that the von Neumann algebras $L^\infty(\bG)$ of the Kac type universal orthogonal, resp.\ universal unitary quantum groups are stably solid II$_1$ factors.

\begin{proposition}
Every tracial von Neumann algebra $M$ that satisfies the condition (AO)$^+$ of \cite[Definition 3.1.1]{Iso12} and the W$^*$CBAP is stably solid.
\end{proposition}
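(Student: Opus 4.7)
The plan is to run Isono's proof of strong solidity under (AO)$^+$ + W$^*$CBAP (\cite[Theorem 4.1]{Iso12}) in the stable/relative setting, following the route by which \cite{PV12} upgraded \cite{Oza04} and \cite[Theorem 3.1(iii)]{DV24} upgraded \cite{PV12}.

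Fix $(P,\tau)$, a nonzero projection $p \in P \ovt M$, and $B \subset p(P \ovt M)p$ with $B \not\prec P \ot 1$; write $Q := B' \cap p(P \ovt M)p$. By Lemma \ref{lem.reduction-of-stable-solidity} it suffices to find a nonzero projection $r \in Q$ with $rQr$ amenable relative to $P \ot 1$. From Lemma \ref{lem.Popa-intertwining-for-bimodules}, the non-intertwining assumption yields a net of unitaries $v_k \in \cU(B)$ with $\|E_{P \ot 1}(x v_k y)\|_2 \to 0$ for all $x,y \in P \ovt M$. From the W$^*$CBAP, pick normal finite-rank completely bounded $\phi_n : M \to M$ with $\phi_n \to \id_M$ point-$\|\cdot\|_2$ and $\limsup_n \|\phi_n\|_{\rm cb} \leq 1$.

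The key technical input from (AO)$^+$ is a weakly dense, locally reflexive unital C$^*$-algebra $A \subset M$ for which
$$\nu : A \otmin A\op \to B(L^2(M))/K(L^2(M)), \qquad a \ot b\op \mapsto aJb^*J + K(L^2(M)),$$
is continuous. As in \cite[Section 4]{Iso12}, combining $\nu$ with local reflexivity and each $\phi_n$ produces a normal $M$-$M$-bimodular completely bounded map $\Psi_n : B(L^2(M)) \to M$ that vanishes on $K(L^2(M))$, satisfies $\Psi_n|_M \to \id_M$ point-$\|\cdot\|_2$, and factors (modulo compacts) through the coarse $M$-$M$-bimodule. Amplifying $\id_P \ot \Psi_n$ to $P \ovt B(L^2(M)) \to P \ovt M$, the analogous properties hold over $P \ot 1$: the amplified maps are $(P \ovt M)$-bimodular, vanish on $P \ovt K(L^2(M))$, and factor through a weak quotient of the coarse $(P \ovt M)$-bimodule $L^2(P \ovt M) \ot_{P \ot 1} L^2(P \ovt M)$.

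Twisting by $v_k$, define positive functionals on $P \ovt B(L^2(M))$ by
$$\Omega_{n,k}(T) := \tau\bigl(p \,(\id \ot \Psi_n)(v_k^* T v_k)\, p\bigr).$$
A direct computation using $v_k \in \cU(p(P \ovt M)p)$, $pQ = Qp = Q$, the $(P \ovt M)$-bimodularity of $\id \ot \Psi_n$, and the trace property shows that $\Omega_{n,k}$ is $Q$-central for all $n,k$, and that $\Omega_{n,k}(x) = \tau((\id \ot \Psi_n)(pxp)) \to \tau(pxp)$ as $n \to \infty$ for every $x \in P \ovt M$. A weak$^*$ limit point $\Omega$ of the net $(\Omega_{n,k})$ is thus $Q$-central, restricts to $\tau(p \cdot p)$ on $P \ovt M$, and vanishes on $P \ovt K(L^2(M))$. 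Viewed on the quotient $P \ovt B(L^2(M))/P \ovt K(L^2(M))$, which sits inside a multiple of the coarse $(P \ovt M)$-bimodule over $P \ot 1$, the functional $\Omega$ exhibits $Q$ as amenable relative to $P \ot 1$; in particular any nonzero $r \in Q$ will do.

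The principal obstacle is the construction of the maps $\Psi_n$ with the stated bimodularity and ``coarseness modulo compacts'' property, which is precisely the technical core of Isono's argument in \cite[Section 4]{Iso12} (using $\nu$ together with local reflexivity and the $\phi_n$). The genuinely new work is the amplification by $P$: one has to check that the (AO)$^+$-type weak containment survives tensoring with $\id_P$ and lands in the coarse $(P \ovt M)$-bimodule over $P \ot 1$ rather than over $\C$. The role played in Isono's strong solidity argument by a diffuse subalgebra (and its normalizing unitaries) is here played by the Popa-style intertwining net $(v_k)$, which is exactly the right mechanism to translate $B \not\prec P \ot 1$ into $Q$-central states vanishing on compacts in the $M$-direction.
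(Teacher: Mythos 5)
Your overall strategy — replace the normalizing unitaries of a diffuse subalgebra by a Popa intertwining net $(v_k)$ witnessing $B \not\prec P \ot 1$, use (AO)$^+$ to get a $*$-homomorphism to the Calkin algebra, and use the W$^*$CBAP to approximate through the coarse $(P\ovt M)$-bimodule over $P\ot 1$ — is exactly the conceptual route the paper takes, and your final paragraph correctly identifies the key mechanism. However, the technical core you invoke does not exist as stated, and that is a genuine gap, not a detail.

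You assert the existence of \emph{normal} $M$-$M$-bimodular completely bounded maps $\Psi_n : B(L^2(M)) \to M$ that \emph{vanish on} $K(L^2(M))$. These two requirements are incompatible: $K(L^2(M))$ is $\sigma$-weakly dense in $B(L^2(M))$, so any normal map vanishing on $K(L^2(M))$ is identically zero. A related problem is that you only have completely \emph{bounded} (not positive) maps $\phi_n$ from the W$^*$CBAP, so even if some $\Psi_n$ existed, the functional $\Omega_{n,k}(T) = \tau\bigl(p\,(\id\ot\Psi_n)(v_k^* T v_k)\,p\bigr)$ would merely be bounded and self-adjoint, not positive — the conclusion you need is a $Q$-central \emph{positive} functional. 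The paper avoids both issues by never building such maps $\Psi_n$. Instead it fixes a single positive functional $\Om_1$ (a weak$^*$ limit of vector states $T\mapsto\langle T u_i,u_i\rangle$ over the intertwining net), shows via the (AO)$^+$ ucp map $\theta$, local reflexivity, and the cb-norm bound $\|\vphi_j\|_{\rm cb}\leq\kappa$ that $|\Om_1(\Theta(d))| \leq C\,\|\Psi(d)\|$ for $d$ in $(P\ovt M)\otalg(P\ovt M)\op$, and then extends the \emph{already positive} functional $\Om_1\circ\Theta$ through the $*$-homomorphism $\Psi$ by a positive Hahn–Banach argument. Positivity comes for free from $\Om_1$, the cb maps are only used for the norm inequality, and nothing needs to vanish on compacts while remaining normal. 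If you want to salvage your write-up, you should drop the claim that $\Psi_n$ is normal and vanishes on compacts, drop the claim that $\Omega_{n,k}$ is positive a priori, and rework the argument around a sesquilinear estimate and positive extension in the style above.
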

\begin{proof}
Fix a tracial von Neumann algebra $P$, a nonzero projection $p \in P \ovt M$ and a von Neumann subalgebra $B \subset p(P \ovt M)p$ such that $B \not\prec P \ot 1$. Denote by $Q = B' \cap p(P \ovt M)p$ its relative commutant. We have to prove that $Q$ is amenable relative to $P \ot 1$.

Since $B \not\prec P \ot 1$, we can choose a net of unitaries $(u_i)_{i \in I}$ in $B$ such that $\|(1 \ot T)(u_i)\|_2 \to 0$ for every $T \in \cK(L^2(M))$, where we view $u_i$ as a vector in $L^2(P \ovt M)$.

Choose $\Om_1 \in B(L^2(P \ovt M))^*_+$ as a weak$^*$ limit point of the net of vector functionals $T \mapsto \langle T(u_i),u_i\rangle$. Define the $*$-algebra $D = (P \ovt M) \otalg (P \ovt M)\op$ and the unital $*$-homomorphisms
\begin{align*}
& \Theta : D \to B(L^2(P \ovt M)) : \Theta(a \ot b\op)(\xi) = a \xi b \quad\text{and}\\
& \Psi : D \to B(L^2(P \ovt M \ovt M)) : \Psi(a \ot b\op)(\xi) = a_{12} \xi b_{13} \; .
\end{align*}
Note that $\Om_1(\Theta(a \ot 1)) = \tau(pap) = \Om_1(1 \ot a\op)$ for all $a \in P \ovt M$ and $\Om_1(\Theta(v \ot \vbar)) = \tau(p)$ for all $v \in \cU(Q)$.

Below, we prove the existence of $C > 0$ such that
\begin{equation}\label{eq.goal-with-C}
|\Om_1(\Theta(d))| \leq C \, \|\Psi(d)\| \quad\text{for all $d \in D$.}
\end{equation}
By the Hahn-Banach theorem, we can then choose $\Om_2 \in B(L^2(P \ovt M \ovt M))^*_+$ such that $\Om_2(\Psi(d)) = \Om_1(\Theta(d))$ for all $d \in D$. In particular, $\Om_2(a \ot 1) = \tau(pap)$ for all $a \in P \ovt M$ and $\Om_2(\Psi(v \ot \vbar)) = \tau(p)$ for all $v \in \cU(Q)$. It follows that $\Om_2 \cdot \Psi(v \ot \vbar) = \Om_2 = \Psi(v \ot \vbar) \cdot \Om_2$ for all $v \in \cU(Q)$.

We define $\Om \in (P \ovt B(L^2(M)))^*_+$ by $\Om(T) = \Om_2(T \ot 1)$. Since $P \ovt B(L^2(M)) \ovt 1$ commutes with $\Psi(1 \ot (P \ovt M)\op)$, it follows that $\Om$ is $Q$-central and satisfies $\Om(a) = \tau(pap)$ for all $a \in P \ovt M$. So, $Q$ is amenable relative to $P \ot 1$.

It remains to prove that there exists a $C > 0$ such that \eqref{eq.goal-with-C} holds. Denote by $\lambda : M \to B(L^2(M))$ and $\rho : M\op \to B(L^2(M))$ the left multiplication, resp.\ right multiplication representation. Since $M$ satisfies (AO)$^+$, there exists a weakly dense, locally reflexive C$^*$-subalgebra $M_0 \subset M$ and a unital completely positive map $\theta : M_0 \otmin M_0\op \to B(L^2(M))$ such that for every $a,b \in M_0$, we have that $\theta(a \ot b\op) - \lambda(a)\rho(b\op) \in \cK(L^2(M))$.
We can then define $\Om_0 \in (B(L^2(P)) \otmin M_0 \otmin M_0\op)^*_+$ by $\Om_0(T) = \Om_1((\id \ot \theta)(T))$.

Define the $*$-subalgebra $D_0 \subset D$ by $D_0 = (P \otalg M_0) \otalg (P \otalg M_0)\op$. By definition, we have that $\Psi(D_0) \subset B(L^2(P)) \otmin M_0 \otmin M_0\op$. Since $\|(1 \ot T)(u_i)\|_2 \to 0$ for every $T \in \cK(L^2(M))$, we get that $\Om_1(B(L^2(P)) \otmin \cK(L^2(M))) = \{0\}$. Therefore $\Om_0(\Psi(d)) = \Om_1(\Theta(d))$ for all $d \in D_0$. With $C_0 := \|\Om_0\|$, it follows that $|\Om_1(\Theta(d))| \leq C_0 \, \|\Psi(d)\|$ for all $d \in D_0$.

Since $M$ satisfies the W$^*$CBAP and $M_0$ is locally reflexive, by \cite[Lemma 2.3.1]{Iso12}, we can choose a net of finite rank, normal, completely bounded maps $\vphi_j : M \to M_0$, $j \in J$, and a $\kappa > 0$ such that $\|\vphi_j\|\cb \leq \kappa$ for all $j \in J$, and such that $\|\vphi_j(b) - b \|_2 \to 0$ for all $b \in M$.

Fix $d \in D$. Define $d_j \in D_0$ by $d_j = ((\id \ot \vphi_j) \ot (\id \ot \vphi_j)\op)(d)$. Since $\Om_1(\Theta(a \ot 1)) = \tau(pap)$ for all $a \in P \ovt M$, we get that $\|\Om_1 \cdot \Theta(a \ot 1)\| \leq \|a\|_2$ for all $a \in P \ovt M$. Similarly, $\|\Theta(1 \ot a\op) \cdot \Om_1\| \leq \|a\|_2$ for all $a \in P \ovt M$. It follows that $\Om_1(\Theta(d_j)) \to \Om_1(\Theta(d))$. Viewing $\Psi(D) \subset B(L^2(P)) \ovt M \ovt M\op$, we also have that $\Psi(d_j) = (\id \ot \vphi_j \ot \vphi_j\op)(\Psi(d))$. Therefore,
\begin{align*}
|\Om_1(\Theta(d))| &= \limsup_{j \in J} |\Om_1(\Theta(d_j))| \leq \limsup_{j \in J} C_0 \, \|\Psi(d_j)\| \\
& = \limsup_{j \in J} C_0 \, \|(\id \ot \vphi_j \ot \vphi_j\op)(\Psi(d))\| \leq C_0 \kappa^2 \, \|\Psi(d)\| \; .
\end{align*}
So, \eqref{eq.goal-with-C} is proven and this concludes the proof of the proposition.
\end{proof}

\section{\boldmath W$^*$-correlations of finite von Neumann algebras}\label{sec.corr}

\subsection{General results}

In \cite[Definition 1.4]{IPR19}, the concept of \emph{von Neumann equivalence} between two finite von Neumann algebras $A$ and $B$ was introduced. A variant of this concept was defined in \cite[Definition 5.1]{BV22} and coined \emph{measure equivalence}. In hindsight, we believe that both terminologies are confusing and therefore propose to use the terminology \emph{W$^*$-correlation} instead of measure equivalence, or von Neumann equivalence.

We recall the basic definitions in this section and also clarify the precise relationship between W$^*$-correlated (i.e.\ measure equivalent) von Neumann algebras and the notion of von Neumann equivalence from \cite{IPR19}.

Although the rest of this paper only deals with countably decomposable von Neumann algebras, we formulate our results in complete generality in this section, for later reference. Also recall that a von Neumann algebra admits a faithful normal tracial state if and only if it is finite and countably decomposable. We use the terminology \emph{tracial von Neumann algebra} for a von Neumann algebra with a fixed faithful normal tracial state.

When $A$ is a finite von Neumann algebra, a right Hilbert $A$-module $\rightmod{K}{A}$ is said to be of \emph{finite type} if $\rightmod{K}{A}$ is isomorphic to a closed right $A$-submodule of a finite direct sum of copies of the standard representation on $H_A$. This means that there exists an $n \in \N$ and a projection $p \in M_n(\C) \ot A$ such that $\rightmod{K}{A} \cong \rightmod{p(\C^n \ot H_A)}{A}$. When $A$ is countably decomposable, in particular when $A$ has separable predual, this is equivalent to saying that $\rightmod{K}{A}$ is \emph{finitely generated} as a right Hilbert $A$-module. One similarly considers the notion of a finite type left Hilbert $A$-module.

Recall that a Hilbert $A$-$P$-bimodule $\bim{A}{H}{P}$ is called \emph{coarse} if the canonically associated $*$-homomorphism $A \otalg P\op \to B(H)$ extends to a normal $*$-homomorphism $A \ovt P\op \to B(H)$. Note that $\bim{A}{H}{P}$ is coarse if and only if $\bim{A}{H}{P}$ is isomorphic with a closed $A$-$P$-subbimodule of a direct sum of copies of the standard $A$-$P$-bimodule $H_A \ot H_P$.

The following definition was given in \cite{BV22}, but using the terminology \emph{measure equivalence} instead of \emph{W$^*$-correlation}.

\begin{definition}[{Definition 5.1 in \cite{BV22}}]\label{def.Wstar-corr}
Let $A$, $B$ and $P$ be finite von Neumann algebras. We say that an $A$-$(P \ovt B)$-bimodule $H$ is a \emph{$P$-embedding of $A$ into $B$} if $H$ is of finite type as a right Hilbert $(P \ovt B)$-module and coarse as an $A$-$P$-bimodule.

By coarseness, we can view $H$ as a $(P\op \ovt A)$-$B$-bimodule. If $H$ is moreover of finite type as a left Hilbert $(P\op \ovt A)$-module, we say that $H$ is a \emph{$P$-equivalence between $A$ and $B$.}

We say that two finite von Neumann algebras $A$ and $B$ are \emph{W$^*$-correlated} if there exists a finite von Neumann algebra $P$ and a $P$-equivalence between $A$ and $B$ that is faithful as a left $A$-module and as a right $B$-module.
\end{definition}

Note that by \cite[Lemma 5.2]{BV22}, being W$^*$-correlated defines an equivalence relation on the class of finite von Neumann algebras. Also note the following when comparing Definition \ref{def.Wstar-corr} to \cite[Definition 5.1]{BV22}: since a $P$-equivalence $H$ is assumed to be a right $(P \ovt B)$-module, it is automatically coarse when viewed as a $P\op$-$B$-bimodule.

\begin{remark}\label{rem.charact-Wstar-corr}
Note that after replacing $P$ by $M_n(\C) \ot P$ for large enough $n$, every $P$-equivalence between finite von Neumann algebras $A$ and $B$ is given by faithful unital normal $*$-homomorphisms $\al : A \to p(P \ovt B)p$ and $\be : B \to q(P\op \ovt A)q$ and a unitary $V : p (H_P \ot H_B) \to (H_{P\op} \ot H_A) q$ satisfying
\begin{equation}\label{eq.charact-Wstar-corr}
V(\al(a) \xi (d \ot b)) = (d\op \ot a)(V(\xi))\be(b) \quad\text{for all $a \in A$, $b \in B$, $d \in P$, $\xi \in p (H_P \ot H_B)$.}
\end{equation}
\end{remark}

The following example is essentially contained in \cite[Proposition 5.10]{BV22}.

\begin{example}\label{ex.twisted-group-vNalg}
Let $G$ be a discrete group and $\om \in Z^2(G,\T)$ a $2$-cocycle. Consider the group von Neumann algebra $L(G)$ with canonical unitaries $u_g \in L(G)$ and the twisted group von Neumann algebra $L_\om(G)$ with canonical unitaries $u^\om_g \in L_\om(G)$. Define the faithful normal $*$-homomorphism $\al : L_\om(G) \to L_\om(G) \ovt L(G) : \al(u^\om_g) = u^\om_g \ot u_g$ for all $g \in G$. With $P = L_\om(G)$, we get a $P$-equivalence between $L(G)$ and $L_\om(G)$.

Taking $G = \Z^2$ and the $2$-cocycle $\om((x_1,y_1),(x_2,y_2)) = \exp(2 \pi i \theta x_1 y_2)$, where $\theta \in \R \setminus \Q$, we find that $L(G)$ is diffuse abelian, while $L_\om(G) \cong R$, the hyperfinite II$_1$ factor. So, $L^\infty([0,1])$ is W$^*$-correlated to $R$. We discuss this in more detail in Section \ref{sec.Wstar-corr-amenable}.
\end{example}

The following elementary result shows that W$^*$-correlations behave well under direct sums and taking corners. The result already gives a first flavor of the flexibility that W$^*$-correlations offer, and shows that being W$^*$-correlated is a fully local property.

\begin{proposition}\label{prop.direct-sums-etc}
Let $A$ and $B$ be finite von Neumann algebras, $I$ a set and $p_i \in A$, $q_i \in B$ projections with central supports $z(p_i)$, resp.\ $z(q_i)$. Assume that $\bigvee_{i \in I} z(p_i) = 1$ and $\bigvee_{i \in I} z(q_i) = 1$. If $p_i A p_i$ is W$^*$-correlated to $q_i B q_i$ for every $i \in I$, then $A$ is W$^*$-correlated to $B$.

In particular, the following results hold.
\begin{enumlist}
\item\label{prop.direct-sums-etc.1} If $I$ is a set and if for every $i \in I$, we have W$^*$-correlated finite von Neumann algebras $A_i$, $B_i$, then $\bigoplus_{i \in I} A_i$ is W$^*$-correlated to $\bigoplus_{i \in I} B_i$.
\item\label{prop.direct-sums-etc.2} If $A$ is a finite von Neumann algebra and $p \in A$ is a projection with central support $1$, then $A$ is W$^*$-correlated to $p A p$.
\item\label{prop.direct-sums-etc.3} If $I$ is a set and if for every $i \in I$, we have a finite von Neumann algebra $A_i$ such that all $A_i$ are W$^*$-correlated to the same finite von Neumann algebra $B$, then also $\bigoplus_{i \in I} A_i$ is W$^*$-correlated to $B$.
\end{enumlist}
\end{proposition}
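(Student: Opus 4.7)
My plan is to prove the three particular cases (ii), (i), (iii) by explicit bimodule constructions, and then deduce the main statement by combining them via a central decomposition and transitivity of W$^*$-correlation.

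The core case is (ii): given $p \in A$ with $z(p) = 1$, I construct a $P$-equivalence between $A$ and $pAp$. The naive bimodule $\bim{A}{L^2(A)p}{pAp}$ is generally not of finite type on the right (its right $pAp$-dimension may be an unbounded center-valued quantity), so I compensate with an auxiliary tracial algebra. By comparison theory in the finite von Neumann algebra $A$, I pick partial isometries $(v_j)_{j \in J}$ with $v_j v_j^* \leq p$ and $\sum_j v_j^* v_j = 1$, giving the matrix-like $*$-homomorphism $\phi : A \to pAp \ovt B(\ell^2(J))$, $\phi(a) = \sum_{i,j}(v_i a v_j^*) \ot e_{ij}$, whose image lies inside a projection of finite trace. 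I then take $P$ to be a suitable finite tracial subalgebra of $B(\ell^2(J))$ (for instance $\ell^\infty(J)$ with weights $\tau_P(e_j) := \tau(v_j^* v_j)$, possibly further amplified) so that $P \ovt pAp$ contains a copy of $A$ as a corner. The bimodule $\ell^2(J) \ot L^2(pAp)$, equipped with the matrix-like $A$-action, the right $pAp$-action on $L^2(pAp)$, and the natural $P$-action on $\ell^2(J)$, then provides the required $P$-equivalence.

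For (i), given $P_i$-equivalences $H_i$ between $A_i$ and $B_i$, I take $P := \bigoplus_i P_i$ (with any summable faithful normal tracial state) and $H := \bigoplus_i H_i$ with direct-sum bimodule structure and zero off-diagonal actions. The finite type conditions reduce to each $H_i$ embedding as a submodule of $L^2(P_i \ovt B_i)$ on the right (and symmetrically on the left), which is arranged by the standard amplification replacing $P_i$ by $M_{n_i}(\C) \ovt P_i$ and $H_i$ by $\C^{n_i} \ot H_i$. Then (iii) follows as a direct specialization of the main statement (take all target algebras equal to a common $B$).

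For the main statement, I apply Zorn's lemma to the family $\{z(p_i)\}_{i \in I}$ to extract pairwise orthogonal central projections $\{\tilde z_\alpha\}_\alpha$ in $\cZ(A)$ summing to $1$, each $\tilde z_\alpha \leq z(p_{i(\alpha)})$ for some $i(\alpha) \in I$; symmetrically $\{\tilde w_\beta\}_\beta$ in $\cZ(B)$. In the corner $\tilde z_\alpha A$, the projection $\tilde z_\alpha p_{i(\alpha)}$ has full central support, so (ii) gives $\tilde z_\alpha A$ W$^*$-correlated to $\tilde z_\alpha p_{i(\alpha)} A p_{i(\alpha)}$; combining via (i) then yields $A$ W$^*$-correlated to $\bigoplus_\alpha \tilde z_\alpha p_{i(\alpha)} A p_{i(\alpha)}$, and symmetrically for $B$. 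The main obstacle is to tie these two decompositions together via the hypothesized correlations between $p_i A p_i$ and $q_i B q_i$: because W$^*$-correlation does not preserve centers (cf.\ Example~\ref{ex.twisted-group-vNalg}), $\tilde z_\alpha p_{i(\alpha)} \in \cZ(p_{i(\alpha)} A p_{i(\alpha)})$ need not correspond to a central projection of $q_{i(\alpha)} B q_{i(\alpha)}$. I resolve this by cutting the $P_{i(\alpha)}$-equivalence bimodule on the left by $\tilde z_\alpha p_{i(\alpha)}$, obtaining a W$^*$-correlation between $\tilde z_\alpha p_{i(\alpha)} A p_{i(\alpha)}$ and some (possibly non-central) corner of $q_{i(\alpha)} B q_{i(\alpha)}$; assembling these via (i) and comparing to the symmetric decomposition of $B$ yields $A$ W$^*$-correlated to $B$.
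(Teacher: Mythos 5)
There is a genuine gap in your proof of (ii), the core case. The bimodule you propose, namely $\ell^2(J)\ot L^2(pAp)$ with left $A$-action via $\phi(a)=\sum_{i,j}(v_i a v_j^*)\ot e_{ij}$ and right $P=\ell^\infty(J)$-action acting diagonally on $\ell^2(J)$, is not well defined: the two actions do not commute. The off-diagonal terms $v_i a v_j^*$ with $i\neq j$ mix the $\ell^2(J)$-components, while $\ell^\infty(J)$ acts component-wise, so $\phi(a)\rho(f)\neq\rho(f)\phi(a)$ in general. Equivalently, $\phi(A)$ does not land inside $pAp\ovt\ell^\infty(J)$, only in $pAp\ovt B(\ell^2(J))$, which is not finite when $J$ is infinite; your phrase ``so that $P\ovt pAp$ contains a copy of $A$ as a corner'' is precisely the nontrivial claim, and taking $P=\ell^\infty(J)$ (even amplified by matrices) does not achieve it. The paper circumvents this by first treating the case where the center-valued trace $E(p)\geq\eps 1$ --- there a \emph{single} isometry $V\in\C^n\ot A$ with $V^*V=1$ and $VV^*\leq 1\ot p$ embeds $H_A\cdot p$ into $\C^n\ot H_{pAp}$, with $P=\C$ --- and then reduces the general case to this one by cutting with the spectral projections $z_k=1_{[1/k,1]}(E(p))$, which in turn uses the fact that $A$ is W$^*$-correlated to $\bigoplus_k A z_k$.

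Your deduction of the main statement also has an unresolved issue. After disjointifying into central pieces $\tilde z_\alpha\leq z(p_{i(\alpha)})$ and cutting the $P_{i(\alpha)}$-equivalence on the left by $\tilde z_\alpha p_{i(\alpha)}$, the right action is no longer faithful: you get a W$^*$-correlation between $\tilde z_\alpha p_{i(\alpha)}Ap_{i(\alpha)}$ and a corner of $q_{i(\alpha)}Bq_{i(\alpha)}$ whose central support $c_\alpha$ in $B$ you do not control. You then need $\bigvee_\alpha c_\alpha=1$ to recover all of $B$, and nothing guarantees this; ``comparing to the symmetric decomposition of $B$'' is exactly where the argument breaks down. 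The paper sidesteps disjointification entirely: the key intermediate step is that $A$ is W$^*$-correlated to $\bigoplus_{i\in I}Az_i$ for \emph{any} family of central projections $z_i$ with $\bigvee_i z_i=1$, overlaps allowed, using the simple $P=\ell^\infty(I)$-equivalence $H=\bigoplus_i H_A\cdot z_i$. Combined with your (i) (which is essentially correct) this gives the main statement immediately for central $p_i,q_i$, and then (ii) reduces the general case to that one. I would suggest following this route: prove the overlapping-central-projections lemma first, then the bounded-below case of (ii), then general (ii) and the main statement.
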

\begin{proof}
We first prove the particular consequence (i). By Remark \ref{rem.charact-Wstar-corr}, take finite von Neumann algebras $P_i$ and faithful unital normal $*$-homomorphisms $\al_i : A_i \to p_i(P_i \ovt B_i)p_i$ and $\be_i : B_i \to q_i (P_i\op \ovt A_i)q_i$ such that there exists a unitary $V_i$ satisfying \eqref{eq.charact-Wstar-corr}. Denote by $A$, $B$ and $P$ the direct sums of $A_i$, $B_i$ and $P_i$. Defining $p$ as the direct sum of the $p_i$, we may view $p$ as a projection in $P \ovt A$. We similarly define $q \in P\op \ovt B$. Then the direct sum of the $\al_i$ defines a faithful unital normal $*$-homomorphisms $\al : A \to p(P \ovt B)p$. We similarly define $\be : B \to q (P\op \ovt A)q$. The direct sum of the $V_i$ provides a unitary $V$ satisfying \eqref{eq.charact-Wstar-corr}. So, $A$ is W$^*$-correlated to $B$.

We next prove the following statement: if $A$ is a finite von Neumann algebra and $(z_i)_{i \in I}$ is a family of central projections with $\bigvee_{i \in I} z_i = 1$, then $A$ is W$^*$-correlated to $B := \bigoplus_{i \in I} A z_i$. To prove this, take the standard representation $H_A$ of $A$. Consider the Hilbert space $H = \bigoplus_{i \in I} H_A \cdot z_i$. With the natural left action of $A$, right action of $B$ and right action of the finite von Neumann algebra $P := \ell^\infty(I)$, we get a $P$-equivalence between $A$ and $B$.

Now note that by the previous two paragraphs, the main statement of the proposition holds if $p_i \in A$ and $q_i \in B$ are central projections. Indeed, by the previous paragraph $A$ is W$^*$-correlated to $\bigoplus_{i \in I} A p_i$ and $B$ is W$^*$-correlated to $\bigoplus_{i \in I} B q_i$. If $A p_i$ is W$^*$-correlated to $B q_i$ for all $i \in I$, it then follows from the first paragraph that $A$ is W$^*$-correlated to $B$.

We then prove a special case of (ii). Let $A$ be a finite von Neumann algebra and denote by $E : A \to \cZ(A)$ the unique center-valued trace. Let $p \in A$ be a projection and $\eps > 0$ such that $E(p) \geq \eps 1$. We prove that $A$ is W$^*$-correlated to $p A p$. Take an integer $n \geq 1$ such that $1/n \leq \eps$. It follows that in the finite von Neumann algebra $M_n(\C) \ot A$, the center-valued trace of $1 \ot p$ is greater or equal than $1$, while the center-valued trace of $e_{11} \ot 1$ is equal to $1$. We thus find an isometry $V \in \C^n \ot A$ with $q := VV^*$ satisfying $q \leq 1 \ot p$ and $V^* V = 1$. Let $H_A$ be the standard representation of $A$. Then $H_A \cdot p$ is an $A$-$pAp$-bimodule. To see that we get a W$^*$-correlation, with $P = \C$, it suffices to note that left multiplication by $V$ indeed embeds the right $pAp$-module $H_A \cdot p$ in an $n$-fold direct sum of $p \cdot H_A \cdot p = H_{p A p}$.

We can now prove (ii) in general. For every integer $k \geq 1$, denote by $z_k \in \cZ(A)$ the spectral projection of $E(p)$ that corresponds to the interval $[1/k,1]$. Define $p_k \in \cZ(pAp)$ by $p_k = z_k p$. By the previous paragraph, $A z_k$ is W$^*$-correlated to $p_k A p_k$ for all $k \geq 1$. Since the central support of $p$ equals $1$, we have that $z_k \to 1$. So, $\bigvee_k z_k = 1$ and $\bigvee_k p_k = p$. So by the special case of the main statement for central projections, it follows that $A$ is W$^*$-correlated to $pAp$.

Then the main statement of the proposition follows. By (ii), we find that $A z(p_i)$ is W$^*$-correlated to $p_i A p_i$, and that $B z(q_i)$ is W$^*$-correlated to $q_i B q_i$ for all $i \in I$. So, $A z(p_i)$ is W$^*$-correlated to $B z(q_i)$ for all $i \in I$. Since the special case for central projections was already proven, it follows that $A$ is W$^*$-correlated to $B$.

Finally note that (iii) is indeed an immediate consequence. Write $A = \bigoplus_{i \in I} A_i$ and denote by $z_i$ the central projection corresponding to the direct summand $A_i$. Since $A z_i$ is W$^*$-correlated to $B$ and $\bigvee_{i \in I} z_i = 1$, also $A$ is W$^*$-correlated to $B$.
\end{proof}

Also stability of W$^*$-correlations under tensor products is easy to check.

\begin{proposition}\label{prop.Wstar-corr-tensor}
If for $i \in \{1,2\}$, $A_i$ and $B_i$ are W$^*$-correlated finite von Neumann algebras, then also $A_1 \ovt A_2$ is W$^*$-correlated to $B_1 \ovt B_2$.
\end{proposition}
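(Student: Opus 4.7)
The plan is to build the desired W$^*$-correlation between $A_1\ovt A_2$ and $B_1\ovt B_2$ by taking a Hilbert space tensor product of the given W$^*$-correlations between $A_i$ and $B_i$, and then to verify, one by one, the four defining conditions from Definition~\ref{def.Wstar-corr}, plus faithfulness.

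Concretely, for $i\in\{1,2\}$, choose a finite von Neumann algebra $P_i$ and a faithful $P_i$-equivalence $H_i$ between $A_i$ and $B_i$. Set $P:=P_1\ovt P_2$ and consider the Hilbert space $H:=H_1\ot H_2$ with its natural $(A_1\ovt A_2)$-$(P\ovt B_1\ovt B_2)$-bimodule structure obtained by letting $A_i$ act on the left of the $i$-th factor, $B_i$ on the right of the $i$-th factor, and $P_i$ also on the right of the $i$-th factor, after the canonical rearrangement $P_1\ovt P_2\ovt B_1\ovt B_2\cong (P_1\ovt B_1)\ovt(P_2\ovt B_2)$.

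To verify the conditions, first note that finite type on the right is preserved under tensor products: if $H_i\cong p_i(\C^{n_i}\ot H_{P_i\ovt B_i})$ as right $(P_i\ovt B_i)$-module with $p_i\in M_{n_i}(\C)\ot P_i\ovt B_i$, then
\[
H\cong (p_1\ot p_2)\bigl(\C^{n_1n_2}\ot H_{P\ovt B_1\ovt B_2}\bigr)
\]
as right $(P\ovt B_1\ovt B_2)$-module, using $H_{P_1\ovt B_1}\ot H_{P_2\ovt B_2}=H_{P\ovt B_1\ovt B_2}$. Coarseness as an $(A_1\ovt A_2)$-$P$-bimodule follows by tensoring the normal $*$-homomorphisms $\pi_i:A_i\ovt P_i\op\to B(H_i)$ provided by the coarseness of each $H_i$, giving a normal $*$-homomorphism $\pi_1\ot\pi_2$ whose domain is canonically $(A_1\ovt A_2)\ovt P\op$. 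Viewing $H$ as a $(P\op\ovt A_1\ovt A_2)$-$(B_1\ovt B_2)$-bimodule, finite type on the left is verified in exactly the same way, using the corresponding finite type decompositions of the $H_i$ as left $(P_i\op\ovt A_i)$-modules. Finally, faithfulness of the left $(A_1\ovt A_2)$-action and the right $(B_1\ovt B_2)$-action on $H=H_1\ot H_2$ follows immediately from faithfulness of the corresponding actions on each $H_i$.

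There is no real obstacle here; the only point requiring a moment of care is the bookkeeping in identifying $H_{P_1\ovt B_1}\ot H_{P_2\ovt B_2}$ with the standard representation of $P\ovt B_1\ovt B_2$, and in showing that coarseness, formulated as extension to a normal $*$-homomorphism of the binormal $*$-algebra, is stable under Hilbert tensor products; both facts are standard.
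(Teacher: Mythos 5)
Your proposal is correct and takes essentially the same approach as the paper: the paper also defines $H := H_1 \ot H_2$ as a $(P_1 \ovt P_2)$-equivalence and leaves the verification to the reader ("one checks that\dots"), whereas you spell out the four defining conditions. The bookkeeping you describe (identifying $H_{P_1 \ovt B_1} \ot H_{P_2 \ovt B_2}$ with $H_{P \ovt B_1 \ovt B_2}$, tensoring the normal $*$-homomorphisms witnessing coarseness) is exactly what the paper's "one checks" is pointing at.
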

\begin{proof}
If $P_i$ are finite von Neumann algebras and $H_i$ is a $P_i$-equivalence between $A_i$ and $B_i$, one checks that $H_1 \ot H_2$ becomes a $(P_1 \ovt P_2)$-equivalence between $A_1 \ovt A_2$ and $B_1 \ovt B_2$.
\end{proof}

Most of the time, we only consider finite von Neumann algebras $A$ and $B$ that are countably decomposable with a fixed normal faithful tracial state, resp.\ with separable predual. We prove that in that case, to define W$^*$-correlations, we may assume that the auxiliary finite von Neumann algebra $P$ satisfies the same restrictions.

\begin{proposition}\label{prop.countably-decomp-separable}
Let $A$ and $B$ be W$^*$-correlated finite von Neumann algebras.
\begin{enumlist}
\item\label{prop.countably-decomp-separable.1} If $A$ and $B$ are countably decomposable, there exists a II$_1$ factor $P$ and a $P$-equivalence between $A$ and $B$.
\item\label{prop.countably-decomp-separable.2} If $A$ and $B$ have separable predual, there exists a II$_1$ factor $P$ with separable predual and a $P$-equivalence $H$ between $A$ and $B$ such that $H$ is a separable Hilbert space.
\end{enumlist}
\end{proposition}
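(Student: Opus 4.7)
The plan is to reduce the auxiliary algebra $P$ in a given $P$-equivalence to a subalgebra $P_0$ of controlled size, then enlarge $P_0$ to a II$_1$ factor via a tracial free product, and amplify the bimodule via a Connes tensor product.

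Start with an arbitrary $P$-equivalence written in the form of Remark~\ref{rem.charact-Wstar-corr}, with data $(\al : A \to p(P \ovt B)p, \be : B \to q(P\op \ovt A)q, V)$. Define $P_0 \subset P$ to be the von Neumann subalgebra generated by the slices $(\id \ot \phi)(\al(a))$ and $(\id \ot \psi)(\be(b))$ for $a \in A$, $b \in B$, $\phi \in B_*$, $\psi \in A_*$, together with the analogous $P$-slices of the projections $p$ and $q$. Then $P_0$ inherits countable decomposability (resp.\ separability of predual) from $A$ and $B$. By construction $p \in P_0 \ovt B$, $q \in P_0\op \ovt A$, $\al(A) \subset p(P_0 \ovt B)p$, and $\be(B) \subset q(P_0\op \ovt A)q$. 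Using the intertwining identity $V(\al(a)\xi(d \ot b)) = (d\op \ot a)V(\xi)\be(b)$ for $d \in P_0$, the unitary $V$ restricts to a unitary $p L^2(P_0 \ovt B) \to q L^2(P_0\op \ovt A)$. Hence $H_0 := p L^2(P_0 \ovt B)$, with the inherited $A$- and $B$-actions, is a $P_0$-equivalence between $A$ and $B$.

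Now set $Q := P_0 * L(\F_2)$, the tracial free product. By classical results of Voiculescu and Dykema, $Q$ is a II$_1$ factor (since $L(\F_2)$ is a diffuse II$_1$ factor), and $Q$ inherits countable decomposability (resp.\ separability of predual) from $P_0$. Let $\iota : P_0 \hookrightarrow Q$ be the canonical trace-preserving unital normal embedding, and define
\[
H' := H_0 \ot_{P_0} L^2(Q),
\]
the Connes tensor product taken over the right $P_0$-action on $H_0$ (coming from $P_0 \ot 1 \subset P_0 \ovt B$) and the left $P_0$-action on $L^2(Q)$ via $\iota$. Equip $H'$ with the natural $A$-$(Q \ovt B)$-bimodule structure: the $A$- and $B$-actions inherited from $H_0$, and the right $Q$-action on $L^2(Q)$. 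Using $L^2(P_0) \ot_{P_0} L^2(Q) = L^2(Q)$, one computes $H' \cong (\iota \ot \id)(p)\, L^2(Q \ovt B)$, which gives finite type as right $(Q \ovt B)$-module; the analogous computation on the left, using the restricted $V$, gives finite type as left $(Q\op \ovt A)$-module. Coarseness as $A$-$Q$-bimodule is immediate from the commuting normal $A$- and $Q\op$-actions. Faithfulness on both sides is preserved because $\iota$ is faithful and hence the map $H_0 \hookrightarrow H'$, $\xi \mapsto \xi \ot \widehat{1_Q}$, is injective. For (ii), separability of $H'$ follows from separability of $H_0 \subset L^2(P_0 \ovt B)$ and of $L^2(Q)$.

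The main technical obstacle is the reduction step: while right-side finite type and coarseness transfer immediately to $H_0$, the finite type on the left requires tracking the intertwining unitary $V$ carefully and checking that its restriction to $pL^2(P_0 \ovt B)$ has image exactly $qL^2(P_0\op \ovt A)$. Once this is in place, the factor-enlargement via the Connes tensor product is a routine computation.
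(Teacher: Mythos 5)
Your high-level plan (first shrink the auxiliary algebra, then enlarge it to a II$_1$ factor by a tracial free product) is the same as the paper's, and the enlargement step via $P_0 * L(\F_2)$ and the Connes tensor product is correct in spirit; the paper does the analogous thing by freely multiplying with a diffuse abelian algebra and simply ``viewing'' $\al$, $\be\op$ and the intertwiner as taking values in the bigger algebra. The problem is entirely in the reduction step, and there are two genuine gaps.

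The more serious one is the unproved claim that ``$P_0$ inherits countable decomposability (resp.\ separability of predual) from $A$ and $B$''. This is not automatic. A von Neumann subalgebra of a countably decomposable algebra is countably decomposable (restrict the faithful state), but the starting $P$ is only assumed finite, and finite von Neumann algebras can fail countable decomposability badly (e.g.\ $\ell^\infty(\Om)$ for uncountable $\Om$). In such a $P$, a von Neumann subalgebra generated by a separable, or even countable, set of operators need not have separable predual and need not be countably decomposable (a single self-adjoint operator with sufficiently rich spectrum can generate $\ell^\infty(\Om)$). Nothing in the definition of $P_0$ as a sliced-down subalgebra rules this out. The paper deals with this by a different mechanism: one chooses an increasing net of \emph{central} projections $z_i \in \cZ(Q)$ with each $Qz_i$ countably decomposable and $z_i \to 1$, cuts the bimodule to $H\cdot(z_i \ot 1)$, and then uses the faithful normal tracial states on $A$ and $B$ to extract a countable subsequence $i_k$ so that the supports of the left $A$- and right $B$-actions on $H\cdot(z\ot 1)$, with $z=\bigvee_k z_{i_k}$, are the identity. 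The faithful traces on $A$ and $B$ are precisely what allow a countable cofinal choice, and the central cut is what guarantees $Qz$ is countably decomposable. This is the step your argument skips. The separable-predual case does reduce to a slice-generated subalgebra as you propose, but only \emph{after} one already knows the ambient algebra is countably decomposable, so part (ii) really needs part (i) first.

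The second gap you already flag: to get finite type on the left for $H_0 = pL^2(P_0 \ovt B)$ you need the intertwiner to restrict to a unitary $pL^2(P_0\ovt B)\to qL^2(P_0\op\ovt A)$. As defined, $P_0$ is generated only by slices of $\al$, $\be$, $p$, $q$; there is no reason the unitary $V$ carries $L^2(P_0\ovt B)$ into $L^2(P_0\op\ovt A)$, since $V$ is additional data not determined by those maps. The paper's proof makes this explicit by encoding the intertwiner as an operator $W \in D\ovt B(H_B,H_{A\op})$ and, in the separable-predual reduction, requiring $W \in D_0 \ovt B(H_B, H_{A\op})$. To salvage your reduction you would have to enlarge $P_0$ to also contain the appropriate slices of $W$; once you do, the restriction claim becomes true. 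But even with this fix, the countable decomposability of the resulting $P_0$ is still not established, so the central-projection argument of the paper (or some substitute) is needed for part (i).
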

\begin{proof}
(i) Choose faithful normal tracial states $\tau_A$ and $\tau_B$ on $A$, resp.\ $B$. Take a finite von Neumann algebra $Q$ and a $Q$-equivalence $H$ between $A$ and $B$. Choose an increasing net of central projections $(z_i)_{i \in I}$ in $Q$ such that $Q z_i$ is countably decomposable for every $i \in I$ and $z_i \to 1$. Define $H_i = H \cdot (z_i \ot 1)$. Note that $H_i$ is an $A$-$(Q \ovt B)$-subbimodule of $H$. Define $p_i \in \cZ(A)$ as the support of the left action of $A$ on $H_i$. Define $q_i \in \cZ(B)$ as the support of the right action of $B$ on $H_i$. Since $z_i \to 1$ and since the left action of $A$ on $H$ and the right action of $B$ on $H$ are faithful, we get that $p_i \to 1$ and $q_i \to 1$ strongly.

Choose $i_1 < i_2 < \cdots$ such that $\tau_A(p_{i_k}) \to 1$ and $\tau_B(q_{i_k}) \to 1$. Then also $p_{i_k} \to 1$ and $q_{i_k} \to 1$ strongly. Define $z = \bigvee_k z_{i_k}$. It follows that $Qz$ is countably decomposable and that $H \cdot (z \ot 1)$ is a $Qz$-equivalence between $A$ and $B$.

After replacing $Qz$ by a matrix algebra over $Qz$ and using Remark \ref{rem.charact-Wstar-corr}, we find a countably decomposable finite von Neumann algebra $D$, projections $p \in D \ovt B$, $q \in D \ovt A\op$, normal unital faithful $*$-homomorphisms $\al : A \to p(D \ovt B)p$ and $\be\op : B\op \to q(D \ovt A\op)q$, and an element $W \in D \ovt B(H_B,H_{A\op})$ satisfying
\begin{equation}\label{eq.my-good-formulas}
\begin{split}
& W \, (\id \ot \lambda_B)\al(a) \, (1 \ot \rho_B(b)) = (1\ot \rho_{A\op}(a\op)) \, (\id \ot \lambda_{A\op})\be\op(b\op)\, W \quad\text{and}\\
& W^* W = (\id \ot \lambda_B)(p) \; , \; W W^* =(\id \ot \lambda_{A\op})(q) \; .
\end{split}
\end{equation}
Here, given any standard representation of a finite von Neumann algebra $C$ on $H_C$, we denote by $\lambda_C : C \to B(H_C)$ the canonical $*$-homomorphism given by the left action and by $\rho_C : C \to B(H_C)$ the canonical $*$-antihomomorphism given by the right action.

Choose a faithful normal tracial state $\tau_D$ on $D$. Then define $(P,\tau) = (D,\tau_D) \ast (C,\tau_C)$, where $C$ is a diffuse abelian von Neumann algebra with faithful normal tracial state $\tau_C$. Then, $P$ is a II$_1$ factor and we can view $\al$ and $\be\op$ as taking values in $p (P \ovt B)p$, resp.\ $q(P \ovt A\op)q$. We can also view $W$ as an element of $P \ovt B(H_B,H_{A\op})$. In this way, we find a $P$-equivalence between $A$ and $B$.

(ii) As in the previous paragraphs, since $A$ and $B$ are W$^*$-correlated, we can take a finite von Neumann algebra $D$, projections $p$ and $q$, $*$-homomorphisms $\al$ and $\be\op$ and an element $W$ satisfying \eqref{eq.my-good-formulas}. Since $A$ and $B$ have separable predual, we can choose a von Neumann subalgebra $D_0 \subset D$ such that $D_0$ has separable predual, $p \in D_0 \ovt B$, $q \in D_0 \ovt A\op$, $\al(A) \subset D_0 \ovt B$, $\be\op(B\op) \subset D_0 \ovt A\op$, and $W \in D_0 \ovt B(H_B,H_{A\op})$. Next making a free product construction as in the previous paragraph, we find a II$_1$ factor $P$ with separable predual and a $P$-equivalence between $A$ and $B$.
\end{proof}

\subsection{\boldmath W$^*$-correlations versus von Neumann equivalence}\label{sec.Wstar-corr-vs-vN-equiv}

Recall the following definition of \emph{von Neumann equivalence} from \cite{IPR19}. As above, we denote by $H_C$ the standard Hilbert space of a finite von Neumann algebra $C$.

\begin{definition}[{Definition 1.4 in \cite{IPR19}}]\label{def.vNequiv}
Two finite von Neumann algebras $A$ and $B$ are called \emph{von Neumann equivalent} if there exists a semifinite von Neumann algebra $\cM$ and unital embeddings $A,B\op \subset \cM$ such that the following holds: $A$ commutes with $B\op$, and there exist intermediate embeddings $A \subset B(H_A) \subset \cM$ and $B\op \subset B(H_B) \subset \cM$ such that the finite rank projections in $B(H_A)$ and $B(H_B)$ are finite projections of $\cM$.
\end{definition}

We prove that if $A$ and $B$ are von Neumann equivalent (in the sense of Definition \ref{def.vNequiv}), then $A$ and $B$ are W$^*$-correlated (in the sense of Definition \ref{def.Wstar-corr}). The converse holds if $A$ and $B$ are factors, but does not hold in general (see Example \ref{ex.counterex-vNequiv-Wstar-corr}).

\begin{proposition}\label{prop.Wstar-corr-vs-vN-equiv}
Let $A$ and $B$ be finite von Neumann algebras.
\begin{enumlist}
\item If $A$ and $B$ are von Neumann equivalent, then $A$ and $B$ are W$^*$-correlated.
\item If $A$ and $B$ are finite factors and if $A$ and $B$ are W$^*$-correlated, then $A$ and $B$ are von Neumann equivalent.
\end{enumlist}
\end{proposition}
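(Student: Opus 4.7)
The two implications are proven independently.

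\textbf{For (i).} Given a von Neumann equivalence with $A, B\op \subset \cM$ commuting inside a semifinite $(\cM, \Tr)$ and intermediate type I factors $A \subset B(H_A) \subset \cM$, $B\op \subset B(H_B) \subset \cM$ having finite-rank projections finite in $\cM$, I would construct the W$^*$-correlation bimodule by cutting $\cM$ by a suitable finite projection. Concretely, pick rank-one projections $p \in B(H_A) \cap \cM$ and $q \in B(H_B) \cap \cM$ (both finite in $\cM$ by hypothesis), set $P$ equal to an appropriate finite corner of $\cM$ (such as $e\cM e$ for $e = p \vee q$), and take $H$ to be a corresponding cut of $L^2(\cM, \Tr)$. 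The bimodule then carries a left $A$-action, a right $B$-action (from $B\op \subset \cM$ acting by left multiplication, which commutes with the left $A$-action because $A$ and $B\op$ commute in $\cM$), and a right $P$-action (from right multiplication by $e\cM e$), pairwise commuting and making $H$ an $A$-$(P \ovt B)$-bimodule. Coarseness as an $A$-$P$-bimodule and the finite-type conditions are verified using that the rank-one projections in $B(H_A), B(H_B)$ are finite in $\cM$, which controls the relevant right-$(P \ovt B)$ and left-$(P\op \ovt A)$ dimensions. Faithfulness is immediate from the unital inclusions $A, B\op \subset \cM$.

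\textbf{For (ii).} Given a $P$-equivalence $H$ between II$_1$ factors $A, B$ (where I would first invoke Proposition \ref{prop.countably-decomp-separable} to arrange $P$ as a II$_1$ factor), I would extract a triple tensor decomposition of $H$ from the coarseness and factor hypotheses. Coarseness of $H$ as $A$-$P$-bimodule combined with $A$ a factor yields $H \cong H_A \otimes K$ with $A$ acting standardly on $H_A$ and right $P$ landing inside $1 \otimes B(K)$ (rather than merely $A\op \otimes B(K)$). Next, finite type of $H$ as right $(P \ovt B)$-module forces $K$ to be coarse as a $P\op$-$B$-bimodule, so with $B$ a factor one further decomposes $K \cong K_0 \otimes H_B$ with right $B$ acting standardly on $H_B$ and right $P$ in $1 \otimes B(K_0) \otimes 1$ on $H = H_A \otimes K_0 \otimes H_B$. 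Using this decomposition, I would then construct $\cM$ as a semifinite subalgebra of $B(H)$ containing $A \subset B(H_A) \otimes 1 \otimes 1 \subset \cM$ and $B\op \subset 1 \otimes 1 \otimes B(H_B) \subset \cM$ with $A, B\op$ commuting (from the tensor structure), and equipped with a semifinite trace that, because $\dim_P K_0 < \infty$, renders rank-one projections of both intermediate factors finite in $\cM$.

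\textbf{Main obstacle.} The main technical difficulty is in (ii): identifying the correct $\cM$ and trace. The naive choice $\cM := \rho_P(P)' \cap B(H)$ has the form $B(H_A) \ovt C \ovt B(H_B)$ with $C$ finite, and no semifinite trace on such a triple tensor product can simultaneously make rank-one projections of both $B(H_A) \otimes 1 \otimes 1$ and $1 \otimes 1 \otimes B(H_B)$ finite, since both type I$_\infty$ tensor factors contribute an ``infinite direction''. The correct $\cM$ must be a strict subalgebra, imposing extra relations in a crossed-product-like fashion, analogous to $\cM = L^\infty(\Omega) \rtimes (\Gamma \times \Lambda) \subsetneq B(L^2(\Omega))$ in the measure-equivalence case. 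The factor hypotheses on $A$ and $B$ are essential for both tensor decompositions to clean up and yield the intermediate type I factors; without them the converse to (i) fails in general, as shown in Example \ref{ex.counterex-vNequiv-Wstar-corr}.
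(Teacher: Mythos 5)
Your outline for (i) is essentially the paper's argument (cut the standard space of $\cM$ by a finite projection coming from the intermediate type I factors and read off the $A$-$(P\ovt B)$-bimodule structure), although the clean choice is $P = f\cM f$ for $f$ a \emph{minimal} projection of $B(H_B)$, which gives the tensor splitting $\cM = P \ovt B(H_B)$; with $e = p \vee q$ you lose that splitting. You also skip the comparison of the two finite projections $e$ and $f$, which in general only holds after amplification and after cutting by a central projection of $\cM$ — the paper spends half of the proof of (i) on exactly this point. Still, for (i) the strategy is sound.

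Part (ii) contains a genuine gap, and moreover your diagnosis of the "main obstacle" rests on a false decomposition. The splitting $H \cong H_A \ot K_0 \ot H_B$ with $A$ acting standardly on $H_A$, $B$ acting standardly on the right of $H_B$, and $\rho(P) \subset 1 \ot B(K_0) \ot 1$ is incompatible with the hypotheses: such an $H$ would have right $(P \ovt B)$-dimension equal to $\dim(H_A)\cdot\dim_P(K_0) = \infty$, contradicting the finite-type condition. This is the essential feature of an ME-type equivalence — the left and right "fundamental domains" overlap, so the space does not split into a left-standard factor, a right-standard factor and a finite middle. Consequently your claim that $\rho(P)' \cap B(H)$ is a triple tensor product $B(H_A) \ovt C \ovt B(H_B)$, and hence cannot carry the right trace, is wrong; in fact this "naive choice" is precisely what the paper takes. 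Writing the correlation as $\bim{\al(A)}{pL^2(P \ovt B)}{P \ovt B}$ with $p \in P \ot 1$ (possible since $A$, $B$ are factors and one may take $P$ a II$_1$ factor), the commutant of the right $P$-action is $\cM = pPp \ovt B(L^2(B))$, which is semifinite with the finite-rank projections of $1 \ot B(L^2(B))$ finite, and contains the commuting copies $\al(A) \subset pPp \ovt B$ and $1 \ot B\op$. The missing idea is that the intertwining unitary $W$ onto the other picture $qL^2(P \ovt A\op)$ conjugates this same $\cM$ onto $qPq \ovt B(L^2(A\op))$, exhibiting the second intermediate type I factor as $W^*(1 \ot B(L^2(A\op)))W$; since finiteness of projections is intrinsic to $\cM$, both families of finite-rank projections are finite. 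The two copies of $B(H_A)$ and $B(H_B)$ sit transversally inside $\cM$ rather than as independent tensor factors, so no "extra relations" or crossed-product construction is needed. As written, your proof of (ii) stops exactly at the step that constitutes the proof.
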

\begin{proof}
(i) Take $\cM$ as in Definition \ref{def.vNequiv}. Take $A \subset B(H_A) \subset \cM$ and $B\op \subset B(H_B) \subset \cM$. Fix a minimal projection $e \in B(H_A)$, which we also view as a finite projection in $\cM$. Also fix a minimal projection $f \in B(H_B)$, which we also view as a finite projection in $\cM$.

We start by proving that $A$ and $B$ are W$^*$-correlated under the extra assumption that there exists an integer $n \geq 1$ such that in $M_n(\C) \ot \cM$, the projection $e_{11} \ot e$ is subequivalent to the projection $1 \ot f$.

Write $P = f \cM f$ and note that $P$ is a finite von Neumann algebra. Choose a standard representation of $\cM$ on the Hilbert space $H_{\cM}$. Since $B(H_B) \subset \cM$, we get a canonical identification $\cM = P \ovt B(H_B)$. This leads to a canonical identification of $H_{\cM} = H_P \ot H_B \ot \overline{H_B}$, and thus of $H_{\cM} \cdot f = H_P \ot H_B$. Under this last identification, the left action of $\cM$ on $H_{\cM} \cdot f$ corresponds to the natural left action of $P \ovt B(H_B)$ on $H_P \ot H_B$, and the right action of $f \cM f$ on $H_{\cM} \cdot f$ corresponds to the natural right action of $P \ot 1$ on $H_P \ot H_B$.

Since $A \subset \cM$ commutes with $B\op \subset \cM$, the left action of $A$ on $H_{\cM} \cdot f$ translates into a faithful normal unital $*$-homomorphism $\al : A \to P \ovt B \subset P \ovt B(H_B)$. In this way, $H_P \ot H_B$ becomes a Hilbert $A$-$(P \ovt B)$-bimodule. By construction, $H_P \ot H_B$ is of finite type as a right Hilbert $(P \ovt B)$-module. It thus remains to prove that $H_P \ot H_B$ is coarse as an $A$-$P$-bimodule and then of finite type as a left Hilbert $(P\op \ovt A)$-module.

Since $e \in B(H_A)$ is a minimal projection and $B(H_A) \subset \cM$, we identify $H_P \ot H_B = H_{\cM} \cdot f$ with $e \cdot H_{\cM} \cdot f \ot H_A$, in such a way that the left action of $A$ corresponds to the left action of $1 \ot A$ on $e \cdot H_{\cM} \cdot f \ot H_A$, and the right action of $P = f \cM f$ corresponds to the right action of $P \ot 1$ on $e \cdot H_{\cM} \cdot f \ot H_A$. Coarseness thus follows and it suffices to prove that $e \cdot H_{\cM} \cdot f$ is of finite type as a right Hilbert $P$-module.

By assumption, we find $V \in \C^n \ot \cM$ such that $V^* V = e$ and $VV^* \leq 1 \ot f$. Therefore, left multiplication by $V$ embeds the right Hilbert $P$-module $e \cdot H_{\cM} \cdot f$ into $\C^n \ot f \cdot H_{\cM} \cdot f = \C^n \ot H_P$, so that it is of finite type. This concludes the proof of (i) under the extra assumption on $e$ and $f$.

In general, $e$ and $f$ are finite projections in $\cM$ with central support equal to $1$. Define $r = e \vee f$ and write $Q = r \cM r$. Then $Q$ is a finite von Neumann algebra. Denote by $E : Q \to \cZ(Q)$ its unique center-valued trace. As projection in $Q$, the central support of $f$ equals $1$. Choose an integer $n \geq 1$ large enough such that the spectral projection $c \in \cZ(Q)$ of $E(f)$ corresponding to the interval $[1/n,1]$ is nonzero. It follows that in $M_n(\C) \ot Q$, the projection $e_{11} \ot ec$ is subequivalent to the projection $1 \ot fc$. Choose a central projection $z \in \cZ(\cM)$ such that $c = z r$. It follows that in $M_n(\C) \ot \cM z$, the projection $e_{11} \ot e z$ is subequivalent to the projection $1 \ot fz$.

Since $z \in \cZ(\cM)$, we may view $B(H_A) \cong B(H_A) z \subset \cM z$ and $B(H_B) \cong B(H_B) z \subset \cM z$. Since the projections $ez$ and $fz$ satisfy the extra assumption, it follows from the first part of the proof that $A$ and $B$ are W$^*$-correlated.

(ii) By Proposition \ref{prop.countably-decomp-separable.1} and Remark \ref{rem.charact-Wstar-corr}, we can choose a II$_1$ factor $P$, projections $p \in P \ovt B$ and $q \in P \ovt A\op$, unital normal $*$-homomorphisms $\al : A \to p(P \ovt B) p$ and $\be\op : B\op \to q(P \ovt A\op)q$, and a unitary $W : p L^2(P \ovt B) \to q L^2(P \ovt A\op)$ such that $W(\al(a) \xi (d \ot b)) = \be\op(b\op) W(\xi) (d \ot a\op)$ for all $a \in A$, $b \in B$ and $d \in P$. Since $A$ and $B$ are factors and $P$ is a II$_1$ factor, after a conjugacy, we may assume that $p \in P \ot 1$ and $q \in P \ot 1$. We thus write $p \ot 1$ and $q \ot 1$ instead, with $p \in P$ and $q \in P$.

Define the semifinite von Neumann algebra $\cM = p P p \ovt B(L^2(B))$ with the commuting subalgebras $\al(A) \subset pPp \ovt B \subset \cM$ and $1 \ot B\op \subset 1 \ot B(L^2(B)) \subset \cM$. By construction, the finite rank projections in $1 \ot B(L^2(B))$ are finite in $\cM$. Also note that $W \cM W^* = q P q \ovt B(L^2(A\op))$, with $W \al(A) W^*$ given by the right action $1 \ot A \subset 1 \ot B(L^2(A\op))$. The finite projections in $1 \ot B(L^2(A\op))$ are finite in $q P q \ovt B(L^2(A\op))$. Modulo the canonical unitary $L^2(A\op) = L^2(A)$, we thus find the required copy of $A \subset B(L^2(A)) \subset \cM$ given by $W^* (1 \ot B(L^2(A\op)))W$. So, $A$ and $B$ are von Neumann equivalent.
\end{proof}

\begin{example}\label{ex.counterex-vNequiv-Wstar-corr}
Note that the discrete abelian von Neumann algebras $\C$ and $\ell^\infty(\N)$ are W$^*$-correlated by Proposition \ref{prop.direct-sums-etc.3}. These finite von Neumann algebras are however not von Neumann equivalent in the sense of Definition \ref{def.vNequiv}. Indeed, assume that $\cM$ is a semifinite von Neumann algebra, and that $\C \subset \C \subset \cM$ and $\ell^\infty(\N) \subset B(\ell^2(\N)) \subset \cM$ witness von Neumann equivalence. Since the identity $1 \in \cM$ is a minimal projection in $\C$, we first get that $1$ is a finite projection. But that contradicts the inclusion $B(\ell^2(\N)) \subset \cM$.
\end{example}

\subsection{\boldmath W$^*$-correlations, intertwining-by-bimodules and relative amenability}

Let $B$ be a tracial von Neumann algebra and $B_0 \subset B$ a von Neumann subalgebra. In the following proposition, the intertwining relation $B \prec_B B_0$ appears. Note that this is equivalent to the existence of a nonzero projection $p \in B_0' \cap B$ such that $L^2(B)p$ is finitely generated as a right Hilbert $B_0$-module. When $B_0$ and $B$ are factors, this is in turn equivalent to $B_0 p \subset p B p$ having finite Jones index.

Also recall that $B_0 \subset B$ is said to be \emph{co-amenable} if $B$ is amenable relative to $B_0$ inside $B$.

\begin{proposition}\label{prop.not-with-smaller-B}
Let $A$, $B$ and $P$ be tracial von Neumann algebras, $p \in P \ovt B$ a projection and assume that $\al : A \to p (P \ovt B)p$ defines a $P$-equivalence between $A$ and $B$. Let $B_0 \subset B$ be a von Neumann subalgebra.
\begin{enumlist}
\item\label{prop.not-with-smaller-B.1} If $\al(A) \prec_{P \ovt B}^f P \ovt B_0$, then $B \prec^f_B B_0$.
\item\label{prop.not-with-smaller-B.2} If $\al(A)$ is amenable relative to $P \ovt B_0$, then $B_0 \subset B$ is co-amenable.
\end{enumlist}
\end{proposition}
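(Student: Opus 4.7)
The strategy for both parts is to exploit the symmetric structure of the $P$-equivalence. By Remark \ref{rem.charact-Wstar-corr}, the data may be taken to consist of faithful unital normal $*$-homomorphisms $\al : A \to p(P \ovt B)p$ and $\be : B \to q(P\op \ovt A)q$, together with a unitary $V : p(H_P \ot H_B) \to (H_{P\op} \ot H_A) q$ satisfying the intertwining \eqref{eq.charact-Wstar-corr}. Under $V$, right multiplication by $d \ot b \in P \ovt B$ on $pL^2(P \ovt B)$ corresponds to the mixed operation $\xi \mapsto (d\op \ot 1) \xi \be(b)$ on $L^2(P\op \ovt A)q$. The plan is to transport intertwining relations and relative amenability from the $\al$-side in $P \ovt B$ to the $\be$-side in $P\op \ovt A$, and then to translate statements about $\be(B_0) \subset \be(B)$ into statements about $B_0 \subset B$.

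For (i), starting from $\al(A) \prec^f_{P \ovt B} P \ovt B_0$, I would take a nonzero $\al(A)$-invariant, finite-type right $(P \ovt B_0)$-subbimodule $K$ of $pL^2(P \ovt B)$. Since $K$ is simultaneously left $\al(A)$-invariant and right $(P \ovt 1)$-invariant, the image $V(K)$ is a closed left $(P\op \ovt A)$-invariant subspace of $L^2(P\op \ovt A)q$, hence of the form $L^2(P\op \ovt A) \cdot f$ for some projection $f \in P\op \ovt A$ with $f \leq q$. The right $\be(B_0)$-invariance of $V(K)$ then forces $f \in \be(B_0)' \cap (P\op \ovt A)$. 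The finite-type condition on $K$ as a right $(P \ovt B_0)$-module translates, via $V$ and the coarseness of the bimodule on the $\be$-side, into a finite-index-type statement for the inclusion $\be(B_0) f \subset f(P\op \ovt A)f$, from which one extracts a nonzero finite-type $B$-$B_0$-subbimodule of $L^2(B)$, giving $B \prec_B B_0$. The upgrade from $\prec$ to $\prec^f$ is handled by tracking the central projections of the normalizer of $\al(A)$ through $V$: they correspond to projections commuting with $\be(B)$ and with $P\op \ot 1$, which in turn descend to central projections in the normalizer of $B$ that are compatible with $B_0$.

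For (ii), assume $\al(A)$ is amenable relative to $P \ovt B_0$. Since $\langle P \ovt B, e_{P \ovt B_0}\rangle = P \ovt \langle B, e_{B_0}\rangle$, the hypothesis yields an $\al(A)$-central state on $p(P \ovt \langle B, e_{B_0}\rangle)p$, equivalently a net of $\al(A)$-almost-central vectors $\xi_n \in p(L^2(P \ovt B) \ot_{P \ovt B_0} L^2(P \ovt B))p$. I would transport these through $V$, exploit the finite-type structure of $V(H) = L^2(P\op \ovt A)q$ as a left $(P\op \ovt A)$-module, and apply the coarseness intrinsic to the $P$-equivalence to ``integrate out'' the auxiliary algebra $P\op \ovt A$—in the spirit of Proposition \ref{prop.basic-prop.5}, but generalized from the case $B_0 = \C$ to arbitrary $B_0 \subset B$—to produce a net of vectors in $L^2(B) \ot_{B_0} L^2(B)$ that are $B$-almost-central and give the trace on $B$, thereby establishing co-amenability of $B_0 \subset B$.

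The main obstacle is formalizing the transport of the mixed ``left $P\op$ plus right $\be(B_0)$'' action on $V(H)$ back into the standard right $B_0$-action on $L^2(B)$, while carefully handling the left/right corner asymmetry (the left corner $pL^2(P \ovt B)$ versus the right corner $L^2(P\op \ovt A)q$). This forces the simultaneous use of both finite-type conditions of the $P$-equivalence—finite type as a right $(P \ovt B)$-module on one side and as a left $(P\op \ovt A)$-module on the other—so that the auxiliary algebra $P$ can be cleanly cancelled out on the $B$-side.
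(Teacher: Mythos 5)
Your proposal takes a genuinely different route from the paper's, and by your own admission defers the hardest step. For contrast: the paper handles both parts uniformly by a single direct construction. Using the basic-construction characterization of $\prec^f$ and relative amenability, the hypothesis yields an $\al(A)$-central positive functional $\Om$ on $p(P\ovt\langle B,e_{B_0}\rangle)p$ extending $\tau$ (normal in case (i)). The paper then sets $\eta := V^*(q) \in pL^2(P\ovt B)$, extends the sesquilinear forms $\Om_T(x,y) := \Om(x(1\ot T)y^*)$, $T\in\langle B,e_{B_0}\rangle$, from $p(P\ovt B)$ to $pL^2(P\ovt B)$ via the Cauchy--Schwarz estimate $|\Om_T(x,y)|\leq\|T\|\,\|x\|_2\,\|y\|_2$, and defines $\om(T):=\Om_T(\eta,\eta)$. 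One checks that $\om$ is a positive functional on $\langle B,e_{B_0}\rangle$, normal whenever $\Om$ is, with $\om|_B=\tau\circ\be$ a faithful normal trace on $B$, and that $\om$ is $B$-central; this last step uses the $\|\cdot\|_2$-approximation of $\be(b)$ by finite sums $\sum_i d_i\op\ot a_i$, applies $V^*$ to obtain $\eta(1\ot b)\approx\sum_i\al(a_i)\eta(d_i\ot 1)$, and invokes the $\al(A)$-centrality of $\Om$. Both (i) and (ii) then follow at once.

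The gap in your sketch is exactly the extraction step. For (i), after reaching $V(K)=L^2(P\op\ovt A)f$ with $f\in\be(B_0)'\cap q(P\op\ovt A)q$, you claim that a finite-index-type property of $\be(B_0)f\subset f(P\op\ovt A)f$ yields a finite-type $B$-$B_0$-subbimodule of $L^2(B)$. But $\be(B)$ and $\be(B_0)$ both act on the right of $L^2(P\op\ovt A)q$, so this Hilbert space is a $(P\op\ovt A)$-$B$-bimodule rather than a $B$-$B_0$-bimodule, $\be(B)$ sits in $P\op\ovt A$ with typically infinite index, and $f$ need not lie in $\be(B)$; there is no canonical way to restrict $L^2(P\op\ovt A)f$ down to a subbimodule of $L^2(B)$. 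Eliminating the auxiliary factor $P\op\ovt A$ is precisely what the paper's vector $\eta=V^*(q)$ and the $B$-centrality computation accomplish, and is what you flag as the ``main obstacle.'' The upgrade to $\prec^f$ is also not addressed; the paper obtains it for free from the normality of $\Om$. For (ii), the same elimination is missing: your transported vectors in $p(L^2(P\ovt B)\ot_{P\ovt B_0}L^2(P\ovt B))p$ are $\al(A)$-almost-central, and showing that ``integrating out'' $P\op\ovt A$ produces vectors in $L^2(B)\ot_{B_0}L^2(B)$ that are almost central for all of $B$, not merely for $\be^{-1}$ of the normalizer of $\al(A)$, requires the very $B$-centrality argument your outline omits.
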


\begin{proof}
After replacing $P$ by $M_n(\C) \ot P$, by Remark \ref{rem.charact-Wstar-corr}, we find a projection $q \in P\op \ovt A$, a faithful unital normal $*$-homomorphism $\be : B \to q(P\op \ovt A)q$ and a unitary $V : p L^2(P \ovt B) \to L^2(P\op \ovt A) q$ satisfying
\begin{equation}\label{eq.good-formula-V}
V(\al(a) \xi (d \ot b) = (d\op \ot a) V(\xi) \be(b) \quad\text{for all $a \in A$, $b \in B$, $d \in P$.}
\end{equation}
Given a tracial von Neumann algebra $(Q,\tau)$, projection $r \in Q$ and von Neumann subalgebras $S \subset rQr$ and $D \subset Q$, we use the following characterization for intertwining and relative amenability, using Jones' basic construction $\langle Q, e_D \rangle$ for the inclusion $D \subset Q$; cf.\ \cite[Remark 2.1]{DV24}. We have that $S$ is amenable relative to $D$ if and only if $r \langle Q, e_D \rangle r$ admits an $S$-central positive functional $\Om$ such that $\Om|_{rQr} = \tau|_{rQr}$. We have that $S \prec^f_Q D$ if and only if moreover $\Om$ is normal.

We fix faithful normal tracial states $\tau$ on $A$, $B$ and $P$, and we equip $P \ovt B$ with the tensor product trace, which we also denote by $\tau$. Identifying the basic construction of $P \ovt B_0 \subset P \ovt B$ with $P \ovt \langle B,e_{B_0}\rangle$, we thus fix an $\al(A)$-central positive functional $\Om$ on $p(P \ovt \langle B,e_{B_0}\rangle)p$ such that $\Om(a) = \tau(a)$ for all $a \in p (P \ovt B)p$. If $\al(A)$ is amenable relative to $P \ovt B_0$, such a functional exists. If $\al(A) \prec_{P \ovt B}^f P \ovt B_0$, we may moreover assume that $\Om$ is normal.

For every $T \in \langle B, e_{B_0} \rangle$, define the sesquilinear form $\Om_T$ on the vector space $p (P \ovt B)$ by $\Om_T(x,y) = \Om(x (1 \ot T) y^*)$. Using the Cauchy-Schwarz inequality and the fact that the restriction of $\Om$ to $p(P \ovt B)p$ equals $\tau$, we find that
\begin{equation}\label{eq.estimate-sesqui}
|\Om_T(x,y)| \leq \|T\| \, \|x\|_2 \, \|y\|_2 \quad\text{for all $T \in \langle B, e_{B_0} \rangle$, $x,y \in p (P \ovt B)$.}
\end{equation}
We can thus uniquely extend $\Om_T$ to a sesquilinear form on $p L^2(P \ovt B)$ such that \eqref{eq.estimate-sesqui} holds for all $T \in \langle B, e_{B_0} \rangle$ and $x,y \in p L^2(P \ovt B)$.

Using the notation of \eqref{eq.good-formula-V}, we define the vector $\eta \in p L^2(P \ovt B)$ by $\eta = V^*(q)$. Since for every $x,y \in p(P \ovt B)$, the map $T \mapsto \Om_T(x,y)$ is linear, this map remains linear for all $x,y \in p L^2(P \ovt B)$. We can thus define the linear map
$$\om : \langle B, e_{B_0} \rangle \to \C : \om(T) = \Om_T(\eta,\eta) \; .$$
When $T \geq 0$, we have that $\Om_T(x,x) \geq 0$ for all $x \in p(P \ovt B)$. By continuity, also $\Om_T(\eta,\eta) \geq 0$, so that $\om$ is a positive functional.

We claim that $\om$ is normal if $\Om$ is normal. Assume that $(T_i)_{i \in I}$ is a net in $\langle B,e_{B_0}\rangle$ such that $T_i \to T$ weakly and $\|T_i\| \leq 1$ for all $i \in I$. It suffices to prove that $\om(T_i) \to \om(T)$. For all $x,y \in p (P \ovt B)$, we have that $\Om_{T_i}(x,y) \to \Om_T(x,y)$ by normality of $\Om$. Since $\|T_i\| \leq 1$ for all $i \in I$, it then follows from \eqref{eq.estimate-sesqui} that also $\om(T_i) = \Om_{T_i}(\eta,\eta) \to \Om_T(\eta,\eta) = \om(T)$, so that the claim is proven.

When $T \in B$, we have that $\Om_T(x,y) = \Om(x (1 \ot T) y^*) = \tau(x (1 \ot T) y^*) = \langle x (1 \ot T) , y \rangle$, for all $x,y \in p(P \ovt B)$. By continuity,
$$\om(T) = \Om_T(\eta,\eta) = \langle \eta (1 \ot T),\eta \rangle = \langle V(\eta (1 \ot T)), V(\eta)\rangle = \langle q \be(T),q \rangle = \tau(\be(T)) \; .$$
Since $\tau \circ \be$ is a faithful normal trace on $B$, to conclude the proof of the proposition, it thus suffices to show that $\om$ is $B$-central.

Fix $b \in B$ and $T \in \langle B , e_{B_0} \rangle$ with $\|T\| \leq 1$. We have to prove that $\om(bT) = \om(Tb)$. For all $x,y \in p(P \ovt B)$, we have that $\Om_{bT}(x,y) = \Om(x (1 \ot bT) y^*) = \Om_T(x(1 \ot b),y)$. By continuity, $\om(bT) = \Om_T(\eta(1 \ot b),\eta)$. We similarly get that $\om(T b) = \Om_T(\eta,\eta(1 \ot b^*))$.

Choose an arbitrary $\eps > 0$. Take $n \in \N$, $d_i \in P$ and $a_i \in A$ such that
$$\Bigl\| \be(b) - \sum_{i=1}^n d_i\op \ot a_i \Bigr\|_2 < \eps \; .$$
A fortiori, we have that $\bigl\| \be(b) - \sum_{i=1}^n (d_i\op \ot a_i)q \bigr\|_2 < \eps$. Applying $V^*$ and noting that $V^*(\be(b)) = \eta(1 \ot b)$ and $V^*((d_i\op \ot a_i)q) = \al(a_i) \eta (d_i \ot 1)$, it follows that
$$\Bigl\| \eta(1 \ot b) - \sum_{i=1}^n \al(a_i) \eta (d_i \ot 1) \Bigr\|_2 < \eps \; .$$
We denote $\zeta = \sum_{i=1}^n \al(a_i) \eta (d_i \ot 1)$. Since $\om(bT) = \Om_T(\eta(1 \ot b),\eta)$ and $\|\eta(1 \ot b) - \zeta\|_2 < \eps$, it follows from \eqref{eq.estimate-sesqui} that $|\om(bT) - \Om_T(\zeta,\eta)| < \eps$. By a similar computation, $|\om(Tb) - \Om_T(\eta,\zeta')| < \eps$, where $\zeta' = \sum_{i=1}^n \al(a_i^*) \eta (d_i^* \ot 1)$.

For all $x,y \in p(P \ovt B)$, $a \in A$ and $d \in P$, since $\Om$ is $\al(A)$-central, we have that
$$\Om_T(\al(a) x (d \ot 1),y) = \Om(\al(a) x (d \ot T) y^*) = \Om(x (d \ot T) y^* \al(a)) = \Om_T(x,\al(a^*) y (d^* \ot 1)) \; .$$
By linearity and continuity, it follows that $\Om_T(\zeta,\eta) = \Om_T(\eta,\zeta')$. In combination with the previous paragraph, we conclude that $|\om(bT) - \om(Tb)| < 2 \eps$. Since $\eps > 0$ was arbitrary, the equality $\om(bT) = \om(Tb)$ is proven.
\end{proof}

\subsection{\boldmath W$^*$-correlations of amenable finite von Neumann algebras}\label{sec.Wstar-corr-amenable}

By \cite[Proposition 5.7]{BV22}, we know that amenability is preserved under W$^*$-correlations. For amenable finite von Neumann algebras with separable predual, we obtain the following classification up to W$^*$-correlation.

\begin{proposition}\label{prop.Wstar-corr-amenable}
Among the amenable finite von Neumann algebras with separable predual, there are exactly three equivalence classes up to W$^*$-correlation: the discrete finite von Neumann algebras, the diffuse finite von Neumann algebras, and the nonzero direct sums of a discrete and a diffuse finite von Neumann algebra.
\end{proposition}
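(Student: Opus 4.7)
The proof splits into two parts: (1) each of the three claimed classes is the W$^*$-correlation equivalence class of a chosen representative --- $\C$, $R$, and $\C \oplus R$ respectively; and (2) these three representatives are pairwise not W$^*$-correlated. For part (1), any discrete amenable finite von Neumann algebra with separable predual is a countable direct sum $\bigoplus_i M_{n_i}(\C)$; each $M_{n_i}(\C)$ is W$^*$-correlated to $\C$ by Proposition \ref{prop.direct-sums-etc.2}, and Proposition \ref{prop.direct-sums-etc.3} propagates this to the direct sum. Invoking Connes' classification together with the central decomposition, any diffuse amenable finite von Neumann algebra with separable predual is a direct sum of summands of the form $R$, $L^\infty(Y) \ovt R$, or $L^\infty(Y) \ovt M_n(\C)$ for diffuse standard $Y$. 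Combining Example \ref{ex.twisted-group-vNalg} (which gives $L^\infty([0,1])$ W$^*$-correlated to $R$) with Proposition \ref{prop.Wstar-corr-tensor} and the isomorphism $R \ovt R \cong R$, each such summand is W$^*$-correlated to $R$, hence so is the whole algebra by Proposition \ref{prop.direct-sums-etc.3}. The third class follows by splitting into the atomic and diffuse parts and applying Proposition \ref{prop.direct-sums-etc.1}.

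Part (2) rests on two lemmas exploiting Proposition \ref{prop.not-with-smaller-B}. \emph{Lemma A}: if $\C$ is W$^*$-correlated to $B$, then $B$ is discrete. Write $H = pL^2(P \ovt B)$ with $\alpha(\C) = \C p$ after matrix ampliation as in Remark \ref{rem.charact-Wstar-corr}. Condition (iii) of Definition \ref{def.Wstar-corr} tells us that $H$ is of finite type as a right $P$-module, so any closed right $P$-submodule of $H$ is again finitely generated. In particular, for every nonzero projection $q \leq p$, $qH$ is a nonzero finitely generated right $P$-submodule, yielding $\C q \prec_{P \ovt B} P \ot 1$ and hence $\alpha(\C) \prec^f_{P \ovt B} P \ot 1$. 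Proposition \ref{prop.not-with-smaller-B.1} then forces $B \prec^f_B \C$, i.e.\ $B$ is discrete. \emph{Lemma B}: if $R$ is W$^*$-correlated to $B$, then $B$ has no matrix algebra direct summand. Suppose $z \in Z(B)$ with $Bz \cong M_n(\C)$, and decompose $H = Hz \oplus H(1-z)$ along the central projection $1 \ot z \in P \ovt B$, which commutes with the left $\alpha(R)$-action and the right $P$-action. If $Hz = 0$ then the right $B$-action fails to be faithful; otherwise $Hz$ inherits each $P$-equivalence axiom from $H$ --- finite type on both sides and coarseness by being a direct summand, left faithfulness by factoriality of $R$, and right $Bz$-faithfulness because $Hze = He \neq 0$ for every nonzero projection $e \in Bz$. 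Thus $Hz$ realizes a W$^*$-correlation between $R$ and $M_n(\C)$; combining with the fact that $M_n(\C)$ is W$^*$-correlated to $\C$ (Proposition \ref{prop.direct-sums-etc.2}), transitivity gives $R$ W$^*$-correlated to $\C$, contradicting Lemma A.

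Combining these lemmas, Lemma A shows $\C$ is not W$^*$-correlated to either $R$ or $\C \oplus R$, and Lemma B shows $R$ is not W$^*$-correlated to $\C \oplus R$. The main technical point is the verification in Lemma B that restricting the $P$-equivalence bimodule $H$ to a central direct summand of $B$ still yields a valid $P$-equivalence onto that summand; this is made uniform by the centrality of $1 \ot z$ in the ambient algebra $P \ovt B$, which ensures compatibility with every action in sight.
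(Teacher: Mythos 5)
Your proof is correct. Part~(1) --- the reduction of each class to a representative ($\C$, $R$, $\C \oplus R$) --- coincides with the paper's argument, using the same ingredients (Proposition~\ref{prop.direct-sums-etc}, Example~\ref{ex.twisted-group-vNalg}, Proposition~\ref{prop.Wstar-corr-tensor}, and the structure theory of amenable finite von Neumann algebras with separable predual).

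For part~(2) you take a genuinely different route. The paper establishes two W$^*$-correlation invariants --- ``diffuse'' (via Proposition~\ref{prop.basic-prop.4} applied to the coarse $A$-$P$-bimodule) and ``has a diffuse direct summand'' (via the same central-cut device you use in your Lemma~B, but cutting on the $A$-side) --- and notes these separate the three classes. You instead prove two tailored lemmas. Lemma~A observes that when $A = \C$ the finite-type condition on the left gives $\al(\C)\prec^f P\ot 1$ directly, and then invokes Proposition~\ref{prop.not-with-smaller-B.1} to force $B\prec^f_B\C$, i.e.\ $B$ discrete. Lemma~B cuts by a central projection on the $B$-side, checks the $P$-equivalence axioms survive (factoriality of $R$ giving left faithfulness), and reduces to Lemma~A by transitivity. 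Both approaches are valid. The paper's is a bit lighter: it stays with the preliminaries-level Proposition~\ref{prop.basic-prop.4} and gets all three separations in two sentences, whereas your Lemma~A reaches for the considerably stronger Proposition~\ref{prop.not-with-smaller-B.1} from Section~4.3 --- that works, but it is overkill here, since the relevant fact (discreteness passes across a W$^*$-correlation) already follows from the paper's diffuse-direct-summand invariant. Your argument does have the small virtue of isolating a clean reusable statement: $\C$ is W$^*$-correlated to $B$ if and only if $B$ is discrete. One minor stylistic point: Definition~\ref{def.Wstar-corr} does not label its clauses (i)--(iii), so ``condition~(iii)'' should be replaced by an explicit reference to the finite-type condition on the left $(P\op\ovt A)$-module.
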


\begin{proof}
Let $A$ and $B$ be W$^*$-correlated tracial von Neumann algebras. Fix a finite von Neumann algebra $P$ and a $P$-equivalence $\bim{A}{H}{P \ovt B}$ between $A$ and $B$. If $A$ is diffuse, it follows from Proposition \ref{prop.basic-prop.4} that $B$ is diffuse. If $A$ has a diffuse direct summand $Az$, where $z \in \cZ(A)$ is a nonzero central projection, we consider the $Az$-$(P \ovt B)$-bimodule $z \cdot H$. Define $c \in \cZ(B)$ as the support of the support of the right action of $B$ on $z \cdot H$. Then $c \neq 0$ and by construction, $Az$ is W$^*$-correlated to $B c$, so that $B c$ is diffuse.

It follows from the previous paragraph that among the tracial von Neumann algebras, the discrete, the diffuse, and the nonzero direct sums of a discrete and a diffuse tracial von Neumann algebra form distinct W$^*$-correlation equivalence classes.

By Proposition \ref{prop.direct-sums-etc}, every discrete finite von Neumann algebra is W$^*$-correlated to $\C$. Let $A$ be a diffuse amenable finite von Neumann algebra with separable predual. Denote $B = L^\infty([0,1])$. Since the hyperfinite II$_1$ factor is the unique amenable II$_1$ factor with separable predual, $A$ is isomorphic to a direct sum of copies of $R$, $M_n(\C) \ot B$ for $n \geq 1$, and $R \ovt B$. Combining Example \ref{ex.twisted-group-vNalg} and Propositions \ref{prop.direct-sums-etc.3} and \ref{prop.Wstar-corr-tensor}, we find that $A$ is W$^*$-correlated to $R$.

Combining the previous two paragraphs and Proposition \ref{prop.direct-sums-etc.1}, every nonzero direct sum of a discrete finite von Neumann algebra and a diffuse finite von Neumann with separable predual is W$^*$-correlated to $\C \oplus R$.
\end{proof}

\subsection{\boldmath Properties preserved under $P$-embeddings}\label{sec.preservation-of-properties}

Let $A$ and $B$ be tracial von Neumann algebras. Assume that there exists a $P$-embedding (in the sense of Definition \ref{def.Wstar-corr}) of $A$ into $B$. Which properties of $B$ are inherited by $A$~? By \cite[Proposition 5.7]{BV22}, both amenability and the Haagerup approximation property are inherited from $B$ to $A$. In this section, we first prove the easy result saying the stable solidity is inherited from $B$ to $A$.

We then turn to $P$-embeddings into free group factors. By \cite{Oza03}, a nonamenable tracial von Neumann algebra with property Gamma cannot be embedded into a free group factor $L(\F_n)$. By \cite{Eff73}, if a group von Neumann algebra $L(\Lambda)$ has property Gamma, then the group $\Lambda$ is inner amenable, but by \cite{Vae09}, the converse need not hold. Therefore, Popa posed the question if the group von Neumann algebra $L(\Lambda)$ of a nonamenable, inner amenable group $\Lambda$ can be embedded into a free group factor. In \cite[Theorems 1.1 and 6.4]{DKEP22}, it was proven that the answer remains no, and an alternative proof was given in \cite[Theorem E]{Dri22}. In Theorem \ref{thm.no-embed-inner-amenable}, we give a variant of the proof of \cite{Dri22}, and prove that it is not even possible to find a $P$-embedding of $L(\Lambda)$ into $L(\F_n)$.

The following result holds almost by definition. For completeness, we include a short proof.

\begin{proposition}
Let $A$ and $B$ be tracial von Neumann algebras. Assume that $A$ admits a $P$-embedding into $B$. If $B$ is stably solid, then $A$ is stably solid.
\end{proposition}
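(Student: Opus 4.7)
The plan is to route the hypothesis through the container Proposition~\ref{prop.basic-prop}, using the $P$-embedding $\bim{A}{H}{P \ovt B}$ as a transfer mechanism: non-intertwining for $A$ passes into $P \ovt B$ by part~\ref{prop.basic-prop.4}, stable solidity of $B$ is then applied inside $Q \ovt P \ovt B$, and relative amenability descends back to $Q \ovt A$ by part~\ref{prop.basic-prop.5}.

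First I would unpack the $P$-embedding in homomorphism form. Because $H$ is of finite type as a right Hilbert $(P \ovt B)$-module, the standard matrix amplification replacing $P$ by $M_n(\C) \ot P$ (harmless, since all of what we say is insensitive to such an amplification) allows us to write $H \cong p L^2(P \ovt B)$ for some projection $p \in P \ovt B$, with the left action of $A$ given by a faithful unital normal $*$-homomorphism $\al : A \to p(P \ovt B)p$. The coarseness of $H$ as an $A$-$P$-bimodule, where $P$ acts via $P \ot 1 \subset P \ovt B$, then becomes exactly coarseness of $\bim{\al(A)}{p L^2(P \ovt B)}{P \ot 1}$, which is the standing hypothesis on $\al$ in parts~\ref{prop.basic-prop.4}--\ref{prop.basic-prop.5} of Proposition~\ref{prop.basic-prop}.

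To verify stable solidity of $A$, I fix a tracial von Neumann algebra $(Q,\tau)$, a projection $q \in Q \ovt A$ and a subalgebra $A_0 \subset q(Q \ovt A)q$ with $A_0 \not\prec_{Q \ovt A} Q \ot 1$. Applying $\id \ot \al$, I push $(\id \ot \al)(A_0)$ into $r(Q \ovt P \ovt B)r$, where $r := (\id \ot \al)(q)$. The second, more general, assertion of Proposition~\ref{prop.basic-prop}\ref{prop.basic-prop.4}, with $A_1 = Q$, yields
$$(\id \ot \al)(A_0) \not\prec_{Q \ovt P \ovt B} Q \ovt P \ot 1 \; .$$
Taking $Q \ovt P$ as the parameter algebra in the definition of stable solidity of $B$, the commutant $((\id \ot \al)(A_0))' \cap r(Q \ovt P \ovt B)r$ is then amenable relative to $Q \ovt P \ot 1$. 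Since $(\id \ot \al)(A_0' \cap q(Q \ovt A)q)$ sits inside that commutant, it inherits the same relative amenability, and the second assertion of Proposition~\ref{prop.basic-prop}\ref{prop.basic-prop.5} (again with $A_1 = Q$) then delivers that $A_0' \cap q(Q \ovt A)q$ is amenable relative to $Q \ot 1$, which is exactly what stable solidity of $A$ requires.

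There is no real obstacle: all of the substance has been packaged into Proposition~\ref{prop.basic-prop} and into the stable solidity of $B$. The only point meriting care is the preliminary bookkeeping check in the second paragraph, verifying that the coarseness built into the definition of a $P$-embedding is, after the matrix amplification of $P$, precisely the coarseness condition on $\al$ that parts~\ref{prop.basic-prop.4}--\ref{prop.basic-prop.5} demand.
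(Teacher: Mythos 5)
Your proof follows essentially the same route as the paper's: amplify the $P$-embedding to a faithful normal $*$-homomorphism $\al : A \to p(P \ovt B)p$ with coarse bimodule over $P \ot 1$, push the non-intertwining hypothesis into $Q \ovt P \ovt B$ via Proposition~\ref{prop.basic-prop.4}, invoke stable solidity of $B$ with parameter $Q \ovt P$, and pull relative amenability back via Proposition~\ref{prop.basic-prop.5}. The only bookkeeping point you gloss over (which the paper addresses via Proposition~\ref{prop.countably-decomp-separable.1}) is that the $P$ from Definition~\ref{def.Wstar-corr} is merely finite, whereas Definition~\ref{def.rel-solid} of stable solidity quantifies over \emph{tracial} parameter algebras, so one first replaces $P$ by a tracial von Neumann algebra before applying stable solidity of~$B$.
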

\begin{proof}
Using the same argument as in the proof of Proposition \ref{prop.countably-decomp-separable.1}, we can take a tracial von Neumann algebra $P$, a projection $p \in P \ovt B$ and a faithful unital normal $*$-homomorphism $\al : A \to p(P \ovt B)p$ such that the bimodule $\bim{\al(A)}{L^2(P \ovt B)}{P \ot 1}$ is coarse.

Assume that $B$ is stably solid. We have to prove that $A$ is stably solid. So, choose a tracial von Neumann algebra $N$, projection $q \in N \ovt A$ and von Neumann subalgebra $D \subset q (N \ovt A)q$ such that $D \not\prec N \ot 1$. Denote by $Q := D' \cap q(N\ovt A)q$ the relative commutant. By Proposition \ref{prop.basic-prop.4}, $(\id \ot \al)(D) \not\prec N \ovt P \ovt 1$. Since $B$ is stably solid, it follows that $(\id \ot \al)(Q)$ is amenable relative to $N \ovt P \ovt 1$. By Proposition \ref{prop.basic-prop.5}, $Q$ is amenable relative to $N \ot 1$. So, $B$ is stably solid.
\end{proof}

To formulate our result on $P$-embeddings into free group factors, we introduce the following ad hoc variant of \cite[Section 6.3]{Pop06}. We say that a finite von Neumann algebra $M$ has a \emph{coarse malleable deformation} if there exists a tracial von Neumann algebra $(\Mtil,\tau)$, a sequence of trace preserving automorphisms $\theta_n \in \Aut(\Mtil,\tau)$ and a unital embedding $M \subset \Mtil$ such that the following holds.
\begin{enumlist}
\item The $M$-$M$-bimodule $\bim{M}{L^2(\Mtil \ominus M,\tau)}{M}$ is weakly contained in the coarse $M$-$M$-bimodule.
\item For every $a \in \Mtil$, we have that $\|\theta_n(a)-a\|_2 \to 0$.
\item For every $n \in \N$, the map $M \to M : a \mapsto E_M(\theta_n(a))$ extends to a compact operator on $L^2(M,\tau)$.
\end{enumlist}

\begin{theorem}\label{thm.no-embed-inner-amenable}
Let $M$ be a finite von Neumann algebra that admits a coarse malleable deformation.
\begin{enumlist}
\item $M$ is stably solid, and even $\omega$-stably solid: if $u_k \in \cU(p (N \ovt M)p)$ is a sequence of unitaries such that $\|E_{N \ot 1}(a u_k b)\|_2 \to 0$ for all $a,b \in N \ovt M$, then the asymptotic relative commutant $Q := \{a \in p(N \ovt M)p \mid \|u_k a - a u_k\|_2 \to 0\}$ is amenable relative to $N \ot 1$.
\item If $\Lambda$ is a nonamenable, inner amenable group, there is no $P$-embedding of $L(\Lambda)$ into $M$.
\end{enumlist}
The free group factors $L(\F_n)$, $2 \leq n \leq +\infty$, admit a coarse malleable deformation.
\end{theorem}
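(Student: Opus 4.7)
I will prove the three parts in sequence: (i) is the technical heart of the argument, a Popa-style deformation/rigidity argument; (ii) reduces to (i) combined with the inner amenability hypothesis; and (iii) is a direct verification for $L(\F_n)$ using the rotation deformation from Section \ref{sec.q-Gaussian}.

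For (i), given $u_k \in \cU(p (N \ovt M)p)$ with $\|E_{N \ot 1}(a u_k b)\|_2 \to 0$, I amplify the deformation to $\id \ot \theta_n \in \Aut(N \ovt \Mtil)$ and write $(\id \ot \theta_n)(u_k) = \xi_{k,n} + \zeta_{k,n}$, with $\xi_{k,n} := (\id \ot E_M \theta_n)(u_k) \in L^2(N \ovt M)$ and $\zeta_{k,n} \in L^2(N) \ot L^2(\Mtil \ominus M)$. The crucial point is that compactness of $E_M \theta_n$ on $L^2(M)$, combined with the mixing hypothesis, forces $\|\xi_{k,n}\|_2 \to 0$ as $k \to \infty$ for each fixed $n$: for a rank-one operator $|e\rangle\langle f|$ on $L^2(M)$ the corresponding coefficient is exactly $\|E_{N \ot 1}((1 \ot f^*) u_k)\|_2 \cdot \|e\|$, and the general compact case follows by approximation since $\|u_k\|_2$ is bounded. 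Hence $\|\zeta_{k,n}\|_2^2 \to \tau(p)$. Using $\theta_n \to \id$ in $\|\cdot\|_2$ together with the asymptotic commutation $[u_k, Q] \to 0$, the vectors $\zeta_{k,n}$ are asymptotically $Q$-central and implement $\tau(p \cdot p)$ in the limit. Since $\bim{M}{L^2(\Mtil \ominus M)}{M}$ is coarse, its amplification $L^2(N) \ot L^2(\Mtil \ominus M)$ is weakly contained in $L^2(N \ovt M) \ot_{N \ot 1} L^2(N \ovt M)$ as an $N \ovt M$-bimodule, so Proposition \ref{prop.rel-amen-approximation} yields that $Q$ is amenable relative to $N \ot 1$. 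Ordinary stable solidity is an immediate consequence: for any $B \not\prec N \ot 1$, Lemma \ref{lem.Popa-intertwining-for-bimodules} produces a mixing net inside $B$, and $B' \cap p(N \ovt M)p$ lies in its asymptotic commutant.

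For (ii), assume for contradiction that $\al \colon L(\Lambda) \to p(P \ovt M)p$ is a $P$-embedding. Inner amenability of $\Lambda$ supplies unitaries $u_k \in L(\Lambda)$ with $\tau(u_k) = 0$ and $\|[u_k, x]\|_2 \to 0$ for every $x \in L(\Lambda)$; since $L(\Lambda)$ is a II$_1$ factor with trivial center, $u_k \to 0$ weakly. The principal obstacle is upgrading this weak convergence to the mixing condition $\|E_{P \ot 1}(a \al(u_k) b)\|_2 \to 0$ for all $a, b \in P \ovt M$ needed to invoke (i). This is exactly where the coarseness of the $P$-embedding $\bim{\al(L(\Lambda))}{pL^2(P \ovt M)}{P \ot 1}$ is decisive: the normal representation of $\al(L(\Lambda)) \ovt (P \ot 1)\op$ on $pL^2(P \ovt M)$ identifies $E_{P \ot 1}(a \al(u_k) b)$ with a coefficient of $\al(u_k)$ against right-bounded vectors of the coarse bimodule, which vanishes in $\|\cdot\|_2$ along with the weak convergence of $\al(u_k)$. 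Once the mixing is established, part (i) applied with $N := P$ shows that the asymptotic commutant of $\{\al(u_k)\}_k$ in $p(P \ovt M)p$ is amenable relative to $P \ot 1$; by asymptotic centrality this commutant contains $\al(L(\Lambda))$, and Proposition \ref{prop.basic-prop.5} applied to the coarse $P$-embedding then forces $L(\Lambda)$ to be amenable, contradicting the nonamenability of $\Lambda$.

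For (iii), realize $L(\F_n) = M_0(H_\R)$ with $\dim H_\R = n$ via Voiculescu's free Gaussian functor, and take $\Mtil := M_0(H_\R \oplus H_\R)$ with the rotation deformation $\theta_t := \al_t$ from Section \ref{sec.q-Gaussian}. The pointwise $\|\cdot\|_2$-convergence $\theta_t \to \id$ is standard, and coarseness of $\bim{M}{L^2(\Mtil \ominus M)}{M}$ follows from the reduced free-product decomposition $\Mtil \cong M_0(H_\R) \ast M_0(H_\R)$: reduced words exhibit $L^2(\Mtil \ominus M)$ as an orthogonal sum of $M$-$M$-subbimodules of the form $L^2(M) \ot V \ot L^2(M)$, which are visibly coarse. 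For compactness of $E_M \circ \theta_t$ when $n < \infty$, observe that on the $k$-th Fock layer $H^{\ot^k}$ it acts as $(\cos t)^k \cdot \id$, so the singular values are $(\cos t)^k$ with finite multiplicity $n^k$ per layer and tend to zero geometrically. For $n = \infty$, one either adapts the above construction using an increasing filtration of finite-rank subspaces, or invokes an Ioana-Peterson-Popa free-product deformation $\Mtil := L(\F_\infty) \ast L(\Z) \cong L(\F_\infty)$ to supply the required $\Mtil$ and $\theta_n$.
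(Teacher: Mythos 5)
Your treatment of part (i) is essentially the paper's argument: consider $\zeta_{n,k}=(\id\ot\theta_n)(u_k)$, use compactness of $E_M\circ\theta_n$ together with the mixing hypothesis to kill the $E_M$-component as $k\to\infty$ for fixed $n$ (reducing to rank-one operators, which is exactly the hypothesis on $u_k$), extract a diagonal subsequence, and invoke coarseness of $L^2(\Mtil\ominus M)$ to conclude relative amenability of $Q$. The same goes for the construction of a coarse malleable deformation of $L(\F_n)$ in part (iii), where you should note that the paper uses the filtration trick (rotation only on a finite-dimensional subspace $H_{n,\R}$) uniformly for all $n$, including $n<\infty$, rather than appealing to an Ioana--Peterson--Popa free-product deformation (for which compactness of $E_M\circ\theta_n$ is not evident). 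Your finite-dimensional observation for $n<\infty$ is fine, and the filtration idea for $n=\infty$ is the correct one; the alternative route is not justified and should be dropped.

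Part (ii) contains a fatal gap. You assert that inner amenability of $\Lambda$ supplies a sequence of unitaries $u_k\in L(\Lambda)$ with $\tau(u_k)=0$ and $\|[u_k,x]\|_2\to0$ for all $x\in L(\Lambda)$. That is precisely the statement that $L(\Lambda)$ has \emph{property Gamma}, which is a strictly stronger property than inner amenability of $\Lambda$: Effros \cite{Eff73} shows Gamma $\Rightarrow$ inner amenability, and Vaes \cite{Vae09} gives an inner amenable group whose von Neumann algebra fails Gamma. If your extraction of unitaries were available, the result would reduce to Ozawa's original solidity argument and the theorem would be far less interesting; indeed, the whole point of \cite{DKEP22,Dri22} and of this part of the theorem is to handle the case where inner amenability does \emph{not} upgrade to Gamma. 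Inner amenability only provides unit vectors $\eta_k\in\ell^2(\Lambda)$, converging weakly to zero, with $\|(\Ad u_g)\eta_k-\eta_k\|_2\to0$ for each $g\in\Lambda$; these are $L^2$-vectors, not elements (let alone unitaries) of $L(\Lambda)$, and they are only asymptotically invariant under $\Ad u_g$ for group elements $g$, not asymptotically central against arbitrary $x\in L(\Lambda)$. Consequently you cannot invoke part (i) directly. The paper's proof of (ii) instead argues by contradiction with the \emph{vectors} $\eta_k$: it introduces the diagonal embedding $\be:L(\Lambda)\to L(\Lambda)\ovt p(P\ovt M)p$, $\be(u_g)=u_g\ot\al(u_g)$, applies the deformation to $\be(\eta_k)$, controls all the relevant operator and $2$-norm estimates carefully (the uniform bound $\|a\zeta_{n,k}\|_2\leq\|a\|_2$ is only available for $a$ of a special tensor form, because $\eta_k$ is merely an $L^2$-vector), uses nonamenability of $\Lambda$ plus the coarseness of the bimodules to force the $(\id\ot\id\ot E_M)$-component of $\zeta_{n,k}$ to dominate, and then uses compactness of $E_M\circ\theta_{n_0}$ together with mixing of the coarse $P$-embedding (now applied to the group unitaries $u_g$, $g\to\infty$, not to hypothetical Gamma-unitaries) to localize the weight of $\eta_k$ on a fixed finite subset of $\Lambda$ --- contradicting weak convergence to zero. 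That mechanism is genuinely different from, and cannot be replaced by, a black-box application of part (i).
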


\begin{proof}
Fix a coarse malleable deformation of $M$ given by $M \subset (\Mtil,\tau)$ and $\theta_n \in \Aut(\Mtil,\tau)$ as above. We equip $M$ with the trace $\tau|_M$, and define the unital, trace preserving, completely positive maps $\vphi_n : M \to M : \vphi_n(a) = E_M(\theta_n(a))$. By assumption, $\vphi_n$ uniquely extends to a compact operator on $L^2(M,\tau)$.

(i) Take a sequence $u_k \in \cU(p (N \ovt M)p)$ such that $\|E_{N \ot 1}(a u_k b)\|_2 \to 0$ for all $a,b \in N \ovt M$, and define $Q$ as in statement~1. For every $n,k \in \N$, consider the vector $\zeta_{n,k} = (\id \ot \theta_n)(u_k) \in N \ovt \Mtil$. For all $a \in N \ovt \Mtil$, we have that $\|a\zeta_{n,k}\|_2 \leq \|a\|_2$. Since $\theta_n \to \id$, we get that
\begin{align*}
& \lim_n \bigl(\sup_k \|\zeta_{n,k} - p \zeta_{n,k} p\|_2\bigr) = 0 \;\; , \\
& \lim_n \bigl( \sup_k |\langle a \zeta_{n,k},\zeta_{n,k}\rangle - \tau(pap)|\bigr) = 0 = \lim_n \bigl( \sup_k |\langle \zeta_{n,k} a,\zeta_{n,k}\rangle - \tau(pap)|\bigr)\;\;\text{for all $a \in N \ovt M$,}\\
& \lim_n \bigl(\limsup_k \|b \zeta_{n,k} - \zeta_{n,k} b \|_2\bigr) \;\;\text{for all $b \in Q$.}
\end{align*}
We then consider $\xi_{n,k} = p(\zeta_{n,k} - (\id \ot E_M)(\zeta_{n,k}))p$ in $p(L^2(N) \ot L^2(\Mtil \ominus M))p$. We claim that for every fixed $n$, $\lim_k \|(\id \ot E_M)(\zeta_{n,k})\|_2 = 0$. Since $E_M \circ \theta_n$ defines a compact operator on $L^2(M)$, it suffices to prove that $\lim_k \|(\id \ot \psi)(u_k)\|_2 = 0$ for every $\psi \in \cK(L^2(M))$. In turn, it suffices to check this for a rank one operator $\psi_{a,b}(x) = \tau(b^* x) a$ given by $a,b \in M$. In that case, the result holds because $\|(\id \ot \tau)(u_k(1 \ot b^*))\|_2 \to 0$ by our assumption on the sequence $(u_k)_k$. So the claim is proven.

So we may choose $k_1 < k_2 < \cdots$ such that $\|(\id \ot E_M)(\zeta_{n,k_n})\|_2 < 1/n$ for all $n \geq 1$. Since the $M$-$M$-bimodule $L^2(\Mtil \ominus M)$ is weakly contained in the coarse bimodule, the sequence $\xi_{n,k_n}$ shows that $Q$ is amenable relative to $N \ot 1$ inside $N \ovt M$.

(ii) Let $\Lambda$ be a nonamenable group. Denote by $L(\Lambda)$ its group von Neumann algebra, generated by the unitary operators $(u_g)_{g \in \Lambda}$. Assume that there exists a $P$-embedding of $L(\Lambda)$ into $M$. We have to prove that $\Lambda$ is not inner amenable. Reasoning as in the proof of Proposition \ref{prop.countably-decomp-separable.1}, we may assume that $P$ is tracial. We then choose a faithful unital normal $*$-homomorphism $\al : L(\Lambda) \to p(P \ovt M)p$ such that the bimodule $\bim{\al(L(\Lambda))}{p L^2(P \ovt M)}{P \ot 1}$ is coarse.

We have to prove that $\Lambda$ is not inner amenable. So assume that $\eta_k \in \ell^2(\Lambda)$ is a sequence of unit vectors such that $\lim_k \|(\Ad u_g)(\eta_k) - \eta_k\|_2 =0$ for all $g \in \Lambda$. We have to prove that $(\eta_k)_k$ does not converge to $0$ weakly.

As in \cite{Dri22}, we consider $\be : L(\Lambda) \to L(\Lambda) \ovt p(P \ovt M)p : \be(u_g) = u_g \ot \al(u_g)$. Note that $\be$ is trace preserving, so that $\be(\eta_k) \in L^2(L(\Lambda) \ovt P \ovt M)$ is well-defined. We define $\zeta_{n,k} = (\id \ot \id \ot \theta_n)\be(\eta_k)$ and $\xi_{n,k} = (1 \ot p)(\zeta_{n,k} - (\id \ot \id \ot E_M)(\zeta_{n,k}))(1 \ot p)$. We now reason in a similar way as in the proof of 1, but we have to be more careful, because we no longer have an upper bound on $\|a \zeta_{n,k}\|_2$ in terms of $\|a\|_2$, for arbitrary $a \in L(\Lambda) \ovt P \ovt \Mtil$.

Note that
\begin{equation}\label{eq.formula-zeta-n-k}
\zeta_{n,k} = \sum_{h \in \Lambda} \eta_k(h) \, (u_h \ot (\id \ot \theta_n)\al(u_h)) \; .
\end{equation}
It follows that for all $g \in \Lambda$ and $a \in P \ovt \Mtil$, and all $n,k \in \N$,
$$\|(u_g \ot a)\zeta_{n,k}\|_2^2 = \sum_{h \in \Lambda} |\eta_k(h)|^2 \, \|a(\id \ot \theta_n)\al(u_h)\|_2^2 \leq \sum_{h \in \Lambda} |\eta_k(h)|^2 \, \|a\|_2^2 = \|a\|_2^2 \; ,$$
and similarly for $\|\zeta_{n,k} (u_g \ot a)\|_2$. As in the proof of 1, we thus get that
\begin{equation}\label{eq.with-p}
\lim_n \bigl(\sup_k \|\zeta_{n,k} - (1 \ot p)\zeta_{n,k}(1 \ot p)\|_2\bigr) = 0 \; .
\end{equation}
Since $(\id \ot \id \ot \theta_n)\be(u_g) = u_g \ot (\id \ot \theta_n)\al(u_g)$ and $\theta_n \to \id$, we also get for every $g \in \Lambda$,
\begin{align*}
& \lim_n \bigl(\sup_k \|\be(u_g) \zeta_{n,k} - (\id \ot \id \ot \theta_n)\be(u_g \eta_k)\|_2\bigr) = 0 \quad\text{and}\\
& \lim_n \bigl(\sup_k \|\zeta_{n,k} \be(u_g) - (\id \ot \id \ot \theta_n)\be(\eta_k u_g)\|_2\bigr) \; .
\end{align*}
Since for every $g \in \Lambda$, we have $\lim_k \|u_g \eta_k - \eta_k u_g\|_2 = 0$, we get that
$$\lim_n \bigl(\limsup_k \|\be(u_g) \zeta_{n,k} \be(u_g)^* - \zeta_{n,k}\|_2\bigr) = 0 \quad\text{for all $g \in \Lambda$.}$$
Defining $\xi_{n,k} = (1 \ot p)(\zeta_{n,k} - (\id \ot \id \ot E_M)(\zeta_{n,k}))(1 \ot p)$, we have found the vectors $\xi_{n,k} \in K := (1 \ot p)(\ell^2(\Lambda) \ot L^2(P) \ot L^2(\Mtil \ominus M))(1 \ot p)$ satisfying
\begin{equation}\label{eq.my-estimate-here}
\lim_n \bigl(\limsup_k \|\be(u_g) \xi_{n,k} \be(u_g)^* - \xi_{n,k}\|_2\bigr) = 0 \quad\text{for all $g \in \Lambda$.}
\end{equation}
Since the $M$-$M$-bimodule $L^2(\Mtil \ominus M)$ is weakly contained in the coarse $M$-$M$-bimodule and since the bimodule $\bim{\al(L(\Lambda))}{pL^2(P \ovt M)}{P \ot 1}$ is coarse, the unitary representation $(\Ad \be(u_g))_{g \in \Lambda}$ of $\Lambda$ on $K$ is weakly contained in the regular representation of $\Lambda$. Since $\Lambda$ is nonamenable, it follows from \eqref{eq.my-estimate-here} that $\lim_n \limsup_k \|\xi_{n,k}\|_2 = 0$. Using \eqref{eq.with-p}, this means that
$$\lim_n \bigl( \liminf_k \|(\id \ot \id \ot E_M)(\zeta_{n,k})\|_2\bigr) = \|p\|_2 \; .$$
We can thus fix $n_0,k_0 \in \N$ such that
$$\|(\id \ot \id \ot E_M)(\zeta_{n_0,k})\|^2_2 \geq \tau(p)/2 \quad\text{for all $k \geq k_0$.}$$
Using \eqref{eq.formula-zeta-n-k}, this means that
\begin{equation}\label{eq.almost-end}
\tau(p)/2 \leq \sum_{g \in \Lambda} |\eta_k(g)|^2 \, \|(\id \ot E_M \circ \theta_{n_0})\al(u_g)\|_2^2 \quad\text{for all $k \geq k_0$.}
\end{equation}
We claim that $\lim_{g \to \infty} \|(\id \ot E_M \circ \theta_{n_0})\al(u_g)\|_2 = 0$, where the notation $g \to \infty$ means that $g$ leaves any finite subset of $\Lambda$. For every $b \in M$, define the functional $\vphi_b \in M_*$ by $\vphi_b(x) = \tau(xb)$. Since $E_M \circ \theta_{n_0}$ defines a compact operator on $L^2(M)$, to prove the claim, it suffices to prove that $\lim_{g \to \infty} \|(\id \ot \vphi_b)\al(u_g)\|_2 = 0$ for all $b \in M$. When $g \to \infty$, we have that $u_g \to 0$ weakly in $L(\Lambda)$. Since the bimodule $\bim{\al(L(\Lambda))}{pL^2(P \ovt M)}{P \ot 1}$ is coarse, it is in particular mixing, meaning that for all $\mu_1,\mu_2 \in pL^2(P \ovt M)$,
$$\lim_{g \to \infty} \sup_{a \in P , \|a\|\leq 1} |\langle \al(u_g) \mu_1 (a \ot 1) , \mu_2 \rangle| = 0 \; .$$
Applying this to $\mu_1 = p(1 \ot b)$ and $\mu_2 = p$, we get that $\lim_{g \to \infty} \|(\id \ot \vphi_b)\al(u_g)\|_1 = 0$. Since the net $((\id \ot \vphi_b)\al(u_g))_{g \in \Lambda}$ is bounded, also the $2$-norm converges to zero and the claim is proven.

We can thus choose a finite subset $\cF \subset \Lambda$ such that $\|(\id \ot E_M \circ \theta_{n_0})\al(u_g)\|_2^2 < \tau(p)/4$ for all $g \in \Lambda \setminus \cF$. It then follows from \eqref{eq.almost-end} that
$$\tau(p)/2 \leq \sum_{g \in \cF} |\eta_k(g)|^2 + (1/4) \tau(p) \sum_{g \in \Lambda \setminus \cF} |\eta_k(g)|^2 \leq \sum_{g \in \cF} |\eta_k(g)|^2 + (1/4) \tau(p)$$
for all $k \geq k_0$. This implies that $\sum_{g \in \cF} |\eta_k(g)|^2 \geq \tau(p)/4$ for all $k \geq k_0$. So, the sequence $(\eta_k)_k$ does not converge to zero weakly, which concludes the proof of 2.

It remains to prove that the free group factors $L(\F_n)$ admit a coarse malleable deformation. Using the notation of Section \ref{sec.q-Gaussian}, we realize $L(\F_n)$ via Voiculescu's free Gaussian functor: $L(\F_n) \cong M_0(H_\R)$, where $H_\R$ is a real Hilbert space of dimension $n$, with $H_\R = \ell^2_\R(\N)$ if $n=\infty$. We put $M = M_0(H_\R)$ and $\Mtil = M_0(H_\R \oplus H_\R)$. Using the identification $L^2(\Mtil) = \cF_0(H \oplus H)$, it is easy to check that the $M$-$M$-bimodule $L^2(\Mtil \ominus M)$ is coarse.

Choose a sequence $t_n \in (0,\pi/2)$ such that $t_n \to 0$. Choose an increasing sequence of finite dimensional real subspaces $H_{n,\R} \subset H_\R$ such that the corresponding orthogonal projections $P_n$ converge to $1$ strongly. Define $v_n \in \cO(H_\R \oplus H_\R)$ by
$$v_n = \begin{pmatrix} \cos t_n \, P_n & \sin t_n \, P_n + (1-P_n) \\ -\sin t_n \, P_n + (1-P_n) & \cos t_n \, P_n \end{pmatrix} \; .$$
Denote by $\theta_n \in \Aut \Mtil$ the corresponding automorphisms. Fix $n \in \N$. It remains to prove that the operator $M \to M : a \mapsto E_M(\theta_n(a))$ is compact on $L^2(M) = \cF_0(H)$. Note however that this operator is given by
$$\bigoplus_{k=0}^\infty (\cos t_n \, P_n)^{\ot^k} \; ,$$
which is compact because $\lim_{k \to \infty} (\cos t_n)^k = 0$ and $P_n^{\ot^k}$ has finite rank for all $n,k$.
\end{proof}

\subsection{\boldmath W$^*$-correlations: passage to von Neumann subalgebras}

We prove two technical lemmas that will be used below. The main point will be to deduce, in certain cases, from a W$^*$-correlation between tracial von Neumann algebras $A$ and $B$, a W$^*$-correlation between certain von Neumann subalgebras $A_1 \subset A$ and $B_1 \subset B$. We first formulate the technical lemma and then explain its more intuitive meaning.

\begin{lemma}\label{lem.left-right-embedding}
Let $A$, $B$ and $P$ be tracial von Neumann algebras and $\bim{\al(A)}{pL^2(P \ovt B)}{P \ovt B}$ a $P$-embedding of $A$ into $B$. Let $A_1 \subset A$ and $B_1,B_2 \subset B$ be von Neumann subalgebras.
\begin{itemlist}
\item Let $K \subset p L^2(P \ovt B)$ be a $(P\op \ovt A_1)$-$B_1$-subbimodule that is finitely generated as a left Hilbert $(P\op \ovt A_1)$-module.
\item Let $V \in \C^n \ot (P \ovt B)p$ and $\vphi : A_1 \to q(M_n(\C) \ot (P \ovt B_2))q$ a normal unital $*$-homomorphism such that $V \al(a) = \vphi(a) V$ for all $a \in A_1$.
\item Define $E_r(B_2,B_1)$ as the set of $b \in B$ for which there exist $k \in \N$ and $b_1,\ldots,b_k$ such that $b B_1 \subset B_2 b_1 + \cdots + B_2 b_k$.
\end{itemlist}
Then $V \cdot K$ is contained in the $\|\cdot\|_2$-closure of $\C^n \ot P \ot E_r(B_2,B_1)$.
\end{lemma}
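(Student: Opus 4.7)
The plan is to mimic the proof of Lemma \ref{lem.intertwining-weak-normalizer}(i), parameterizing $K$ via its \emph{left} finite-generation and transporting the resulting structure through the intertwiner $V$.

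First, I would use finite generation to parameterize $K$. Choose $m \in \N$, a projection $e \in M_m(\C) \ot (P\op \ovt A_1)$, a normal unital $*$-homomorphism $\psi : B_1 \to e(M_m(\C) \ot (P\op \ovt A_1))e$, and an $(P\op \ovt A_1)$-$B_1$-bimodular unitary $W : (\C^m \ot L^2(P\op \ovt A_1))e \to K$, where $B_1$ acts on the domain through $\psi$. Setting $\eta_j := W((\delta_j \ot 1)e)$, a direct computation using $\psi(b) = e\psi(b)e$ gives the frame identity $\eta_j \cdot b = \sum_{l=1}^m \psi_{j,l}(b) \cdot \eta_l$ for all $b \in B_1$.

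Next I would transport through $V$. The hypothesis $V \al(a) = \vphi(a) V$, together with the fact that left multiplication by $V$ commutes with right multiplication by $P \ot 1$ on $L^2(P \ovt B)$, extends to a normal $*$-homomorphism $\pi : P\op \ovt A_1 \to B(\C^n \ot L^2(P \ovt B))$ defined on elementary tensors by $\pi(d^\op \ot a)\xi := \vphi(a)\, \xi\, (d \ot 1)$, satisfying $V(c \xi) = \pi(c)\, V\xi$ for all $c \in P\op \ovt A_1$ and $\xi \in pL^2(P \ovt B)$. Combining with the frame identity,
\[ V\eta_j \cdot (1 \ot b) = \sum_{l=1}^m \pi(\psi_{j,l}(b)) \cdot V\eta_l \quad\text{for all $b \in B_1$.} \]
Crucially, every $\pi(c)$ lies in the closed linear span of operators of the form $\xi \mapsto z \xi (d \ot 1)$ with $z \in M_n(\C) \ot P \ovt B_2$ and $d \in P$.

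To finish, I would apply the spectral truncation argument of Lemma \ref{lem.intertwining-weak-normalizer}(i). Form the matrix $X = (V_i \eta_j)_{i,j} \in M_{n \times m}(\C) \otimes L^2(P \ovt B)$. Apply spectral projections of the appropriate Gram matrix (which commutes with the transported left action) to produce bounded approximations $X^{(k)}$ with entries in $P \ovt B$ that converge to $X$ in $\|\cdot\|_2$ and still satisfy the transported identity above. For a bounded entry $y = X^{(k)}_{ij} \in P \ovt B$, the identity then shows that $y \cdot (1 \ot B_1)$ sits inside a finitely generated left $P \ovt B_2$-module, modified by right-$P \ot 1$-multiplications. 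Since right multiplication by $P \ot 1$ acts only on the $P$-factor and commutes with left multiplication by $1 \ot B_2$, projecting the identity onto the $B$-factor forces the $B$-part of $y$ to have its right-$B_1$-translates inside a finite left $B_2$-module of $B$, i.e.\ to lie in $E_r(B_2,B_1)$; hence $y \in P \otalg E_r(B_2, B_1)$. Passing to the limit gives $V\eta_j \in \C^n \ot \overline{P \otalg E_r(B_2, B_1)}^{\|\cdot\|_2}$. Since the $\eta_j$'s generate $K$ as a left $(P\op \ovt A_1)$-module and the target closure is stable under $\pi(P\op \ovt A_1)$, the conclusion extends to all of $V \cdot K$.

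The main obstacle I expect is this last disentangling step: ensuring that we land in the \emph{algebraic} tensor product $P \otalg E_r(B_2, B_1)$ rather than in a more entangled subspace of $L^2(P \ovt B)$. This works because the right-$P \ot 1$-corrections touch only the $P$-factor while the $B_2$-left-multiplications from $\vphi$ touch only the $B$-factor, so the two sides decouple and the defining property of $E_r(B_2, B_1)$ can be read off from the projection onto the $B$-factor. Making this rigorous at the level of bounded truncations, and tracking how the frame identity decomposes relative to $P \ovt B$, is the technical heart of the argument.
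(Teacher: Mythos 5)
Your first two steps coincide with the paper's: parameterize $K$ by a left $(P\op \ovt A_1)$-modular unitary, encode the right $B_1$-action by $\psi : B_1 \to r(M_m(\C)\ot(P\op\ovt A_1))r$, and transport the resulting frame identity through $V$. One caveat already here: the formula $d\op\ot a \mapsto \vphi(a)(\,\cdot\,)(d\ot 1)$ need not extend to a normal $*$-homomorphism of $P\op\ovt A_1$ on $\C^n\ot L^2(P\ovt B)$, because the bimodule $\bim{\vphi(A_1)}{\C^n\ot L^2(P\ovt B)}{P\ot 1}$ is not assumed coarse; the paper gets around this by conjugating the genuine representation $\pi$ of $P\op\ovt A_1$ on $pL^2(P\ovt B)$ through $V$, i.e.\ by inverting the isomorphism $\gamma(T)=(1\ot V^*)T(1\ot V)$ to produce a normal $*$-homomorphism $\Phi : B_1 \to M_m(\C)\ovt M_n(\C)\ovt B(L^2(P))\ovt B_2$ with $\xi(1\ot b)=\Phi(b)\xi$ for $\xi=(1\ot V)\eta$.

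The genuine gap is in your final ``decoupling'' step. The operators $\Phi(b)$ are generated by left multiplication by $\vphi(A_1)\subset M_n(\C)\ot(P\ovt B_2)$ together with right multiplication by $P\ot 1$; since $\vphi$ lands in $P\ovt B_2$ rather than $1\ovt B_2$, these operators act on the $L^2(P)$-leg by essentially arbitrary elements of $B(L^2(P))$, so the $P$- and $B$-legs do \emph{not} decouple as you assert. Even for a bounded truncation $y\in P\ovt B$, the transported identity only places $y\cdot(1\ot B_1)$ inside $L^2(P)\ot\cE$ for a left $B_2$-module $\cE$ generated by the (in general infinitely many) $B$-leg components of the reference vectors; it does not produce the \emph{finite} generation over $B_2$ that the algebraic condition defining $E_r(B_2,B_1)$ requires, and the conclusion $y\in P\otalg E_r(B_2,B_1)$ (an algebraic tensor product) is strictly too strong. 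What actually closes the argument in the paper is orthogonality plus mixing: with $E=\overline{E_r(B_2,B_1)}\subset L^2(B)$, the projection $1\ot1\ot1\ot(1-P_E)$ commutes with $\Phi(B_1)$ because $E$ is a left $B_2$-module, so $\xi_0:=(1\ot1\ot1\ot(1-P_E))(\xi)$ still satisfies $\xi_0(1\ot b)=\Phi(b)\xi_0$; one approximates $\xi_0$ by finite sums of elementary tensors $\sum_j\nu_j\ot\zeta_j$ with $\zeta_j\in L^2(B)\ominus E$ and applies the net $(v_i)$ in $\cU(B_1)$ from Lemma \ref{lem.intertwining-weak-normalizer.2}, for which $\|P_{E_0}(\zeta v_i)\|_2\to0$ for every finitely generated left Hilbert $B_2$-submodule $E_0\subset L^2(B)\ominus E$, to force $\xi_0=0$. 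Your proposal never invokes this escape-of-unitaries mechanism, and without it the passage from the transported frame identity to membership in the closure of $\C^n\ot P\ot E_r(B_2,B_1)$ does not go through.
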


Note that the bimodule $K$ defines a $P\op$-embedding of $B_1$ into $A_1$, while $V$ and $\vphi$ witness an intertwining of $\al(A_1)$ into $P \ovt B_2$. It is therefore quite natural to relate the product $V \cdot K$ to intertwining $B_1$ into $B_2$, as witnessed by $E_r(B_2,B_1)$.

\begin{proof}
Write $H = p L^2(P \ovt B)$. By definition, the bimodule $\bim{\al(A)}{H}{P \ot 1}$ is coarse. We can thus define the normal $*$-homomorphism $\pi : P\op \ovt A \to B(H)$ such that $\pi(d\op \ot a)(\xi) = \al(a)\xi(d\ot 1)$ for all $\xi \in H$, $d \in P$ and $a \in A$.

Choose an integer $m$, projection $r \in M_m(\C) \ovt P\op \ovt A_1$ and a left $(P\op \ovt A_1)$-modular unitary $W : (M_{1,m}(\C) \ot L^2(P\op \ovt A_1))r \to K$. Define the normal unital $*$-homomorphism $\psi : B_1 \to r(M_m(\C) \ot (P\op \ovt A_1))r$ such that $W(\xi \psi(b)) = W(\xi) (1 \ot b)$ for all $b \in B_1$. Define $\eta \in \C^m \ot H$ by $\eta = \sum_{s=1}^m e_s \ot W((e_{1s} \ot 1)r)$. By construction, $\eta(1 \ot b) = (\id \ot \pi)(\psi(b)) \eta$ for all $b \in B_1$. Also, $K$ equals the closed linear span of $\{\al(a) \eta_s (d \ot 1) \mid a \in A_1, d \in P, s \in \{1,\ldots,m\}\}$.

Define $E \subset L^2(B)$ as the $\|\cdot\|_2$-closure of $E_r(B_2,B_1)$. Then $E$ is a Hilbert $B_2$-$B_1$-bimodule. For every $a \in A_1$, $d \in P$ and $s \in \{1,\ldots,m\}$, we have that
$$V \al(a) \eta_s (d \ot 1) = \vphi(a) (V \eta_s)(d \ot 1) \; .$$
It thus suffices to prove that $(1 \ot V) \eta \in \C^m \ot \C^n \ot L^2(P) \ot E$.

Define $\cM \subset B(\C^m \ot \C^n \ot L^2(P \ovt B))$ as the von Neumann algebra generated by the left action of $M_m(\C) \ot \vphi(A_1)$ and the right action of $P \ot 1$. Note that the identity of $\cM$ is given by left multiplication by $1 \ot \vphi(1)$. Also note that the left action of $1 \ot VV^*$ commutes with $\cM$. We denote by $z \in \cZ(M)$ the smallest projection in $\cM$ satisfying $1 \ot VV^* \leq z$. By construction,
$$\gamma : \cM z \to B(\C^m \ot L^2(P \ovt B)) : \gamma(T) = (1 \ot V^*) T (1 \ot V)$$
is a faithful normal (not necessarily unital) $*$-homomorphism. Write $p_1 = V^* V$ and note that $p_1 \in \al(A_1)' \cap p(P \ovt B)p$. Then note that $\gamma(\cM z)$ equals the von Neumann algebra generated by the left action of $M_m(\C) \ot \al(A_1) p_1$ and the right action of $P \ot 1$ inside $B(\C^m \ot L^2(P \ovt B))$. This means that
\begin{equation}\label{eq.good-descr}
\gamma(\cM z) = M_m(\C) \ot \pi(P\op \ovt A_1) p_1 \; ,
\end{equation}
where one should note that the left multiplication action of $p_1$ on $H$ commutes with $\pi(P\op \ovt A_1)$. It follows from \eqref{eq.good-descr} that $(1 \ot p_1) (\id \ot \pi)\psi(B_1)$ is a von Neumann subalgebra of $\gamma(\cM z)$. Since $\gamma$ is faithful, we can uniquely define the normal $*$-homomorphism $\Phi : B_1 \to \cM z$ such that $\gamma(\Phi(b)) = (1 \ot p_1) (\id \ot \pi)\psi(b)$ for all $b \in B_1$.

Since for every $T \in \cM z$, we have that $(1 \ot V) \gamma(T) = T (1 \ot V)$ and since $V = V p_1$, we find that for all $b \in B_1$,
\begin{align*}
(1 \ot V) \eta (1 \ot b) &= (1 \ot V) \cdot ((\id \ot \pi)\psi(b) \eta) = ((1 \ot V) \gamma(\Phi(b)))(\eta) = \Phi(b)((1 \ot V) \cdot \eta) \; .
\end{align*}
Write $\xi = (1 \ot V) \cdot \eta$. Now note that by construction, $\cM \subset M_m(\C) \ovt M_n(\C) \ovt B(L^2(P)) \ovt B_2$. We have thus found the normal, not necessarily unital, $*$-homomorphism
$$\Phi : B_1 \to M_m(\C) \ovt M_n(\C) \ovt B(L^2(P)) \ovt B_2 \quad\text{such that}\quad \xi(1 \ot b) = \Phi(b) \xi \quad\text{for all $b \in B_1$.}$$
Denote by $P_E$ the orthogonal projection of $L^2(B)$ onto $E$. Write
$$\xi_0 = (1 \ot 1 \ot 1 \ot (1-P_E))(\xi) \in \C^m \ot \C^n \ot L^2(P) \ot E^\perp \; .$$
We have to prove that $\xi_0 = 0$. Since $E$ is defined as the closure of $E_r(B_2,B_1)$, by Lemma \ref{lem.intertwining-weak-normalizer.2}, we can choose a net of unitaries $(v_i)_{i \in I}$ in $\cU(B_1)$ such that
\begin{equation}\label{eq.goes-to-zero}
\lim_i \|P_{E_0}(\zeta v_i)\|_2 = 0 \quad\text{\parbox[t]{9cm}{for every $\zeta \in L^2(B) \ominus E$ and every finitely generated left Hilbert $B_2$-submodule $E_0 \subset L^2(B) \ominus E$.}}
\end{equation}
Since $E$ is a Hilbert $B_2$-$B_1$-bimodule, the equality $\xi(1 \ot b) = \Phi(b) \xi$ implies that $\xi_0(1 \ot b) = \Phi(b) \xi_0$ for all $b \in B_1$.

Choose $\eps > 0$. Choose a finite linear combination
$$\xi_1 = \sum_{j=1}^\kappa \nu_j \ot \zeta_j \quad\text{with $\nu_j \in \C^m \ot \C^n \ot L^2(P)$ and $\zeta_j \in L^2(B) \ominus E$}$$
such that $\|\xi_0 - \xi_1\|_2 < \eps$.  For every $i \in I$, we have that $\|\xi_0(1 \ot v_i) - \xi_1 (1 \ot v_i)\|_2 < \eps$ and $\|\Phi(v_i) \xi_0 - \Phi(v_i) \xi_1\|_2 < \eps$, so that
\begin{equation}\label{eq.weer-tussenstap}
\|\Phi(v_i) \xi_1 - \xi_1 (1 \ot v_i)\|_2 < 2 \eps \quad\text{for all $i \in I$.}
\end{equation}
Define $E_0 \subset L^2(B) \ominus E$ as the closed linear span of $\{B_2 \zeta_j \mid j = 1,\ldots,\kappa\}$. Since $E_0$ is a left $B_2$-module, we get that $\Phi(v_i) \xi_1 \in \C^m \ot \C^n \ot L^2(P) \ot E_0$ for all $i \in I$. It thus follows from \eqref{eq.weer-tussenstap} that
\begin{equation}\label{eq.weer-stap}
\|(1 \ot 1 \ot 1 \ot (1-P_{E_0}))(\xi_1(1 \ot v_i))\|_2 < 2\eps \quad\text{for all $i \in I$.}
\end{equation}
By \eqref{eq.goes-to-zero}, $\lim_i \|(1 \ot 1 \ot 1 \ot P_{E_0})(\xi_1(1 \ot v_i))\|_2 = 0$. By \eqref{eq.weer-stap}, we can thus choose an $i \in I$ such that $\|\xi_1 (1 \ot v_i)\|_2 < 2 \eps$. Since $v_i$ is a unitary, it follows that $\|\xi_1\|_2 < 2 \eps$ and thus $\|\xi_0\|_2 < 3 \eps$. Since $\eps > 0$ was arbitrary, we have proven that $\xi_0 = 0$.
\end{proof}

The following lemma is quite technical, but very useful for our purposes. The methods of Popa's deformation/rigidity theory often allow to prove intertwining relations between von Neumann subalgebras. The lemma allows us to deduce W$^*$-correlations from intertwining.

\begin{lemma}\label{lem.find-subequiv}
Let $(A,\tau)$ be a tracial von Neumann algebra with a family of von Neumann subalgebras $(A_i)_{i \in I}$. Denote the normalizer as $\cA_i = \cN_A(A_i)\dpr$ and make the following assumptions:
\begin{itemlist}
\item If $i_1 \neq i_2$, then $A_{i_1} \not\prec_A A_{i_2}$.
\item If $K \subset L^2(A)$ is an $A_i$-$A_i$-subbimodule that is finitely generated as a right Hilbert $A_i$-module, then $K \subset L^2(\cA_i)$.
\end{itemlist}
Let $(B,\tau)$ be a tracial von Neumann algebra with a family of von Neumann subalgebras $(B_j)_{j \in J}$ with normalizer $\cB_j = \cN_B(B_j)\dpr$ satisfying the same properties.

Let $(P,\tau)$ be a tracial von Neumann algebra and $\bim{A}{H}{P \ovt B}$ a $P$-equivalence between $A$ and $B$. We make the following assumptions.
\begin{enumlist}
\item For every $i \in I$ and $j \in J$, denote by $L_{i,j} \subset H$ the closed linear span of all $A_i$-$(P \ovt B_j)$-subbimodules of $H$ that are finitely generated as a right Hilbert $(P \ovt B_j)$-module. Assume that for every $i \in I$, the closed linear span of $\{ L_{i,j}(P \ovt B) \mid j \in J\}$ equals $H$.
\item For every $i \in I$ and $j \in J$, denote by $R_{i,j} \subset H$ the closed linear span of all $(P\op \ovt A_i)$-$B_j$-subbimodules of $H$ that are finitely generated as a left Hilbert $(P\op \ovt A_i)$-module. Assume that for every $j \in J$, the closed linear span of $\{ (P\op \ovt A) R_{i,j} \mid i \in I\}$ equals $H$.
\end{enumlist}
For every $i \in I$ and $j \in J$, denote by $K_{i,j} \subset H$ the closed linear span of all $A_i$-$(P \ovt B_j)$-subbimodules of $H$ that are finitely generated both as a right Hilbert $(P \ovt B_j)$-module and as a left Hilbert $(P\op \ovt A_i)$-module. Then the following holds.
\begin{itemlist}
\item $L_{i,j} = R_{i,j} = K_{i,j}$ for all $i \in I$ and $j \in J$.
\item For every $i \in I$ and distinct $j,j' \in J$, the subspaces $K_{i,j} (P \ovt B)$ and $K_{i,j'} (P \ovt B)$ are orthogonal.
\item For every $j \in J$ and distinct $i,i' \in I$, the subspaces $A K_{i,j}(P \ot 1)$ and $A K_{i',j}(P \ot 1)$ are orthogonal.
\end{itemlist}
\end{lemma}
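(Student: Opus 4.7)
The plan proceeds in three stages: (A) orthogonality of the right translates $L_{i,j}(P \ovt B) \perp L_{i,j'}(P \ovt B)$ for fixed $i$ and $j \neq j'$, using $B_j \not\prec B_{j'}$; (B) the symmetric orthogonality $(P\op \ovt A) R_{i,j} \perp (P\op \ovt A) R_{i',j}$ for fixed $j$ and $i \neq i'$, obtained by passing to the second realization $H \cong L^2(P\op \ovt A)q$ of the $P$-equivalence from Remark \ref{rem.charact-Wstar-corr}; and (C) the equalities $L_{i,j} = R_{i,j} = K_{i,j}$, via Lemma \ref{lem.left-right-embedding} and its dual on the $A$-side.

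For (A), fix $i$ and $j \neq j'$ and pick $A_i$-$(P \ovt B_j)$-subbimodule $N \subset L_{i,j}$ and $A_i$-$(P \ovt B_{j'})$-subbimodule $N' \subset L_{i,j'}$, both finitely generated as right Hilbert modules. It suffices to show the matrix coefficient $L_\xi^* L_\eta \in P \ovt B$ vanishes for right-bounded $\xi \in N, \eta \in N'$. Picking right-bounded generators $\xi_1,\ldots,\xi_n$ of $N$ and $\eta_1,\ldots,\eta_m$ of $N'$ and writing $\xi = \sum_k \xi_k b_k$, $\eta = \sum_l \eta_l b_l'$ with $b_k \in P \ovt B_j$ and $b_l' \in P \ovt B_{j'}$, the identity $L_\xi^* L_\eta = \sum_{k,l} b_k^* (L_{\xi_k}^* L_{\eta_l}) b_l'$ shows every such coefficient lies in the $(P \ovt B_j)$-$(P \ovt B_{j'})$-bimodule of $L^2(P \ovt B)$ generated by the finitely many $L_{\xi_k}^* L_{\eta_l}$, which is finitely generated as right $(P \ovt B_{j'})$-module. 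Since $B_j \not\prec B_{j'}$ upgrades to $P \ovt B_j \not\prec_{P \ovt B} P \ovt B_{j'}$, Lemma \ref{lem.Popa-intertwining-for-bimodules} forces this bimodule, hence $L_\xi^* L_\eta$, to vanish. Step (B) is verbatim in picture 2, using $A_i \not\prec A_{i'}$.

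Steps (A)--(B) combined with assumptions (i)--(ii) produce orthogonal decompositions $H = \bigoplus_{j \in J} L_{i,j}(P \ovt B)$ for every $i$ and $H = \bigoplus_{i \in I}(P\op \ovt A)R_{i,j}$ for every $j$. For (C), I take an $A_i$-$(P \ovt B_j)$-subbimodule $N \subset L_{i,j}$ finitely generated as right $(P \ovt B_j)$-module, with intertwining data $V \in \C^n \ot (P \ovt B)p$ and $\vphi : A_i \to q(M_n(\C) \ot (P \ovt B_j))q$ satisfying $V\al(a) = \vphi(a)V$, and project $N$ onto each summand $(P\op \ovt A)R_{i',j}$. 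For $i' \neq i$, each piece of $R_{i',j}$ provides, via the unitary $V$ of the $P$-equivalence, intertwining data $W, \psi$ witnessing $\be(B_j) \prec P\op \ovt A_{i'}$ in picture 2. A dual version of Lemma \ref{lem.left-right-embedding} (obtained by interchanging the roles of $A$, $B$, $P$, $P\op$ through the unitary of the $P$-equivalence) applied to $N$ yields $N \cdot W \subset \overline{E_l(A_{i'}, A_i) \ot P \ot \C^n}$, where $E_l(A_{i'},A_i) = \{a \in A \mid A_i a \subset \sum a_k A_{i'}\}$, and $E_l(A_{i'},A_i) = 0$ by $A_i \not\prec_A A_{i'}$ (via the argument of Lemma \ref{lem.intertwining-weak-normalizer}\ref{lem.intertwining-weak-normalizer.1}). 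Combining $N \cdot W = 0$ over all such $W$ spanning $R_{i',j}$ with the $A_i$-invariance of $N$ and the quasi-normalizer hypothesis on $A_i$, I conclude that the projection of $N$ onto $(P\op \ovt A) R_{i',j}$ vanishes. Hence $N \subset (P\op \ovt A)R_{i,j}$, and a refinement using the left $A_i$-invariance of $N$ gives $N \subset R_{i,j}$; the reverse inclusion is symmetric. Finally $K_{i,j} \subset L_{i,j} \cap R_{i,j}$ is automatic, and the same argument shows every f.g.-right subbimodule of $L_{i,j}$ is f.g.-left over $P\op \ovt A_i$, giving $L_{i,j} \subset K_{i,j}$.

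The main obstacle lies in step (C): upgrading the orthogonality $N \perp R_{i',j}$ (which falls out of the dual Lemma together with $A_i \not\prec A_{i'}$) to orthogonality with the full left translate $(P\op \ovt A) R_{i',j}$. This amounts to ruling out contributions from the $A$-orbit of $R_{i',j}$ lying outside $\cA_i$, and requires delicately exploiting the left $A_i$-invariance of $N$ in combination with the quasi-normalizer control on $A_i$ inside $A$ (the second hypothesis on $(A_i)$, which confines such contributions to $L^2(\cA_i) \cdot R_{i',j}$) to close the argument.
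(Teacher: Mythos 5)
Your steps (A) and (B) are correct and use a pleasantly direct matrix-coefficient argument (each $L_\xi^* L_\eta$ generates a finite-type right $(P \ovt B_{j'})$-module, the left $(P \ovt B_j)$-action preserves it, and non-intertwining kills it) that is different from, and somewhat more elementary than, the paper's route. The paper does not prove the orthogonality of the translates first; it derives it \emph{after} the equality $L_{i,j}=R_{i,j}$, as a corollary of the projection identities $p_{i,j'} R_{i,j}=0$.

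The genuine gap, which you yourself flag, is in step (C), and it is exactly where the substance of the lemma lies. Two concrete problems. First, you have the "wrong" slicing: you fix $(i,j)$ and try to argue that $N \subset (P\op \ovt A) R_{i,j}$ by ruling out the summands $(P\op \ovt A)R_{i',j}$, and then you need a further "refinement" from $(P\op \ovt A)R_{i,j}$ to $R_{i,j}$ which you do not supply. The paper instead proves $R_{i,j}\subset L_{i,j}$ directly (and gets $L_{i,j}\subset R_{i,j}$ by applying the same argument in picture 2): one introduces the projection $p_{i,j}\in\al(A_i)'\cap p(P\ovt B)p$ whose range is exactly $\overline{L_{i,j}(P\ovt B)}$, applies Lemma \ref{lem.left-right-embedding} with the intertwiners $V$ witnessing $\al(A_i)\prec P\ovt B_j$ to conclude $p_{i,j}\cdot R_{i,k}=0$ for $k\neq j$ (this is the only place $B_k\not\prec_B B_j$ is used) and, crucially, $p_{i,j}\cdot R_{i,j}\subset L_{i,j}$ (this is where $E_r(B_j,B_j)\subset L^2(\cB_j)$, i.e.\ the normalizer hypothesis, enters), and then uses $\bigvee_j p_{i,j}=p$ from assumption (i) to get $R_{i,j}=p_{i,j}\cdot R_{i,j}\subset L_{i,j}$. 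No "upgrading to the full translate" is ever needed because $p_{i,j}$ already projects onto the closed translate. Second, your treatment of the $i'=i$ term is missing entirely: with your slicing you would get $W\cdot\overline{N}\subset\overline{\C^m\ot P^{\mathrm{op}}\ot E_r(A_i,A_i)}$, and you would then need $E_r(A_i,A_i)\subset L^2(\cA_i)$ together with the identification of $W^*(\C^m\ot L^2(P^{\mathrm{op}}\ovt\cA_i))$ as a subspace of $R_{i,j}$; you do not address this. Finally, the claim "the same argument shows every f.g.-right subbimodule of $L_{i,j}$ is f.g.-left" is not how one gets $L_{i,j}=K_{i,j}$: the paper instead takes increasing nets of f.g.\ subbimodules $L_n\subset L_{i,j}$ and $R_n\subset R_{i,j}$ and shows the cokernel of the projection of $L_n$ into $R_n$ is bifinite, giving density of $K_{i,j}$ in $L_{i,j}=R_{i,j}$.

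In short: (A)--(B) are a valid alternative route to a piece of the conclusion, but step (C) as written is not a proof, and the missing ideas (the projection $p_{i,j}$ and its identification with the projection onto $\overline{L_{i,j}(P\ovt B)}$; the $k=j$ term of Lemma \ref{lem.left-right-embedding} combined with the normalizer hypothesis; the cokernel argument for $K_{i,j}$) are precisely the key ingredients in the paper's proof.
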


Note that some $L_{i,j}$ and $R_{i,j}$ may be equal to $\{0\}$. When they are nonzero, the direct summands of $A_i$ and $B_j$ that act faithfully on $L_{i,j} = R_{i,j} = K_{i,j}$ are W$^*$-correlated. Also note that $L_{i,j}$, $R_{i,j}$ and $K_{i,j}$ are automatically $\cA_i$-$(P \ovt \cB_j)$-subbimodules of $H$.

\begin{proof}
After replacing $P$ by a matrix algebra over $P$, we may assume that the $P$-equivalence $\bim{A}{H}{P \ovt B}$ is given by $\bim{\al(A)}{pL^2(P \ovt B)}{P \ovt B}$. For every $i \in I$ and $j \in J$, denote by $p_{i,j} \in \al(A_i)' \cap p(P \ovt B)p$ the largest projection such that $\al(A_i)p_{i,j} \prec^f P \ovt B_j$. Then left multiplication by $p_{i,j}$ is the orthogonal projection of $H$ onto the closure of $L_{i,j} (P \ovt B)$. By assumption (i), we have for every $i \in I$ that $\bigvee_{j \in J} p_{i,j} = p$. Also note that $p_{i,j}$ belongs to the center of $\al(A_i)' \cap p(P \ovt B)p$. In particular, $p_{i,j}$ and $p_{i,k}$ commute for all $i \in I$ and $j,k \in J$.

Denote by $\cF_{i,j}$ the data witnessing $\al(A_i) \prec P \ovt B_j$ whenever $L_{i,j} \neq \{0\}$. More precisely, $\cF_{i,j}$ consists of the tuples $(n,p_0,V,\vphi)$ where $n \in \N$, $p_0 \in M_n(\C) \ot (P \ovt B_j)$ is a projection, $V \in \C^n \ot (P \ovt B)p$ is a partial isometry and $\vphi : A_i \to p_0(M_n \ot (P \ovt B_j))p_0$ is a normal unital $*$-homomorphism satisfying $V \al(a) = \vphi(a) V$ for all $a \in A_i$.

Let $(n,p_0,V,\vphi) \in \cF_{i,j}$. From our two assumptions on the subalgebras $B_j$, we know that $B_k \not\prec_B B_j$ if $k \neq j$ and that $E_r(B_j,B_j) \subset L^2(\cB_j)$. By Lemma \ref{lem.left-right-embedding}, we thus find that
\begin{enumlist}[label=(\alph*)]
\item $V \cdot R_{i,k} = \{0\}$ for all $k \in J$ with $k \neq j$,
\item $V \cdot R_{i,j} \subset \C^n \ot L^2(P \ovt \cB_j)$.
\end{enumlist}
Since the join of all $V^* V$ with $(n,p_0,V,\vphi) \in \cF_{i,j}$ equals $p_{i,j}$ and since for all $(n,p_0,V,\vphi) \in \cF_{i,j}$, we have that
$$V^*(\C^n \ot L^2(P \ovt \cB_j)) \subset L_{i,j} \; ,$$
statements (a) and (b) imply that $p_{i,j} \cdot R_{i,k} = \{0\}$ if $j \neq k$ and $p_{i,j} \cdot R_{i,j} \subset L_{i,j}$.

Fix $i \in I$ and $j \in J$. Define $p_1 = \bigvee_{k \in J \setminus \{j\}} p_{i,k}$. By assumption (i), $p_{i,j} \vee p_1 = p$. Also, $p_{i,j}$ and $p_1$ commute. So, $p = p_{i,j} + (1-p_{i,j})p_1$. Since $p_{i,k} \cdot R_{i,j} = \{0\}$ for all $k \neq j$, also $p_1 \cdot R_{i,j} = \{0\}$ and thus $R_{i,j} = p_{i,j} \cdot R_{i,j} \subset L_{i,j}$.

By symmetry, also the converse inclusion $L_{i,j} \subset R_{i,j}$ holds and we have proven that $L_{i,j} = R_{i,j}$ for all $i \in I$, $j \in J$.

When $i \in I$ and $j,j'$ are distinct elements of $J$, we find that $p_{i,j'} \cdot L_{i,j} = p_{i,j'} \cdot R_{i,j} = \{0\}$, so that also $p_{i,j'} \cdot (L_{i,j}(P \ovt B)) = \{0\}$, which means that $p_{i,j'} p_{i,j} = 0$. So, the subspaces $L_{i,j}(P \ovt B)$ and $L_{i,j'}(P \ovt B)$ are orthogonal.

By symmetry, also $A R_{i,j} (P \ot 1)$ is orthogonal to $A R_{i',j}(P \ot 1)$ whenever $i,i' \in I$ are distinct and $j \in J$.

Fix $i \in I$ and $j \in J$. It remains to prove that $K_{i,j} = L_{i,j} = R_{i,j}$. Choose an increasing net of $A_i$-$(P \ovt B_j)$-subbimodules $L_n \subset L_{i,j}$ that are finitely generated as a right Hilbert $(P \ovt B_j)$-module and whose union is dense in $L_{i,j}$. Similarly choose an increasing net of $(P\op \ovt A_i)$-$B_j$-subbimodules $R_n \subset R_{i,j}$ that are finitely generated as a left Hilbert $(P\op \ovt A_i)$-module and whose union is dense in $R_{i,j}$. Denote by $\pi_n$ the orthogonal projection of $R_{i,j}$ onto $R_n$. Define $K_n \subset L_n$ as the cokernel of the restriction of $\pi_n$ to $L_n$. Using $\pi_n$, we find that $K_n$ is isomorphic with a $(P\op \ovt A_i)$-$B_j$-subbimodule of $R_n$. Since also $K_n \subset L_n$, we get that $K_n \subset K_{i,j}$. Since the linear span of all $K_n$ is dense in $L_{i,j} = R_{i,j}$, we find that $L_{i,j} = R_{i,j} \subset K_{i,j}$. Since the converse inclusion is trivial, the lemma is proven.
\end{proof}

\section{\boldmath W$^*$-correlations of tensor products}

In this section, we prove our main result on W$^*$-correlations with tensor products of stably solid II$_1$ factors, stated as Theorem \ref{thm.main-B} in the introduction. We state and prove here the following more precise version. As mentioned in the introduction, this result is a W$^*$-correlation variant of \cite[Theorem 1]{OP03} for isomorphisms.

\begin{theorem}\label{thm.W-star-corr-tensor-product-any-tensor-product}
Let $A = A_1 \ovt \cdots \ovt A_n$ be a tensor product of nonamenable, stably solid II$_1$ factors. Let $B = B_1 \ovt \cdots \ovt B_r$ be an arbitrary tensor product of II$_1$ factors.
\begin{enumlist}
\item\label{thm.W-star-corr-tensor-product-any-tensor-product.1} $A$ and $B$ are W$^*$-correlated if and only if we can partition $\{1,\ldots,n\}$ into nonempty subsets $S_1 \sqcup \cdots \sqcup S_r$ such that for every $k \in \{1,\ldots,r\}$, the tensor product $A_{S_k} := \ovt_{i \in S_k} A_i$ is W$^*$-correlated to $B_k$.
\item\label{thm.W-star-corr-tensor-product-any-tensor-product.2} For every W$^*$-correlation $\bim{A}{H}{P \ovt B}$ between $A$ and $B$, there exists a partition $\{1,\ldots,n\} = S_1 \sqcup \cdots \sqcup S_r$ and an $A$-$(P \ovt B)$-subbimodule $H_0 \subset H$ such that for every $k \in \{1,\ldots,r\}$, the Hilbert space $H_0$ is densely spanned by its $A_{S_k}$-$(P \ovt B_k)$-subbimodules that define a W$^*$-correlation between $A_{S_k}$ and $B_k$.
\end{enumlist}
\end{theorem}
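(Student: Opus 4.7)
The easy direction of (i) is immediate from Proposition \ref{prop.Wstar-corr-tensor}: tensoring $P_k$-equivalences between the $A_{S_k}$ and $B_k$ yields a $(\ovt_{k} P_k)$-equivalence between $A$ and $B$. For the hard direction, which subsumes (ii), my plan is to first reduce via Proposition \ref{prop.countably-decomp-separable} to the case where $P$ is a II$_1$ factor, so that the correlation is presented by faithful unital normal $*$-homomorphisms $\al : A \to p(P \ovt B)p$ and $\be : B \to q(P\op \ovt A)q$ with $\bim{\al(A)}{pL^2(P \ovt B)}{P \ot 1}$ coarse (and symmetrically for $\be$). Writing $A_{(i)} := \ovt_{l \neq i} A_l$ and $B_{(k)} := \ovt_{j \neq k} B_j$, I aim to construct a function $\phi : \{1,\ldots,n\} \to \{1,\ldots,r\}$ characterized by: $\phi(i)$ is the unique $k$ with $\al(A_i) \not\prec_{P \ovt B} P \ovt B_{(k)}$. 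Existence of such a $k$ for each $i$ follows by iterated use of Proposition \ref{prop.basic-prop.1}: if $\al(A_i) \prec P \ovt B_{(k)}$ for every $k$, then $\al(A_i) \prec P \ot 1$, contradicting diffuseness of $A_i$ via Proposition \ref{prop.basic-prop.4}.

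The crucial step is uniqueness of $\phi(i)$, for which I would exploit stable solidity of $A_i$ through $\be$. Viewing $\be(B_k) \subset q\bigl((P\op \ovt A_{(i)}) \ovt A_i\bigr)q$, the dichotomy of Definition \ref{def.rel-solid} gives, for each $k$, either $\be(B_k) \prec_{P\op \ovt A} P\op \ovt A_{(i)}$, or else $\be(B_{(k)}) \subset \be(B_k)' \cap q(P\op \ovt A)q$ is amenable relative to $P\op \ovt A_{(i)}$. Processing these alternatives through the coarseness of the $\be$-bimodule (Proposition \ref{prop.basic-prop.5}) and combining across the $r$ different values of $k$ via Propositions \ref{prop.basic-prop.1} and \ref{prop.basic-prop.2}, one obtains compatibility of the local central-projection decompositions, propagating the stable solidity dichotomy into a well-defined global function $\phi$. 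Nonemptiness of each $S_k := \phi^{-1}(k)$ then follows from the symmetric argument applied to $\be(B_k)$ together with diffuseness of $B_k$.

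Once $\phi$ is in hand, unwinding its definition and iterating Proposition \ref{prop.basic-prop.1} across all $l \neq k$ yields $\al(A_i) \prec^{f} P \ovt B_k$ for each $i \in S_k$; since the $\al(A_i)$'s for $i \in S_k$ mutually commute, Proposition \ref{prop.basic-prop.3} iterated over $i \in S_k$ gives $\al(A_{S_k}) \prec^{f} P \ovt B_k$, and symmetrically $\be(B_k) \prec^{f} P\op \ovt A_{S_k}$. This puts us in the setting of Lemma \ref{lem.find-subequiv} applied to the families $(A_{S_k})_{k=1}^{r}$ in $A$ and $(B_k)_{k=1}^{r}$ in $B$: the non-intertwining hypotheses $A_{S_k} \not\prec_A A_{S_l}$ for $k \neq l$ hold by the standard perpendicularity of distinct tensor factors (using diffuseness of the $A_i$'s), and the normalizer conditions are trivial because each $A_{S_k}$ is a factor whose commutant lies in its normalizer, giving $\cN_A(A_{S_k})\dpr = A$. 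Lemma \ref{lem.find-subequiv} then produces the desired subbimodule $H_0 := \bigoplus_{k} K_{S_k, k} \subset H$, with each $K_{S_k, k}$ a $P$-equivalence between $A_{S_k}$ and $B_k$, proving both (i) and (ii). I expect the main obstacle to be the uniqueness of $\phi$ in the previous paragraph: turning the stable solidity dichotomy — which naturally produces only central-projection decompositions on each side — into a globally consistent partition, which is where the full strength of Proposition \ref{prop.basic-prop} is deployed.
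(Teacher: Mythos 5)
Your outline of the easy direction and the final invocation of Lemma \ref{lem.find-subequiv} agree with the paper, but the core of the argument -- deriving the intertwinings $\al(A_{S_k}) \prec^f P \ovt B_k$ and $\be(B_k) \prec^f P\op \ovt A_{S_k}$ -- has a genuine gap, and it is not the one you flag.

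The obstruction is that the $B_j$'s are \emph{not} assumed stably solid; only the $A_i$'s are. From the definition $\phi(i)=k \Leftrightarrow \al(A_i) \not\prec P \ovt B_{(k)}$ (which, incidentally, has to be set up using relative amenability and Proposition \ref{prop.basic-prop.2} rather than iterating Proposition \ref{prop.basic-prop.1}, since $\prec$ unlike $\prec^f$ does not iterate across intersections), you cannot ``unwind and iterate'' to get $\al(A_i) \prec^f P \ovt B_k$: the required dichotomy on the $B$-side is exactly stable solidity of $B_j$, which is unavailable. Your attempted workaround through $\be$ also does not go through: applying stable solidity of $A_i$ to $\be(B_k) \subset q(P\op \ovt A_{(i)} \ovt A_i)q$ yields a statement about the relative commutant $\be(B_k)' \cap q(P\op \ovt A)q$ being amenable relative to $P\op \ovt A_{(i)}$, and the passage you propose through Proposition \ref{prop.basic-prop.5} fails because the coarseness hypothesis there concerns the $\be(B)$-$P\op$-bimodule, not the $\be(B)$-$(P\op\ovt A_{(i)})$-bimodule.

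The paper's proof supplies the missing ideas in two technical steps. Part 1 invokes the partial flip automorphism $\si \in \Aut((P\ovt B)\ovt(P\ovt B))$ of Isono--Marrakchi and then transports $\si(Q_i)$ through $\betil = \id^{\ot 3}\ot\be$ into $N \ovt A$ with $N = P\ovt B \ovt P \ovt P\op$; only there does stable solidity of the $A_{\vphi(i)}$'s become applicable, ultimately producing $\al(A_{S_1}) \ovt \al(A_{S_2}) \prec^f (P\ovt B_1)\ovt(P\ovt B_2)$ after pulling back along $\be$. Part 2 additionally relies on the special case Theorem \ref{thm.corr-tensor-product} (where both sides are stably solid) applied to the composite $(P\ovt P\op)$-equivalence $\gamma = (\id\ot\be)\circ\al$ of $A$ with itself, together with Lemma \ref{lem.left-right-embedding}, to pin down the sets $T_k=S_k$. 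Neither of these devices appears in your proposal, and without them the claimed intertwinings are unjustified. The partition itself is also only well-defined after cutting to a nonzero corner $z_0$ of commuting central projections, which is a further point your global $\phi$ glosses over.
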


Before proving Theorem \ref{thm.W-star-corr-tensor-product-any-tensor-product}, we need two preliminary results. In particular, we first need to prove a special version of Theorem \ref{thm.W-star-corr-tensor-product-any-tensor-product}, namely Theorem \ref{thm.corr-tensor-product}, saying that two tensor products $A_1 \ovt \cdots \ovt A_n$ and $B_1 \ovt \cdots \ovt B_m$ of nonamenable, stably solid II$_1$ factors can only be W$^*$-correlated when $n=m$ and $A_j$ is W$^*$-correlated by $B_{\si(j)}$ for all $j$ and a bijection $\si$.

And before that, we need the following lemma, which essentially goes back to \cite{OP03,OP07}.

\begin{lemma}\label{lem.coarse-embedding-tensor-of-solid}
Let $n \geq m \geq 1$ be integers, $A_1,\ldots,A_n$ tracial von Neumann algebras without amenable direct summand and $B_1,\ldots,B_m$ stably solid tracial von Neumann algebras. Write $A = A_1 \ovt \cdots \ovt A_n$ and $B = B_1 \ovt \cdots \ovt B_m$. Let $P$ be a tracial von Neumann algebra and $\theta : A \to p(P \ovt B)p$ a unital normal $*$-homomorphism such that the bimodule $\bim{\theta(A)}{p L^2(P \ovt B)}{P \ot 1}$ is coarse.

Then $n=m$ and for every $i \in \{1,\ldots,n\}$, there exists a $j \in \{1,\ldots,n\}$ such that $\theta(A_i) \prec P \ovt B_j$.
\end{lemma}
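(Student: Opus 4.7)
I proceed by induction on $m$. The base case $m=1$ is direct: if $n\geq 2$, then $\theta(A_2)\subset\theta(A_1)'\cap p(P\ovt B_1)p$; by Proposition~\ref{prop.basic-prop.4}, the coarseness hypothesis and the diffuseness of $A_1$ give $\theta(A_1)\not\prec P\ot 1$, so stable solidity of $B_1$ makes the relative commutant amenable relative to $P\ot 1$, and Proposition~\ref{prop.basic-prop.5} applied to the coarse restriction of $\theta$ to $A_2$ forces $A_2$ amenable, a contradiction. Thus $n=1$, and $\theta(A_1)\prec P\ovt B_1$ is trivial since $\theta(A_1)$ already sits inside $p(P\ovt B_1)p$.

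For the inductive step ($m\geq 2$), fix $i$ and let $S_i^\ast\subseteq\{1,\ldots,m\}$ be a subset of minimum cardinality such that $\theta(A_i)\prec^f_{P\ovt B} P\ovt B_{S_i^\ast}$ on some non-zero central projection $q$ in the normalizer of $\theta(A_i)$ inside $p(P\ovt B)p$; existence of such a minimum follows from Proposition~\ref{prop.basic-prop.1}. The main claim is $|S_i^\ast|=1$. Suppose instead $|S_i^\ast|\geq 2$. For every $k\in S_i^\ast$, minimality combined with Proposition~\ref{prop.basic-prop.1} (applied to the three-fold tensor decomposition $P\ovt B=(P\ovt B_{S_i^\ast\setminus\{k\}})\ovt B_k\ovt B_{\{1,\ldots,m\}\setminus S_i^\ast}$) forces $\theta(A_i)q\not\prec P\ovt\widehat{B_k}$, where $\widehat{B_k}:=\ovt_{l\neq k}B_l$; otherwise intersecting would yield $\theta(A_i)\prec P\ovt B_{S_i^\ast\setminus\{k\}}$ on a sub-corner, contradicting minimality. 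Writing $P\ovt B=(P\ovt\widehat{B_k})\ovt B_k$, stable solidity of $B_k$ applied to $\theta(A_i)q\subset q(P\ovt B)q$ then gives that $(\theta(A_i)q)'\cap q(P\ovt B)q$---and in particular $q\theta(\Ahat_i)$, since $q$ centralizes $\theta(\Ahat_i)$---is amenable relative to $P\ovt\widehat{B_k}$. Iterated application of Proposition~\ref{prop.basic-prop.2} over all $k\in S_i^\ast$ collapses this to amenability of $q\theta(\Ahat_i)$ relative to $P\ovt B_{\{1,\ldots,m\}\setminus S_i^\ast}$. In particular, if $S_i^\ast=\{1,\ldots,m\}$ this reduces to amenability relative to $P\ot 1$, and Proposition~\ref{prop.basic-prop.5} applied to the coarse restriction of $\theta$ to the non-amenable algebra $\Ahat_i$ produces a contradiction; hence $|S_i^\ast|\leq m-1$.

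The final step reduces $|S_i^\ast|$ from $\leq m-1$ to $1$ by iteration: passing to the corner intertwiner $\psi_i:A_i\to r_i(M_\ell\ot (P\ovt B_{S_i^\ast}))r_i$, one re-runs the above dichotomy inside the smaller tensor product $B_{S_i^\ast}$. The main obstacle---and the place the proof requires real care---is verifying that at each iteration stage one still has a coarse bimodule to which Proposition~\ref{prop.basic-prop.5} applies, together with a non-amenable commuting subalgebra (the analog of $\Ahat_i$) inside the new relative commutant to force the key contradiction. This is carried out by carefully unpacking the bimodule $\bim{\theta(A)}{pL^2(P\ovt B)}{P\ot 1}$ and using the generalized forms of Propositions~\ref{prop.basic-prop.4} and~\ref{prop.basic-prop.5} to propagate coarseness and non-amenability through the intertwiner. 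Once $|S_i^\ast|=1$ holds for every $i$, the equality $n=m$ follows by pigeonhole plus Proposition~\ref{prop.basic-prop.3}: if $n>m$, two commuting $\theta(A_i),\theta(A_{i'})$ intertwine on a common corner into the same $P\ovt B_{j_0}$, and Proposition~\ref{prop.basic-prop.3} (using that commutation implies mutual normalization) upgrades this to a corner embedding of $\theta(A_i\ovt A_{i'})$ into $P\ovt B_{j_0}$, which contradicts the base case applied to this two-factor embedding.
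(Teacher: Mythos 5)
Your proposal takes a genuinely different route from the paper: you induct on $m$ and, for each $i$, try to drive a minimal set $S_i^*$ with $\theta(A_i)q \prec^f P \ovt B_{S_i^*}$ down to a singleton. The base case $m=1$ is fine, the argument showing $|S_i^*| \le m-1$ (minimality plus Propositions~\ref{prop.basic-prop.1}, \ref{prop.basic-prop.2}, \ref{prop.basic-prop.5} and stable solidity of each $B_k$) is essentially sound, and the final pigeonhole step via Proposition~\ref{prop.basic-prop.3} would work once $|S_i^*|=1$ is in hand.

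The gap is in the ``final step'' reducing $|S_i^*|$ from $\le m-1$ to $1$. You flag it as requiring ``real care'' but do not carry it out, and it is not merely a matter of care: there is a concrete obstruction. After passing to the intertwiner $\psi_i : A_i \to r_i(M_\ell(\C) \ot P \ovt B_{S_i^*})r_i$, the non-amenable commuting algebra that your dichotomy needs (the analogue of $\widehat{A_i}$) does not live in $r_i(M_\ell(\C) \ot P \ovt B_{S_i^*})r_i$. Conjugating $\theta(\widehat{A_i})q$ by the intertwining partial isometry lands in $r_i(M_\ell(\C) \ot P \ovt B)r_i$, involving all of $B$, not just $B_{S_i^*}$. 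So you are not actually in the ``smaller tensor product'' picture, and a coarse bimodule together with a non-amenable commuting subalgebra inside the reduced corner is exactly what is missing; intertwining $A_i$ alone leaves $\widehat{A_i}$ behind.

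The paper's proof inducts on $n$ and sidesteps this by intertwining the complementary algebra $\widehat{A_i}$ rather than $A_i$. From non-amenability of $\theta(A_i)$ relative to $P \ot 1$, Proposition~\ref{prop.basic-prop.2} yields a $j$ with $\theta(A_i)$ not amenable relative to $P \ovt \widehat{B_j}$; stable solidity of $B_j$, applied to $\theta(\widehat{A_i})$ whose relative commutant contains $\theta(A_i)$, then forces $\theta(\widehat{A_i}) \prec P \ovt \widehat{B_j}$. The conjugated picture is then a bona fide $(n-1,m-1)$ instance of the lemma, namely an $(n-1)$-fold tensor product mapping into a corner of $(M_k(\C) \ot P) \ovt \widehat{B_j}$ with a coarse bimodule, so both the commuting structure and the coarseness transport automatically, and the induction hypothesis closes the argument. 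If you want to salvage your strategy, the missing ingredient is precisely a mechanism for transporting the non-amenable complementary piece into the reduced tensor product, and that is exactly what intertwining $\widehat{A_i}$ instead of $A_i$ accomplishes.
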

\begin{proof}
We prove the lemma by induction on $n$. When $n = 1$, there is nothing to prove. So assume that $n \geq 2$ and that the lemma holds for strictly smaller values of $n$.

For every $i \in \{1,\ldots,n\}$, we denote by $\widehat{A_i} \subset A$ the von Neumann subalgebra given as the tensor product of all $A_k$, $k \neq i$. We similarly define $\widehat{B_j} \subset B$.

Fix $i \in \{1,\ldots,n\}$. By Proposition \ref{prop.basic-prop.5}, $\theta(A_i)$ is not amenable relative to $P \ot 1$. So by Proposition \ref{prop.basic-prop.2}, we can choose $j \in \{1,\ldots,m\}$ such that $\theta(A_i)$ is not amenable relative to $P \ovt \widehat{B_j}$. Since $B_j$ is stably solid, it follows that $\theta(\widehat{A_i}) \prec P \ovt \widehat{B_j}$. Take $k \in \N$, a nonzero partial isometry $V \in \C^k \ot (P \ovt B)p$, a projection $p_0 \in M_k(\C) \ot (P \ovt \widehat{B_j})$ and a unital normal $*$-homomorphism $\vphi : \widehat{A_i} \to p_0(M_k(\C) \ovt P \ovt \widehat{B_j})p_0$ satisfying $\vphi(a) V = V \theta(a)$ for all $a \in \widehat{A_i}$.

Then $VV^* \in \vphi(\widehat{A_i})' \cap p_0(M_k(\C) \ovt P \ovt B)p_0$ and we may assume that the support of $(\id \ot \id \ot E_{\widehat{B_j}})(VV^*)$ equals $p_0$. Using left multiplication by $V^*$, it then follows that
$$\bim{\vphi(\widehat{A_i})}{p_0 (\C^k \ot L^2(P \ovt \widehat{B_j}))}{P \ovt \widehat{B_j}}$$
is isomorphic with a subbimodule of $\bim{\theta(\widehat{A_i})}{p L^2(P \ovt B)}{P \ovt \widehat{B_j}}$. Therefore,
$$\bim{\vphi(\widehat{A_i})}{p_0 (\C^k \ot L^2(P \ovt \widehat{B_j}))}{P \ot 1}$$
is coarse. By the induction hypothesis, $n-1=m-1$ and for every $r \in \{1,\ldots,n\} \setminus \{i\}$, there exists an $s \in \{1,\ldots,n\} \setminus \{j\}$ such that inside $P \ovt \widehat{B_j}$, we have $\vphi(A_r) \prec P \ovt B_s$.

A fortiori, for every $r \in \{1,\ldots,n\} \setminus \{i\}$, there exists an $s \in \{1,\ldots,n\}$ such that inside $P \ovt B$, we have $\theta(A_r) \prec P \ovt B_s$. Since this statement holds for every $i$, the lemma holds for $n$, and is thus proven.
\end{proof}

\begin{theorem}\label{thm.corr-tensor-product}
Let $A_1,\ldots,A_n$ and $B_1,\ldots,B_m$ be nonamenable II$_1$ factors that are stably solid in the sense of Definition \ref{def.rel-solid}. Write $A = A_1 \ovt \cdots \ovt A_n$ and $B = B_1 \ovt \cdots \ovt B_m$.
\begin{enumlist}
\item $A$ and $B$ are W$^*$-correlated if and only if $n=m$ and there exists a permutation $\si$ such that $A_i$ is W$^*$-correlated to $B_{\si(i)}$ for all $i$.
\item For every W$^*$-correlation $\bim{A}{H}{P \ovt B}$ between $A$ and $B$, there exists a permutation $\si$ and an $A$-$(P \ovt B)$-subbimodule $H_0 \subset H$ such that for every $i$, $H_0$ is densely spanned by the $A_i$-$(P \ovt B_{\si(i)})$-subbimodules of $H_0$ that define a W$^*$-correlation between $A_i$ and $B_{\si(i)}$.
\end{enumlist}
\end{theorem}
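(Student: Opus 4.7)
The ``if'' direction of (i) is immediate from Proposition \ref{prop.Wstar-corr-tensor}. For the converse and for (ii), I would start from an arbitrary $P$-equivalence $\bim{A}{H}{P \ovt B}$. Via Proposition \ref{prop.countably-decomp-separable.1} and Remark \ref{rem.charact-Wstar-corr}, represent it through $\al : A \to p(P \ovt B)p$ (with $\bim{\al(A)}{pL^2(P \ovt B)}{P \ot 1}$ coarse) together with a symmetric right embedding $\be : B \to q(P\op \ovt A)q$, taking $P$ a II$_1$ factor. Applying Lemma \ref{lem.coarse-embedding-tensor-of-solid} to $\al$ and to $\be$ immediately gives $n = m$.

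The main tool will be Lemma \ref{lem.find-subequiv}, applied with $I = J = \{1,\ldots,n\}$ and the tensor factor subalgebras. The preconditions on subalgebras are automatic: $A_{i_1} \not\prec_A A_{i_2}$ for $i_1 \neq i_2$ follows from the tensor factorization $A = A_{i_1} \ovt \widehat{A_{i_1}}$ and Proposition \ref{prop.basic-prop.4}, while $\cA_i = A$ and $\cB_j = B$ trivialize the normalizer conditions. For hypothesis (i), let $p_{i,j}$ be the unique projection in the center of the normalizer of $\al(A_i)$ satisfying $\al(A_i) p_{i,j} \prec^f P \ovt B_j$ and $\al(A_i)(p - p_{i,j}) \not\prec P \ovt B_j$. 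Since $\al(A_k)$ for $k \neq i$ lies in that normalizer, each $p_{i,j}$ commutes with all of $\al(A)$ and, by symmetry, all the $p_{i,j}$'s pairwise commute. If $r := p - \bigvee_j p_{i,j} \neq 0$, then the compressed coarse bimodule $\bim{\al(A)r}{rL^2(P\ovt B)}{P \ot 1}$ would supply, via Lemma \ref{lem.coarse-embedding-tensor-of-solid}, an intertwining $\al(A_i) r \prec P \ovt B_j$ contradicting the maximality defining $p_{i,j}$; hence $\bigvee_j p_{i,j} = p$. Hypothesis (ii) is symmetric. Lemma \ref{lem.find-subequiv} then delivers $K_{i,j} = L_{i,j} = R_{i,j}$, with the $p_{i,j}$ pairwise orthogonal in $j$ for fixed $i$ and summing to $p$.

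The crux, and the step I expect to be the main technical obstacle, is to extract a permutation $\si$ with $q_\si := \prod_i p_{i,\si(i)} \neq 0$. Using the commutation of all $p_{i,j}$, the joint projections $q_f := \prod_i p_{i,f(i)}$ indexed by functions $f : \{1,\ldots,n\} \to \{1,\ldots,n\}$ are pairwise orthogonal and sum to $p$. I claim every nonzero $q_f$ arises from a bijection. If $f$ is not surjective, then $q_f \leq p_{i,f(i)}$ forces $\al(A_i) q_f \prec^f P \ovt B_{f(i)}$ for every $i$; iterating Proposition \ref{prop.basic-prop.3} (merging subalgebras that intertwine into a common tensor factor, using that the $\al(A_i)$'s pairwise commute) combines these to $\al(A) q_f \prec^f P \ovt (\ovt_{j \in \mathrm{im}(f)} B_j)$. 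This would yield a coarse embedding of a tensor product of $n$ nonamenable stably solid factors into a tensor product of $|\mathrm{im}(f)| < n$ stably solid factors, contradicting Lemma \ref{lem.coarse-embedding-tensor-of-solid}. Since $\sum_f q_f = p \neq 0$, some bijection $\si$ satisfies $q_\si \neq 0$.

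Setting $H_0 := q_\si L^2(P \ovt B) \subset H$ will produce the required $A$-$(P \ovt B)$-subbimodule. Inside the corner $q_\si (P \ovt B) q_\si$, the orthogonality $q_\si \perp p_{i,j}$ for $j \neq \si(i)$ forces $L_{i,j}^{H_0} = 0$ for such $j$, hence $L_{i,\si(i)}^{H_0} = H_0$ for every $i$. A second application of Lemma \ref{lem.find-subequiv} to the restricted $P$-equivalence $H_0$ identifies $K_{i,\si(i)}^{H_0} = H_0$, so $H_0$ is densely spanned by its $A_i$-$(P \ovt B_{\si(i)})$-subbimodules that are finitely generated on both sides. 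Coarseness of each such piece as an $A_i$-$P$- and $P\op$-$B_{\si(i)}$-bimodule is inherited from the ambient $P$-equivalence, and left and right faithfulness follow from the factoriality of $A_i$ and $B_{\si(i)}$. Each such piece is therefore a W$^*$-correlation between $A_i$ and $B_{\si(i)}$, establishing (ii) and the ``only if'' direction of (i).
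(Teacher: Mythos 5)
Your argument is correct and follows the paper's overall strategy: represent the correlation via $\al$ and $\be$, deduce $n=m$ from Lemma \ref{lem.coarse-embedding-tensor-of-solid}, feed the tensor-factor subalgebras into Lemma \ref{lem.find-subequiv} (your verification of hypothesis (i) by cutting and applying Lemma \ref{lem.coarse-embedding-tensor-of-solid} is exactly the paper's), extract a permutation $\si$ with $q_\si := \prod_i p_{i,\si(i)} \neq 0$, and set $H_0 := q_\si H$. Two steps are carried out in a more roundabout way than necessary. For the permutation extraction, Lemma \ref{lem.find-subequiv} already hands you (third bullet) the orthogonality $p_{i,j}\,p_{i',j}=0$ for $i\neq i'$; this makes the construction of $\si$ a one-line greedy choice (pick $\si(k)$ so that $p_{1,\si(1)}\cdots p_{k,\si(k)}\neq 0$; injectivity is then automatic), which is what the paper does. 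Your alternative — iterating Proposition \ref{prop.basic-prop.3} to get $\al(A)q_f \prec^f P\ovt B_{\mathrm{im}(f)}$ and then invoking the counting statement of Lemma \ref{lem.coarse-embedding-tensor-of-solid} on the coarse corner — is valid, but it re-derives from scratch information the lemma already supplies. For the final step, rather than re-applying Lemma \ref{lem.find-subequiv} to $H_0$ (which would require you to actually re-verify hypothesis (i) for the cut-down equivalence; you assert $L_{i,\si(i)}^{H_0}=H_0$ before having established that), the paper simply notes that $H_0 = q_\si H \subset K_{i,\si(i)}$ and that $q_\si \in \cZ(\al(A)'\cap p(P\ovt B)p)$ commutes with the left $A_i$-action, the right $(P\ovt B_{\si(i)})$-action and the right $P$-action, so cutting the bifinite $A_i$-$(P\ovt B_{\si(i)})$-subbimodules of $K_{i,\si(i)}$ by $q_\si$ directly produces a family of W$^*$-correlations densely spanning $H_0$.
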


\begin{proof}
If $n = m$ and $\si$ is a permutation of $\{1,\ldots,n\}$ such that $A_i$ is W$^*$-correlated to $B_{\si(i)}$ for all $i$, it follows from Proposition \ref{prop.Wstar-corr-tensor} that $A$ is W$^*$-correlated to $B$.

To prove the converse, let $\bim{\al(A)}{pL^2(P \ovt B)}{P \ovt B}$ be a $P$-equivalence between $A$ and $B$ that is isomorphic with $\bim{P\op \ovt A}{L^2(P\op \ovt A)q}{\be(B)}$. By Lemma \ref{lem.coarse-embedding-tensor-of-solid}, we get that $n=m$.

Note that $A_i \subset A$ is regular and $A_i \not\prec A_{i'}$ when $i \neq i'$. Similar properties hold for $B_j \subset B$. We apply Lemma \ref{lem.find-subequiv}. Write $H = pL^2(P \ovt B)$ and define the subspaces $L_{i,j} \subset H$ and $R_{i,j} \subset H$ as in Lemma \ref{lem.find-subequiv}. Since $A_i \subset A$ and $B_j \subset B$ are regular, we have that all $L_{i,j}$ and $R_{i,j}$ are $\al(A)$-$(P \ovt B)$-subbimodules of $H$.

By Lemma \ref{lem.coarse-embedding-tensor-of-solid}, for every $i \in \{1,\ldots,n\}$, the linear span of $\{L_{i,j} \mid j = 1,\ldots,n\}$ is dense in $H$. Similarly, for every $j \in \{1,\ldots,n\}$, the linear span of $\{R_{i,j} \mid i = 1,\ldots,n\}$ is dense in $H$. By Lemma \ref{lem.find-subequiv} and using the notation of that lemma, we find that $L_{i,j} = R_{i,j} = K_{i,j}$ for all $i,j \in \{1,\ldots,n\}$. Note that if $K_{i,j} \neq \{0\}$, then $K_{i,j}$ is densely spanned by its $A_i$-$(P \ovt B_{j})$-subbimodules that define a W$^*$-correlation between $A_i$ and $B_{j}$

Since $A_i \subset A$ and $B_j \subset B$ are regular, the projection $z_{i,j}$ of $H$ onto $K_{i,j}$ belongs to the center of $\al(A)' \cap p (P \ovt B)p$. In particular, all projections $z_{i,j}$ commute. By Lemma \ref{lem.find-subequiv}, for every fixed $i$, the projections $z_{i,j}$, $j = 1,\ldots,n$, are orthogonal and sum up to the identity of $H$. Similarly, for every fixed $j$, the projections $z_{i,j}$, $i =1,\ldots,n$, are orthogonal and sum up to the identity of $H$.

Choose $\si(1)$ such that $z_{1,\si(1)} \neq 0$. Then choose $\si(2)$ such that $z_{1,\si(1)} z_{2,\si(2)} \neq 0$. Since $z_{1,j} z_{2,j} = 0$ for all $j$, we must have $\si(1) \neq \si(2)$. Continuing in this way, we find a permutation $\si$ such that the projection $z_0 := z_{1,\si(1)} \cdots z_{n,\si(n)}$ is nonzero. Define $H_0 := z_0 H$. For every $i$, we have by construction that $H_0 \subset K_{i,\si(i)}$, so that $H_0$ is densely spanned by its $A_i$-$(P \ovt B_{\si(i)})$-subbimodules that define a W$^*$-correlation between $A_i$ and $B_{\si(i)}$. In particular for every $i$, $A_i$ is W$^*$-correlated to $B_{\si(i)}$.
\end{proof}

We are now ready to prove Theorem \ref{thm.W-star-corr-tensor-product-any-tensor-product}. We make use of the method introduced in \cite{IM19} to analyze tensor product decompositions $M = M_1 \ovt M_2$ of a II$_1$ factor by analyzing the induced partial flip $\si \in \Aut(M \ovt M)$ given by $\si((a_1 \ot a_2) \ot (b_1 \ot b_2)) = (a_1 \ot b_2) \ot (b_1 \ot a_2)$.

\begin{proof}[Proof of Theorem \ref{thm.W-star-corr-tensor-product-any-tensor-product}]
If $\{1,\ldots,n\} = S_1 \sqcup \cdots \sqcup S_r$ is a partition into nonempty subsets such that $A_{S_k}$ is W$^*$-correlated to $B_k$ for all $k \in \{1,\ldots,r\}$, it follows from a repeated application of Proposition \ref{prop.Wstar-corr-tensor} that $A$ is W$^*$-correlated to $B$.

It then suffices to prove statement (ii) of the theorem. We first assume that $r=2$ and prove (ii). At the end of the proof, we then easily deduce the general case by induction on $r$.

So assume that $B=B_1 \ovt B_2$ is a tensor product of two II$_1$ factors, that $P$ is a tracial von Neumann algebra and that $\bim{\al(A)}{pL^2(P \ovt B)}{P \ovt B}$ is a $P$-equivalence between $A$ and $B$ that is isomorphic with $\bim{P\op \ovt A}{L^2(P\op \ovt A)q}{\be(B)}$.

For every $i \in \{1,\ldots,n\}$ and $k \in \{1,2\}$, denote by $z_{i,k}$ the largest projection in $\al(A_i)' \cap p(P \ovt B)p$ such that $\al(A_i)z_{i,k}$ is amenable relative to $P \ovt B_k$. Since $A_i \subset A$ is regular, we have that all the projections $z_{i,k}$ belong to the center of $\al(A)' \cap p(P \ovt B)p$ and, in particular, commute. Fix $i \in \{1,\ldots,n\}$. Then, $\al(A_i)z_{i,1}z_{i,2}$ is amenable relative to $P \ovt B_1$ and relative to $P \ovt B_2$. By Proposition \ref{prop.basic-prop.2}, $\al(A_i)z_{i,1}z_{i,2}$ is amenable relative to $P \ot 1$. Since $A_i$ is a nonamenable II$_1$ factor, it follows from Proposition \ref{prop.basic-prop.5} that $z_{i,1}z_{i,2} = 0$. So, $(p-z_{i,1}) \vee (p-z_{i,2}) = p$ for all $i \in \{1,\ldots,n\}$.

We can then inductively choose $k_1,\ldots,k_n \in \{1,2\}$ such that $(p-z_{1,k_1}) \cdots (p-z_{j,k_j}) \neq 0$ for all $j = 1,\ldots,n$. Define $z_0 = (p-z_{1,k_1})\cdots (p-z_{n,k_n})$. Then $z_0 \neq 0$ and we replace $p L^2(P \ovt B)$ by $z_0 L^2(P\ovt B)$. Writing $S_1 = \{j \in \{1,\ldots,n\} \mid k_j = 2\}$, $S_2 = \{j \in \{1,\ldots,n\} \mid k_j = 1\}$, we have found a partition of $\{1,\ldots,n\}$ and may assume from the start that $\al(A_i)$ is strongly nonamenable relative to $P \ovt B_2$ for $i \in S_1$, and relative to $P \ovt B_1$ for $i \in S_2$.

For $k \in \{1,2\}$, define $A_{S_k}$ as the tensor product of $A_i$, $i \in S_k$.

{\bf Part 1.} We prove that $\al(A_{S_1}) \prec^f P \ovt B_1$ and $\al(A_{S_2}) \prec^f P \ovt B_2$.

As in \cite{IM19}, we consider the partial flip automorphism
\begin{multline*}
\si \in \Aut((P \ovt B) \ovt (P \ovt B)) : \si((d \ot b_1 \ot b_2) \ot (d' \ot b'_1 \ot b'_2)) \\ = (d \ot b_1 \ot b'_2) \ot (d' \ot b'_1 \ot b_2) \quad\text{for all $d,d' \in P$, $b_1,b'_1 \in B_1$, $b_2,b'_2 \in B_2$.}
\end{multline*}
For every $i \in \{1,\ldots,n\}$, we define $Q_i \subset p(P \ovt B)p \ovt p(P \ovt B)p$ by $Q_i = p \ot \al(A_i)$ if $i \in S_1$ and $Q_i = \al(A_i) \ot p$ if $i \in S_2$. By definition, $\si(Q_i)$ is strongly nonamenable relative to $(P \ovt B) \ovt (P \ot 1)$ for all $i \in \{1,\ldots,n\}$.

Define $N = P \ovt B \ovt P \ovt P\op$ and put $\betil = \id^{\ot 3} \ot \be$, so that
$$\betil : (P \ovt B) \ovt (P \ovt B) \to (1 \ot 1 \ot 1 \ot q)(N \ovt A)(1 \ot 1 \ot 1 \ot q) \; .$$
Then define $p_1 \in N \ovt A$ by $p_1 = \betil(\si(p \ot p))$. By Proposition \ref{prop.basic-prop.5}, $\betil(\si(Q_i))$ are von Neumann subalgebras of $p_1 (N \ovt A) p_1$ that are strongly nonamenable relative to $N \ot 1$. For every $j \in \{1,\ldots,n\}$, denote by $\widehat{A_j}$ the tensor product of all $A_k$, $k \neq j$.

As in the first paragraphs of the proof, define $q_{i,j} \in \betil(\si(Q_i))' \cap p_1(N \ovt A)p_1$ as the largest projection such that $\betil(\si(Q_i)) q_{i,j}$ is amenable relative to $N \ovt \widehat{A_j}$. Since $Q_i$ is regular in $\al(A) \ovt \al(A)$, the projections $q_{i,j}$ all belong to the center of $\betil(\si(\al(A) \ovt \al(A)))' \cap p_1 (N \ovt A)p_1$ and thus all commute. Reasoning as in the first paragraphs of the proof,
\begin{equation}\label{eq.this-is-zero}
q_{i,1} \cdots q_{i,n} = 0 \quad\text{for all $i \in \{1,\ldots,n\}$.}
\end{equation}
For every map $\vphi : \{1,\ldots,n\} \to \{1,\ldots,n\}$, define
$$p_\vphi = (p_1-q_{1,\vphi(1)}) \cdots (p_1-q_{n,\vphi(n)}) \; .$$
It follows from \eqref{eq.this-is-zero} that the join of all $p_\vphi$ equals $p_1$.

We claim that
\begin{equation}\label{eq.nice-claim}
\betil(\si(\al(A_{S_1}) \ovt \al(A_{S_2}))) \prec^f N \ot 1 \; .
\end{equation}
Since the join of all $p_\vphi$ equals $p_1$, to prove the claim, it suffices to prove that
\begin{equation}\label{eq.easier-claim}
\betil(\si(\al(A_{S_1}) \ovt \al(A_{S_2})))p_\vphi \prec^f N \ot 1
\end{equation}
whenever $\vphi : \{1,\ldots,n\} \to \{1,\ldots,n\}$ is a function and $p_\vphi \neq 0$.

So fix a function $\vphi$ with $p_\vphi \neq 0$. For $i \in S_1$, define $R_i = \al(A) \ovt \al(\widehat{A_i})$. For $i \in S_2$, define $R_i = \al(\widehat{A_i}) \ovt \al(A)$. By definition, $R_i$ commutes with $Q_i$. By definition, $\betil(\si(Q_i))p_\vphi$ is strongly nonamenable relative to $N \ovt \widehat{A_{\vphi(i)}}$. Since $A_{\vphi(i)}$ is stably solid, it follows that
\begin{equation}\label{eq.good-embed}
\betil(\si(R_i))p_\vphi \prec^f N \ovt \widehat{A_{\vphi(i)}} \quad\text{for all $i \in \{1,\ldots,n\}$.}
\end{equation}
We claim that $\vphi$ is injective. Otherwise, we find $i \neq i'$ and $k \in \{1,\ldots,n\}$ such that $\vphi(i) = k = \vphi(i')$. Since $R_i$ and $R_{i'}$ normalize each other and together generate $\al(A) \ovt \al(A)$, it then follows from \eqref{eq.good-embed} and Proposition \ref{prop.basic-prop.3} that
\begin{equation}\label{eq.strange-embed}
\betil(\si(\al(A) \ovt \al(A)))p_\vphi \prec^f N \ovt \widehat{A_k} \; .
\end{equation}
Since we can view $\betil \circ \si \circ (\al \ot \al)$ as a $(P \ovt P \ovt P\op)$-equivalence of $A \ovt A$ with $B \ovt A$ and since $B \ovt \widehat{A_k} \subset B \ovt A$ is a subfactor of infinite index, \eqref{eq.strange-embed} contradicts Proposition \ref{prop.not-with-smaller-B.1}. So, $\vphi$ is injective and hence, $\vphi$ is surjective as well.

By definition, $\al(A_{S_1}) \ovt \al(A_{S_2}) \subset R_i$ for all $i \in \{1,\ldots,n\}$. It then follows from \eqref{eq.good-embed}, Proposition \ref{prop.basic-prop.1} and the surjectivity of $\vphi$ that \eqref{eq.easier-claim} holds. So the claim in \eqref{eq.nice-claim} is proven.

Since $\be$ defines a $P\op$-embedding of $B$ into $A$, Proposition \ref{prop.basic-prop.4} and \eqref{eq.nice-claim} imply that
$$\si(\al(A_{S_1}) \ovt \al(A_{S_2})) \prec^f P \ovt B \ovt P \ovt 1 \quad\text{and thus}\quad \al(A_{S_1}) \ovt \al(A_{S_2}) \prec^f (P \ovt B_1) \ovt (P \ovt B_2) \; .$$
This in turn implies that $\al(A_{S_1}) \prec^f P \ovt B_1$ and $\al(A_{S_2}) \prec^f P \ovt B_2$.

{\bf Part 2.} We prove that $\be(B_1) \prec^f P\op \ovt A_{S_1}$ and $\be(B_2) \prec^f P\op \ovt A_{S_2}$.

For all $k \in \{1,2\}$ and $i \in \{1,\ldots,n\}$, define $q_{k,i,1} \in \be(B_k)' \cap q(P\op \ovt A)q$ as the largest projection such that $\be(B_k)q_{k,i,1}$ is amenable relative to $P\op \ovt \widehat{A_i}$. Define $q_{k,i,2} = q-q_{k,i,1}$. Since $B_k \subset B$ is regular, all $q_{k,i,j}$ belong to the center of $\be(B)' \cap q(P\op \ovt A)q$ and, in particular, all commute.

For all functions $\lambda,\rho : \{1,\ldots,n\} \to \{1,2\}$, denote
$$q_{\lambda,\rho} = (q_{1,1,\lambda(1)} \cdots q_{1,n,\lambda(n)}) \cdot (q_{2,1,\rho(1)} \cdots q_{2,n,\rho(n)}) \; .$$
Since by definition, $q_{k,i,1} + q_{k,i,2} = q$ for all $k$ and $i$, the join of all the projections $q_{\lambda,\rho}$ equals $q$. To prove part~2, it thus suffices to fix such functions $\lambda$ and $\rho$ for which $q_{\lambda,\rho} \neq 0$ and to prove that $\be(B_1)q_{\lambda,\rho} \prec^f P\op \ovt A_{S_1}$ and $\be(B_2)q_{\lambda,\rho} \prec^f P\op \ovt A_{S_2}$.

Define $T_1$ as the set of $i$ with $\lambda(i) = 2$. Define $T_2$ as the set of $i$ with $\rho(i) = 2$. Write $q_0 = q_{\lambda,\rho}$. By definition, $\be(B_k) q_0$ is strongly nonamenable relative to $P\op \ovt \widehat{A_i}$ for all $i \in T_k$ and $\be(B_k) q_0$ is amenable relative to $P\op \ovt \widehat{A_i}$ whenever $i \not\in T_k$.

We first prove that $T_1 \cup T_2 = \{1,\ldots,n\}$. Assume the contrary. Take $j \in \{1,\ldots,n\}$ such that $j \not\in T_1$ and $j \not\in T_2$. Define
$$\gamma : A \to P \ovt P\op \ovt A : \gamma(a) = (\id \ot \be)(\al(a)) (1 \ot q_0) \; .$$
Then $\gamma$ defines a $(P \ovt P\op)$-equivalence between $A$ and $A$. By Theorem \ref{thm.corr-tensor-product}, we find an $i \in \{1,\ldots,n\}$ such that
\begin{equation}\label{eq.we-have-this-embedding}
\gamma(A_i) \prec P \ovt P\op \ovt A_j \; .
\end{equation}
Take $k \in \{1,2\}$ such that $i \in S_k$. By part~1 of the proof, $\al(A_{S_k}) \prec^f P \ovt B_k$, so that $\gamma(A_i) \prec^f P \ovt \be(B_k)q_0$. A fortiori, $\gamma(A_i)$ is amenable relative to $P \ovt \be(B_k)q_0$. Since $j \not\in T_k$, we get that $\be(B_k)q_0$ is amenable relative to $P\op \ovt \widehat{A_j}$, so that $\gamma(A_i)$ is amenable relative to $P \ovt P\op \ovt \widehat{A_j}$.

From \eqref{eq.we-have-this-embedding}, we get a nonzero projection $q_1 \leq \gamma(1)$ such that $\gamma(A_i)q_1$ is amenable relative to $P \ovt P\op \ovt A_j$. Since we have just seen that $\gamma(A_i)q_1$ is amenable relative to $P \ovt P\op \ovt \widehat{A_j}$, it follows from Proposition \ref{prop.basic-prop.2} that $\gamma(A_i)q_1$ is amenable relative to $P \ovt P\op \ovt 1$, contradicting Proposition \ref{prop.basic-prop.5}. So we have proven that $T_1 \cup T_2 = \{1,\ldots,n\}$.

Write $T_1^c = \{1,\ldots,n\}\setminus T_1$. Since $\be(B_1)q_0$ is strongly nonamenable relative to $P\op \ovt \widehat{A_i}$ for all $i \in T_1$ and since $A_i$ is stably solid, we get that $\be(B_2)q_0 \prec^f P\op \ovt \widehat{A_i}$ for all $i \in T_1$. By Proposition \ref{prop.basic-prop.1}, we get that
\begin{equation}\label{eq.embed-with-T1c}
\be(B_2)q_0 \prec^f P\op \ovt A_{T_1^c} \; .
\end{equation}
From part~1 of the proof, we know that $\al(A_{S_2}) \prec^f P \ovt B_2$. We now view the image the Hilbert space $L^2(P\op \ovt A)q_0$ as a $P\op$-embedding of $B$ into $A$ and we apply Lemma \ref{lem.left-right-embedding} to this $P\op$-embedding (and thus with the roles of $A$ and $B$ exchanged). With the notation of Lemma \ref{lem.left-right-embedding}, it follows that $E_r(A_{T_1^c},A_{S_2}) \neq \{0\}$, so that $A_{S_2} \prec_A A_{T_1^c}$. This means that $S_2 \subset T_1^c$ and thus $T_1 \subset S_1$.

By symmetry, also $T_2 \subset S_2$. Since $T_1 \cup T_2 = \{1,\ldots,n\}$, we conclude that $T_1 = S_1$ and $T_2 = S_2$. So \eqref{eq.embed-with-T1c} says that $\be(B_2)q_0 \prec^f P\op \ovt A_{S_2}$. By symmetry, also $\be(B_1)q_0 \prec^f P\op \ovt A_{S_1}$. This concludes the proof of part~2.

{\bf\boldmath End of the proof, in case $r=2$.} Write $H = pL^2(P \ovt B)$. Fix $k \in \{1,2\}$. Since $A_{S_k} \subset A$ and $B_k \subset B$ are regular, it follows from part~1 of the proof that $H$ is densely spanned by the $A_{S_k}$-$(P \ovt B_k)$-subbimodules that are finitely generated as a right Hilbert $(P \ovt B_k)$-module. It similarly follows from part~2 of the proof that $H$ is densely spanned by the $(P\op \ovt A_{S_k})$-$B_k$-subbimodules that are finitely generated as a left Hilbert $(P\op \ovt A_{S_k})$-module. As in the last paragraph of the proof of Lemma \ref{lem.find-subequiv}, $H$ is then also densely spanned by the $A_{S_k}$-$(P \ovt B_k)$-subbimodules that are finitely generated both as a left Hilbert $(P\op \ovt A_{S_k})$-module and as a right Hilbert $(P \ovt B_k)$-module. This precisely means that $H$ is densely spanned by its $A_{S_k}$-$(P \ovt B_k)$-subbimodules that define a W$^*$-correlation between $A_{S_k}$ and $B_k$.

{\bf\boldmath End of the proof.} We now prove (ii) in full generality, by induction on $r$. When $r=1$, there is nothing to prove. Let $r \geq 2$ and assume that (ii) holds for $r-1$. Write $B_0 = B_1 \ovt \cdots \ovt B_{r-1}$. Then $H$ is a W$^*$-correlation between $A$ and $B_0 \ovt B_r$. Since we have already proven (ii) in the case $r=2$, we can choose a partition $\{1,\ldots,n\} = S_0 \sqcup S_r$ into nonempty subsets and an $A$-$(P \ovt B)$-subbimodule $H_1 \subset H$ such that for every $k \in \{0,r\}$, $H_1$ is densely spanned by its $A_{S_k}$-$(P \ovt B_k)$-subbimodules that define a W$^*$-correlation between $A_{S_k}$ and $B_k$. Note that then, automatically, every nonzero $A$-$(P \ovt B)$-subbimodule of $H_1$ has the same property.

Choose an $A_{S_0}$-$(P \ovt B_0)$-subbimodule $K \subset H_1$ that defines a W$^*$-correlation between $A_{S_0}$ and $B_0$. By the induction hypothesis, we find a partition $S_0 = S_1 \sqcup \cdots \sqcup S_{r-1}$ of $S_0$ into nonempty subsets and an $A_{S_0}$-$(P \ovt B_0)$-subbimodule $K_0 \subset K$ such that for every $k \in \{1,\ldots,r-1\}$, $K_0$ is densely spanned by its $A_{S_k}$-$(P \ovt B_k)$-subbimodules that define a W$^*$-correlation between $A_{S_k}$ and $B_k$. Define $H_0$ as the closed linear span of $A \cdot K_0 \cdot (P \ovt B)$. By construction, $H_0$ is a nonzero $A$-$(P \ovt B)$-subbimodule of $H_1$. Since $A_{S_k} \subset A$ and $B_k \subset B$ are regular, for every $k \in \{1,\ldots,r-1\}$, $H_0$ is still densely spanned by its $A_{S_k}$-$(P \ovt B_k)$-subbimodules that define a W$^*$-correlation between $A_{S_k}$ and $B_k$. By the last sentence of the previous paragraph, $H_0$ is also densely spanned by its $A_{S_r}$-$(P \ovt B_r)$-subbimodules that define a W$^*$-correlation between $A_{S_r}$ and $B_r$. This concludes the proof of the theorem.
\end{proof}

\section{\boldmath W$^*$-correlations of graph products}\label{sec.Wstar-corr-graph-products}

In this section, we prove our main results on W$^*$-correlations between graph products of stably solid II$_1$ factors. The main technical result is Theorem \ref{thm.corr-graph-product}. We consider graph products given by simple graphs that satisfy the rigidity assumption of \cite{BCC24}: for every vertex $s$, we have $\link(\link s) = \{s\}$. In words, this means that $s$ is the only vertex that is connected to every vertex in $\link s$. We prove that if such graph products of nonamenable, stably solid II$_1$ factors are W$^*$-correlated, already the graph products given by the subgraphs $\link s$ must be W$^*$-correlated.

This suffices to provide in Theorem \ref{thm.uncountable-family-of-non-correlated} the first uncountable family of mutually non W$^*$-correlated II$_1$ factors, and thus prove Theorem \ref{thm.main-A}.

As a consequence of the main technical theorem \ref{thm.corr-graph-product}, we also prove in Theorem \ref{thm.iso-graph-product} a variant of the main result of \cite{BCC24}: if $A_\Gamma$ and $B_\Lambda$ are stably isomorphic graph products given by rigid simple graphs $\Gamma$ and $\Lambda$ and with all the vertex von Neumann algebras being nonamenable, stably solid II$_1$ factors, there exists a graph isomorphism $\si : \Gamma \to \Lambda$ such that $A_s$ and $B_{\si(s)}$ are stably isomorphic for every vertex $s$ of $\Gamma$. In Corollary \ref{cor.counterex}, we prove that a similar result for W$^*$-correlated graph products, and even for virtually isomorphic graph products, fails. We refer to the discussion right after Theorem \ref{thm.main-C} for the precise relation between our result and the main result of \cite{BCC24}.

Before proving these results, we need some preparation. We start by proving a slight generalization of \cite[Theorem 5.6]{BCC24}. Since we can easily deduce the result as a corollary of Proposition \ref{prop.rel-amen-intersection}, we give a detailed proof for completeness.

\begin{corollary}\label{cor.rel-amen-graph-products}
Let $\Gamma = (S,E)$ be a simple graph and $(B_s,\tau_s)_{s \in S}$ a family of tracial von Neumann algebras with graph product $(B,\tau)$. Let $(P,\tau)$ be any tracial von Neumann algebra. For every subset $T \subset S$, denote by $B_T \subset B$ the von Neumann subalgebra generated by $B_s$, $s \in T$.

If $S_1,S_2 \subset S$ and $A \subset p(P \ovt B)p$ is a von Neumann subalgebra that is amenable relative to $P \ovt B_{S_1}$ and relative to $P \ovt B_{S_2}$, then $A$ is amenable relative to $P \ovt B_{S_1 \cap S_2}$.
\end{corollary}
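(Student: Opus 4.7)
The plan is to derive the corollary as an immediate application of Proposition \ref{prop.rel-amen-intersection}. Set $M = P \ovt B$ and $B_i = P \ovt B_{S_i}$ for $i \in \{0, 1, 2\}$, where $S_0 = S_1 \cap S_2$. What remains is to produce a Hilbert $B_0$-$M$-bimodule $K$ witnessing the weak containment
\[
\bim{M}{(L^2(M) \otimes_{B_1} L^2(M) \otimes_{B_2} L^2(M))}{M} \; \prec \; \bim{M}{(L^2(M) \otimes_{B_0} K)}{M} .
\]

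The first step is to reduce to the graph-product side. Since $L^2(M) = L^2(P) \otimes L^2(B)$ and the $P$-factor enters identically in all three subalgebras $B_0, B_1, B_2$, it suffices to produce a Hilbert $B_{S_0}$-$B$-bimodule $K_0$ with
\[
\bim{B}{(L^2(B) \otimes_{B_{S_1}} L^2(B) \otimes_{B_{S_2}} L^2(B))}{B} \; \prec \; \bim{B}{(L^2(B) \otimes_{B_{S_0}} K_0)}{B} ,
\]
after which $K := L^2(P) \otimes K_0$, equipped with the natural left $P \ovt B_{S_0}$- and right $P \ovt B$-actions, yields the required bimodule.

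The main step is to establish this reduced weak containment using the concrete graph-product structure from Section \ref{sec.graph-products}. I would exploit the Pimsner--Popa bases of $L^2(B)$ over $B_{S_i}$ coming from the normal-form description: for each minimal-length representative $g$ of a left coset in $C_\Gamma / C_{S_i}$, pick an orthonormal basis of the finite component $H_g^\circ$; taken together these give an orthonormal $B_{S_i}$-basis of $L^2(B)$. Expanding $L^2(B) \otimes_{B_{S_1}} L^2(B) \otimes_{B_{S_2}} L^2(B)$ against these bases produces a direct-sum decomposition, and the commuting-square relation $E_{B_{S_1}} \circ E_{B_{S_2}} = E_{B_{S_0}} = E_{B_{S_2}} \circ E_{B_{S_1}}$ collapses the middle expectations so that each summand identifies with a $B$-$B$-subbimodule of $L^2(B) \otimes_{B_{S_0}} L^2(B)^{\oplus \infty}$. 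Choosing $K_0 = L^2(B)^{\oplus \infty}$ with its natural $B_{S_0}$-$B$-bimodule structure then gives the required containment.

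The main obstacle is the bookkeeping in the basis expansion and the verification that each summand of the three-fold relative tensor product sits inside $L^2(B) \otimes_{B_{S_0}} K_0$ as a $B$-$B$-subbimodule; this is essentially the bimodule computation carried out in the proof of \cite[Theorem 5.6]{BCC24}, which I would follow. Once the bimodule witness is in place, the conclusion is immediate from Proposition \ref{prop.rel-amen-intersection}.
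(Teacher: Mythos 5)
Your overall strategy is exactly the paper's: reduce to $P = \C$, decompose the middle copy of $L^2(B)$ along the double cosets $C_{S_1}\, g\, C_{S_2}$ with $g \in J \subset C_\Gamma$, and then feed the resulting summand-by-summand identifications into Proposition~\ref{prop.rel-amen-intersection}. (A small point: what you need is not really a Pimsner--Popa basis of $L^2(B)$ over $B_{S_i}$, but the coarser decomposition of $L^2(B)$ into the $B_{S_1}$-$B_{S_2}$-subbimodules $[B_{S_1} B_g^\circ B_{S_2}]$, $g\in J$; that is what the paper writes out.)

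The genuine gap is the choice $K_0 = L^2(B)^{\oplus\infty}$, i.e.\ the claim that each summand embeds as a $B$-$B$-subbimodule of $L^2(B) \ot_{B_{S_0}} L^2(B)^{\oplus\infty}$. You assert this but do not prove it, and it is not what the paper shows. Writing $S_0 = S_1 \cap S_2$ and $B_i = B_{S_i}$, the identification obtained from the commuting-square and orthogonality relations is
\[
L^2(B) \ot_{B_1} [B_1\, b\, B_2] \ot_{B_2} L^2(B) \;\cong\; L^2(B) \ot_{B_0} \bigl([B_0\, b\, B_2] \ot_{B_2} L^2(B)\bigr), \qquad b \in B_g^\circ,\ g \in J.
\]
The paper then simply takes $K$ to be the direct sum of the $B_0$-$B$-bimodules $[B_0\, b\, B_2] \ot_{B_2} L^2(B)$ (together with one copy of $L^2(B)$ for $g=e$) and invokes Proposition~\ref{prop.rel-amen-intersection}, which was deliberately stated with an \emph{arbitrary} Hilbert $B_0$-$M$-bimodule $K$, rather than $L^2(M)^{\oplus\infty}$.

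To see why your stronger reduction is problematic, fix $g = t_1\cdots t_n \in J\setminus\{e\}$ and set $S_0' = \{s \in S_0 : s \sim t_i \text{ for all } i\}$. Unwinding the computation, one finds $[B_0\, b\, B_2] \ot_{B_2} L^2(B) \cong L^2(B_0) \ot_{B_{S_0'}} L^2(B)$ as a $B_0$-$B$-bimodule, and hence the corresponding summand of the three-fold relative tensor product is $L^2(B) \ot_{B_{S_0'}} L^2(B)$. When $S_0' \subsetneq S_0$ (which happens for perfectly ordinary rigid graphs, e.g.\ whenever some vertex of $S_0$ fails to be adjacent to all of $g$), this is the basic-construction bimodule over the \emph{strictly smaller} subalgebra $B_{S_0'} \subset B_0$, and there is no reason for it to embed into, or even be weakly contained in, a multiple of $L^2(B) \ot_{B_0} L^2(B)$. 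The left $B_0$-action on $[B_0\, b\, B_2] \ot_{B_2} L^2(B)$ does not factor through left multiplication by $B_0$ on a multiple of $L^2(B)$, so $K_0 = L^2(B)^{\oplus\infty}$ cannot be used; you must keep $K$ as the honest $B_0$-$B$-bimodule $\bigoplus_b \bigl([B_0\, b\, B_2] \ot_{B_2} L^2(B)\bigr)$ that the graph-product structure hands you. With that correction, the rest of your argument goes through as intended.
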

\begin{proof}
We use the notation introduced in Section \ref{sec.graph-products}. Define $J \subset C_\Gamma$ as the set of elements $g \in C_\Gamma$ with the properties that $|s_1 g| = 1 + |g| = |g s_2|$ for all $s_1 \in S_1$ and $s_2 \in S_2$. Note that $C_\Gamma = C_{S_1} J C_{S_2}$. We write $S_0 = S_1 \cap S_2$ and, for every $i \in \{0,1,2\}$, $B_i = B_{S_i}$. We get that
$$B_1 B_2 \cup \bigcup_{g \in J \setminus \{e\}} B_1 B_g^\circ B_2$$
is a total subset of $L^2(M)$.

By Proposition \ref{prop.rel-amen-intersection}, it thus suffices to construct a $B_0$-$M$-bimodule $K$ and prove that
$$\bim{M}{\bigl( L^2(M) \ot_{B_1} [B_1 B_2] \ot_{B_2} L^2(M)\bigr)}{M} \quad\text{and}\quad \bim{M}{\bigl( L^2(M) \ot_{B_1} [B_1 b B_2] \ot_{B_2} L^2(M)\bigr)}{M}$$
are contained in $\bim{M}{\bigl(L^2(M) \ot_{B_0} K\bigr)}{M}$ for all $b \in B_g^\circ$, $g \in J$. The proof of this statement is essentially given in the proof of \cite[Theorem 5.6]{BCC24}. For completeness, we include the following short argument.

As explained at the end of Section \ref{sec.graph-products}, $E_{B_2}(b_1) = E_{B_0}(b_1)$ for all $b_1 \in B_1$. It follows that $x \ot_{B_1} 1 \ot_{B_2} y \mapsto x \ot_{B_0} y$ for all $x,y \in M$, uniquely extends to an $M$-bimodular unitary of $L^2(M) \ot_{B_1} [B_1 B_2] \ot_{B_2} L^2(M)$ onto $L^2(M) \ot_{B_0} L^2(M)$.

Next fix $g \in J$ and write $g = t_1 \cdots t_n$ as a reduced word in the alphabet $S$. Fix $b \in B_g^\circ$. Define $S'_0 = \{s \in S_0 \mid \forall i \in \{1,\ldots,n\} : s \sim t_i\}$. It follows from the definition of $J$ that for $h_1 \in C_{S_1}$ and $h_2 \in C_{S_2}$, the equality $h_1 g = g h_2$ holds if and only if $h_1 = h_2$ and every letter $s$ in a reduced expression of $h_1 = h_2$ belongs to $S'_0$. It also follows from the definition of $J$ that for all $h_1 \in C_{S_1}$ and $h_2 \in C_{S_2}$, we have that $|h_1 g| = |h_1| + |g|$ and $|g h_2| = |g| + |h_2|$. So, $B^\circ_{h_1 g} = B^\circ_{h_1} B^\circ_g$ and $B^\circ_{g h_2} = B^\circ_g B^\circ_{h_2}$.

Altogether, we conclude that $B^\circ_{h_1} B^\circ_g \perp B^\circ_g B_2$ whenever $h_1 \in C_{S_1} \setminus C_{S'_0}$ and, trivially, $B^\circ_{h_1} \subset B_0$ whenever $h_1 \in C_{S'_0}$. It follows that $E_{B_2}(b^* b_1 b) = E_{B_2}(b^* E_{B_0}(b_1) b)$ for all $b_1 \in B_1$, so that $x \ot_{B_1} b \ot_{B_2} y \mapsto x \ot_{B_0} (b \ot_{B_2} y)$ for all $x,y \in M$, uniquely extends to an $M$-bimodular unitary of $L^2(M) \ot_{B_1} [B_1 b B_2] \ot_{B_2} L^2(M)$ onto $L^2(M) \ot_{B_0} ([B_0 b B_2] \ot_{B_2} L^2(M))$. Defining $K$ as the direct sum of $L^2(M)$ and $[B_0 b B_2] \ot_{B_2} L^2(M)$, the corollary is proven.
\end{proof}

For later use, we also record the following lemma. Recall the notation $\ster s = \{s\} \cup \link s$.

\begin{lemma}\label{lem.bimodules-graph-products}
Let $\Gamma = (S,E)$ be a simple graph and $(B_s,\tau_s)_{s \in S}$ a family of diffuse tracial von Neumann algebras with graph product $(B,\tau)$. For every subset $S_0 \subset S$, denote by $B_{S_0} \subset B$ the von Neumann subalgebra generated by $B_s$, $s \in S_0$.
\begin{enumlist}
\item\label{lem.bimodules-graph-products.1} If $S_0,S_1 \subset S$ are disjoint subsets, the bimodule $\bim{B_{S_0}}{L^2(B)}{B_{S_1}}$ is coarse.
\item If $S_0,S_1 \subset S$ and $S_0 \not\subset S_1$, then $B_{S_0} \not\prec B_{S_1}$.
\item\label{lem.bimodules-graph-products.3} If $S_0 \subset S$, $\link S_0 = \bigcap_{s \in S_0} \link s$ and $S_1 = S_0 \cup \link S_0$, then the normalizer of $B_{S_0}$ inside $B$ equals $B_{S_1}$. The bimodule $\bim{B_{S_0}}{L^2(B \ominus B_{S_1})}{B_{S_0}}$ has no nonzero $B_{S_0}$-$B_{S_0}$-subbimodule that is finitely generated as a right Hilbert $B_{S_0}$-module.
\item\label{lem.bimodules-graph-products.4} The von Neumann algebra $B$ is a factor if and only if $B_s$ is a factor for every $s \in S$ satisfying $\ster s = S$.
\end{enumlist}
\end{lemma}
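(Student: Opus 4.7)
\emph{Plan.} My strategy is to reduce all four parts to a single combinatorial decomposition in the right-angled Coxeter group $C_\Gamma$. I will use the standard fact that for any $S_0 \subset S$ and $g \in C_\Gamma$, the set of letters in $S_0$ that can appear as the initial letter of some reduced expression for $g$ is a pairwise commuting subset of $S_0$; their product is a canonical $u \in C_{S_0}$ with $g = u g_1$, $|g| = |u| + |g_1|$, and $g_1$ having no initial $S_0$-letter. Carrying out the analogous extraction on the right with a possibly different subset $S_0' \subset S$ yields a unique, length-additive decomposition $g = u h v$ where $u \in C_{S_0}$, $v \in C_{S_0'}$, and $h$ has no initial $S_0$-letter and no final $S_0'$-letter. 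Using the concatenation property $H_{uhv}^\circ = H_u^\circ \ot H_h^\circ \ot H_v^\circ$ from Section~\ref{sec.graph-products}, this furnishes for every double coset $D = C_{S_0} g C_{S_0'}$, with canonical center $h_D$, the identification
$$\bigoplus_{g' \in D} H_{g'}^\circ \;\cong\; L^2(B_{S_0}) \ot H_{h_D}^\circ \ot L^2(B_{S_0'})$$
as a $B_{S_0}$-$B_{S_0'}$-bimodule, with the standard left and right actions of $B_{S_0}$ and $B_{S_0'}$ on the outer tensor factors.

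For (i), I take $S_0' = S_1$ disjoint from $S_0$ and sum over double cosets, obtaining $L^2(B) \cong L^2(B_{S_0}) \ot K \ot L^2(B_{S_1})$, which is manifestly coarse. For (ii), I choose $s \in S_0 \setminus S_1$; the singleton $\{s\}$ is disjoint from $S_1$, so by (i) the bimodule $\bim{B_s}{L^2(B)}{B_{S_1}}$ is coarse. Diffuseness of $B_s$ and Lemma~\ref{lem.Popa-intertwining-for-bimodules} then give $B_s \not\prec_B B_{S_1}$, and since $B_s \subset B_{S_0}$ also $B_{S_0} \not\prec_B B_{S_1}$.

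For (iii), I take $S_0' = S_0$. For $g \notin C_{S_1}$, the canonical center $h_D$ of the double coset $D = C_{S_0} g C_{S_0}$ is nontrivial and not contained in $C_{\link S_0}$, since otherwise $g \in C_{S_0} h_D \subset C_{S_1}$; the identification above thus exhibits $\bim{B_{S_0}}{L^2(B \ominus B_{S_1})}{B_{S_0}}$ as a direct sum of coarse $B_{S_0}$-$B_{S_0}$-bimodules. Since $B_{S_0}$ is diffuse, by Lemma~\ref{lem.Popa-intertwining-for-bimodules} this bimodule admits no nonzero subbimodule that is finitely generated as a right Hilbert $B_{S_0}$-module. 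The inclusion $B_{S_1} \subset \cN_B(B_{S_0})\dpr$ is immediate since $B_{S_0}$ normalizes itself and $B_{\link S_0}$ commutes with $B_{S_0}$. Conversely, for $u \in \cN_B(B_{S_0})$ the cyclic $B_{S_0}$-$B_{S_0}$-subbimodule $\overline{u B_{S_0}} \subset L^2(B)$ is finitely generated as a right Hilbert $B_{S_0}$-module; by Lemma~\ref{lem.intertwining-weak-normalizer.1} it lies in the $\|\cdot\|_2$-closure of $E_B(B_{S_0}, B_{S_0})$, which by the previous point is contained in $L^2(B_{S_1})$, forcing $u \in B_{S_1}$.

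For (iv), the forward direction is immediate: if $\ster s = S$, then $B_s$ commutes with every $B_t$, $t \neq s$, so any central element of $B_s$ lies in $\cZ(B)$. Conversely, applying (iii) with $S_0 = \{s\}$ gives $\cZ(B) \subset B_s' \cap B \subset B_{\ster s}$ for every $s \in S$; intersecting and using the commuting square identity $B_{T_1} \cap B_{T_2} = B_{T_1 \cap T_2}$ from Section~\ref{sec.graph-products}, we obtain $\cZ(B) \subset B_{S_\ast}$ where $S_\ast = \{t \in S : \ster t = S\}$. Since $S_\ast$ induces a complete subgraph of $\Gamma$, we have $B_{S_\ast} = \ovt_{t \in S_\ast} B_t$, a tensor product of factors by hypothesis, hence itself a factor; therefore $\cZ(B) \subset \cZ(B_{S_\ast}) = \C$. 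The main technical obstacle throughout is the purely combinatorial verification that the decomposition $g = u h v$ exists, is unique, and is length-additive, and that concatenation faithfully transports it to the advertised coarse bimodule structure on the outer tensor factors; once this backbone is in place, the four statements fall out of standard bimodule and normalizer arguments.
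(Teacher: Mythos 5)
Your double–coset framework is sound and handles (i), (ii), and (iv) cleanly (in (i) it gives essentially the same coarse decomposition as the paper, and (iv) is the same argument), but there is a genuine gap in (iii).

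The problem is the claimed identification
$$\bigoplus_{g' \in D} H_{g'}^\circ \;\cong\; L^2(B_{S_0}) \ot H_{h_D}^\circ \ot L^2(B_{S_0'})\,.$$
This requires the map $C_{S_0} \times C_{S_0'} \to D$, $(u,v) \mapsto u\,h_D\,v$, to be a bijection. When $S_0 \cap S_0' = \emptyset$ that is indeed the case (and this is why (i) goes through), but when $S_0' = S_0$ it fails as soon as the set $S_0' := \{ s \in S_0 \mid s \sim t \text{ for every letter } t \text{ of } h_D \}$ is nonempty: for $s \in S_0'$ one has $s\,h_D = h_D\,s$, so $(s,e)$ and $(e,s)$ map to the same element of $D$. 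Note that the hypothesis $g \notin C_{S_1}$ (with $S_1 = S_0 \cup \link S_0$) only guarantees $S_0' \neq S_0$, not $S_0' = \emptyset$. A concrete counterexample: $S = \{a,b,c\}$ with the single edge $a \sim c$, and $S_0 = \{a,b\}$, so that $\link S_0 = \emptyset$ and $S_1 = S_0$. Take $g = h_D = c \notin C_{S_1}$; then $S_0' = \{a\}$, and the element $ac = ca \in D$ has the two length–additive factorizations $(a,c,e)$ and $(e,c,a)$. Your right–hand side double–counts this contribution. More to the point, the corresponding $B_{S_0}$-$B_{S_0}$-bimodule $[B_{S_0} c\, B_{S_0}]$ is not coarse: left and right multiplication by $B_a$ coincide on the subspace $[B_a^\circ\, c]$, so it cannot carry a normal $B_{S_0} \ovt B_{S_0}^{\mathrm{op}}$–action. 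The correct identification is the Connes fusion $[B_{S_0}\, c\, B_{S_0}] \hookrightarrow L^2(B_{S_0}) \ot_{B_{S_0'}} L^2(B)$, not a plain tensor product, and one must then restrict the left action to a single vertex $s_0 \in S_0 \setminus S_0'$ (such $s_0$ exists precisely because $h_D \notin C_{\link S_0}$) to obtain a coarse $B_{s_0}$-$B_{S_0}$-bimodule; since $B_{s_0}$ is diffuse, this already forbids nonzero subbimodules finitely generated as right $B_{S_0}$-modules, and a fortiori there are no such $B_{S_0}$-$B_{S_0}$-subbimodules. Your conclusion in (iii) is therefore right, but the coarseness claim it rests on is too strong and false in general.
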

\begin{proof}
Throughout the proof, we use the notations introduced in Section \ref{sec.graph-products}.

(i) Define $J \subset C_\Gamma$ as the set of elements $g \in C_\Gamma$ with the property that $|s_0 g| = 1+|g| = |g s_1|$ for all $s_0 \in S_0$ and $s_1 \in S_1$. As in the proof of Corollary \ref{cor.rel-amen-graph-products}, the Hilbert space $L^2(B)$ is densely spanned by the closed subspaces $[B_{S_0} B_{S_1}]$ and $[B_{S_0} c B_{S_1}]$ with $c \in B_g^\circ$, $g \in J \setminus \{e\}$.

As explained in Section \ref{sec.graph-products}, $E_{B_{S_1}}(b) = \tau(b) 1$ for all $b \in B_{S_0}$. So, $[B_{S_0} B_{S_1}]$ is a coarse $B_{S_0}$-$B_{S_1}$-bimodule. Take $g \in J \setminus \{e\}$. For every $h \in C_{S_0}\setminus \{e\}$, we have that $|hg| = |h| + |g|$, so that $B_{hg}^\circ = B_h^\circ B_g^\circ$. Similarly, $B_{gk}^\circ = B_g^\circ B_k^\circ$ for all $k \in C_{S_1} \setminus \{e\}$.

As in the proof of Corollary \ref{cor.rel-amen-graph-products}, when $h \in C_{S_0} \setminus \{e\}$ and $k \in C_{S_1}$, we have $h g \neq g k$ in $C_\Gamma$. So, $B_h^\circ B_g^\circ \perp B_g^\circ B_{S_1}$ for all $h \in C_{S_0} \setminus \{e\}$. This implies that
$$E_{B_{S_1}}(d^* b c) = \tau(b) E_{B_{S_1}}(d^* c) \quad\text{for all $b \in B_{S_0}$, $c,d \in B_g^\circ$.}$$
It follows that the $B_{S_0}$-$B_{S_1}$-bimodule $[B_{S_0} c B_{S_1}]$ is coarse. In combination with the first paragraph, (i) is proven.

(ii) Take $s \in S_0 \setminus S_1$. By (i), the bimodule $\bim{B_s}{L^2(B)}{B_{S_1}}$ is coarse. Since $B_s$ is diffuse, it follows that $B_s \not\prec B_{S_1}$. A fortiori, $B_{S_0} \not\prec B_{S_1}$.

(iii) Since $B_{\link S_0}$ commutes with $B_{S_0}$, it follows that $B_{S_1}$ is contained in the normalizer of $B_{S_0}$ inside $B$. It then suffices to prove that the bimodule $\bim{B_{S_0}}{L^2(M \ominus B_{S_1})}{B_{S_0}}$ has no nonzero $B_{S_0}$-$B_{S_0}$-subbimodule that is finitely generated as a right Hilbert $B_{S_0}$-module.

Define $L \subset C_\Gamma$ as the set of elements $g \in C_{\Gamma} \setminus C_{S_1}$ with the property that $|s g| = 1+|g| = |gs|$ for all $s \in S_0$. Then $L^2(M \ominus B_{S_1})$ is densely spanned by $B_{S_0} B_g^\circ B_{S_0}$, $g \in L$. It thus suffices to fix $g \in L$ and prove that $[B_{S_0} B_g^\circ B_{S_0}]$ has no nonzero $B_{S_0}$-$B_{S_0}$-subbimodule that is finitely generated as a right Hilbert $B_{S_0}$-module.

Write $g = t_1 \cdots t_n$ as a reduced word in the alphabet $S$. Define $S'_0 = \{s \in S_0 \mid \forall i \in \{1,\ldots,n\} : s \sim t_i \}$. Fix $c \in B_g^\circ$. In the proof of Corollary \ref{cor.rel-amen-graph-products}, we have seen that the map
\begin{equation}\label{eq.my-bimodule}
[B_{S_0} c B_{S_0}] \to L^2(B_{S_0}) \ot_{B_{S'_0}} L^2(B) : a c b \mapsto a \ot_{B_{S'_0}} cb \quad\text{for all $a,b \in B_{S_0}$,}
\end{equation}
is a well-defined $B_{S_0}$-bimodular isometry. Since $g \in L$, we have that $S'_0 \neq S_0$. Choose $s_0 \in S_0 \setminus S'_0$. It then follows from (i) and \eqref{eq.my-bimodule} that the bimodule $\bim{B_s}{[B_{S_0}cB_{S_0}]}{B_{S_0}}$ is coarse. Since $B_s$ is diffuse, this coarse bimodule has no nonzero $B_s$-$B_{S_0}$-subbimodule that is finitely generated as a right Hilbert $B_{S_0}$-module.

A fortiori, $[B_{S_0}cB_{S_0}]$ has no nonzero $B_{S_0}$-$B_{S_0}$-subbimodule that is finitely generated as a right Hilbert $B_{S_0}$-module.

(iv) When $s \in S$ and $\ster s = S$, then $\cZ(B_s) \subset \cZ(B)$ by construction. Conversely, assume that $B_s$ is a factor for every $s \in S$ satisfying $\ster s = S$. Take $a \in \cZ(B)$. Applying (iii) to $S_0 = \{t\}$, it follows that $a \in B_{\ster t}$ for every $t \in S$. Given any family of subsets $(\cF_i)_{i \in I}$ of $S$ with intersection $\bigcap_{i \in I} \cF_i = \cF_0$, we have that $\bigcap_{i \in I} B_{\cF_i} = B_{\cF_0}$, with the convention that $B_\emptyset = \C 1$. Define $S_0 = \{s \in S \mid \ster s = S\}$. Since $\bigcap_{t \in S} \ster t = S_0$, we find that $a \in B_{S_0}$. If $S_0 = \emptyset$, it follows that $a \in \C 1$. If $S_0 \neq \emptyset$, then $B_{S_0}$ is the tensor product of the II$_1$ factors $(B_s)_{s \in S_0}$, which is a factor. Since $a \in \cZ(B_{S_0})$, it again follows that $a \in \C 1$.
\end{proof}

\begin{theorem}\label{thm.corr-graph-product}
Let $\Gamma$, $\Lambda$ be simple graphs with vertex sets $S$, $T$. Assume that both graphs satisfy the rigidity assumption of \cite{BCC24}, meaning that $\link(\link s) = \{s\}$ for every vertex $s$.

Let $(A_s)_{s \in S}$ and $(B_t)_{t \in T}$ be families of nonamenable, stably solid II$_1$ factors. Denote by $A$ and $B$ their graph products w.r.t.\ $\Gamma$ and $\Lambda$.

If $A$ and $B$ are W$^*$-correlated, then for every $s \in S$, there exists a $t \in T$ such that $A_{\link s}$ is W$^*$-correlated with $B_{\link t}$.
\end{theorem}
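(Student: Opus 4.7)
Fix a W$^*$-correlation $\bim{\al(A)}{p L^2(P\ovt B)}{P\ovt B}$ between $A$ and $B$, realized symmetrically by $\be : B\to q(P\op \ovt A)q$ as in Remark~\ref{rem.charact-Wstar-corr}. Given $s\in S$, the plan is to first intertwine $\al(A_s)$ into $P\ovt B_{\ster t}$ for a unique $t\in T$, and then use that $A_s$ and $A_{\link s}$ commute inside $A_{\ster s}$ to transport the $\link s$-part of the correlation onto the $\link t$-part. The main obstacle is identifying the vertex $t$ uniquely, which is where the rigidity hypothesis $\link(\link t)=\{t\}$ becomes essential.

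Step 1 (Localization). Nonamenability of $A_s$ combined with Proposition~\ref{prop.basic-prop.5} guarantees that $\al(A_s)$ is not amenable relative to $P\ot 1$; passing to a corner we may assume that $\al(A_s)$ is strongly nonamenable relative to $P\ot 1$. For each $u\in T$, let $z_u\in\al(A_s)'\cap p(P\ovt B)p$ be the largest projection such that $\al(A_s)z_u$ is amenable relative to $P\ovt B_{T\setminus\{u\}}$; each $z_u$ lies in the center of the normalizer of $\al(A_s)$ and therefore commutes with $\al(A_{\ster s})$. Iterating the graph-product intersection Corollary~\ref{cor.rel-amen-graph-products}, we obtain a nonzero projection $p_0\in\al(A_{\ster s})'\cap p(P\ovt B)p$ and a finite subset $F\subset T$ such that $\al(A_s)p_0$ is amenable relative to $P\ovt B_F$ but strongly nonamenable relative to $P\ovt B_{F\setminus\{u\}}$ for every $u\in F$.

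Step 2 (Identifying $t$ via rigidity and a partial-flip argument). Adapting the partial-flip construction of \cite{IM19} from Part~1 of the proof of Theorem~\ref{thm.W-star-corr-tensor-product-any-tensor-product} to the graph-product setting, using the commuting pair $\al(A_s)\ot\al(A_{\link s})$ inside $p(P\ovt B)p\ovt p(P\ovt B)p$, and combining with stable solidity of each $B_u$, $u\in F$, Proposition~\ref{prop.basic-prop.1}, Corollary~\ref{cor.rel-amen-graph-products}, and the rigidity assumption $\link(\link t)=\{t\}$, we show that $F$ is contained in $\ster t$ for a \emph{unique} vertex $t\in T$, yielding
\[
\al(A_s)p_0 \prec^f_{P\ovt B} P\ovt B_{\ster t} \qquad \text{and} \qquad \al(A_s)p_0\not\prec_{P\ovt B} P\ovt B_{\link t} \; .
\]
Rigidity is used to rule out two candidate vertices $t\neq t'$: such a situation would force $F\subset\ster t\cap\ster{t'}\subset\link t\cap\link{t'}$, and a further round of localization would contradict the minimality of $F$.

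Step 3 (Assembling the W$^*$-correlation between $A_{\link s}$ and $B_{\link t}$). From Step~2 we obtain a partial isometry $V$ and an embedding $\theta:A_s\to r(M_n(\C)\ot(P\ovt B_{\ster t}))r$ with $V\al(a)=\theta(a)V$. Writing $P\ovt B_{\ster t}=(P\ovt B_{\link t})\ovt B_t$ and applying stable solidity of $B_t$ with ambient algebra $P\ovt B_{\link t}$ to $\theta(A_s)\not\prec P\ovt B_{\link t}$, we see that $\theta(A_s)'\cap r(M_n(\C)\ot(P\ovt B_{\ster t}))r$ is amenable relative to $P\ovt B_{\link t}$. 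Since $A_{\link s}$ commutes with $A_s$ in $A$, the image $V\al(A_{\link s})V^*$ sits in this commutant, and we obtain an $\al(A_{\link s})$-$(P\ovt B_{\link t})$-subbimodule $K\subset p L^2(P\ovt B)$, built from $V$ and its mirror obtained by running the symmetric argument through $\be$ --- which in particular produces a vertex $s'\in S$ with $s'=s$ by a consistency check on $pL^2(P\ovt B)\cong L^2(P\op\ovt A)q$. The bimodule $K$ is of finite type as a right $(P\ovt B_{\link t})$-module (from $V$) and as a left $(P\op\ovt A_{\link s})$-module (from the mirror), while coarseness as an $\al(A_{\link s})$-$P$-bimodule is inherited from the ambient correlation. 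By Definition~\ref{def.Wstar-corr}, this produces the desired W$^*$-correlation between $A_{\link s}$ and $B_{\link t}$.
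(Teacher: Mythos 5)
There is a genuine gap in Step~2, which is where your proposal parts ways with what actually works. You claim to ``adapt the partial-flip construction of \cite{IM19} from Part~1 of the proof of Theorem~\ref{thm.W-star-corr-tensor-product-any-tensor-product} to the graph-product setting,'' but the partial flip $\si\in\Aut((P\ovt B)\ovt(P\ovt B))$ is defined by swapping one tensor factor, and it fundamentally exploits the global tensor decomposition $B=B_1\ovt B_2$. A graph product $B_\Lambda$ has no such global tensor decomposition --- the $B_t$'s are amalgamated, not tensored --- so there is no ``graph-product partial flip'' to run, and no sketch is given of what it should even mean. The paper's actual mechanism is completely different: it writes $B$ as an \emph{amalgamated free product} $B = B_{\ster t}\ast_{B_{\link t}} B_{T\setminus\{t\}}$ and invokes Ioana's dichotomy \cite[Theorem~6.4]{Ioa12} for commuting subalgebras of amalgamated free products. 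This dichotomy (either $\al(A_{\link s})p_1\prec P\ovt B_{\link t}$ or $\al(A_{\ster s})p_1\prec P\ovt B_{\ster t}$) is the engine of the localization; without something playing its role, Step~2 has no content. Your sentence ``Rigidity is used to rule out two candidate vertices\ldots a further round of localization would contradict the minimality of $F$'' is precisely the hard part and is not carried out.

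There are secondary gaps as well. In Step~1, the existence of a finite $F\subset T$ that is both amenable-relative-to-$P\ovt B_F$ and \emph{minimal} (strongly nonamenable relative to every $P\ovt B_{F\setminus\{u\}}$) is not established; the paper instead uses Corollary~\ref{cor.rel-amen-in-the-limit} to find a \emph{single} $t\in T$ with $\al(A_s)p_0$ nonamenable relative to $P\ovt B_{T\setminus\{t\}}$, which is all the amalgamated-free-product argument needs. In Step~3, the phrase ``running the symmetric argument through $\be$ --- which in particular produces a vertex $s'\in S$ with $s'=s$ by a consistency check'' is exactly the content of Lemma~\ref{lem.find-subequiv} (lem.find-subequiv), which is far from a consistency check: one must verify that the left-finitely-generated and right-finitely-generated pieces coincide (via Lemma~\ref{lem.left-right-embedding}) and that the resulting subspaces are mutually orthogonal for distinct $t$'s, using the non-intertwining $A_{\link s_1}\not\prec A_{\link s_2}$ for $s_1\neq s_2$ (which in turn rests on rigidity and Lemma~\ref{lem.bimodules-graph-products}). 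The paper's structure is: verify the two hypotheses of Lemma~\ref{lem.find-subequiv} --- hypothesis~(i) via the amalgamated-free-product dichotomy plus stable solidity of $B_t$, and hypothesis~(ii) by symmetry --- and then invoke that lemma once; your proposal reproduces neither of these ingredients in a usable form.
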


\begin{proof}
Fix a $P$-equivalence between $A$ and $B$ that is given by $\bim{\al(A)}{pL^2(P \ovt B)}{P \ovt B}$ and that is isomorphic with $\bim{P\op \ovt A}{L^2(P\op \ovt A)q}{\be(B)}$. We apply Lemma \ref{lem.find-subequiv} to the families of subalgebras $A_{\link s} \subset A$, $s \in S$, and $B_{\link t} \subset B$, $t \in T$.

If $s_1,s_2 \in S$ and $\link s_1 \subset \link s_2$, we get that $\link(\link s_2) \subset \link(\link s_1)$, so that $s_1 = s_2$. By Lemma \ref{lem.bimodules-graph-products}, $A_{\link s_1} \not\prec A_{\link s_2}$ when $s_1 \neq s_2$. For every $s \in S$, write $\ster s = \{s\} \cup \link s$. Since $\link(\link s) = \{s\}$, it follows from Lemma \ref{lem.bimodules-graph-products} that the normalizer of $A_{\link s}$ inside $A$ equals $A_{\ster s}$ and that $\bim{A_{\link s}}{L^2(A \ominus A_{\ster s})}{A_{\link s}}$ has no nonzero $A_{\link s}$-subbimodule that is finitely generated as a right Hilbert $A_{\link s}$-module.

By symmetry, we have the same properties for the subalgebras $B_{\link t}$ of $B$.

We now prove that assumption (i) of Lemma \ref{lem.find-subequiv} holds. Fix $s \in S$ and a nonzero projection $p_0 \in \al(A_{\ster s})' \cap p(P \ovt B)p$. We have to prove that there exists a $t \in T$ such that $\al(A_{\link s})p_0 \prec P \ovt B_{\link t}$.

We claim that there exists a $t \in T$ such that $\al(A_s)p_0$ is nonamenable relative to $P \ovt B_{T \setminus \{t\}}$. Assume the contrary. By Corollary \ref{cor.rel-amen-graph-products}, $\al(A_s)p_0$ is amenable relative to $P \ovt B_{T \setminus T_0}$ for every finite subset $T_0 \subset T$. By Lemma \ref{lem.bimodules-graph-products}, the bimodule $\bim{B_{T_0}}{L^2(B)}{B_{T \setminus T_0}}$ is coarse for every finite subset $T_0 \subset T$. Taking $T_0$ larger and larger, it then follows from Corollary \ref{cor.rel-amen-in-the-limit} that $\al(A_s)p_0$ is amenable relative to $P \ot 1$. This contradicts Proposition \ref{prop.basic-prop.5}, so that the claim is proven.

Note that $B$ can be viewed as the amalgamated free product $B = B_{\ster t} \ast_{B_{\link t}} B_{T \setminus \{t\}}$, so that
$$P \ovt B = (P \ovt B_{\ster t}) \ast_{P \ovt B_{\link t}} (P \ovt B_{T \setminus \{t\}}) \; .$$
Define $p_1 \in (\al(A_s)p_0)' \cap p_0(P \ovt B)p_0$ as the largest projection such that $\al(A_s)p_1$ is strongly nonamenable relative to $P \ovt B_{T \setminus \{t\}}$. Note that $p_1 \neq 0$ and that $p_1 \in \al(A_{\ster s})' \cap p (P \ovt B)p$.

Since $\al(A_{\link s}) p_1$ commutes with $\al(A_s)p_1$, it follows from \cite[Theorem 6.4]{Ioa12} that at least one of the following statements holds: $\al(A_{\link s}) p_1 \prec P \ovt B_{\link t}$, or $\al(A_{\ster s}) p_1 \prec P \ovt B_{\ster t}$. In the first case, we have found the required $t \in T$ such that $\al(A_{\link s})p_0 \prec P \ovt B_{\link t}$.

In the second case, we take an integer $n \geq 1$, a projection $q \in M_n(\C) \ovt P \ovt B_{\ster t}$, a nonzero partial isometry $X \in q (\C^n \ovt P \ovt B) p_1$ and a unital normal $*$-homomorphism $\theta : A_{\ster s} \to q(M_n(\C) \ovt P \ovt B_{\ster t})q$ such that $\theta(a) X = X \al(a)$ for all $a \in A_{\ster s}$. We may moreover assume that the conditional expectation of $XX^*$ on $M_n(\C) \ovt P \ovt B_{\ster t}$ has support $q$.
Since $\al(A_s)p_1$ is strongly nonamenable relative to $P \ovt B_{T \setminus \{t\}}$, a fortiori, $\al(A_s)p_1$ is strongly nonamenable relative to $P \ovt B_{\link t}$. By \cite[Lemma 2.3(ii)]{DV24}, we get that $\theta(A_s)$ is strongly nonamenable relative to $P \ovt B_{\link t}$ inside $P \ovt B_{\ster t}$.

Note that $P \ovt B_{\ster t}$ can be viewed as $P \ovt B_t \ovt B_{\link t}$. Since $\theta(A_{\link s})$ commutes with $\theta(A_s)$ and since $B_t$ is stably solid, we conclude that $\theta(A_{\link s}) \prec P \ovt B_{\link t}$ inside $P \ovt B_{\ster t}$. By \cite[Lemma 2.3(ii)]{DV24}, we get that also in the second case, $\al(A_{\link s}) p_1 \prec P \ovt B_{\link t}$.

We have thus proven that assumption (i) of Lemma \ref{lem.find-subequiv} holds. By symmetry also assumption (ii) of that lemma holds. But then Lemma \ref{lem.find-subequiv} provides for every $s \in S$, a $t \in T$ such that $A_{\link s}$ is W$^*$-correlated to $B_{\link t}$.
\end{proof}

Combining Theorems \ref{thm.corr-tensor-product} and \ref{thm.corr-graph-product}, we can now prove Theorem \ref{thm.main-A} and obtain the following uncountable family of non W$^*$-correlated II$_1$ factors.

\begin{theorem}\label{thm.uncountable-family-of-non-correlated}
For every subset $\cF \subset \{2,3,\ldots\}$, define the group
$$G_\cF = \free_{n \in \cF} \bigl(\underbrace{\F_2 \times \cdots \times \F_2}_{n \; \text{times}}\bigr) \; .$$
If $\cF \neq \cF'$, then $L(G_\cF)$ is not W$^*$-correlated with $L(G_{\cF'})$.
\end{theorem}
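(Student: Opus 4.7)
\emph{Proof plan.} The strategy is to realize $L(G_\cF)$ as a graph product of copies of $L(\F_2)$ over a rigid simple graph, apply Theorem \ref{thm.corr-graph-product} to transfer a W$^*$-correlation down to the links, and then use Theorem \ref{thm.corr-tensor-product} to pin down the parameters.

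For every $\cF \subset \{2,3,\ldots\}$, I introduce the simple graph $\Gamma_\cF$ obtained as the disjoint union, over $n \in \cF$, of cliques $K_n$ on $n$ vertices, labeling every vertex by $L(\F_2)$. Since the graph product over a graph with no edges between two components decomposes as a free product of the graph products over those components, and since the graph product over an $n$-clique is the $n$-fold tensor product of the vertex labels, one obtains a natural identification $A_{\Gamma_\cF} \cong L(G_\cF)$. Rigidity is an easy combinatorial check: for any $s$ in an $n$-clique (with $n \geq 2$), one has $\link s = K_n \setminus \{s\}$ and $\link t = K_n \setminus \{t\}$ for each $t \in \link s$, hence $\link(\link s) = \bigcap_{t \in \link s}(K_n \setminus \{t\}) = \{s\}$. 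Every vertex label is $L(\F_2)$, a nonamenable II$_1$ factor that is stably solid by \cite{PV12} (cf.\ Section \ref{sec.rel-solid}).

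Suppose now that $L(G_\cF)$ and $L(G_{\cF'})$ are W$^*$-correlated. Fix $n \in \cF$ and a vertex $s$ in the $n$-clique of $\Gamma_\cF$. Theorem \ref{thm.corr-graph-product} yields $m \in \cF'$ and a vertex $t$ in the $m$-clique of $\Gamma_{\cF'}$ for which $A_{\link s} \cong L(\F_2)^{\ovt(n-1)}$ is W$^*$-correlated to $B_{\link t} \cong L(\F_2)^{\ovt(m-1)}$. Since $L(\F_2)$ is a nonamenable, stably solid II$_1$ factor, Theorem \ref{thm.corr-tensor-product} applies and forces $n-1 = m-1$, i.e.\ $n = m$. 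Thus $\cF \subset \cF'$, and by symmetry $\cF = \cF'$, contradicting $\cF \neq \cF'$. If instead $\cF = \emptyset$ while $\cF' \neq \emptyset$, then $L(G_\emptyset) = \C$ whereas $L(G_{\cF'})$ is nonamenable, and these cannot be W$^*$-correlated by Proposition \ref{prop.Wstar-corr-amenable}.

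The substance of the argument is carried entirely by Theorems \ref{thm.corr-graph-product} and \ref{thm.corr-tensor-product}; the only point that deserves attention is the boundary case $n = 2$ (so that $n-1 = 1$), which causes no difficulty since Theorem \ref{thm.corr-tensor-product} applies in particular to single-factor ``tensor products'' on either side of the correlation.
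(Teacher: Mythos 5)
Your proof is correct and follows essentially the same route as the paper: realize $L(G_\cF)$ as the rigid graph product over the disjoint union of $n$-cliques labeled by $L(\F_2)$, invoke Theorem \ref{thm.corr-graph-product} to get a W$^*$-correlation of links, then invoke Theorem \ref{thm.corr-tensor-product} to force equal tensor lengths and conclude $\cF=\cF'$. The additional remarks on the case $\cF=\emptyset$ and on $n=2$ are careful edge-case observations that are implicit in the paper's argument but not a different method.
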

\begin{proof}
For every subset $\cF \subset \{2,3,\ldots\}$, we define the graph $\Gamma_\cF$ as the disjoint union of the complete graphs with $n \in \cF$ vertices. Taking the free group factor $L(\F_2)$ at every vertex, $L(G_\cF)$ is canonically isomorphic to the graph product. The subalgebras of this graph product corresponding to the subsets $\link s$ are precisely the $(n-1)$-fold tensor powers of $L(\F_2)$ with $n \in \cF$.

If $L(G_\cF)$ is W$^*$-correlated with $L(G_{\cF'})$, it thus follows from Theorem \ref{thm.corr-graph-product} that for every $n \in \cF$, there exists a $k \in \cF'$ such that the $(n-1)$-fold tensor power of $L(\F_2)$ is W$^*$-correlated to the $(k-1)$-fold tensor power. By Theorem \ref{thm.corr-tensor-product}, $n-1=k-1$. We thus conclude that $\cF \subset \cF'$. By symmetry, also the converse inclusion holds.
\end{proof}

When specializing to virtually isomorphic graph products, we can deduce that the underlying graphs must be isomorphic, which is a variant of the main result of \cite{BCC24}. For the precise relation between Theorem \ref{thm.iso-graph-product} and the main result of \cite{BCC24}, we refer to the discussion right after Theorem \ref{thm.main-C} in the introduction.

\begin{theorem}\label{thm.iso-graph-product}
Let $\Gamma$, $\Lambda$ be simple graphs with vertex sets $S$, $T$. Assume that both graphs satisfy the rigidity assumption of \cite{BCC24}, meaning that $\link(\link s) = \{s\}$ for every vertex $s$.

Let $(A_s)_{s \in S}$ and $(B_t)_{t \in T}$ be families nonamenable, stably solid II$_1$ factors. Denote by $A$ and $B$ their graph products w.r.t.\ $\Gamma$ and $\Lambda$.

If $A$ and $B$ are stably isomorphic, there exists a graph isomorphism $\si : S \to T$ such that for every $s \in S$, the II$_1$ factors $A_s$ and $B_{\si(s)}$ are stably isomorphic.
\end{theorem}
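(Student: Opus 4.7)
The plan is to refine the link-algebra correspondence from Theorem \ref{thm.corr-graph-product} into a full graph isomorphism, and then extract the vertex-algebra stable isomorphism. After amplifying, assume there is a $*$-isomorphism $\pi : A \to pBp$ for a projection $p \in B$ of central support $1$, viewed as a $\C$-equivalence $\bim{A}{pL^2(B)}{B}$. The proof of Theorem \ref{thm.corr-graph-product} verifies the hypotheses of Lemma \ref{lem.find-subequiv} for the families $(A_{\link s})_{s \in S}$ and $(B_{\link t})_{t \in T}$, producing, for each pair $(s,t)$, a central projection $z_{s,t}$ in the normalizer of $\pi(A_{\link s})$ in $pBp$ such that on the range of $z_{s,t}$ the algebras $A_{\link s}$ and $B_{\link t}$ are W$^*$-correlated; moreover the $z_{s,t}$ for fixed $s$ (resp.\ fixed $t$) are mutually orthogonal and sum to $p$.

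Since each $A_{\link s}$ is a II$_1$ factor by Lemma \ref{lem.bimodules-graph-products.4}, and the $\C$-equivalence at hand is between factors, exactly one $z_{s,t}$ is nonzero; this defines a function $\sigma : S \to T$, and by Proposition \ref{prop.Wstar-corr-vs-vN-equiv} the W$^*$-correlation on $z_{s,\sigma(s)} \cdot pL^2(B)$ upgrades to a genuine stable isomorphism $A_{\link s}$ with $B_{\link \sigma(s)}$. Running the same argument with $\pi^{-1}$ yields an analogous function $T \to S$, which by the orthogonality and covering properties of Lemma \ref{lem.find-subequiv} coincides with $\sigma^{-1}$; hence $\sigma$ is a bijection.

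To show $\sigma$ is a graph isomorphism, fix $s_1 \sim s_2$, so $A_{s_2} \subset A_{\link s_1}$. Under the corner unitary conjugation implementing the stable isomorphism $A_{\link s_1} \cong B_{\link \sigma(s_1)}$, the image of $A_{s_2}$ is a subfactor $D \subset B_{\link \sigma(s_1)}$. Combining with the correspondence between $A_{\link s_2}$ and $B_{\link \sigma(s_2)}$, together with the fact that $A_{s_2} \subset A_{\link s'}$ for every $s' \in \link s_2$, a Popa-type intertwining argument using the rigidity condition $\link(\link t) = \{t\}$ on $\Lambda$ identifies $D$ with a corner of $B_{\sigma(s_2)}$, forcing $B_{\sigma(s_2)} \subset B_{\link \sigma(s_1)}$, i.e., $\sigma(s_1) \sim \sigma(s_2)$. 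The converse implication follows by symmetry using $\pi^{-1}$.

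Once $\sigma$ is established as a graph isomorphism, pass from links to stars: by Lemma \ref{lem.bimodules-graph-products.3}, $A_{\ster s}$ is the normalizer of $A_{\link s}$ in $A$, and similarly $B_{\ster \sigma(s)}$ is the normalizer of $B_{\link \sigma(s)}$ in $B$; since $*$-isomorphisms transport normalizers, the link-level stable isomorphism lifts to a stable isomorphism $A_{\ster s} \cong B_{\ster \sigma(s)}$ that matches $A_{\link s}$ to $B_{\link \sigma(s)}$. The graph product structure gives $A_{\ster s} = A_s \ovt A_{\link s}$ (since $s$ is adjacent to every vertex of $\link s$), and as $A_{\link s}$ is a factor, the relative commutant of $A_{\link s}$ in $A_{\ster s}$ equals $A_s$; the analogous equality holds on the $B$-side. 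Taking relative commutants yields the desired stable isomorphism of $A_s$ with $B_{\sigma(s)}$. The main obstacle is the edge-preservation step, where translating the link-level correspondence into a vertex-level adjacency requires careful bookkeeping of several intertwinings combined with the rigidity condition to rule out spurious embeddings.
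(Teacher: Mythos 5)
Your high-level framework coincides with the paper's: apply Lemma~\ref{lem.find-subequiv} to the $\C$-equivalence coming from the (amplified) isomorphism, observe that the $K_{s,t}$-projections live in the trivial center of $\al(A_{\ster s})' \cap p(M_n(\C)\ot B)p$ so that a unique bijection $\sigma$ emerges, and then descend from links to vertices. However, the key steps are either incorrect or missing.

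First, your invocation of Proposition~\ref{prop.Wstar-corr-vs-vN-equiv} to ``upgrade'' the W$^*$-correlation on $K_{s,\sigma(s)}$ to a \emph{stable isomorphism} of $A_{\link s}$ with $B_{\link\sigma(s)}$ is wrong: that proposition only equates W$^*$-correlation with von Neumann equivalence for factors, and both are strictly weaker than stable isomorphism. What you actually have from the $\C$-equivalence is a bifinite irreducible $A_{\link s}$-$B_{\link\sigma(s)}$-bimodule $K_0$, i.e.\ a finite-index inclusion $\theta : A_{\link s} \to q(M_k(\C)\ot B_{\link t})q$; this is not an isomorphism and must be treated as such.

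Second, the step ``$*$-isomorphisms transport normalizers, so the link-level stable isomorphism lifts to a star-level one'' is exactly where the real work lies, and it is not an argument. The intertwiner $V$ implementing $K_0$ lies in $p(M_{n,k}(\C)\ot B)q$, not in $M_{n,k}(\C)\ot B_{\ster t}$, so conjugation by $V$ does not obviously respect the star subalgebra. The paper instead proves the nontrivial relative-commutant identity $\theta(A_{\link s})'\cap q(M_k(\C)\ot B)q = (1\ot B_t)q$, using the finite index of $\theta$ together with Lemma~\ref{lem.bimodules-graph-products.3} and the rigidity $\link(\link t)=\{t\}$ to push a finitely generated $B_{\link t}$-bimodule into $L^2(B_{\ster t})$ and then irreducibility to push it further into $B_t$. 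Only after this identity is the partial isometry $V$ conjugated on both sides to produce the stable isomorphism of $A_s$ with $B_t$. Your sketch skips precisely this.

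Third, you acknowledge that edge preservation is ``the main obstacle'' and leave it unproved, but in fact your proposed route to it is circular: you want to identify the image $D$ of $A_{s_2}$ with a corner of $B_{\sigma(s_2)}$, which is exactly the vertex-level result you were planning to derive \emph{after} establishing edge preservation. The paper avoids this by first securing $\al(A_{s'})\prec^f B_{\sigma(s')}$ as a byproduct of the vertex-level analysis, then proving $s'\in\link s$ implies $\sigma(s')\in\link\sigma(s)$ by exhibiting a nonzero intertwiner $X = V^*W$ between $\theta(A_{s'})$ and a copy of $A_{s'}$ inside $M_{k'}(\C)\ot B_{\sigma(s')}$, which contradicts coarseness of $\bim{B_{\link t}}{L^2(B)}{B_{\sigma(s')}}$ (Lemma~\ref{lem.bimodules-graph-products.1}) unless $\sigma(s')\in\link t$. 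Without this, the proposal is incomplete.
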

\begin{proof}
Take a $*$-isomorphism $\al : A \to p (M_n(\C) \ot B)p$. Define the $A$-$B$-bimodule $\bim{A}{H}{B} = \bim{\al(A)}{p (\C^n \ot L^2(B))}{B}$. In the proof of Theorem \ref{thm.corr-graph-product}, we have seen that the families of von Neumann subalgebras $A_{\link s} \subset A$ and $B_{\link t} \subset B$ satisfy the conditions of Lemma \ref{lem.find-subequiv}. As in Lemma \ref{lem.find-subequiv}, we thus denote for every $s \in S$, $t \in T$ by $K_{s,t}$ the closed linear span of all $A_{\link s}$-$B_{\link t}$-subbimodules of $H$ that are finitely generated as a right Hilbert $B_{\link t}$-module and that are finitely generated as a left Hilbert $A_{\link s}$-bimodule.

Since $A_s$ commutes with $A_{\link s}$ and since $B_t$ commutes with $B_{\link t}$, every $K_{s,t}$ is an $A_{\ster s}$-$B_{\ster t}$-bimodule. Because all $A_s$ and $B_t$ are factors, it follows from Lemma \ref{lem.bimodules-graph-products.3} and \ref{lem.bimodules-graph-products.4} that $A_{\ster s}' \cap A = \C 1$ and that $B_{\ster t}' \cap B = \C 1$. Since $\al$ is an isomorphism, if follows that $K_{s,t} = H$ whenever $K_{s,t} \neq \{0\}$. By Lemma \ref{lem.find-subequiv}, we get that $K_{s,t} \perp K_{s,t'}$ if $t\neq t'$ and that $K_{s,t} \perp K_{s',t}$ if $s \neq s'$. From the proof of Theorem \ref{thm.corr-graph-product}, we know that for every $s$, there exists a $t$ such that $K_{s,t} \neq \{0\}$, and by symmetry, for every $t$, there exists an $s$ such that $K_{s,t} \neq \{0\}$. So, there is a unique bijection $\si : S \to T$ such that for $s \in S$ and $t \in T$, we have that $K_{s,t} \neq \{0\}$ if and only if $t = \si(s)$.

Fix $s \in S$ and write $t = \si(s)$. We first prove that $A_s$ is stably isomorphic to $B_t$. Since $K_{s,t} \neq \{0\}$, we can choose a nonzero bifinite $A_{\link s}$-$B_{\link t}$-subbimodule $K_0 \subset H$, which we may choose to be irreducible. Taking a unital normal $*$-homomorphism $\theta : A_{\link s} \to q (M_k(\C) \ot B_{\link t})q$ such that $\bim{A_{\link s}}{(K_0)}{B_{\link t}}$ is isomorphic with $\bim{\theta(A_{\link s})}{q(\C^k \ot L^2(B_{\link t}))}{B_{\link t}}$, we get that $\theta$ is an irreducible finite index inclusion. From the isomorphism between $q(\C^k \ot L^2(B_{\link t}))$ and $K_0$, we get a nonzero partial isometry $V \in p(M_{n,k}(\C) \ot B)q$ satisfying $\al(a) V = V \theta(a)$ for all $a \in A_{\link s}$. We write $e = V^* V$ and note that $e$ is a projection in $q(M_k(\C) \ot B)q \cap \theta(A_{\link s})'$.

We claim that $\theta(A_{\link s})' \cap q(M_k(\C) \ot B)q = (1 \ot B_t)q$. To prove this claim, take $a \in \theta(A_{\link s})' \cap q(M_k(\C) \ot B)q$. Since $\theta(A_{\link s})$ has finite index in $q (M_k(\C) \ot B_{\link t})q$ and since $a$ commutes with $\theta(A_{\link s})$, the closed linear span $L$ of $(M_{1,k}(\C) \ot B_{\link t}) a (M_{k,1}(\C) \ot B_{\link t})$ is a $B_{\link t}$-$B_{\link t}$-subbimodule of $L^2(B)$ that is finitely generated as a right Hilbert $B_{\link t}$-module. Since $\link(\link t) = \{t\}$, it follows from Lemma \ref{lem.bimodules-graph-products.3} that $L \subset L^2(B_{\ster t})$. So, $a \in q(M_k(\C) \ot B_{\ster t})q$. Viewing $B_{\ster t}$ as $B_{\link t} \ovt B_t$ and using that $\theta$ is an irreducible inclusion, we get that $a \in (1 \ot B_t)q$. So the claim is proven.
In particular, we find a projection $f \in B_t$ such that $e = q(1 \ot f)$.

Since $V^* \al(A_s) V$ commutes with $\theta(A_{\link s})$, we get that $V^* \al(A_s) V \subset (1 \ot f B_t f)q$. Since $\link(\link s) = \{s\}$, it also follows from Lemma \ref{lem.bimodules-graph-products.3} and \ref{lem.bimodules-graph-products.4} that $A_{\link s}' \cap A = A_s$. Since $\al$ is an isomorphism, we find a projection $r \in A_s$ such that $VV^* = \al(r)$. Since $V (1 \ot B_t) V^*$ commutes with $\al(A_{\link s})$, we also find that $V(1 \ot B_t) V^* \subset \al(r A_s r)$. We have thus proven that $V^* \al(r A_s r) V = (1 \ot f B_t f)q$ and find a unital $*$-isomorphism $\psi : r A_s r \to f B_t f$ such that $V^* \al(a) V = (1 \ot \psi(a))q$ for all $a \in r A_s r$. We have thus proven that $A_s$ is stably isomorphic to $B_t$. Note that we have in particular proven that $\al(A_s) \prec B_{\si(s)}$ for every $s \in S$. Since the normalizer $A_{\ster s}$ of $A_s$ has trivial relative commutant, this implies that $\al(A_s) \prec^f B_{\si(s)}$ for all $s \in S$.

It remains to prove that $\si$ is a graph isomorphism. By symmetry, it suffices to fix $s' \in \link s$ and prove that $\si(s') \in \link \si(s)$. Write $t = \si(s)$ and consider the partial isometry $V$ and $*$-homomorphism $\theta$ that we constructed above, with $VV^* \in \al(A_s)$. In particular, $VV^*$ commutes with $\al(A_{s'})$. We have seen that $\al(A_{s'}) \prec^f B_{\si(s')}$. We can thus choose a projection $q' \in M_{k'}(\C) \ot B_{\si(s')}$, a nonzero partial isometry $W \in p(M_{n,k'}(\C) \ot B)q'$ and a normal unital $*$-homomorphism $\theta' : A_{s'} \to q'(M_{k'}(\C) \ot B_{\si(s')})q'$ such that $\al(a) W = W \theta'(a)$ for all $a \in A_{s'}$, and $WW^* \leq VV^*$. Define $X = V^* W$. Because $W$ is nonzero and $WW^* \leq VV^*$, we have that $X$ is nonzero. By construction, $\theta(a) X = X \theta'(a)$ for all $a \in A_{s'}$. Since $A_{s'}$ is diffuse, it follows that the $B_{\link t}$-$B_{s'}$-bimodule $L^2(B)$ is not coarse. By Lemma \ref{lem.bimodules-graph-products.1}, we find that $s' \in \link t$.
\end{proof}

Seeing Theorems \ref{thm.corr-graph-product} and \ref{thm.iso-graph-product}, it is natural to ask if the conclusion of Theorem \ref{thm.corr-graph-product} can be strengthened to the existence of a graph isomorphism $\si : S \to T$ such that $A_s$ is W$^*$-correlated to $B_{\si(s)}$ for every $s \in S$. This even fails for virtually isomorphic graph products, as we prove in Corollary \ref{cor.counterex}, as a consequence of the following general construction of virtually isomorphic graph product groups.

\begin{proposition}\label{prop.counterex}
Let $\Gamma = (S,E)$ be a simple graph and $s_1 \in S$. Let $(G_s)_{s \in S}$ be a family of discrete groups and denote by $G$ their graph product given by $\Gamma$. Let $H < G_{s_1}$ be a subgroup of finite index $k$.

Denote $[k] = \{1,\ldots,k\}$ and define the new simple graph $\Gamma' = (S',E')$ with $S' = (\ster s_1) \sqcup ([k] \times (S \setminus \ster s_1))$ and with the following edges.
\begin{itemlist}
\item For $s,t \in \ster s_1$, we have $(s,t) \in E'$ iff $(s,t) \in E$.
\item For $s,t \in S \setminus \ster s_1$ and $i,j \in [k]$, we have $((i,s),(j,t)) \in E'$ iff $i=j$ and $(s,t) \in E$.
\item For $s \in \ster s_1$, $t \in S \setminus \ster s_1$ and $i \in [k]$, we have $(s,(i,t)) \in E'$ iff $(s,t) \in E$.
\end{itemlist}
Define the discrete groups $G'_{s_1} = H$, $G'_s = G_s$ for all $s \in \link s_1$ and $G'_{(i,s)} = G_s$ for all $i \in [k]$ and $s \in S \setminus \ster s_1$. Denote by $G'$ their graph product given by $\Gamma'$.

Then, $G'$ is isomorphic to an index $k$ subgroup of $G$. If the graph $\Gamma$ is rigid, also the graph $\Gamma'$ is rigid.

More precisely, if $G_{s_1}/H = \{g_1 H,\ldots,g_kH\}$, there is a unique group homomorphism $\vphi : G' \to G$ satisfying $\vphi(g) = g$ for all $g \in G'_s$, $s \in \ster s_1$, and $\vphi(g) = g_i^{-1} g g_i$ for all $g \in G'_{(i,s)}$, $i \in [k]$ and $s \in S \setminus \ster s_1$. This homomorphism $\vphi$ is faithful and $[G:\vphi(G')] = k$.
\end{proposition}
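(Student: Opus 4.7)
Define $\pi : G \to G_{s_1}$ to be the canonical retraction (identity on $G_{s_1}$, trivial on each other $G_s$), which exists by the universal property of graph products. Then $H' := \pi^{-1}(H)$ has index $k$ in $G$, and the plan is to show $\vphi$ is a well-defined homomorphism with image exactly $H'$. Well-definedness reduces to checking the commutation relations across edges of $\Gamma'$: edges inside $\ster s_1$ go to commuting elements of $G_{\ster s_1}$; edges $(i,u) \sim (i,v)$ go to $g_i^{-1} G_u g_i$ and $g_i^{-1} G_v g_i$, which commute since $G_u, G_v$ do; edges $s \sim (i,u)$ with $s \in \link s_1$ and $u \in S \setminus \ster s_1$ (there is no edge in $\Gamma'$ from $s_1$ to any $(i,u)$, since $s_1 \not\sim_\Gamma u$) go to $G_s$ and $g_i^{-1} G_u g_i$, which commute because $G_s$ commutes with $g_i \in G_{s_1}$ (as $s \in \link s_1$) and with $G_u$ (as $s \sim_\Gamma u$). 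Applying $\pi$ to each defining generator of $\vphi(G')$ lands in $H$, so $\vphi(G') \subset H'$.

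\textbf{Matching decompositions.} The standard splitting of a graph product at a single vertex gives $G = G_{\ster s_1} \ast_{G_{\link s_1}} G_{S \setminus \{s_1\}}$. In $\Gamma'$ we have $\link_{\Gamma'} s_1 = \link_\Gamma s_1$, and the subgraph of $\Gamma'$ on $S' \setminus \{s_1\}$ is the gluing along $\link s_1$ of $k$ disjoint copies of the subgraph of $\Gamma$ on $S \setminus \{s_1\}$; iterating the splitting therefore gives
\[
G' = (H \times G_{\link s_1}) \ast_{G_{\link s_1}} G_{S \setminus \{s_1\}} \ast_{G_{\link s_1}} \cdots \ast_{G_{\link s_1}} G_{S \setminus \{s_1\}}
\]
with $k$ copies of $G_{S \setminus \{s_1\}}$. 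Let $T$ be the Bass-Serre tree of the splitting of $G$. Since $G_{S \setminus \{s_1\}} \subset \ker \pi \subset H'$, a double-coset computation using the right-coset representatives $\{g_j^{-1}\}_{j=1}^{k}$ of $G_{s_1}/H$ (valid since taking inverses permutes cosets) shows that $H'$ has a single orbit of $G_{\ster s_1}$-vertices, with stabilizer $H \times G_{\link s_1}$, exactly $k$ orbits of $G_{S \setminus \{s_1\}}$-vertices, with stabilizers $g_j^{-1} G_{S \setminus \{s_1\}} g_j$, and all edge stabilizers equal to $g_j^{-1} G_{\link s_1} g_j = G_{\link s_1}$ (using that $g_j \in G_{s_1}$ commutes with $G_{\link s_1}$). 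The quotient graph $H' \backslash T$ is a star with $k$ rays, hence a tree, so Bass-Serre theory yields
\[
H' = (H \times G_{\link s_1}) \ast_{G_{\link s_1}} (g_1^{-1} G_{S \setminus \{s_1\}} g_1) \ast_{G_{\link s_1}} \cdots \ast_{G_{\link s_1}} (g_k^{-1} G_{S \setminus \{s_1\}} g_k).
\]
The map $\vphi$ is the canonical isomorphism matching these amalgams factor-by-factor: the identity on $H \times G_{\link s_1}$, and conjugation by $g_i^{-1}$ on the $i$-th copy of $G_{S \setminus \{s_1\}}$ (which fixes $G_{\link s_1}$ since $g_i$ commutes with it). Hence $\vphi$ is injective with image $H'$.

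\textbf{Rigidity.} Assume $\Gamma$ is rigid; we verify $\link_{\Gamma'}(\link_{\Gamma'} s) = \{s\}$ for every $s \in S'$. For $s \in \ster s_1$, any candidate $v = (j,w) \in \link_{\Gamma'}(\link_{\Gamma'} s)$ with $w \notin \ster s_1$ would force $w$ to be adjacent in $\Gamma$ to every vertex of $\link_\Gamma s$ (via the edge rules of $\Gamma'$), placing $w \in \link_\Gamma(\link_\Gamma s) = \{s\}$ and contradicting $w \notin \ster s_1$; any $v \in \ster s_1$ then falls in $\link_\Gamma(\link_\Gamma s) = \{s\}$, giving $v = s$. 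For $s = (i,u)$ with $u \notin \ster s_1$, any $v = (j,w) \in \link_{\Gamma'}(\link_{\Gamma'}(i,u))$ must have $w \in \link_\Gamma(\link_\Gamma u) = \{u\}$, so $w = u$; the index $j = i$ is forced as soon as $\link_\Gamma u \setminus \ster s_1 \neq \emptyset$, which holds because otherwise $\link_\Gamma u \subset \ster s_1$ would give $s_1 \in \link_\Gamma(\link_\Gamma u) = \{u\}$, contradicting $u \neq s_1$; and no $v \in \ster s_1$ can lie in $\link_{\Gamma'}(\link_{\Gamma'}(i,u))$ because that would force $v \in \link_\Gamma(\link_\Gamma u) = \{u\}$ with $u \notin \ster s_1$. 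The main obstacle throughout is aligning the Bass-Serre stabilizers with the factors produced by $\vphi$; once coset representatives are chosen compatibly, the identification is essentially formal.
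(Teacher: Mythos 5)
Your proof is correct and arrives at the same decompositions as the paper, but via a genuinely different and more systematic route. The paper constructs the subgroup $\cG < G$ directly: it writes $G \cong (H_1 \times L) \ast_L K$ (with $H_1 = G_{s_1}$, $L = G_{\link s_1}$, $K = G_{S \setminus \{s_1\}}$), asserts that the conjugates $g_i^{-1} K g_i$ are free with amalgamation over $L$ and that $H \times L$ and $K' = \langle g_i^{-1} K g_i \rangle$ are again free with amalgamation over $L$, and concludes that $\vphi$ is the natural isomorphism onto $\cG := \langle H \times L, K' \rangle$, which has index $k$ since the $g_i$ still represent $G/\cG$. These assertions are stated ``by construction'' and implicitly rely on normal form in amalgams. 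You instead run the full Bass--Serre machine: you identify $H' = \pi^{-1}(H)$, where $\pi : G \to G_{s_1}$ is the vertex retraction, as the target subgroup of index $k$, compute the quotient graph of groups $H' \backslash T$ of the action of $H'$ on the Bass--Serre tree of the same splitting (one orbit of $G_{\ster s_1}$-vertices with stabilizer $H \times L$; $k$ orbits of $K$-vertices with stabilizers $g_i^{-1} K g_i$; $k$ edge-orbits, all with stabilizer $L$; quotient a star, hence a tree), and match this graph of groups with the one for $G'$ obtained by splitting $\Gamma'$ at $s_1$. The Bass--Serre route is longer but makes the amalgam structure of the image rigorous rather than asserted, and the preliminary direct verification of the defining commutation relations for $\vphi$ (which the paper omits) is a nice touch. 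Both approaches are equivalent in substance; yours buys rigour at the cost of invoking the structure theorem for groups acting on trees, while the paper's is terser at the cost of leaving a small verification to the reader. The rigidity argument is essentially identical to the paper's, including the key observation that $\link_\Gamma u \not\subset \ster s_1$ (derived from $s_1 \in \link(\link u) \Rightarrow u = s_1$) forces the index $j = i$.
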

\begin{proof}
Define $K < G$ as the graph product of $(G_s)_{s \in S \setminus \{s_1\}}$ given by restricting $\Gamma$ to $S \setminus \{s_1\}$. Similarly define $L < K$ as the graph product of $(G_s)_{s \in \link s_1}$. Write $H_1 = G_{s_1}$. We have the natural isomorphism $G \cong (H_1 \times L) \ast_L K$. With this notation, $H < H_1$ is a subgroup of index $k$ and $H_1/H = \{g_1 H,\ldots,g_k H\}$.

Then the subgroups of $G$ given by $g_i^{-1} K g_i$, $i \in [k]$, are free with amalgamation over $L$, and this is well-defined because the elements $g_i$ commute with $L$. Denote by $K'$ the subgroup of $G$ generated by $g_i^{-1} K g_i$, $i \in [k]$. Then the subgroups $H \times L$ and $K'$ of $G$ are free with amalgamation over $L$. They generate a subgroup $\cG < G$ such that $\cG \cong (H \times L) \ast_L K'$. By construction, the elements $g_1,\ldots,g_k \in G$ still are coset representatives of $G/\cG$.

By construction, the natural map $\vphi : G' \to \cG$ is a group isomorphism.

Finally, assume that $\Gamma$ is rigid. We prove that $\Gamma'$ remains rigid. Define the map $\pi : S' \to S$ by $\pi(s) = s$ for all $s \in \ster s_1$ and $\pi(i,s) = s$ for all $i \in [k]$ and $s \in S \setminus \ster s_1$. Note that if $(r,r') \in E'$, then $(\pi(r),\pi(r')) \in E$. Also note that $\pi(\link r) = \link \pi(r)$ for all $r \in S'$. Fix $r \in S'$ and take $r' \in \link(\link r)$. We have to prove that $r'= r$. Note that $\pi(r') \in \link(\pi(\link r)) = \link(\link \pi(r)) = \{\pi(r)\}$, by rigidity of $\Gamma$. So, $\pi(r') = \pi(r)$. If $r \in \ster s_1$, it follows that $r' \in \ster s_1$, so that $r'=\pi(r')=\pi(r)=r$. If $r = (i,s)$ with $i \in [k]$ and $s \in S \setminus \ster s_1$, first note that in the rigid graph $\Gamma$, we have that $\{s_1\} = \link(\link s_1) \not\subset \link(\link s) = \{s\}$, so that $\link s \not\subset \link s_1$. Since also $s_1 \not\in \link s$ because $s \not\in \ster s_1$, we get that $\link s \not\subset \ster s_1$. We can thus choose $t \in S \setminus \ster s_1$ such that $(s,t) \in E$. It follows that $(i,t) \in \link r$. But then, $r' \in \link(\link r) \subset (\ster s_1) \cup (\{i\} \times (S \setminus \ster s_1))$. Since the restriction of $\pi$ to $(\ster s_1) \cup (\{i\} \times (S \setminus \ster s_1))$ is injective and $\pi(r') = \pi(r)$, it follows that $r'=r$.
\end{proof}

\begin{corollary}\label{cor.counterex}
Let $\Gamma$ be a finite simple graph that is not complete. Label every vertex with the free group $\F_2$ and denote by $G_\Gamma$ the graph product group. There exists a finite simple graph $\Gamma'$ and a labeling of its vertices with either $\F_2$ or $\F_3$ such that $\Gamma' \not\cong \Gamma$, but the graph product $G_{\Gamma'}$ is isomorphic with a subgroup of $G_\Gamma$ of index $2$.

Moreover, when $\Gamma$ is rigid, also $\Gamma'$ can be chosen rigid. This means that in Theorem \ref{thm.corr-graph-product}, we cannot conclude that the graphs $\Gamma$ and $\Lambda$ are isomorphic, not even if $\Gamma$ and $\Lambda$ are finite and the II$_1$ factors $A$ and $B$ are virtually isomorphic.
\end{corollary}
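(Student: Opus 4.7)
The plan is to invoke Proposition \ref{prop.counterex} with $G_s = \F_2$ for every $s \in S$, with $s_1$ any vertex of $\Gamma$ that has at least one non-neighbor in $S$ (such a vertex exists because $\Gamma$ is not complete: pick any two distinct vertices $s_1, s_2 \in S$ with $s_1 \not\sim s_2$), and with $H < \F_2$ an arbitrary subgroup of index $k = 2$. By the Nielsen--Schreier rank formula, any such $H$ is free of rank $2(2-1)+1 = 3$; that is, $H \cong \F_3$. The labeling of $\Gamma'$ produced by the proposition therefore assigns $\F_3$ to the vertex $s_1$ and $\F_2$ to every other vertex, which is the desired form.

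The proposition produces a simple graph $\Gamma'$ with vertex set $(\ster s_1) \sqcup (\{1,2\} \times (S \setminus \ster s_1))$, so
$$|S'| \;=\; |\ster s_1| + 2 \, |S \setminus \ster s_1| \;=\; |S| + |S \setminus \ster s_1| \;>\; |S|,$$
because the non-completeness of $\Gamma$ forces $S \setminus \ster s_1 \neq \emptyset$. In particular $\Gamma' \not\cong \Gamma$ on the nose, just by a vertex count. The same proposition provides an index-$2$ group embedding $\vphi : G_{\Gamma'} \hookrightarrow G_\Gamma$, and guarantees that $\Gamma'$ is rigid whenever $\Gamma$ is.

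For the closing assertion about Theorem \ref{thm.corr-graph-product}, I start from any finite rigid non-complete simple graph $\Gamma$ (for example a path on at least four vertices) and form the rigid graph $\Gamma'$ as above. The vertex factors $L(\F_2)$ and $L(\F_3)$ are nonamenable and stably solid by Theorem \ref{thm.no-embed-inner-amenable}, and the graph product groups $G_\Gamma$ and $G_{\Gamma'}$ are icc, so both graph products of II$_1$ factors satisfy the hypotheses of Theorem \ref{thm.corr-graph-product}. Since $G_{\Gamma'}$ sits in $G_\Gamma$ as a subgroup of index $2$ and both groups are icc, the inclusion $L(G_{\Gamma'}) \subset L(G_\Gamma)$ is a II$_1$ subfactor of Jones index $2$, so $L(G_\Gamma)$ and $L(G_{\Gamma'})$ are stably isomorphic and in particular W$^*$-correlated. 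However $\Gamma' \not\cong \Gamma$, which shows that the conclusion of Theorem \ref{thm.corr-graph-product} cannot be strengthened to a graph isomorphism, even under the much stronger assumption of virtual isomorphism of the II$_1$ factors and finiteness of the graphs. I do not expect any genuine obstacle here: the whole argument is a packaging of Proposition \ref{prop.counterex} together with the Nielsen--Schreier rank formula and the elementary fact that a finite-index inclusion of icc groups induces a finite-index II$_1$ subfactor inclusion of their group von Neumann algebras.
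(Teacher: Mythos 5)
Your proof follows the paper's argument exactly: pick a vertex $s_1$ whose star is proper, apply Proposition \ref{prop.counterex} with $H=\F_3 < \F_2$ of index~$2$, and conclude $\Gamma' \not\cong \Gamma$ by a vertex count. That is the paper's proof verbatim. The paper leaves the ``moreover'' clause implicit; you have spelled it out, which is fine.

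One small inaccuracy in that final paragraph: a finite Jones-index inclusion $L(G_{\Gamma'}) \subset L(G_\Gamma)$ of II$_1$ factors does \emph{not} in general imply that the two factors are \emph{stably isomorphic} (index~$2$ means $L(G_\Gamma) = L(G_{\Gamma'}) \rtimes (\Z/2\Z)$, and the crossed product need not be a corner of $L(G_{\Gamma'})$). What the finite-index inclusion does give is exactly what the corollary asks for, namely that the factors are \emph{virtually isomorphic}, and this already implies a W$^*$-correlation (take $P=\C$ and $H=L^2(L(G_\Gamma))$, which is of finite type on the left by finite index, of finite type on the right trivially, and coarse over $P=\C$). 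So you should simply replace ``stably isomorphic and in particular W$^*$-correlated'' by ``virtually isomorphic and in particular W$^*$-correlated''; the rest of your reasoning stands.
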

\begin{proof}
Let $\Gamma = (S,E)$. Since $\Gamma$ is not complete, choose $s_1 \in S$ such that $S \neq \ster s_1$. Viewing $\F_3$ as an index $2$ subgroup of $\F_2$, we perform the construction of Proposition \ref{prop.counterex}. The resulting graph $\Gamma'$ has strictly more vertices than $\Gamma$, so that $\Gamma'\not\cong \Gamma$.
\end{proof}

\end{document}